\documentclass{amsart}

\title{Contact homology of orbit complements and implied existence}
\author{Al Momin}
\address{Purdue University Dept. of Mathematics \\  150 N. University St. \\  West Lafayette IN, 47906 U.S.A. \\amomin@math.purdue.edu}

\usepackage{amssymb, graphicx, subfigure}

\newcommand{\C}{\mathbb{C}}
\newcommand{\R}{\mathbb{R}}
\newcommand{\Z}{\mathbb{Z}}
\newcommand{\Q}{\mathbb{Q}}

\newcommand{\M}{\mathcal{M}}
\newcommand{\calN}{\mathcal{M}}

\newcommand{\J}{\mathcal{J}}
\newcommand{\B}{\mathcal{B}}
\newcommand{\E}{\mathcal{E}}

\newcommand{\qi}{\mathbf{i}}

\newcounter{newcounter}[section]

\numberwithin{equation}{section}
\numberwithin{newcounter}{section}
\numberwithin{figure}{section}
\numberwithin{footnote}{section}

\newtheorem{corollary}[newcounter]{Corollary}
\newtheorem{definition}[newcounter]{Definition}
\newtheorem{example}[newcounter]{Example}
\newtheorem{lemma}[newcounter]{Lemma}
\newtheorem{proposition}[newcounter]{Proposition}
\newtheorem{remark}[newcounter]{Remark}
\newtheorem{theorem}[newcounter]{Theorem}

\begin{document}

\renewcommand{\thesubfigure}{(\arabic{subfigure})}

\begin{abstract}
For Reeb vector fields on closed $3$-manifolds, cylindrical contact homology is used to show that the existence of a set of closed Reeb orbit with certain knotting/\-linking properties implies the existence of other Reeb orbits with other knotting/\-linking properties relative to the original set.  We work out a few examples on the $3$-sphere to illustrate the theory, and describe an application to closed geodesics on $S^2$ (a version of a result of Angenent in \cite{Angenent}).
\end{abstract}

\maketitle

\section{Introduction}

Let $V$ be a closed $3$-manifold with a contact form $\lambda$ and associated Reeb vector field $X_{\lambda}$.  In this article we will be concerned with the following question about closed orbits:

\begin{quote}
\textbf{General Question:}  If one has a Reeb vector field with a known set of closed Reeb orbits $L$, can one deduce the existence of other closed Reeb orbits from knowledge about $L$?
\end{quote}

\noindent Let us call an affirmative answer to such a question an \emph{implied existence} result.

We will see that in some instances one can obtain an affirmative answer to this question.  These results agree with affirmative answers to analogous questions for certain similar conservative systems in low dimensions studied in e.g.~\cite{Angenent, MR1974892, GVBVVW}, but the present methods differ significantly.  %
Specifically, to exhibit implied existence we study cylindrical contact homology on the complement of $L$ for non-degenerate contact forms $\lambda$ imposing as few conditions on the orbit set as we can manage.  We will show in detail one approach to such a theory inspired by the intersection theory of \cite{Siefring1} for the necessary compactness arguments\footnote{It has been pointed out to the author that other approaches are possible e.g.~using convexity for compactness arguments, see e.g.~\cite{2010arXiv1004.2942C}.}.  We use established techniques \cite{2008arXiv0809.5088C, Bourgeois_thesis} to compute this homology in some cases, and use these computations as a tool to affirmatively answer the above question in certain cases.

\subsection{A version of cylindrical contact homology on Reeb orbit complements}

In this paper we denote by $V$ a $3$-dimensional manifold.  A one-form $\lambda$ on $V$ is called a contact form if $\lambda \wedge d\lambda$ is everywhere non-zero.  Such a one-form uniquely determines its \emph{Reeb vector field} $X_{\lambda}$ by the equations $\lambda(X_{\lambda}) \equiv 1$, $d\lambda(X_{\lambda},\cdot) \equiv 0$.  It also determines a distribution $\xi = \ker \lambda$, which is a contact structure, and two forms $\lambda_{\pm}$ induce the same contact structure if and only if $\lambda_+ = f \cdot \lambda_-$ for some nowhere vanishing function.  The Conley-Zehnder index of a closed orbit for this vector field is a measure of the rotation of the flow around the closed orbit - see Proposition \ref{prop-Linear Growth} for one characterization of the Conley-Zehnder index.  The Conley-Zehnder index usually depends on a choice of trivializations, but in many cases we will consider, e.g.~the \emph{tight} $3$-sphere, there is a global trivialization which is used to define Conley-Zehnder indices independently of choices.

\subsubsection{The hypotheses $(E)$ and $(PLC)$}

We introduce two types of technical hypotheses below, which are not mutually exclusive.  They simplify the construction of contact homology on $V \backslash L$ which we use to deduce implied existence.  We will give some examples of forms on the $3$-sphere later which we hope will help to clarify these hypotheses.

We will consider pairs $(\lambda,L)$ in which $\lambda$ is a non-degenerate contact form on $V$ and $L$ is a link composed of closed orbits of the Reeb vector field of $\lambda$.  Sometimes it may be convenient to refer to the subset of closed Reeb orbits with image contained in $L$; we will abuse language and denote this subset of closed Reeb orbits by $L$ again\footnote{i.e.~$L$ refers to both a closed embedded submanifold which is tangent to the Reeb vector field $X$, as well as the set of solutions $x: \R / (T \cdot \Z) \rightarrow V, \dot{x} = X(x)$ modulo reparametrization with image contained in $L$ (this includes multiple covers of components of $L$).}.

Following standard terminology\footnote{e.g.~\cite{Hasselblatt-Katok}; see also Proposition \ref{prop-Linear Growth} for another characterization of \emph{elliptic}.}, an orbit is \emph{elliptic} if the eigenvalues of its linearized Poincar\'{e} return map (a linear symplectic map on $\xi$) are \emph{non-real}.
\begin{definition}
 We will say $(\lambda,L)$ satisfies the \emph{``ellipticity''} condition (abbreviated by $(E)$) if 
\begin{itemize}
 \item each orbit in $L$ is non-degenerate elliptic (including multiple covers)
 \item each contractible Reeb orbit $y$ not in $L$ ``links'' with $L$ in the sense that for any disc with boundary $y$, $L$ intersects the interior of the disc.
\end{itemize}
\end{definition}
We shall see these hypotheses force the compactness of moduli of holomorphic cylinders in $V \backslash L$ necessary to define cylindrical contact homology.  Before we continue, let us note a very simple example:

\begin{example}  \label{example-1}
Consider $\R^4 = \R^2 \times \R^2$ with the usual polar coordinates $(r_i,\theta_i)$ ($i = 1,2$) on each $\R^2$ factor of $\R^4$ and let $E$ be the ellipsoid determined by the equation
\[
 \frac{r_1^2 }{a} +  \frac{r_2^2}{b} = 1
\]
\noindent where $a,b$ are positive constants.  The one form
\[
 \lambda_0 = \sum_{i = 1}^2 \frac{1}{2} r_i^2  d\theta_i
\]
\noindent restricted to $E$ defines a contact form which we denote $\lambda$.  If the ratio $a/b$ is irrational, then there are precisely two geometrically distinct closed Reeb orbits, $P' = S \cap \C \times \{ 0 \}$, $Q' = S \cap \{ 0 \} \times \C$.  They are both non-degenerate and elliptic with Conley-Zehnder indices 
\[
 \mathrm{CZ}(P'^k) = 2 \left \lfloor k \left( 1 + \frac{a}{b} \right) \right \rfloor + 1, \qquad \mathrm{CZ}(Q'^k) = 2 \left \lfloor k \left( 1 + \frac{b}{a} \right) \right \rfloor + 1 
\]
Since the linking numbers $\ell(P'^k,Q') = k, \ell(P',Q'^k) = k$, the pairs $(\lambda, P')$, $(\lambda, Q')$, $(\lambda, P'\sqcup Q')$ all satisfy $(E)$.
\end{example}

\noindent See Example \ref{example-2} in section \ref{sec-examples} for a more interesting class of examples.

There is another way to control compactness of holomorphic cylinders.  Again assume $L$ is a link of closed orbits for $\lambda$, and let $[a]$ be the homotopy class of a loop $a$ in the complement.
\begin{definition} \label{def-PLC}
 We say that $(\lambda,L,[a])$ satisfies the \emph{``proper link class''} condition $(PLC)$ if 
\begin{itemize}
\item for any connected component $x \subset L$, no representative $\gamma \in [a]$ can be homotoped to $x$ inside $V \backslash L$ i.e.~there is no homotopy $I:[0,1] \times S^1 \rightarrow V$ (with $I(0,\cdot) = \gamma$ and $I(1,\cdot) = x$) such that $I([0,1) \times S^1) \subset V \backslash L$.  We will call such a $[a]$ a \emph{proper link class} for $L$.
\item for every disc $F$ with boundary $\partial F = y$ a closed (non-constant, but possibly multiply covered) Reeb orbit (possibly in $L$), there is a component $x$ of $L$ that intersects the interior of $F$.
\end{itemize}
\end{definition}

See Figure \ref{fig-PLC} for an example, and Figure \ref{fig-notPLC} for a counter-example.

\begin{figure}
  \centering
\subfigure[The homotopy class of $a$ is a proper link class for the two component link $L$ above.] 
{
    \includegraphics[scale=.34]{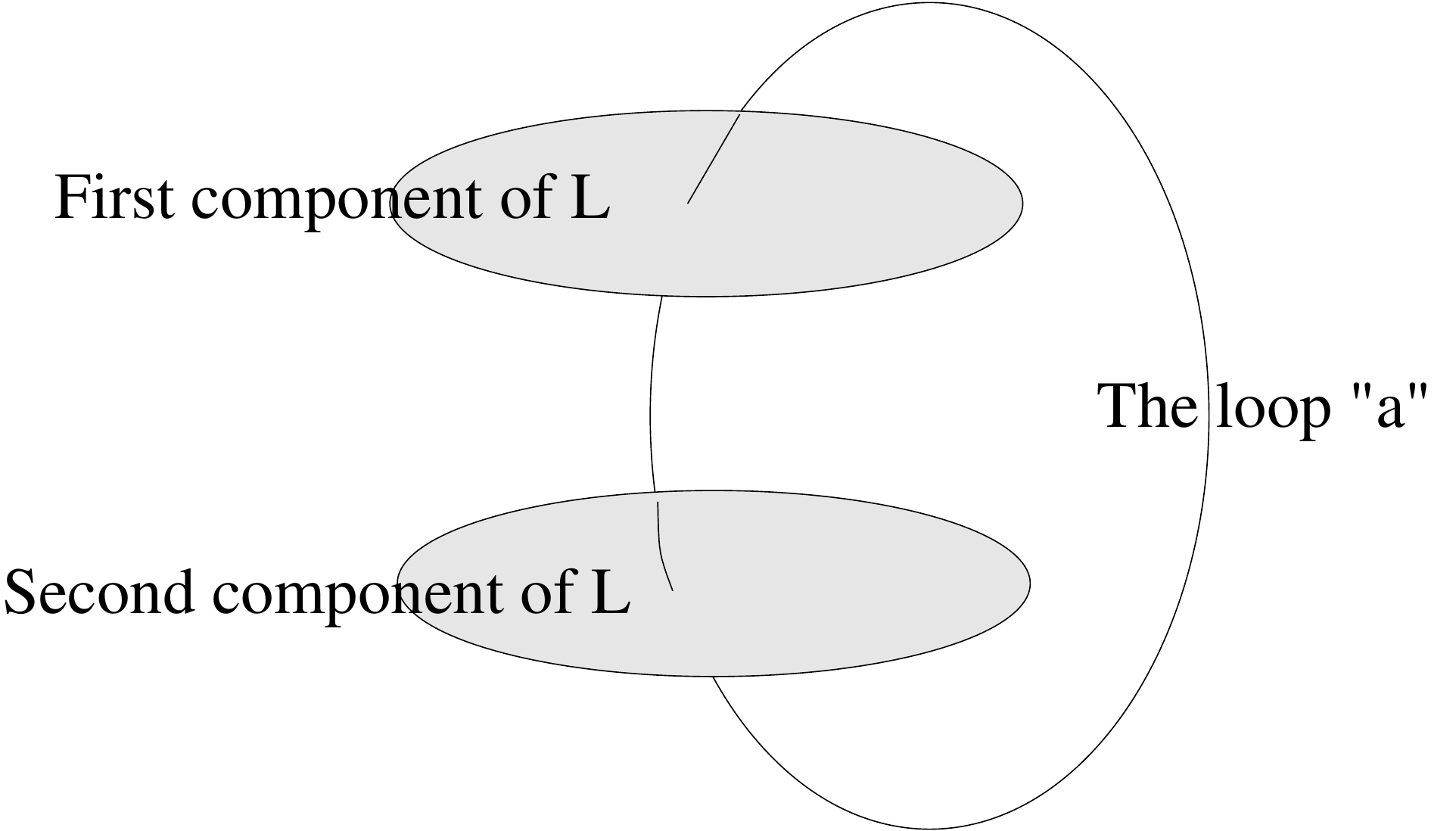}
    \label{fig-PLC}
} 
\subfigure[Here $a$ does not represent a proper link class for $L$.]
{
    \includegraphics[scale=.31]{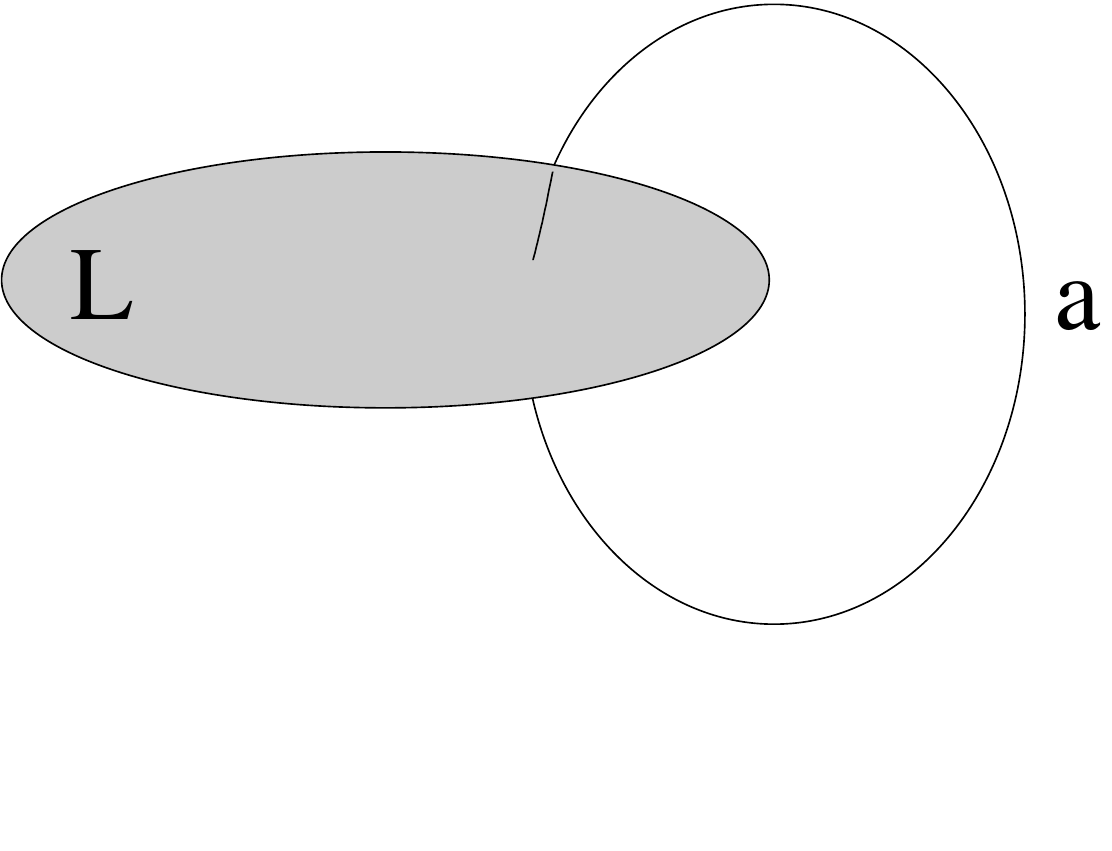}
    \label{fig-notPLC}
}

\caption{Example and counter-example of proper link classes in Definition \ref{def-PLC}.}

\end{figure}

The class $[a]$ is meant to be analogous to a \emph{proper braid class} studied in \cite{MR1974892, GVBVVW}.  Both conditions $(E)$ and $(PLC)$ contain a \emph{``no contractible orbits''} hypothesis.  Such a hypothesis is necessary to preclude ``bubbling'', which is an obstruction to defining cylindrical contact homology in general \cite{EGH}\footnote{The necessary hypotheses can actually be weakened with greater care.  We remark that the ``no contractible orbits'' hypothesis in $(PLC)$ is more restrictive.}.
We point to Examples \ref{example-2}, \ref{example-3} below for concrete examples of orbits and (Morse-Bott) contact forms on the tight $S^3$ satisfying conditions $(E)$ and $(PLC)$ respectively.

\subsubsection{An invariant, contact homology}

We define a relation on pairs $(\lambda,L)$ as above.  Suppose we have two such pairs $(\lambda_{\pm},L_{\pm})$.  We write $(\lambda_+,L_+) \sim (\lambda_-,L_-)$ if $\ker (\lambda_+) = \ker (\lambda_-)$ and $L_+ = L_-$.  We say $(\lambda_+,L) \geq (\lambda_-,L)$ if they are related by $\sim$ and the Conley-Zehnder indices of the orbits in $L$ (including multiple covers) are always greater or equal when considered as orbits for $\lambda_+$ than when considered as orbits for $\lambda_-$.  If $(\lambda_0,L) \geq (\lambda_1,L)$ and $(\lambda_1,L) \geq (\lambda_0,L)$ then we write $(\lambda_0,L) \equiv (\lambda_1,L)$.

In Section \ref{sec-CCHint} we associate with a pair $(\lambda,L)$ satisfying $(E)$ and the homotopy class $[a]$ of a loop $a$, or with a triple $(\lambda,L,[a])$ satisfying $(PLC)$, an invariant $CCH^{[a]}_*([\lambda] \mbox{ rel } L)$.  It is invariant in the sense that it depends only on the equivalence class $\equiv$ in case $(\lambda,L)$ satisfy $(E)$ (that is, it depends on the Conley-Zehnder indices of the elliptic orbits in $L$), and in the case $(PLC)$ it depends only on equivalence classes of $\sim$ (i.e.~only on the contact structure $\xi$).  $CCH^{[a]}_*([\lambda] \mbox{ rel } L)$ will be an isomorphism class of graded vector spaces, so it makes sense to say whether or not it is zero.

It follows easily from the constructions of the invariants in Section \ref{sec-CCHint} that:

\begin{theorem} \label{thm-forcing1}
 Given a homotopy class $[a]$, if the pair $(\lambda,L)$ satisfies $(E)$ or the triple $(\lambda,L,[a])$ satisfies $(PLC)$, and $CCH^{[a]}_*([\lambda] \mbox{ rel } L) \neq 0$, then there is a closed Reeb orbit in the homotopy class $[a]$.
\end{theorem}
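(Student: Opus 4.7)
The plan is to argue by contrapositive: assume no closed Reeb orbit of $\lambda$ in $V \backslash L$ represents the homotopy class $[a]$, and deduce that $CCH^{[a]}_*([\lambda] \mbox{ rel } L) = 0$. This reduces the theorem to unpacking how the chain complex is constructed in Section \ref{sec-CCHint}.

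The first step is to recall that, under either hypothesis $(E)$ or $(PLC)$, the invariant $CCH^{[a]}_*([\lambda] \mbox{ rel } L)$ is computed as the homology of a $\mathbb{Q}$- (or $\mathbb{Z}/2$-) vector space freely generated by the \emph{good} closed Reeb orbits $\gamma$ of $\lambda$ with image in $V \backslash L$ whose free homotopy class in $V \backslash L$ equals $[a]$, graded by the Conley--Zehnder index (plus a constant), with a differential that counts appropriate index-one finite energy holomorphic cylinders in the symplectization $\mathbb{R} \times (V \backslash L)$ for a suitable $\J \in \J(\lambda)$. Under the assumption of non-degeneracy of $\lambda$ in the complement, together with the compactness furnished by the linking/proper-link-class hypotheses used in Section \ref{sec-CCHint}, this chain complex is well-defined.

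Now, if there is no closed Reeb orbit of $\lambda$ in $V \backslash L$ representing $[a]$, then the set of generators in the class $[a]$ is empty, so the chain complex itself is the zero vector space. Its homology is therefore zero, contradicting $CCH^{[a]}_*([\lambda] \mbox{ rel } L) \neq 0$. Hence at least one generator must exist, which is by construction a closed Reeb orbit of $\lambda$ contained in $V \backslash L$ and freely homotopic to $a$ in $V \backslash L$; in particular it represents $[a]$ as a free homotopy class in $V$ as well.

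The only thing worth checking carefully, rather than a genuine obstacle, is that the construction in Section \ref{sec-CCHint} really does ensure the chain complex is generated solely by closed Reeb orbits in the \emph{complement} $V \backslash L$ representing $[a]$ (and nothing external to this set), so that emptiness of this generating set implies a trivial complex. This is built into the definition once one verifies that orbits in $L$ itself are excluded as generators in the class $[a]$; in the $(PLC)$ case this is automatic because $[a]$ cannot be represented by any component of $L$ by the first bullet of Definition \ref{def-PLC}, and in the $(E)$ case either $[a]$ avoids $L$ tautologically or one works in a fixed free homotopy class in $V \backslash L$ distinct from the classes represented by components of $L$.
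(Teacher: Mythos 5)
Your argument is correct and is essentially the paper's own: Theorem \ref{thm-forcing1} is stated as following directly from the construction in Section \ref{sec-CCHint}, where $CC^{[a]}_*(\lambda \mbox{ rel } L)$ is generated precisely by the SFT-good closed Reeb orbits in $\mathcal{G}' = \mathcal{G}\backslash L$ lying in the class $[a]$, so nonvanishing homology forces a nonempty generating set, i.e.\ a closed Reeb orbit in $[a]$. Your contrapositive phrasing and the check that orbits in $L$ are excluded as generators match the intended (and only) content of the proof.
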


\subsection{A general implied existence result}
\label{sec-impliedexistence}

The non-degeneracy assumption on the form $\lambda$ in Theorem \ref{thm-forcing1}, and the implicit assumptions about contractible orbits are unfortunate, but can be weakened considerably by stretching-the-neck and compactness arguments given in Section \ref{sec-degeneracies}.

\begin{theorem}
\label{thm-forcing2}
Suppose $\lambda$ is a contact form with a closed orbit set $L$.  Suppose either
\begin{itemize}
\item $L$ is non-degenerate and elliptic, or
\item \begin{enumerate}
	\item $L$ is such that every disc $F$ with boundary $\partial F  \subset L$ and $[\partial F] \neq 0 \in H_1(L)$ has an interior intersection with $L$, and
       \item $[a]$ is a proper link class relative to $L$.
      \end{enumerate}
\end{itemize}
If $[a]$ is simple and $CCH^{[a]}_*([\lambda] \mbox{ rel } L) \neq 0$, then there is a closed Reeb orbit in the homotopy class $[a]$.
\end{theorem}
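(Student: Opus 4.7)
The plan is to reduce Theorem \ref{thm-forcing2} to the already-established Theorem \ref{thm-forcing1} via a perturbation-and-compactness argument. First, I would construct a sequence of non-degenerate contact forms $\lambda_n \to \lambda$ in $C^\infty$, each agreeing with $\lambda$ on a fixed tubular neighbourhood of $L$, so that $L$ remains a union of closed Reeb orbits with unchanged Conley-Zehnder indices and so that the structural hypothesis of the theorem persists: the orbits of $L$ stay non-degenerate elliptic with the linking condition on contractible orbits preserved for $n$ large, respectively $(\lambda_n, L, [a])$ continues to satisfy $(PLC)$. By the invariance statements from Section \ref{sec-CCHint}, in either case $CCH^{[a]}_*([\lambda_n] \mbox{ rel } L) = CCH^{[a]}_*([\lambda] \mbox{ rel } L) \neq 0$, so Theorem \ref{thm-forcing1} supplies a closed Reeb orbit $\gamma_n$ of $\lambda_n$ in the class $[a]$.

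Next I would extract a subsequential limit of $\gamma_n$. A uniform upper bound on the periods $T_n$ follows from the action filtration: the non-vanishing class in $CCH^{[a]}_*$ is represented by finitely many generators of bounded action, and since the boundary operator is action-decreasing, the orbits produced by Theorem \ref{thm-forcing1} inherit a uniform action bound. A positive lower bound comes from compactness of $V$ together with the $C^\infty$-convergence $\lambda_n \to \lambda$. Arzela-Ascoli applied to reparametrised curves then yields a subsequential $C^\infty$-limit $\gamma : \R/T\Z \to V$ that is a closed Reeb orbit of $\lambda$.

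The main obstacle is to verify that $\gamma$ lives in $V \backslash L$ and still represents the class $[a]$; equivalently, one must exclude (i) $\gamma$ converging into $L$, and (ii) $\gamma$ being a nontrivial multiple cover. For (i), if $\gamma_n$ eventually enters every tubular neighbourhood $U$ of $L$, then $\gamma_n$ is homotopic in $V \backslash L$ to a loop in $U \backslash L$; in the $(PLC)$ case this directly contradicts the proper-link-class property of $[a]$, since $\pi_1(U \backslash L)$ is generated by meridians and by loops homotopic into $L$. In the elliptic case, the non-degenerate elliptic normal form near each component of $L$ (for $\lambda_n$, which agrees with $\lambda$ there) together with the linking-of-contractible-orbits hypothesis of $(E)$ constrains any putative limit loop in $U \backslash L$ to a homotopy class that cannot coincide with $[a]$; an alternative viewpoint is a neck-stretch around $L$, which is the approach alluded to in Section \ref{sec-degeneracies}. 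For (ii), the hypothesis that $[a]$ is simple in $\pi_1(V \backslash L)$ rules out multiple-cover degeneration, since a $k$-fold cover of a simple orbit would express $[a]$ as a $k$-th power in $\pi_1$, contradicting primitivity. Combining these steps yields the desired simple Reeb orbit of $\lambda$ in the class $[a]$.
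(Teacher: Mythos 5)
There is a genuine gap at the first step, and it is the very point the theorem is designed to overcome. You perturb $\lambda$ to nearby non-degenerate forms $\lambda_n$ and assert that they satisfy $(E)$ resp.\ $(PLC)$, that $CCH^{[a]}_*([\lambda_n] \mbox{ rel } L)$ is defined and equals the given non-zero invariant, and then invoke Theorem \ref{thm-forcing1}. But the hypotheses of Theorem \ref{thm-forcing2} deliberately drop the ``contractible (in $V\backslash L$) orbits link with $L$'' part of $(E)$/$(PLC)$: the given $\lambda$ may have closed Reeb orbits bounding discs disjoint from $L$, and no $C^\infty$-small perturbation removes this, so $CCH^{[a]}_*(\lambda_n \mbox{ rel } L)$ need not be defined at all (bubbling of planes with zero intersection with $Z_L$ cannot be excluded), and Theorem \ref{thm-forcing1} does not apply. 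Moreover, the hypothesis $CCH^{[a]}_*([\lambda] \mbox{ rel } L) \neq 0$ only means that \emph{some} representative $\lambda'$ with $(\lambda',L)\equiv(\lambda,L)$ (resp.\ $\sim$) satisfies $(E)$ (resp.\ $(PLC)$) and has non-zero homology; that representative can be far from $\lambda$ in $C^\infty$ (in the applications it is an explicit Morse--Bott model), and the invariance statements of Section \ref{sec-CCHint} only compare forms that themselves satisfy $(E)$/$(PLC)$, so there is no way to transport non-vanishing to your $\lambda_n$. This is exactly why the paper's proof does not reduce to Theorem \ref{thm-forcing1}: it works with the filtered complexes $CC^{[a],\leq N}_*$ of the good representative $\lambda'$ and a rescaling $c\lambda'$, stretches the neck along (an approximation, non-degenerate up to action $N$, of) the given form $\lambda$ sandwiched between them, and uses the compactness statements proved \emph{without} assuming $(E)$/$(PLC)$ for the middle form (Propositions \ref{prop-splitcompactnessNOT-E} and \ref{prop-PBCsplitcompactnessNOT-PLC}) to show the limiting building stays in $V\backslash L$; the desired orbit of $\lambda$ then appears as a negative asymptotic limit of that building, with action bounded by $N$, and only at the very end does an Arzel\`a--Ascoli argument (as in your last step) handle full degeneracy of $\lambda$.

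Two secondary points, even granting a framework in which orbits $\gamma_n$ exist: Theorem \ref{thm-forcing1} as stated gives no period bound, so the uniform action bound must come from the filtered complexes rather than from the mere non-vanishing of a class; and in the elliptic case your homotopy-theoretic exclusion of the limit collapsing onto $L$ does not work, since loops in a class such as a $(p,q)$-cable class can perfectly well converge onto a cover of an elliptic component of $L$ --- the paper's argument is dynamical: such a limit would force $1$ to be an eigenvalue of the linearized $l$-th return map, contradicting non-degeneracy of all covers of $L$. (Your point (ii) is moot: if $[a]$ is simple, any orbit representing it is automatically simple, and no separate multiple-cover exclusion is needed.)
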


To clarify a possible point of confusion in the statement of Theorem \ref{thm-forcing2}, when we say $CCH^{[a]}_*([\lambda] \mbox{ rel } L) \neq 0$ we mean that there is a $\lambda' \equiv \lambda$ resp.~ $\lambda' \sim \lambda$ satisfying $(E)$ resp.~$(PLC)$ for which $CCH_*^{[a]}$ can be computed and is non-zero.

When $[a]$ is a proper link class, this theorem requires no non-degeneracy hypotheses whatsoever on $\lambda$, and in the ellipticity case requires only non-degeneracy of the orbits in $L$.  For a concrete example where this is applicable see Example \ref{example-3}, and Section \ref{example-5}.

\subsection{Some applications in $S^3$}

The results here are corollaries of Theorem \ref{thm-forcing2} in Section \ref{sec-impliedexistence} and the examples and computations of Section \ref{sec-examples}.

One application is the existence of analogs of ``$(p,q)$-type orbits'' if a certain pair of closed Reeb orbits exists and satisfy a ``non-resonance'' condition (namely the pair violates the ``resonance'' condition introduced in \cite{MR2342700} when the differential of cylindrical contact homology in the tight $S^3$ vanishes).  In the following statement $\ell(\cdot, \cdot)$ denotes the linking number of two knots in $S^3$.

\begin{corollary} \label{cor-forcing1}
Let $\lambda$ be a tight contact form on the $3$-sphere.  Suppose that there is a pair of periodic orbits $L_1, L_2$ such that $L_1 \sqcup L_2$ is the Hopf link with self-linking $0$\footnote{By Theorem $4.1$ in \cite{Etnyre_Horn-Morris}, all such links are transversely isotopic.  The pair $H_1 \sqcup H_2$ of Example \ref{example-2} is one such example.}.  Suppose $L_1$ and $L_2$ (and all multiple covers) are non-degenerate elliptic, and let $\theta_1, \theta_2$ be the unique irrational numbers satisfying 
\begin{align*}
 \mathrm{CZ}(L_1^k) &= 2 \left \lfloor k\left(1 + \theta_1 \right) \right \rfloor + 1, \mbox{ for all } k \geq 1 \\
 \mathrm{CZ}(L_2^k) &= 2  \left \lfloor k\left(1 + 1 / \theta_2 \right) \right \rfloor + 1, \mbox{ for all } k \geq 1  \\
\end{align*}
\noindent  Then
\begin{enumerate}
\item If $\theta_1,\theta_2 >0$ and $\theta_1 \neq \theta_2$, then for each relatively prime pair $(p,q)$ such that
\[
 \frac{q}{p} \in (\theta_1,\theta_2) \sqcup (\theta_2,\theta_1)
\] 
\noindent there is a simple closed Reeb orbit $P_{(p,q)}$ such that $\ell(P_{(p,q)}, L_1)$ $= q$ and $\ell(P_{(p,q)}, L_2)$ $= p$.
\item If $\theta_1 < 0 < \theta_2$ (if $\theta_2 < 0 < \theta_1)$, relabel the orbits so that this is the case), then for each relatively prime pair $(p,q)$ such that
\[
p > 0, \mbox{ and } \frac{q}{p} \in (\theta_1,\theta_2)
\] 
\noindent there is a simple closed Reeb orbit $P_{(p,q)}$ such that $\ell(P_{(p,q)}, L_1)$ $= q$ and $\ell(P_{(p,q)}, L_2)$ $= p$.
\item   If $\theta_1,\theta_2 < 0$, then for each relatively prime pair $(p,q)$ such that
\[
p > 0 \mbox{ and } \frac{q}{p} \in (\theta_1,1]; \mbox{ or } q > 0 \mbox{ and } \frac{p}{q} \in (\frac{1}{\theta_2},1]
\]
\noindent there is a simple closed Reeb orbit $P_{(p,q)}$ such that $\ell(P_{(p,q)}, L_1)$ $= q$ and $\ell(P_{(p,q)}, L_2)$ $= p$.
\end{enumerate}

\end{corollary}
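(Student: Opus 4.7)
The plan is to apply Theorem \ref{thm-forcing2} (in its ellipticity branch) to $L := L_1 \sqcup L_2$ and, for each coprime pair $(p,q)$ in the range described, a suitable homotopy class in the complement. The complement $S^3 \setminus L$ of the Hopf link is diffeomorphic to $T^2 \times \R$, so $\pi_1(S^3 \setminus L) \cong \Z \oplus \Z$ is generated by small meridians $\mu_1, \mu_2$ around $L_1, L_2$. Let $[a_{p,q}]$ denote the primitive class $q[\mu_1] + p[\mu_2]$: a loop in this class has linking number $q$ with $L_1$ and $p$ with $L_2$, and primitivity gives the ``simple'' hypothesis of Theorem \ref{thm-forcing2}. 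Since $L$ is non-degenerate elliptic by assumption, the theorem reduces the proof to showing $CCH^{[a_{p,q}]}_*([\lambda] \mbox{ rel } L) \neq 0$; any Reeb orbit thus produced will automatically be a simple orbit with the advertised linking numbers.

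Because $CCH$ depends only on the $\equiv$-equivalence class, one may replace $\lambda$ by the model Morse-Bott form of Example \ref{example-2}: a contact form for the tight $S^3$ whose Reeb flow preserves a foliation of $S^3 \setminus L$ by invariant $2$-tori and whose binding orbits $L_1, L_2$ realize the prescribed rotation numbers $\theta_1, \theta_2$. On this model the rotation number varies monotonically across the tori, and each torus whose rotation number equals a rational $q/p$ carries an $S^1$-Morse-Bott family of simple closed Reeb orbits in the class $[a_{p,q}]$. Applying the Morse-Bott perturbation methods of \cite{2008arXiv0809.5088C, Bourgeois_thesis} yields two non-degenerate generators of $CC^{[a_{p,q}]}_*$ for each such $(p,q)$, of consecutive Conley-Zehnder index; no other orbit in $S^3 \setminus L$ contributes to this class, because a primitive class cannot equal a proper multiple of another. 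The cylindrical differential between these two generators vanishes on standard parity and Morse-Bott cascade grounds, so $CCH^{[a_{p,q}]}_* \neq 0$ whenever the rotation interval of the model contains $q/p$.

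The three cases of the corollary correspond to the three sign configurations for $(\theta_1, \theta_2)$: when both are positive the rotation interval is the naive $(\theta_1, \theta_2)$; when the signs are opposite the interval must be read through $\infty$, and after switching between $L_1$- and $L_2$-adapted coordinates one recovers the condition $p > 0$ with $q/p \in (\theta_1, \theta_2)$; when both are negative the interval wraps around through $\pm \infty$, producing the two half-open intervals stated in terms of $\theta_1$ and $1/\theta_2$. The main technical obstacle is this Morse-Bott computation: matching the rotation intervals to the conditions in the three cases requires careful bookkeeping of the coordinate changes that appear when $L_1$- and $L_2$-adapted trivializations disagree (which is what produces the asymmetric role of $\theta_1$ versus $1/\theta_2$), and one must verify that the cylindrical differential indeed vanishes on the pair of generators over each rational torus after perturbation. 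Once this combinatorics is set up, the rest is a routine application of Theorem \ref{thm-forcing2}.
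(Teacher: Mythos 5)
Your proposal follows the same route as the paper: reduce to the integrable model of Example \ref{example-2} with the prescribed rotation numbers $\theta_1,\theta_2$, read off $CCH^{[a_{p,q}]}_*(\cdot \mbox{ rel } L) \cong \Q^2 \neq 0$ from the Morse--Bott computation (which the paper carries out in Section \ref{sec-examples}), and then apply the ellipticity branch of Theorem \ref{thm-forcing2} to the primitive classes $[a_{p,q}]$; coprimality of $(p,q)$ gives both the ``simple class'' hypothesis and the simplicity of the orbit produced, exactly as in the paper.

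One step is glossed over, and it is the only substantive content of the paper's own (very short) proof beyond citing Example \ref{example-2} and Theorem \ref{thm-forcing2}. You write that ``because $CCH$ depends only on the $\equiv$-equivalence class, one may replace $\lambda$ by the model Morse--Bott form of Example \ref{example-2}.'' But the relation $\equiv$ is defined only for pairs $(\lambda_\pm, L_\pm)$ with $\ker\lambda_+ = \ker\lambda_-$ and \emph{the same link} $L_+ = L_-$ as a subset of $S^3$; the given $L_1 \sqcup L_2$ is an arbitrary transverse Hopf link with self-linking $0$, not a priori the model link $H_1 \sqcup H_2$, and there is no reason the model form can be chosen with its binding equal to the given link without further argument. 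The paper closes this gap by invoking Theorem $4.1$ of Etnyre--Horn-Morris (quoted in the footnote of the statement): all such transverse links are transversely isotopic, the transverse isotopy extends to an ambient contact isotopy, and pulling back the model form by the resulting contactomorphism produces a form with kernel $\ker\lambda$ whose Reeb flow is the integrable model and whose distinguished orbits are literally $L_1 \sqcup L_2$, so that $(\lambda, L_1\sqcup L_2) \equiv (\lambda', H_1 \sqcup H_2)$ in the precise sense required. You need to insert this contactomorphism step before the $\equiv$-invariance of $CCH$ can be used. A minor wording point in addition: the differential on the two generators over each rational torus vanishes not ``by parity'' (they have opposite parity, so parity alone forbids nothing) but because the Morse--Bott complex in the class $(p,q)$ is the Morse complex of a function on the circle of orbits, whose differential vanishes over $\Q$; your appeal to the cascade formalism of \cite{Bourgeois_thesis} is the correct justification, as in the paper.
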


\begin{remark}
There are no hidden hypotheses on the contact form being either \emph{dynamically convex} or non-degenerate (except in the explicit hypotheses on the orbits $L_1, L_2$).  There are other approaches to prove such a result using the surface of section constructed by \cite{HWZ_convex}, \cite{2010arXiv1006.0049H} and applying the result of \cite{MR1161099}; however, the construction of these surfaces of section require additional hypotheses, so it is not clear that they always exist.  The result above applies even when there may be no such surface of section.
\end{remark}

\begin{proof}
By Theorem $4.1$ in \cite{Etnyre_Horn-Morris}, there is a transverse isotopy from $L_1 \sqcup L_2$ to the link $H_1 \sqcup H_2$ in Example \ref{example-2}, which extends to an ambient contact isotopy, and therefore a contactomorphism taking $L_1 \sqcup L_2$ to $H_1 \sqcup H_2$.  By applying the given contactomorphism we can assume $(\lambda,L_1 \sqcup L_2) \equiv (\lambda',H_1 \sqcup H_2)$ with $(\lambda',H_1 \sqcup H_2)$ from Example \ref{example-2}.  It follows immediately from Theorem 
\ref{thm-forcing2} 
(each component is elliptic by hypothesis) and the computation in Example \ref{example-2}.
\end{proof}

It is interesting to interpret this result in the case the contact form is obtained from a metric on $S^2$.  Angenent \cite{Angenent} proved that if one has a simple, closed geodesic for a $C^{2,\mu}$-Riemannian metric on $S^2$, then whenever Poincar\'{e}'s inverse rotation number $\rho \neq 1$, for every $p/q \in (\rho,1) \cup (1,\rho)$ there is a geodesic $\gamma_{p,q}$ with the flat-knot type of a $(p,q)$-satellite geodesic.  The following Corollary is a version of this result with stronger hypotheses on the rotation number and weaker conclusions about the resulting geodesics, but allowing the metric to be reversible Finsler.  It is a direct consequence of Corollary \ref{cor-forcing1} applied to the geodesic flow of a reversible Finsler metric on $S^2$ which will be proved in Section \ref{sec-FinslerMetrics}
\begin{corollary} \label{cor-Angenent}
Let $F$ be a reversible Finsler metric on $S^2$.  Suppose $\gamma$ is a simple, closed geodesic with irrational inverse rotation number $\rho \neq 1$.  Then for every pair of relatively prime integers $p,q$ such that
\[
\frac{p+q}{2q} \mbox{ or } \frac{p+q}{2p} \in (\rho, 1] \cup [1 , \rho)
\]
\noindent there is a geodesic $\gamma_{p,q}$ with the following topological property.  If $\gamma,\overline{\gamma}$ denote the lifted double covers of the geodesic to $S^3$ (traversed ``forwards'' and ``backwards''), then $\gamma_{p,q}$ has linking number $p$ with $\gamma$ and $q$ with $\overline{\gamma}$.  In particular, the geodesics are geometrically distinguished (up to a double-count by traversing $\gamma_{p,q}$ oppositely).
\end{corollary}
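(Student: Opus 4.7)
The plan is to realize the geodesic flow of $(S^2,F)$ as a Reeb flow on the tight three-sphere, identify the lifts of $\gamma$ and $\overline{\gamma}$ with a Hopf link of the type appearing in Example \ref{example-2}, and then invoke Corollary \ref{cor-forcing1}. The unit cotangent bundle $T^1 S^2$ is diffeomorphic to $\mathbb{RP}^3$ and carries a canonical contact form, pulled back from the Liouville form on $T^{*}S^2$, whose Reeb vector field generates the co-geodesic flow. Because $F$ is reversible, the fibrewise involution $\iota\colon (p,v)\mapsto (p,-v)$ is a strict contactomorphism preserving the Reeb field; it swaps the oriented orbits of $\gamma$ and $\overline{\gamma}$, each of which represents the nontrivial class in $\pi_1(\mathbb{RP}^3) = \Z/2$. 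Pulling back the contact form along the double cover $\varpi\colon S^3\to \mathbb{RP}^3$ thus gives a Reeb flow on $(S^3,\xi_{\mathrm{std}})$, and the preimages $L_1:=\varpi^{-1}(\gamma)$ and $L_2:=\varpi^{-1}(\overline{\gamma})$ are each a single closed Reeb orbit in $S^3$ double-covering its image. A direct comparison with the round-metric model (in which this setup reduces to Example \ref{example-2}) together with Theorem $4.1$ of \cite{Etnyre_Horn-Morris} would then confirm that $L_1\sqcup L_2$ is a transverse Hopf link with self-linking zero.

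Next I would compute the indices. Irrationality of $\rho$ forces $\gamma$ (hence $L_1, L_2$) and all iterates to be non-degenerate elliptic. Writing the linearized Poincar\'e return map of $\gamma$ as a rotation by $2\pi\alpha$, with $\alpha$ explicitly determined by $\rho$, the period doubling of $\varpi$ yields
\[
  \mathrm{CZ}(L_1^k) = 2\lfloor 2k\alpha\rfloor + 1,
\]
so that in the notation of Corollary \ref{cor-forcing1} we have $\theta_1 = 2\alpha - 1$; the $\iota$-symmetry gives the same index for $L_2^k$, and since the corresponding formula there uses $1/\theta_2$, we obtain $\theta_2 = 1/(2\alpha-1)$. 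In particular $\theta_1\theta_2 = 1$. An elementary algebraic manipulation then shows that, for coprime $p,q>0$, the condition ``$(p+q)/(2q)$ or $(p+q)/(2p)$ lies in $(\rho,1]\cup[1,\rho)$'' is equivalent to $q/p$ lying in the interval appearing in the appropriate one of cases $(1)$ or $(3)$ of Corollary \ref{cor-forcing1}, under the identification of $\rho$ with $\alpha$ forced by the index computation.

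Applying Corollary \ref{cor-forcing1} then produces a simple closed Reeb orbit $P_{(p,q)}\subset S^3$ with $\ell(P_{(p,q)},L_1)=q$ and $\ell(P_{(p,q)},L_2)=p$. Projecting through $\varpi$ and the footpoint projection $T^1S^2\to S^2$ yields a closed geodesic $\gamma_{p,q}$ on $S^2$; the description of $P_{(p,q)}$ as the lift of (a cover of) $\gamma_{p,q}$ translates its linking numbers with $L_1,L_2$ into the stated linking numbers of the lift of $\gamma_{p,q}$ with $\gamma,\overline{\gamma}$. The ``double-count'' caveat reflects that $\gamma_{p,q}$ and its time-reverse give $\iota$-related orbits in $T^1S^2$, which may or may not coincide after lifting to $S^3$.

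The main obstacle is the bookkeeping of the dictionary in the second step: tracking how Angenent's inverse rotation number for the Poincar\'e section of the geodesic flow on $S^2$ pulls through the twofold cover $\varpi$ and the associated change of trivializations to produce the Conley--Zehnder parameters $\theta_i$, and simultaneously how linking data in $S^3$ converts back to satellite linking data with $\gamma,\overline{\gamma}$. The factor of $2$ in the expressions $(p+q)/(2q)$ and $(p+q)/(2p)$ should emerge naturally from the period doubling in $\varpi$ once this dictionary is set up, so that no new dynamical or holomorphic input beyond Corollary \ref{cor-forcing1} is required.
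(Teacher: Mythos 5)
Your proposal is correct and follows essentially the same route as the paper: lift the geodesic flow through the double cover of the unit (co)tangent bundle of $S^2$ to a tight Reeb flow on $S^3$, identify the lifts of $\gamma$ and $\overline{\gamma}$ as a non-degenerate elliptic Hopf link of self-linking zero, compute $\theta_1 = 2\rho-1$ and $\theta_2 = 1/(2\rho-1)$, and apply Corollary \ref{cor-forcing1}. The bookkeeping you defer is exactly what Section \ref{sec-FinslerMetrics} carries out, by counting zeros of solutions of $y'' + K(\gamma(t))y = 0$ to show $\lim_k \mathrm{CZ}(z^k)/k = 4\rho$, which yields the identification (including the factor of $2$ from period doubling) that your plan anticipates.
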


Corollaries \ref{cor-forcing1} and \ref{cor-Angenent} will be improved to cover the degenerate and hyperbolic cases as well in collaboration with Umberto Hryniewicz and Pedro Salom\~ao \cite{HMS}.

We give other examples by considering fibered knots or links in the $3$-sphere, in which conclusions can be drawn without non-degeneracy hypotheses\footnote{In Corollary \ref{cor-forcing2} we will also give another example, related to the example used in the previous Corollary, which does not assume non-degeneracy of the orbit set $L$.  We postpone the statement to section \ref{sec-examples} where it will be easier to describe the link $L$.}.  One class of examples is obtained as follows.  Let us say that a knot $B$ in the tight $3$-sphere is a \emph{tight fibered hyperbolic knot} if the following conditions hold:

\begin{enumerate}
 \item (fibered) The knot $B$ is the binding of an open book decomposition of $S^3$
 \item (tight) The contact structure supported by the open book is the tight contact structure on $S^3$
 \item (hyperbolic) The monodromy map $h$ of the associated open book decomposition is pseudo-Anosov
 \item The map $h^* - I:H^1(S;\R) \rightarrow H^1(S;\R)$ (where $S$ is a page of the open book) is invertible
\end{enumerate}

We remark that the fourth condition actually follows automatically from the first; it will be a convenient fact (see section \ref{sec-fiberedpseudoAnosov} where we also explain why this follows).  We make some further brief remarks about this class of knots in section \ref{sec-fiberedpseudoAnosov}.  For now we merely note that there are infinitely many examples, such as the Fintushel-Stern knot which is the Pretzel knot $(-2,3,7)$.  For \emph{tight fibered hyperbolic} knots in $S^3$ (see Section \ref{sec-fiberedpseudoAnosov}):

\begin{corollary} \label{cor-fiberedpseudoAnosov}
Suppose $\lambda$ is a tight contact form on $S^3$.  If its Reeb vector field has a closed orbit which is a tight fibered hyperbolic knot realizing the Thurston-Bennequin bound as a transverse knot, then there are infinitely many geometrically distinct closed Reeb orbits and the number of such orbits of period at most $T$ is bounded below by an exponential function of $T$.
\end{corollary}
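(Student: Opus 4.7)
My plan is to apply Theorem~\ref{thm-forcing2} in its $(PLC)$ case, with $L = B$ and with a family of proper link classes $[a_p]$ indexed by non-peripheral periodic points $p$ of the pseudo-Anosov monodromy $h$. Writing $M_h$ for the mapping torus of $h: S \to S$, there is a diffeomorphism $S^3 \setminus B \cong M_h$, and the free homotopy classes of loops in $S^3 \setminus B$ are parametrized by Nielsen classes of periodic points of iterates of $h$ together with the meridional winding number around $B$ (equivalently, the number of times a loop wraps around the $S^1$-base of the mapping torus).

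First I would verify the hypotheses of Theorem~\ref{thm-forcing2}, case~2. Since $B$ is a fibered hyperbolic knot, its complement is a hyperbolic $3$-manifold, so $\pi_1(S^3 \setminus B)$ is torsion-free and the longitude of $B$ (its pushoff along the Seifert framing of the page $S$) generates a parabolic peripheral subgroup. Hence no power $B^k$ is null-homotopic in $S^3 \setminus B$, so every disc in $S^3$ bounded by a non-zero multiple of $B$ must intersect $B$ in its interior, giving condition~(1). For a non-peripheral periodic point $p$ of $h$ of minimal period $n \geq 1$, the associated loop $a_p$ in $M_h$ wraps $n$ times around the $S^1$-base, so its class in $H_1(S^3 \setminus B) = \Z$ equals $n \neq 0$, whereas $[B] = 0$ in $H_1$; hence $[a_p]$ cannot be homotoped into $B$ within $S^3 \setminus B$, so $[a_p]$ is a proper link class. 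Minimality of the period $n$ also guarantees that $[a_p]$ is not a proper power in $\pi_1(S^3 \setminus B)$ and so is simple in the sense of Theorem~\ref{thm-forcing2}.

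The main step is to show $CCH^{[a_p]}_*([\lambda] \mbox{ rel } B) \neq 0$ for each such $[a_p]$. Here I would pass to a Morse--Bott contact form $\lambda'$ with $\ker \lambda' = \ker \lambda$ adapted to the open book in the sense of \cite{2008arXiv0809.5088C, Bourgeois_thesis}, so that $B$ is a Reeb orbit and the first return map of the Reeb flow to a page is isotopic to~$h$. The transverse Thurston--Bennequin bound hypothesis on~$B$ is used to ensure that $B$ and all of its covers have Conley--Zehnder indices compatible with this adapted model, and in particular precludes bubbling by capping discs through $B$. The closed Reeb orbits of $\lambda'$ in $S^3 \setminus B$ then correspond to Morse--Bott families indexed by Nielsen classes of iterates of $h$, and the chain complex computing $CCH^{[a_p]}_*$ is supported by the single Nielsen class of $p$. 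The hypothesis that $h^* - I$ is invertible on $H^1(S;\R)$ is the algebraic input that rules out degeneracies of these Nielsen classes and controls the standard Morse--Bott perturbation of~\cite{Bourgeois_thesis}; any non-trivial differential out of or into the generator would require a cylinder interpolating between distinct Nielsen classes of the same winding number, which positivity of intersection with $B$ via~\cite{Siefring1} (as used in Section~\ref{sec-CCHint}) combined with the proper link class condition forbids. Hence $CCH^{[a_p]}_* \neq 0$.

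Applying Theorem~\ref{thm-forcing2} yields a simple closed Reeb orbit $\gamma_p$ of $\lambda$ in each class $[a_p]$, and distinct Nielsen classes give non-conjugate free homotopy classes and thus geometrically distinct orbits. The period of $\gamma_p$ is bounded above by a constant multiple of $n$ via the action filtration on the adapted form. Since the pseudo-Anosov~$h$ has dilatation $\mu > 1$, classical Nielsen--Thurston theory gives that the number of Nielsen classes of interior periodic points of period at most $n$ grows like $\mu^n$, yielding the exponential lower bound on the number of geometrically distinct Reeb orbits of period at most~$T$. I expect the main obstacle to be the non-vanishing computation of $CCH^{[a_p]}_*$: rigorously controlling the Morse--Bott perturbation on the non-compact manifold $S^3 \setminus B$, and ruling out unexpected differentials between distinct Nielsen classes, will require careful bookkeeping with the Siefring intersection theory of \cite{Siefring1} together with the open-book dynamics, with the invertibility of $h^* - I$ providing the precise algebraic input that blocks spurious cancellations.
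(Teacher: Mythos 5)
Your overall architecture is the same as the paper's (model the knot as the binding of an open book with pseudo-Anosov monodromy, use proper link classes indexed by Nielsen classes, apply Theorem~\ref{thm-forcing2}, and count Nielsen classes to get exponential growth), but two pivotal steps are not established. First, your verification of the proper-link-class condition by homology does not work: $B$ is not a loop in $S^3\setminus B$, so ``$[B]=0$ in $H_1$'' is not meaningful, and Definition~\ref{def-PLC} only forbids a homotopy through the complement whose \emph{final} loop lies on $B$. For times $s$ close to $1$ such a homotopy produces loops in $N(B)\setminus B$, and these can carry arbitrary meridional winding; a class with winding number $n$ around the base is homologically indistinguishable from a $(1,n)$-cable of $B$, which \emph{can} be pushed onto $B$ through the complement. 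What is actually needed is non-peripherality of the conjugacy class $[a_p]$ (equivalently, that the periodic point is not Nielsen-equivalent to a boundary class), which is what the paper proves in the model (Lemma~\ref{lem-ApproximationLemma0} and the restriction to non-trivial Nielsen classes); your $H_1$ argument does not give this. Second, the non-vanishing $CCH^{[a_p]}_*\neq 0$ is not proved. The differential counts cylinders between generators in the \emph{same} class $[a_p]$ having zero intersection number with the cylinder over $B$; positivity of intersections together with $(PLC)$ is what defines this complex, it does not kill its differential, and ``cylinders interpolating between distinct Nielsen classes'' are irrelevant because distinct Nielsen classes lie in distinct free homotopy classes. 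The paper's mechanism is different: one builds a contact form whose return map is a smoothing of the pseudo-Anosov representative, so that each non-trivial Nielsen class contains a unique non-degenerate orbit (and when smoothing creates several, an Euler-characteristic/Lefschetz-index argument as in \cite{2008arXiv0809.5088C}, Section $11.1$, shows the homology in that class is non-zero). Nothing in your outline replaces this.

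You have also misassigned the hypotheses. Invertibility of $h^*-I$ on $H^1(S;\R)$ has nothing to do with ``degeneracies of Nielsen classes'' or with controlling the Morse--Bott perturbation; it is what permits choosing a primitive $\beta'$ with $h^*\beta'-\beta'$ exact and hence realizing the (smoothed) monodromy by a Reeb flow on the mapping torus (Lemmas~\ref{lem-ReebRealizability1}, \ref{lem-ReebRealizability2}), which is the starting point of the whole model computation. Likewise the Thurston--Bennequin-bound hypothesis is not about Conley--Zehnder indices of $B$ or bubbling: via \cite{Hedden1} it guarantees that the given closed orbit is transversely isotopic to the binding of the open book supporting the tight structure, so that after a contactomorphism the model pair has the \emph{same} link $L$ as the given $(\lambda,L)$ and therefore computes $CCH^{[a]}([\lambda] \mbox{ rel } L)$. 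This identification step is missing from your proposal, and without it Theorem~\ref{thm-forcing2} cannot be invoked for $\lambda$, since the relative invariant depends on the pair $(\ker\lambda,L)$ and not just on the contact structure. The remaining ingredients you use (bounded return time to a page giving the action bound by a multiple of $\ell([a],B)$, and exponential growth of the number of non-trivial Nielsen classes of $h^k$) are correct and agree with the paper.
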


The required calculation is essentially due to Colin-Honda \cite{2008arXiv0809.5088C}, but turns out simpler because one does not need to consider holomorphic curves that intersect the binding.

\begin{remark}
Again, there are no hidden non-degeneracy hypotheses, and it applies to all tight contact forms (not only dynamically convex ones).  Theorem \ref{thm-forcing2} allows one to draw conclusions about free homotopy classes which must contain closed Reeb orbits.
\end{remark}

Finally, cylindrical contact homology is usually thought to be only applicable to tight contact structures.  However, after removing a certain orbit set, it may be possible to apply cylindrical contact homology \emph{on the complement of the orbit set}.  For example, we can take the figure eight knot in the $3$-sphere, which satisfies properties $1,3,4$ in the definition of fibered hyperbolic supporting knots, but not property $2$ (that is, it is fibered and hyperbolic, but does not support the tight contact structure).  Hence it supports an over-twisted contact structure; Theorem \ref{thm-forcing2} still applies from which one can deduce the following:

\begin{corollary} \label{cor-forcing3} 
Let $\lambda$ be a contact form for the (over-twisted) contact structure on $S^3$ supported by the open book decomposition with binding the figure eight knot, page diffeomorphic to a once-punctured torus, and monodromy map given by the matrix transformation
\[
 \begin{bmatrix}  2 & 1 \\ 1 & 1  \end{bmatrix}
\]
\noindent Suppose $\lambda$ has a closed Reeb orbit transversely isotopic to the binding.  Then the number of geometrically distinct periodic orbits of action at most $N$ grows at least exponentially in $N$.
\end{corollary}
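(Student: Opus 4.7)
The plan is to mimic the strategy of Corollary \ref{cor-fiberedpseudoAnosov}: reduce to having the binding $B$ be the given Reeb orbit, build a Morse-Bott model on the complement, verify $(PLC)$ for an exponentially large family of free homotopy classes, compute $CCH$ in each, and invoke Theorem \ref{thm-forcing2}. Since the hypothesized closed Reeb orbit $L$ is transversely isotopic to $B$, I would extend the transverse isotopy to an ambient contact isotopy (as in the proof of Corollary \ref{cor-forcing1}) to obtain a contactomorphism carrying $L$ to $B$, reducing to the case $L = B$. The complement $S^3 \setminus B$ is then the open mapping torus of the pseudo-Anosov map $h : S \to S$ of the once-punctured torus given by the stated matrix, with free homotopy classes of loops classified by $h$-twisted conjugacy classes in $\pi_1(S) \rtimes_{h_*} \Z$. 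Classes with nonzero $\Z$-component wind nontrivially around $B$ and cannot be homotoped into $B$ through the complement, so they are proper link classes relative to $B$.

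Following Bourgeois \cite{Bourgeois_thesis} and Colin-Honda \cite{2008arXiv0809.5088C}, I would construct a Morse-Bott contact form $\lambda_0$ supporting the same overtwisted contact structure, with binding $B$, whose non-binding Reeb orbits biject with periodic orbits of $h$. A prime period-$n$ periodic orbit $\alpha$ yields a Reeb orbit $\gamma_\alpha$ in a simple free homotopy class $[a_\alpha]$ of $\Z$-degree $n$ and linking number $n$ with $B$. After a standard perturbation making each $\gamma_\alpha$ non-degenerate, the triple $(\lambda_0, B, [a_\alpha])$ satisfies $(PLC)$: the first clause follows from the nonzero $\Z$-degree, and for the second clause every non-binding Reeb orbit of $\lambda_0$ has positive linking number with $B$, so any spanning disc must intersect $B$ in its interior. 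The differential computation then follows Colin-Honda's with simplifications, since $(PLC)$ rules out holomorphic cylinders that approach $B$; each group $CCH^{[a_\alpha]}_*([\lambda_0] \text{ rel } B)$ is non-zero, generated by $\gamma_\alpha$.

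By the $\sim$-invariance of $CCH$ under $(PLC)$, the same non-vanishing holds for the original form $\lambda$, so Theorem \ref{thm-forcing2} produces a closed Reeb orbit of $\lambda$ in each $[a_\alpha]$, and orbits in distinct free homotopy classes are geometrically distinct. The Anosov matrix has dilatation $\tau = (3+\sqrt 5)/2$, so the number of prime periodic orbits of $h$ of period at most $n$ is of order $\tau^n/n$; exponentially many such classes exist. Passing from this to a count by $\lambda$-action $\leq N$ is a standard action-filtered argument: the $\lambda_0$-action of $\gamma_\alpha$ is linear in $n$ by construction, and an $\mathrm{SFT}$-type cobordism between $\lambda_0$ and $\lambda$ transports the filtered non-vanishing to $\lambda$ up to a multiplicative constant, yielding on the order of $\tau^{cN}$ geometrically distinct orbits of $\lambda$-action $\leq N$.

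The main anticipated obstacle is checking the non-vanishing of $CCH^{[a_\alpha]}_*$ in the overtwisted, relative setting, since Colin-Honda's original computation is for tight contact structures and absolute $CCH$. The $(PLC)$ framework is designed to circumvent both issues, localizing all relevant holomorphic cylinders away from $B$; the contact-structural data near $B$ becomes immaterial, and the computation reduces to the combinatorics of twisted conjugacy classes for the Anosov map $h$. A secondary subtlety is the action-filtered comparison between $\lambda_0$ and $\lambda$, which, while routine in the non-degenerate case, must be handled with care in the generality allowed by Theorem \ref{thm-forcing2}.
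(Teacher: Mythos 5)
Your plan is essentially the paper's own proof (Section \ref{example-5} together with Lemmas \ref{lem-ApproximationLemma0}, \ref{lem-Approximation_Lemma} and Proposition \ref{prop-figure8}): reduce to $L=B$ by a contactomorphism, realize the monodromy as a Reeb return map on the open book complement, identify exponentially many nontrivial (simple) Nielsen classes with proper link classes in which $CCH^{[a]}_*(\,\cdot\,\mbox{ rel } B)\cong\Q$, and transfer to $\lambda$ via the action-filtered stretching argument of Theorem \ref{thm-forcing2}, using the uniformly bounded return time to the page to convert linking number into an action bound. The only small point you gloss over is the second clause of $(PLC)$ for discs whose boundary is a (cover of the) binding itself, which the paper handles in Lemma \ref{lem-bindingintersect} and which holds here because the page is a once-punctured torus rather than a disc.
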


We note that in the above one can describe homotopy classes that must contain a closed orbit.

We will go over the details of this particular example in Section \ref{sec-examples}.  An analogous result will hold for any (non-tight) fibered hyperbolic knot in $S^3$, examples of which are plentiful \cite{knotinfo}.

\subsection{Further comments, outline, and acknowledgements}
\label{sec-comments}

\subsubsection{Comments}

It is possible to study the cylindrical contact homology of stable Hamiltonian structures on complements of elliptic Reeb orbits (or in proper link classes) in general.  For example, the mapping tori of Hamiltonian diffeomorphisms of surfaces fits in this category.  In this case there are clearly no contractible Reeb orbits, and bubbling of spheres is not difficult to rule out in most cases.  The portion of the loops space corresponding to the first return map are all simple homotopy classes and thus $CCH_*$ is well-defined and coincides with Floer homology.  This particular case has been carried out explicitly (at least in the case the surface is a disc) and is called ``braid Floer homology'' in \cite{GVBVVW}.

\subsubsection{Outline}  In section $2$ we recall the geometric and analytic set-up.  Section $3$ describes an intersection theory of pseudoholomorphic curves in symplectizations \cite{Siefring2} and derives some compactness results from this intersection theory.  In section $4$ we describe the chain complexes, maps and homotopies.  In section $5$, we describe how to draw conclusions when the contact form might be degenerate or there may be contractible (in $V \backslash L$) closed Reeb orbits.  In section $6$ some explicit examples on the $3$-sphere are worked out in detail.

\subsubsection{Acknowledgments}  This work is a continuation of the author's Ph.D. thesis; I thank my adviser Helmut Hofer for his invaluable guidance and support.  I thank Alberto Abbondandolo, Umberto Hryniewicz, Richard Siefring, Robert Vandervorst, and Chris Wendl for influential discussions, and Matt Hedden for answering some questions about fibered knots in $S^3$.  Finally, I thank Peter Albers and Pedro Salom\~{a}o for many helpful suggestions, and the anonymous referee for a careful reading and for generously offering numerous detailed improvements to the readability.

\section{Review of geometric and analytic set-up}
\label{sec-Preliminaries}

\subsection{Symplectizations}
\label{sec-symplectizations}
Let $(V,\xi)$ be a co-oriented contact manifold.  Then there is a symplectic manifold associated with it, $W_{\xi} \dot{=} \xi^{\perp} \backslash 0$ (the annihilator of $\xi$ in $T^*V$ minus the zero section) which is a sub-bundle and symplectic submanifold of $T^*V$ with respect to the exterior derivative of the Liouville form, $d\theta_{can}$.  The symplectic manifold $(W_{\xi},\omega_{\xi} = d\theta_{can}|_{W_{\xi}})$ is called the \emph{symplectization} of $(V,\xi)$.  Because $\xi$ is co-oriented there are two components, $W_{\xi,\pm}$.  We will consider only the component $W_{\xi,+}$ and when we write $W_{\xi}$ we mean this component only.  There is a natural $\R$ action $(a,\lambda) \mapsto a * \lambda = e^a \cdot \lambda$ given by scalar multiplication in the fibers, and a natural projection to $V$ by restricting the projection $T^*V \rightarrow V$ to $W_{\xi}$.

Any positively co-oriented contact form $\lambda$ for $\xi$ defines a global section of $W_{\xi}$ which gives an identification 
\[ 
\psi_{\lambda}: W_{\xi} \cong \R \times V, \qquad \lambda' \mapsto \left( a = \ln \frac{\lambda'}{\lambda},x = \pi(\lambda') \right) 
\]
(where $\pi$ denotes the restriction to $W_{\xi}$ of the projection $T^*V \rightarrow V$) in which the symplectic form for $W_{\xi}$ is $d(e^a \cdot \lambda)$.

Given such a $\lambda$ we denote by $X$ the Reeb vector field, defined by
\[
 X \neg \lambda \equiv 1, \qquad X \neg d\lambda \equiv 0
\]
\noindent  We will later assume that $\lambda$ is non-degenerate, a generic condition meaning that no closed periodic orbit has a Floquet multiplier equal to $1$; for the moment this is not necessary.  Given a contact form $\lambda$, we define a splitting of $TW_{\xi}$:
\begin{align*}
 T_{\lambda_0(x)} W_{\xi} & = \R \cdot \partial_a \oplus \R \cdot d(\psi^{-1}_{\lambda})_{\psi(\lambda_0)} (0,X) \oplus d(\psi_{\lambda}^{-1})_{\psi(\lambda_0)} (0 \times \xi) \\
& =: \R \cdot \partial_a \oplus \R \cdot \widehat{X} \oplus \widehat{\xi}
\end{align*}
\noindent  We might abuse notation and use $X$ to denote $\widehat{X}$ and $\xi$ for $\widehat{\xi}$\footnote{The subspaces $\R \cdot \widehat{X}$ and $\widehat{\xi}$ both depend on the choice of $\lambda$.}.

\subsubsection{Almost-complex structures}  Given a contact form $\lambda$ for $\xi$, consider the symplectic vector bundle $(\xi, d\lambda)$ over $V$ and let $\J(\xi)$ be the set of $d\lambda$-compatible almost-complex structures.  This defines a set of almost-complex structures on $\widehat{\xi}$ via
\[
 \widehat{J}(\lambda_0) = (d\psi_{\lambda}^{-1})_{\psi(\lambda_0)}|_{\xi} \circ J \circ (d\psi_{\lambda})_{\lambda_0}|_{\widehat{\xi}}
\]
Finally this extends to all of $T_{\lambda_0} W_{\xi}$ by
\[
  \partial_a \mapsto \widehat{X} \qquad \widehat{X} \mapsto -  \partial_a
\]

We denote the set of almost-complex structures on $TW_{\xi}$ that arise in this way $\J(\lambda)$; we can identify this with $\J(\xi)$.  The almost-complex structures in $\J(\lambda)$ are called \emph{cylindrical} almost-complex structures.

\subsubsection{Cylindrical ends}  To compare contact homology between different choices of $\lambda$, one studies the symplectization and almost-complex structure with \emph{cylindrical ends}.  We define an order relation on the fibers of $\pi: W_{\xi} \rightarrow V$ as follows: given $\lambda_0,\lambda_1 \in \pi^{-1}(x)$, we say $\lambda_0 \prec \lambda_1$ (resp. $\lambda_0 \preceq \lambda_1$) if $\lambda_1 / \lambda_0 > 1$ (resp. $\lambda_1 / \lambda_0 \geq 1$).  Given two contact forms $\lambda_0,\lambda_1$ for $\xi$, we write $\lambda_0 \prec \lambda_1$ if $\lambda_0(x) \prec \lambda_1(x)$ on each fiber, or equivalently when we write $\lambda_1 = r \lambda_0$ we have $r >1 $ pointwise.  If $\lambda_- \prec \lambda_+$, then we set
\[
 \overline{W}(\lambda_-,\lambda_+) = \left \{ \lambda \in W_{\xi} | \lambda_-(\pi(\lambda)) \preceq \lambda \preceq \lambda_+(\pi(\lambda)) \right\}
\]
This is an exact symplectic cobordism between $(V,\lambda_-),(V, \lambda_+)$.  Let
\[ \begin{split}
 W^-(\lambda_-) &= \left \{ \lambda \in W_{\xi} | \lambda \preceq \lambda_-(\pi(\lambda)) \right\} \\
 W^+(\lambda_+) &= \left \{ \lambda \in W_{\xi} | \lambda_+(\pi(\lambda)) \preceq \lambda \right\}
\end{split} \]
\noindent so\footnote{By $\partial^+$ we mean $\partial_a$ is outward pointing and by $\partial^-$ we mean $\partial_a$ is inward pointing.}
\[ 
 W_{\xi} = W^-(\lambda_-) 
\bigcup_{\substack{\partial^+ W^-(\lambda_-) = \\ \partial^- \overline{W}(\lambda_-,\lambda_+)}} \overline{W}(\lambda_-,\lambda_+) 
\bigcup_{\substack{\partial^+ \overline{W}(\lambda_-,\lambda_+) \\ =  \partial^- W^+(\lambda_+)}} 
W^+(\lambda_+) 
\]

An almost-complex structure with cylindrical ends is then an almost-complex structure $J$ such that
\begin{itemize}
 \item $J$ agrees with $\widehat{J}_{\pm} = \widehat{J_{\pm}}(\lambda_{\pm})$ on (a neighborhood of) $W^{\pm}$
 \item $J$ is $\omega_{\xi}$-compatible on all of $W$
\end{itemize}
We denote the set of such almost-complex structures by $\J(\widehat{J}_-,\widehat{J}_+)$.  A well-known argument shows that this is a non-empty contractible set.  For $J \in \J(\widehat{J}_+,\widehat{J}_-)$ the almost-complex manifold $(W,J)$ is said to have \emph{cylindrical ends} $W^{\pm}$.

It is also necessary to consider families of almost-complex structures; we will denote by $\J_{\tau}(\widehat{J}_-,\widehat{J}_+)$ the space of smooth paths $[0,1] \rightarrow \J(\widehat{J}_-,\widehat{J}_+)$, $\tau \mapsto J_{\tau}$.

\subsubsection{Splitting almost-complex structures}

Suppose we have $\lambda_- \prec \lambda_0 \prec \lambda_+$.  Consider cylindrical almost-complex structures $\widehat{J}_-, \widehat{J}_0, \widehat{J}_+$ and almost-complex structures $J_1 \in \J(\widehat{J}_-,\widehat{J}_0)$, $J_2 \in \J(\widehat{J}_0,\widehat{J}_+)$.  Then there is a smooth family of almost-complex structures $J'_R$ on $W_{\xi}$ for $R \geq 0$ defined by (using the coordinates $\psi_{\lambda_0}$):
\[
J'_R(a,x) = \left\{  \begin{matrix} 
J_1(a + R,x) & \mbox{if } a \leq R \\
J_2(a-R, x) & \mbox{if } a \geq -R 
\end{matrix} \right.
\]
\noindent which fits together smoothly because $\widehat{J_0}$ is $\R$-translation invariant.

Let $\delta > 0$ be an arbitrarily small but fixed number.  Choose diffeomorphisms $g_{\delta}^{-}: \R \rightarrow (-\infty,0)$, $g_{\delta}^{+}: \R \rightarrow (0,\infty)$ such that $g_{\delta}^-(a) = a-\delta$ if $a \leq 0$ and $g_{\delta}^+(a) = a+\delta$ if $a \geq 0$.  Choose a smooth family of diffeomorphisms $g^{(\delta,R)}:\R \rightarrow \R$ (for $R \geq \delta$) with the properties
\begin{itemize}
\item $g^{(\delta,\delta)}(a) = a$
\item $g^{(\delta,R)}(a) = \left\{  \begin{matrix} 
a - R + \delta & \mbox{if } a \geq R  \\
a + R -\delta  & \mbox{if } a \leq -R 
\end{matrix} \right.$
\item $g^{(\delta,R)}(\cdot+R), g^{(\delta,R)}(\cdot-R)$ converge to $g_{\delta}^+,g_{\delta}^-$ in $C^{\infty}_{\rm{loc}}(\R, \R)$ as $R \uparrow \infty$
\end{itemize}
\noindent Then we have diffeomorphisms $G^{(\delta,R)}(a,x) = (g^{(\delta,R)}(a),x)$ (using the coordinates $\psi_{\lambda_0}$ for $W_{\xi}$), and almost-complex structures $J_R = G^{(\delta,R)}_* J_R'$.  

We may concatenate the matching Hamiltonian structured ends of $(W_{\xi},J_1)$, $(W_{\xi},J_2)$ to get $(W_{\xi},J_1) \odot (W_{\xi},J_2)$ (see section \ref{sec-intersections}).  Then we have a diffeomorphism to the concatenation $G:W_{\xi} \rightarrow (W,J_1) \odot (W,J_2)$ defined by
\begin{itemize}
 \item $e^{g_{\delta}^-(a)} \cdot \lambda_0(x) \mapsto e^a \lambda_0(x) = \psi_{\lambda_0}^{-1}(a,x)$, on $W^-(\lambda_0) \backslash \lambda_0(V) \subset W_{\xi}$
 \item $e^{g_{\delta}^+(a)} \cdot \lambda_0(x) \mapsto e^a \lambda_0(x) = \psi_{\lambda_0}^{-1}(a,x)$, on $W^+(\lambda_0) \backslash \lambda_0(V) \subset W_{\xi}$
 \item $e^{0}      \cdot \lambda_0(x) \mapsto (\infty,x) \sim (-\infty,x) \in \{+\infty\} \times V \sim \{-\infty\} \times V$, on $\lambda_0(V) \subset W_{\xi}$
\end{itemize}
\noindent It follows from the construction of $J_R$ that $G_* J_R$ converges to the concatenated almost-complex structure $J_1 \odot J_2$ uniformly on compact subsets of $W_{\xi} \backslash \lambda_0(V)$.

In $(W_{\xi},J_R)$, denote the regions $W_{\pm} = W^{\pm}(\lambda_{\pm})$, and denote by $W_0$ the region $(-\delta,\delta) * \lambda_0(V)$.  Note that $J_R|_{W_{\pm}} = \widehat{J_{\pm}}$, $(W_0,J_R|_{W_0}) \cong ([-R , R] * \lambda_0(V), \widehat{J_0})$.  Let us denote the set of such almost-complex structures $J_R$ constructed in this way from $J_1, J_2$ by $\J(J_1,J_2) \cong [0,\infty)$.

\subsubsection{Holomorphic maps and finite energy}  A $J$-holomorphic map on a punctured Riemann surface $(\Sigma,j,\Gamma)$ is a map $U:\Sigma \backslash \Gamma \rightarrow W_{\xi}$ that satisfies the Cauchy-Riemann equation
\[
 DU + J \cdot DU \cdot j = 0
\]
We only consider solutions of this equation having finite energy; for the definition of finite energy $J$-holomorphic curves we refer to \cite{BEHWZ}.

A crucial example for cylindrical almost-complex structure $J$ are the trivial cylinders over a periodic Reeb orbit $x$ of some period $T$: it is a solution of the form (for $(s,t) \in \R \times S^1$) $\psi_{\lambda} \circ U(s,t) = (Ts + a_0, x(T \cdot t) )$.  If a contact form is (Morse-Bott) non-degenerate\footnote{We will always assume that all contact forms have non-degenerate asymptotic orbits, so this condition is always satisfied in everything that follows.}, then at any (non-removeable) puncture a finite energy $J$-holomorphic map is asymptotic to a half-trivial cylinder over a closed Reeb orbit in one of the ends $W^{\pm}$, see \cite{HWZ_propsII} or \cite{BEHWZ}.  %
A puncture is called positive (resp.~negative) if it is asymptotic to the positive (resp.~negative) half of a trivial cylinder in $W^{+}$ (resp. $W^-$).  We will say that a finite energy holomorphic map is positively (resp.~negatively) asymptotic to a Reeb orbit $x$ if there is a positive (resp.~negative) puncture $z$ (perhaps already specified) at which $U$ is asymptotic to a half-trivial cylinder over $x$ in $W^+$ (resp.~$W^-$).  

When the contact form is (Morse-Bott) non-degenerate, %
then the asymptotic convergence to the trivial half-cylinder mentioned above is exponential in nature and one has an asymptotic formula \cite{HWZ_propsI, Siefring1}.  If one looks at the space of maps asymptotic to fixed closed Reeb orbits with a given rate of exponential convergence, then there is a Fredholm theory for the Cauchy-Riemann operator \cite{Dragnev, Wendl2008}.

One more crucial property of finite-energy holomorphic curves is the compactness theory of \cite{BEHWZ}.  In section $8$ of \cite{BEHWZ}, a notion of a $k_-|1|k_+$ level holomorphic building and convergence of a sequence of holomorphic maps to that building is given.  We refer to that paper for the definitions.  Theorem $10.3$ of \cite{BEHWZ} extends to the case where $W$ has cylindrical ends as well, so that the space of $J_R$-holomorphic maps can be compactified for sequences with $R \uparrow \infty$.  The compactification consists of a holomorphic building in $(W^-(\lambda_0),J'_1) \sqcup (W^-(\lambda_0),J'_2)$ together with a $\widehat{J_0}(\lambda_0)$ holomorphic building, all of which glue together to $W^-(\lambda_0) \odot W^+(\lambda_0) = W_{\xi}$.  We can think of these as $k_-|1|k_0|2|k_+$ level holomorphic buildings, which we describe next.

Let $S$ be a decorated Riemann surface \cite{BEHWZ} $S$, with each smooth component assigned a level labeled $1,2$ or $(\lambda_i, j_i)$ where $i \in \{1,0,+\}$ and $1\leq j_i \leq k_i$.  Write $S = S_1 \cup S_2$, where $S_1$ consists of the levels labeled either $(\lambda_{-,0},\cdot)$ or $1$, while $S_2$ consists of the parts of the domain labeled $(\lambda_{0,+},\cdot)$ or $2$.  If $k_0 \neq 0$ then $S_1 \cap S_2 \neq \emptyset$.  Let $(U)$ be an 
assignment to each smooth component $S^a \subset S$ of a finite energy holomorphic map $U^a$ with domain $S^a$ such that
\begin{itemize}
 \item If one considers the subset of $(U)$ with domain in $S_1$, it is a holomorphic building in $W_1:= (W_{\xi},J_1)$
 \item If one considers the subset of $(U)$ with domain in $S_2$, it is a holomorphic building in $W_2 := (W_{\xi},J_2)$
 \item The maps glue together to give a continuous map $\overline{(U)}: \overline{S} \rightarrow \overline{W_1} \odot \overline{W_2}$
\end{itemize}
\noindent  This data will be called a $k_-|1|k_0|2|k_+$ holomorphic building.  It is called \emph{stable} if both $(U)_i = (U)|_{S_i}$ ($i = 1, 2$) are stable.

Let $U_k$ be a sequence of $J_{R_k}$ holomorphic maps with the same asymptotic orbits and genus; denote the domains by $\Sigma_k$ (each a Riemann surface).  Let $(U)$ be a building as above with domain $S$.  If $R_k$ is bounded then there is a limit which is a $J_{R_{\infty}}$-holomorphic building, where $R_{\infty}$ is an accumulation point of the sequence.  Else, suppose without loss of generality that $R_k \uparrow \infty$.  The sequence $U_k$ converges to the building $(U)$ if there exists a sequence of diffeomorphisms $\phi_k: S \rightarrow \Sigma_k$ converging as decorated Riemann surfaces \cite{BEHWZ} so that

\begin{itemize}
  \item[C1]  For each level $(\lambda_i,j_i)$ ($i \in \{-,0,+\}, 1 \leq j_i \leq k_i$), the maps $U_k \circ \phi_k|_{S_{(\lambda_i,j_i)}}$ lie in the $W_{i}$ part of $(W_{\xi},J_{R_k})$
  \item[C2]  There is a sequence of constants $c^{(\lambda_i,j_i)}_k$ so that $c^{(\lambda_i,j_i)}_k * U_k \circ \phi_k|_{S_{(\lambda_i,j_i)}}$ converges in $C^{\infty}_{\mathrm{loc}}$ to $U^{(\lambda_i,j_i)} = (U)|_{S_{(\lambda_i,j_i)}}$.
  \item[C3]  The maps $G \circ U_k \circ \phi_k$ converge in $C^{0}(\overline{S},\overline{W})$ to $\overline{(U)}$ (where $G:W_{\xi} \rightarrow W_1 \odot W_2$ is the identification map described above).
\end{itemize}

The asymptotic behavior of finite energy surfaces together with an application of Stokes' theorem shows - for finite energy solutions of the almost-complex structures that we consider - the energy $E(U)$ is bounded by the action of the positive asymptotic orbits.  Then by \cite{BEHWZ}
\begin{lemma}
\label{lem-finiteness}
  For $J \in \J(\widehat{J}_-,\widehat{J}_+)$, or $\J(\lambda)$, or $J = J_R \in \J(J_1,J_2)$, the space of $J$-holomorphic finite energy cylinders positively asymptotic to a closed orbit $x$ of $\lambda_+$ is precompact in $\overline{\M}_J$.\end{lemma}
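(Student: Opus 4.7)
The plan is to reduce the statement in each of the three cases to the BEHWZ compactness theorem by establishing a uniform energy bound that depends only on the period of the fixed positive asymptotic orbit~$x$. The domain type is already pinned down (genus zero, two punctures, positive puncture asymptotic to~$x$), so the only thing needed to invoke compactness is control on the Hofer energy.

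First, let $(U_n)$ be a sequence in the relevant space of finite energy cylinders (in the third case, each $U_n$ is $J_{R_n}$-holomorphic for some $R_n \geq 0$). I would apply the Stokes' theorem argument alluded to just before the lemma statement. In the cylindrical case $J \in \J(\lambda)$, the standard computation bounds the Hofer energy by $\int_x \lambda$. In the cobordism case $J \in \J(\widehat{J}_-,\widehat{J}_+)$, exactness of $\omega_\xi = d(e^a \lambda)$ on each cylindrical end, together with non-positivity of the negative puncture contribution, bounds $E(U_n)$ by $\int_x \lambda_+$. In the splitting case $J_R \in \J(J_1,J_2)$, the cylindrical $\R$-invariance of $\widehat{J_0}$ on the middle slab ensures that $d(e^a \lambda_0)(\cdot, \widehat{J_0} \cdot)$ is independent of the $\R$-coordinate, so lengthening this slab contributes no extra energy; the same Stokes computation gives a bound by $\int_x \lambda_+$ that is uniform in $R \geq 0$.

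Second, with the positive asymptote fixed and the energy uniformly bounded, Theorem~10.3 of \cite{BEHWZ} (and its extension to targets with cylindrical ends, recalled earlier in this section) yields a subsequence converging to a stable holomorphic building. In the first two cases this is a $k_-|1|k_+$ building. In the third case, if $(R_n)$ has a bounded subsequence the limit is a $J_{R_\infty}$ building for an accumulation point $R_\infty$, while if $R_n \uparrow \infty$ the limit is a $k_-|1|k_0|2|k_+$ building in the sense described above, using the convergence notion spelled out in conditions (C1)--(C3). The energy bound also bounds the periods of all asymptotic orbits appearing in the limit, so only finitely many orbits can occur at negative punctures, and the top-level positive puncture remains asymptotic to~$x$ by construction. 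This produces a limit point in $\overline{\M}_J$, proving precompactness.

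The main obstacle is the uniformity of the energy bound in the splitting case: one must verify that the bound is independent of the stretching parameter $R$. This hinges on the fact that $\widehat{J_0}$ is $\R$-translation invariant and that $d\lambda_0$ tames $\widehat{J_0}$ on $\widehat{\xi}$, so inserting arbitrarily long cylindrical pieces contributes nothing to the $d(e^a \lambda_0)$-energy across the middle slab beyond what is already captured at its boundaries. Once this observation is in place, the Stokes computation proceeds uniformly and the BEHWZ compactification applies in each of the three listed cases.
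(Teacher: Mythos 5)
Your proposal is correct and follows essentially the same route the paper takes: the energy of any such cylinder is bounded by the $\lambda_+$-action of the fixed positive asymptotic orbit via the Stokes' theorem argument (uniformly in the stretching parameter, thanks to exactness and the $\R$-invariance of $\widehat{J_0}$), and then Theorem $10.3$ of \cite{BEHWZ}, together with its extension to cylindrical ends and to split almost-complex structures, yields precompactness in $\overline{\M}_J$. The paper states this only as a sketch immediately before the lemma, and your write-up simply fills in that same outline.
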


\subsubsection{A restricted class of almost-complex structures}  Suppose $\lambda_{\pm}$ are two contact forms for $\xi$ satisfying $\lambda_- \prec \lambda_+$ and that $L \subset V$ is a link that is tangent to the Reeb vector fields for both.  With the projection $\pi:W_{\xi} \rightarrow V$ consider $Z_L = \pi^{-1}(L)$; which is a union of embedded cylinders.  We have
\begin{lemma}
 $Z_L$ is an embedded symplectic submanifold of $(W_{\xi},\omega_{\xi})$.
\end{lemma}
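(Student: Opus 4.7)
The plan is to reduce to a direct computation by trivializing $W_\xi$ using one of the contact forms already in play. Pick either $\lambda = \lambda_-$ or $\lambda = \lambda_+$ (either works, since by hypothesis both Reeb fields are tangent to $L$) and use $\psi_\lambda: W_\xi \cong \R \times V$ to transport $\omega_\xi$ to $d(e^a\lambda)$. Under this identification, $Z_L$ corresponds precisely to $\R \times L$.

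First I would observe that $Z_L$ is automatically an embedded submanifold: $\pi: W_\xi \to V$ is a principal $\R_{>0}$-bundle (trivialized by any contact form for $\xi$), and $L \subset V$ is an embedded $1$-manifold, so $\pi^{-1}(L) \cong \R \times L$ is an embedded $2$-manifold. Since $L$ is a disjoint union of circles, $Z_L$ is a disjoint union of cylinders as advertised.

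Next, I would check the symplectic condition pointwise. At a point $p = (a,x)$ with $x \in L$, the tangent space $T_p Z_L$ is spanned by $\partial_a$ and the generator $X(x)$ of $T_xL$, where $X$ is the Reeb vector field of $\lambda$ — this is exactly the place where the hypothesis that $L$ is tangent to the Reeb field gets used. Computing
\[
d(e^a \lambda)(\partial_a, X) = e^a\bigl(da \wedge \lambda + d\lambda\bigr)(\partial_a, X) = e^a\bigl(1\cdot 1 - 0 + 0\bigr) = e^a \neq 0,
\]
where $d\lambda(\partial_a,\cdot)=0$ because $d\lambda$ is pulled back from $V$, and $d\lambda(X,\cdot)=0$ by the defining Reeb equation. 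Hence $\omega_\xi|_{T_p Z_L}$ is a nonvanishing $2$-form on a $2$-dimensional space, so it is non-degenerate, and $Z_L$ is symplectic.

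There is no real obstacle here; the only subtlety worth flagging explicitly is that the statement is intrinsic and independent of which contact form one uses to set up the identification $\psi_\lambda$, which is immediate because $Z_L$ and $\omega_\xi$ are defined without reference to any contact form. The hypothesis that the components of $L$ are Reeb orbits (rather than arbitrary embedded circles in $V$) is essential — otherwise the tangent space $T_pZ_L$ would contain a direction transverse to $\R X \oplus \R \partial_a$ lying in $\widehat{\xi}$, on which $\omega_\xi$ could degenerate.
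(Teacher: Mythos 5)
Your proof is correct, but it takes a different route from the paper. The paper does not compute $\omega_{\xi}$ on $TZ_L$ at all: it observes that each component of $Z_L$ is $\widehat{J}(\lambda)$-holomorphic for any contact form $\lambda$ with kernel $\xi$ whose Reeb field is tangent to $L$ and any $J \in \J(\xi)$, and then uses that $\widehat{J}(\lambda)$ is $\omega_{\xi}$-compatible, so a $\widehat{J}$-invariant surface is automatically symplectic. That argument is shorter and, more importantly, records a fact ($Z_L$ is holomorphic for every cylindrical almost-complex structure built from such a $\lambda$) that the paper reuses immediately afterwards, e.g.\ in the proof of Lemma \ref{lem-admcplxstr} and in the positivity-of-intersection arguments. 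Your direct computation in the trivialization $\psi_{\lambda}$, namely $d(e^a\lambda)(\partial_a, X) = e^a \neq 0$ on $T_pZ_L = \R\partial_a \oplus \R X$, is elementary and self-contained, and it makes transparent exactly which piece of the form is responsible for non-degeneracy; what it does not give you for free is the holomorphicity statement. One small correction to your closing remark: what your computation actually uses is only that $L$ is transverse to $\xi$ (so that $\lambda$ is non-vanishing on $TL$, giving $\omega_{\xi}(\partial_a, \dot L) = e^a\lambda(\dot L) \neq 0$); tangency to the Reeb vector field is stronger than necessary for the symplectic conclusion, although it is of course the hypothesis in force here and is what makes $Z_L$ holomorphic, which is how the rest of the paper uses it.
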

\begin{proof}
$Z_L$ is embedded, and each component of $Z_L$ is $\widehat{J}(\lambda)$-holomorphic for any $\lambda$ with kernel $\xi$ and Reeb vector field tangent to $L$, and any $J \in \J(\xi)$.  By hypothesis on $L$ there exist such $\lambda$.  Since each $\widehat{J}(\lambda)$ is $\omega_{\xi}$-compatible, we see $Z_L$ must be symplectic.
\end{proof}
\begin{lemma}
\label{lem-admcplxstr}
 The subset $\J(\widehat{J}_-,\widehat{J}_+:Z_L)$ of $\J(\widehat{J}_-,\widehat{J}_+)$ such that $TZ_L$ is $J$ invariant is non-empty and contractible.
\end{lemma}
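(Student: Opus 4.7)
The plan is to identify $\J(\widehat{J}_-,\widehat{J}_+:Z_L)$ as a continuous retract of a convex subset of the space of Riemannian metrics on $W_\xi$ via the polar decomposition; convexity will give contractibility, and non-emptiness will follow from a partition-of-unity construction using the symplectic neighbourhood theorem.

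The first step would be to verify that the prescribed cylindrical-end data $\widehat{J}_\pm$ already preserve $TZ_L|_{W^\pm}$, so that the new $Z_L$-invariance constraint is consistent with the existing boundary conditions. Since $L$ is tangent to $X_{\lambda_\pm}$, on $Z_L \cap W^\pm$ one has $T_{(a,x)}Z_L = \R\partial_a \oplus \R\widehat{X}_{\lambda_\pm}$, and by construction $\widehat{J}_\pm$ swaps $\partial_a$ and $\widehat{X}_{\lambda_\pm}$; the induced metric $g_{\widehat{J}_\pm} := \omega_\xi(\cdot,\widehat{J}_\pm\cdot)$ therefore satisfies $(TZ_L)^{g_{\widehat{J}_\pm}} = (TZ_L)^{\omega_\xi}$ on $W^\pm$.

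Next I would invoke the standard polar decomposition: given a Riemannian metric $g$ on $W_\xi$, define $A$ by $\omega_\xi(u,v) = g(Au,v)$ so that $A$ is $g$-skew-adjoint, and set $J := A|A|^{-1}$ with $|A| := \sqrt{A^*A}$; then $J$ is an $\omega_\xi$-compatible almost-complex structure, and the construction $g \mapsto J$ is a continuous retraction onto $\omega_\xi$-compatible almost-complex structures with section $J\mapsto g_J := \omega_\xi(\cdot,J\cdot)$. The essential observation is that preservation of a symplectic subbundle $V$ by $J$ is equivalent to $V^g = V^{\omega_\xi}$: if the latter holds, then for $u\in V,\,w\in V^g$ one has $g(Au,w)=\omega_\xi(u,w)=0$, so $A(V)\subset V$; consequently $A^*A$ preserves $V$, its positive square root $|A|$ preserves $V$ by the functional calculus, and hence so does $J$. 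This identifies $\J(\widehat{J}_-,\widehat{J}_+:Z_L)$ as a retract of the set $\mathcal{G}$ of Riemannian metrics on $W_\xi$ that equal $g_{\widehat{J}_\pm}$ on a fixed neighbourhood of $W^\pm$ and satisfy $(TZ_L)^g = (TZ_L)^{\omega_\xi}$ on $Z_L\cap\overline{W}(\lambda_-,\lambda_+)$. Both constraints are linear in $g$, so $\mathcal{G}$ is the intersection of linear subspaces with the open convex cone of all Riemannian metrics --- hence convex, and therefore contractible once non-empty.

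For non-emptiness I would pick nested tubular neighbourhoods $N'\Subset N$ of $Z_L$, and use the orthogonal splitting $TW_\xi|_{Z_L} = TZ_L \oplus (TZ_L)^{\omega_\xi}$ together with the symplectic neighbourhood theorem to produce a metric $g_N$ on $N$ realizing this splitting along $Z_L$ and matching $g_{\widehat{J}_\pm}$ on $N\cap W^\pm$. Taking any metric $g_W$ on $W_\xi$ with the correct cylindrical boundary data and a smooth cutoff $\rho$ supported in $N$ with $\rho\equiv 1$ on $\overline{N'}$, the convex combination $g := \rho g_N + (1-\rho)g_W$ lies in $\mathcal{G}$: it is positive definite, equals $g_N$ on a neighbourhood of $Z_L$, and retains the correct cylindrical boundary data. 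The main technical obstacle is the local construction of $g_N$, which reduces to pointwise choices of compatible complex structure on the symplectic bundle pair $(TZ_L,(TZ_L)^{\omega_\xi})$ preserving the splitting; since the pointwise space of such choices is itself contractible (a product of two Siegel upper half spaces), the existence of a smooth local section is routine.
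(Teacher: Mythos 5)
Your argument is essentially the paper's own: the paper likewise notes that $\widehat{J}_{\pm}$ automatically preserve $Z_L$ on the ends and then works with the space of metrics making $TZ_L$ and $(TZ_L)^{\omega_\xi}$ orthogonal along $Z_L$, mimicking the standard polar-decomposition proof of non-emptiness and contractibility (the paper omits the details you supply). Your proposal is correct and simply fills in that standard argument.
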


The proof of this fact is standard after the observations that $(1)$ $Z_L$ is an embedded symplectic manifold, and $(2)$ that regardless of $\widehat{J}_{\pm}(\lambda_{\pm})$, $Z_L$ is $J$ holomorphic in $W^{\pm}$.  One considers the space of metrics for which, along $Z_L$, $TZ_L$ and $(TZ_L)^{\omega}$ (the symplectic complement) are orthogonal and then mimics the usual proof that the space of almost-complex structures is non-empty and contractible (e.g.~\cite{Hofer-Zehnder}).  We therefore omit the details.  Denote the space of smooth paths $[0,1] \rightarrow \J(\widehat{J}_-,\widehat{J}_+:Z_L)$ by $\J_{\tau}(\widehat{J}_-,\widehat{J}_+:Z_L)$.  Notice that if $J_1 \in \J(\widehat{J}_-,\widehat{J}_0:Z_L)$ and $J_2 \in \J(\widehat{J}_0,\widehat{J}_+:Z_L)$ then for each $J_R \in \J(J_1,J_2)$ the set $\pi^{-1}(L)$ is $J_R$-holomorphic as well.

\subsection{Review of some facts about Conley-Zehnder Indices} \label{sec-CZindices}

Let $x$ be a closed Reeb orbit, $\overline{x}$ be the simple Reeb orbit with the same image as $x$, and $\Phi$ be a trivialization  $\Phi: S^1 \times \C \rightarrow \overline{x}^* \xi$.  We denote by $\mathrm{CZ}_{\Phi}(x)$ the Conley-Zehnder index of $x$ with respect to the trivialization obtained from $\Phi$ by the obvious degree $m_x$ covering of $S^1 \times \C$ (where $m_x$ is the covering number of $x$ over $\overline{x}$), unless it is specified that $\Phi$ is a trivialization of $x^*\xi$ itself (in which case we will compute with respect to $\Phi$).  If the trivialization is clear in a given context we drop the subscript $\Phi$ from $\mathrm{CZ}_{\Phi}$.

One very useful fact about the Conley-Zehnder index is that it grows almost-linearly for Reeb orbits in dimension $3$ (see e.g. \cite{HWZ_FEF} Theorem $8.3$, or \cite{Hutchings-ECHindex}) in the following precise sense.

\begin{proposition}
\label{prop-Linear Growth}
Let $\gamma$ be a periodic orbit of minimal period $T$, and $\Psi$ a trivialization of the contact structure over that orbit.  The Conley-Zehnder index $\mathrm{CZ}_{\Psi}(\gamma^k)$ is monotone in $k$.  Moreover, one has the following characterization of the index of its coverings $\gamma^k$:
\begin{itemize}
 \item If $\gamma$ is elliptic, there exists a unique $\theta$ so that the complex eigenvalues of $\Phi_T$ are $e^{\pm 2 \pi i \theta}$, and for all covering numbers $k$, $\mathrm{CZ}_{\Psi}(\gamma^k) = 2 \lfloor k \cdot \theta \rfloor + 1$.
 \item If $\gamma$ is hyperbolic then there exists an integer $n$ so that $\mathrm{CZ}_{\Psi}(\gamma^k)$ is $k \cdot n$.
\end{itemize}
\end{proposition}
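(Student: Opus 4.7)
The plan is to compute $\mathrm{CZ}_\Psi(\gamma^k)$ directly by analyzing the path of symplectic matrices obtained by linearizing the Reeb flow along $\gamma$ in the frame $\Psi$, then homotoping this path to a normal form adapted to the eigenvalue type of the return map. Trivializing $\xi$ along $\bar\gamma$ via $\Psi$, the linearized Reeb flow gives a smooth path $\Phi:[0,T] \to \mathrm{Sp}(2,\R)$ with $\Phi_0 = I$; write $A = \Phi_T$ for the linearized Poincar\'e return map. The $k$-fold cover $\gamma^k$ corresponds to the concatenated path $\Phi^{(k)}:[0,kT]\to \mathrm{Sp}(2,\R)$ defined by $\Phi^{(k)}(t+jT) = \Phi(t) A^j$, and $\mathrm{CZ}_\Psi(\gamma^k)$ is by definition the Conley-Zehnder/Maslov index $\mu(\Phi^{(k)})$.

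For the elliptic case I would use that the open subset of $\mathrm{Sp}(2,\R)$ of matrices with non-real eigenvalues deformation retracts onto $\mathrm{SO}(2)\setminus\{I\}$ to homotope $\Phi$ rel endpoints to a path of rotations $R_{2\pi\rho(t)}$ with $\rho(0) = 0$; set $\theta := \rho(T)$. Then $A$ has eigenvalues $e^{\pm 2\pi i\theta}$ as required, and $\theta$ must be irrational because every iterate $\gamma^k$ is assumed non-degenerate. By arranging the homotopy to be compatible with concatenation, $\Phi^{(k)}$ is homotopic to a rotation path whose angle lift ends at $2\pi k\theta$, and the standard formula $\mu\bigl(R_{2\pi\sigma(\cdot)}\bigr) = 2\lfloor\sigma(T')\rfloor + 1$ for paths of rotations starting at $I$ with irrational endpoint angle gives $\mathrm{CZ}_\Psi(\gamma^k) = 2\lfloor k\theta\rfloor + 1$. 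Uniqueness of $\theta$ then follows since the eigenvalues of $A$ determine $\theta$ modulo $\Z$ and the $k = 1$ formula pins down the integer shift.

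For the hyperbolic case, $A$ has real eigenvalues $\mu^{\pm 1}$ with $|\mu|\neq 1$, and non-degeneracy of all iterates is automatic. The positive and negative hyperbolic components of the subset of $\mathrm{Sp}(2,\R)$ consisting of matrices without eigenvalue $1$ are each contractible; homotope $\Phi$ rel endpoints so that $\Phi^{(k)}$ becomes homotopic to the $k$-fold concatenation of a path whose intermediate endpoints stay in the same hyperbolic component. Setting $n := \mathrm{CZ}_\Psi(\gamma)$, additivity of the Maslov index under such concatenations (no extra winding is introduced because the path avoids the locus of matrices with eigenvalue $1$) yields $\mathrm{CZ}_\Psi(\gamma^k) = kn$. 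Monotonicity in both cases is then immediate: $k\mapsto 2\lfloor k\theta\rfloor+1$ and $k\mapsto kn$ are monotone, with direction determined by the sign of $\theta$ or $n$, since $k\theta$ and $kn$ are monotone in $k$.

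The main obstacle lies in the elliptic case: one must verify that the rotation-angle lift of the concatenation lands at precisely $2\pi k\theta$ rather than $2\pi(k\theta + m)$ for a stray integer $m$. This is a careful bookkeeping step handled by choosing the homotopy of $\Phi$ into $\mathrm{SO}(2)$ compatibly with concatenation, equivalently by lifting $\Phi$ to the universal cover $\widetilde{\mathrm{Sp}}(2,\R)$ and using that concatenation lifts to multiplication in the group, so that the lifted endpoint of the $k$-fold concatenation is the $k$-th power of the lifted endpoint of $\Phi$.
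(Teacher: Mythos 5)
The paper does not actually prove this proposition; it is quoted from the literature (\cite{HWZ_FEF} Theorem $8.3$, \cite{Hutchings-ECHindex}), so your write-up should be judged against the standard argument given there. Your elliptic case is essentially that standard argument and is fine up to minor wording: the path $\Phi$ need not lie in the elliptic locus and its endpoint $A$ need not itself be a rotation, so one should first deform $A$ inside the elliptic set (e.g.\ through conjugates, which preserves the eigenvalues and hence non-degeneracy of all iterates) to its rotation normal form and only then use that $\mathrm{Sp}(2,\R)$ deformation retracts onto $\mathrm{SO}(2)$; your universal-cover bookkeeping, with the lift of the $k$-fold concatenation ending at the $k$-th power of the lifted endpoint, then correctly yields $2\lfloor k\theta\rfloor+1$, and irrationality of $\theta$ follows from non-degeneracy of all iterates as you say.

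The hyperbolic case, however, has a genuine gap as justified. First, the intermediate endpoints $A^j$ do \emph{not} stay in the same hyperbolic component when $\gamma$ is negative hyperbolic: odd powers are negative hyperbolic and even powers are positive hyperbolic. Second, and more seriously, the principle you invoke --- that the Conley--Zehnder index is additive under concatenation provided the intermediate endpoints avoid the eigenvalue-$1$ locus --- is false in general: concatenating the rotation path to angle $0.4\cdot 2\pi$ with itself gives index $1$, not $2$, even though the intermediate endpoint is non-degenerate. So ``no extra winding because eigenvalue $1$ is avoided'' cannot carry the argument; additivity in the hyperbolic case is a consequence of hyperbolicity itself, not of non-degeneracy of the intermediate points. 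The standard fix (and the proof in the cited sources) is the eigenvector-winding argument: since $A$ preserves the eigenvector line $\langle v\rangle$, the linearized flow rotates this line by an angle $\pi n$ over one period for some integer $n$ (even for positive, odd for negative hyperbolic), the rotation over $k$ periods is exactly $k\pi n$, and $\mathrm{CZ}_{\Psi}(\gamma^k)=kn$. Equivalently, in your universal-cover language, the rotation number of the lift $\tilde A$ is the (half-)integer $n/2$, it is exactly homogeneous under powers, and on hyperbolic lifts the index equals twice the rotation number --- whereas for elliptic lifts the index is $2\lfloor\cdot\rfloor+1$ of it, which is precisely why naive additivity fails there. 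With this replacement the hyperbolic case closes, and monotonicity follows as you state.
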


Another very useful fact is a relationship between the Conley-Zehnder index and the winding of eigensections of the asymptotic operator associated to the orbit $x$.

\begin{definition}  Let $P$ be a simple, closed Reeb orbit, and $k \geq 1$ be an integer.  We define $\alpha^{-}_{\Phi}(P,k)$ to be the winding number of the eigensection associated to the largest negative eigenvalue of the asymptotic operator of $(P,k)$ (the $k$-fold covering of $P$) with respect to $\Phi$, and $\alpha^+_{\Phi}(P,k)$ to be the winding associated to the least positive eigenvalue (see \cite{Siefring2}).
\end{definition}

It is proved in \cite{HWZ_propsII} that the the eigenvalues can be ordered (with multiplicity) so that the winding of the corresponding eigensections is non-decreasing and increases by $1$ every second eigenvalue, and that the Conley-Zehnder index is given by
\[ 
\mathrm{CZ}_{\Phi}(P,k) = \alpha^-_{\Phi}(P,k) + \alpha^+_{\Phi}(P,k) = 2\alpha^-_{\Phi}(P,k) + p(P,k) 
\]
\noindent where $p(P,k)$ is the parity of the Conley-Zehnder index and is equal to either $0$ or $1$ depending on whether the winding numbers of the `extremal' positive/negative eigenvalues agree or disagree:

\noindent In particular, the Conley-Zehnder index is odd if and only if the winding of the eigensections associated with the eigenvalues nearest zero differ.  This is important because it implies by the asymptotic formulas of e.g. \cite{HWZ_propsI, Siefring1} that at odd index orbits that the holomorphic curves necessarily wind around the orbit differently (depending on whether they approach positively or negatively).

Using Proposition \ref{prop-Linear Growth} we see easily that $\alpha^-_{\Phi}(P,k)$ is given by $\lfloor k \theta \rfloor$ if $P$ is elliptic, and $\lfloor \frac{k \cdot n}{2} \rfloor $ if it is hyperbolic (even or odd), and $\alpha^+_{\Phi}(P,k)$ is given by $\lceil k \theta \rceil$ if $P$ is elliptic, and $\lceil \frac{k \cdot n}{2} \rceil $ if it is hyperbolic.

\subsection{Transversality for cylinders}
\label{sec-transversality}

In this section we assume that $\lambda$ is a non-degenerate contact form on $V$.  We will consider $J$-holomorphic cylinders in the symplectization and transversality of the Cauchy-Riemann operator for these cylinders.  Here $J = \widehat{J}$ will be a cylindrical almost-complex structure.  The constructions of the chain complexes in Section \ref{sec-CCHint} requires the Cauchy-Riemann operator to be transverse on enough moduli spaces.  In particular we will prove the following transversality Theorem.  In order to state it, we recall first that a Reeb orbit is called \emph{SFT-good} if it is not an even cover of another orbit with odd Conley-Zehnder index.

\begin{theorem} \label{thm-transversality}
  By \cite{Dragnev} there is a residual subset $\mathcal{J}_{\mathrm{gen}}(\lambda)$ of $\J(\lambda)$ such that for all somewhere injective $J$-holomorphic curves the Cauchy-Riemann operator is transverse for $J \in \J_{\mathrm{gen}}(\lambda)$.  Let $J \in \J_{\mathrm{gen}}(\lambda)$:
\begin{enumerate}
  \item  At a $J$-holomorphic cylinder $U$ of index $\mathrm{Ind}(U) \leq 1$, the Cauchy-Riemann operator is transverse at $U$.  In particular the only index $\leq 0$ cylinders are trivial cylinders.
  \item  At a $J$-holomorphic cylinder $U$ of index $\mathrm{Ind}(U) = 2$, and such that both asymptotic orbits are SFT-good, the Cauchy-Riemann operator is transverse at $U$.
\end{enumerate}
Therefore the corresponding moduli spaces (modulo the free smooth $\R$-action) consist of isolated points (index $0$ or $1$ case) or of isolated intervals or circles (index $2$ case).
\end{theorem}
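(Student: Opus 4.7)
The plan is to combine Dragnev's genericity theorem for somewhere injective curves with a Riemann--Hurwitz analysis of possible covers, together with an equivariant analysis of the linearized Cauchy--Riemann operator at multiply covered cylinders. First I would invoke \cite{Dragnev} to obtain the residual set $\J_{\mathrm{gen}}(\lambda) \subset \J(\lambda)$ for which $\bar\partial$ is surjective at every somewhere injective finite-energy $J$-holomorphic curve; this already handles the somewhere injective case in both parts of the theorem.

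For a multiply covered finite-energy $J$-holomorphic cylinder $U$, I would factor $U = v \circ \phi$ with $v$ somewhere injective and $\phi$ a holomorphic branched cover of some degree $k \geq 2$. Applying Riemann--Hurwitz to the sphere compactification $\hat\Sigma_U = S^2 \to \hat\Sigma_v$ via $2 = k(2-2g_v) - B$, and observing that the two punctures of $U$ must surject onto those of $v$ with opposite signs, one finds $g_v = 0$, that $v$ has exactly two punctures, and that $\phi$ is unramified on the interior; consequently $v$ is itself a cylinder and $U$ is asymptotic to the $k$-th iterates $(\gamma^\pm)^k$ of the asymptotic orbits of $v$. The cylinder index formula together with $c_1(U^*\xi, \tau) = k\,c_1(v^*\xi, \tau)$ and Proposition \ref{prop-Linear Growth} then yields
\[
\mathrm{Ind}(U) = k\,\mathrm{Ind}(v) + \Delta^+ - \Delta^-,
\]
where $\Delta^\pm = 0$ at hyperbolic asymptotes of $v$ and $\Delta^\pm \in \{1-k, 3-k, \ldots, k-1\}$ at elliptic ones.

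Transversality at a non-trivial somewhere injective $v$ combined with the free $\R$-action on its moduli forces $\mathrm{Ind}(v) \geq 1$. For part (1), in the purely hyperbolic case this gives $\mathrm{Ind}(U) \geq k \geq 2$, contradicting $\mathrm{Ind}(U) \leq 1$; hence any multiply covered $U$ with $\mathrm{Ind}(U) \leq 1$ must cover a trivial cylinder and is itself trivial. When $v$ has an elliptic asymptote the index relation no longer forces a contradiction on its own, and I would instead decompose $\bar\partial_U$ into isotypic components under the $\Z/k$ deck-group action: the invariant summand is the already-surjective $\bar\partial_v$, and an automatic-transversality argument for the twisted summands (leveraging the strict winding inequalities of Section \ref{sec-CZindices}, which are strict at elliptic orbits precisely because elliptic orbits have odd Conley--Zehnder index) completes the argument. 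The ``in particular'' claim is then immediate: a transverse non-trivial cylinder has moduli of dimension $\mathrm{Ind}(U) - 1$ modulo the free $\R$-action, so $\mathrm{Ind}(U) \leq 0$ forces triviality. For part (2), the SFT-good hypothesis on the asymptotes of $U$ is precisely what excludes the even-cover-of-an-odd-Conley--Zehnder-index-orbit configurations in which a twisted summand could fail to be surjective; the same equivariant argument then yields transversality at $\mathrm{Ind}(U) = 2$ multiply covered cylinders, and the stated structure of the moduli spaces follows from the dimension count.

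The main obstacle will be the multiply covered case in the presence of an elliptic asymptote: the index relation alone does not rule out such covers, so one must genuinely carry out the equivariant decomposition of $\bar\partial_U$ and establish surjectivity of each non-invariant twisted summand using the detailed asymptotic winding-number data of Proposition \ref{prop-Linear Growth}, with the SFT-good hypothesis in part (2) used to eliminate the one remaining family of potential obstructions.
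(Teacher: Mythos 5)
Your proposal diverges from the paper's argument precisely at the point where it stops being a proof. The somewhere injective case via \cite{Dragnev} and the Riemann--Hurwitz bookkeeping for covers are fine, and your hyperbolic-asymptote index estimate $\mathrm{Ind}(U) = k\,\mathrm{Ind}(v) \geq 2$ is correct. But the entire multiply covered case with an elliptic asymptote --- which is the only genuinely hard case, and which you yourself identify as ``the main obstacle'' --- is left as an announced plan: you say you would decompose $\overline{\partial}_U$ into isotypic summands under the $\Z/k$ deck action and prove surjectivity of each twisted summand ``by an automatic-transversality argument,'' but no such argument is given, and making it work is not routine (one must identify the twisted summands with Cauchy--Riemann-type operators on the quotient with shifted asymptotic weights and verify an index/winding criterion for each weight; the SFT-good hypothesis enters in a specific way that you only gesture at). In addition, your ``in particular'' step for index $\leq 0$ is incomplete for multiple covers with elliptic asymptotes: your index relation with the $\Delta^{\pm}$ correction terms does not by itself force $\mathrm{Ind}(v) \leq 0$, so triviality of index-$0$ covers is not established on the route you describe (the paper gets it from monotonicity of $\mathrm{CZ}(\gamma^k)$ in $k$, Proposition \ref{prop-Linear Growth}, which gives monotone growth of the cylinder index under covering).

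For comparison, the paper avoids equivariant analysis entirely. For index $1$, and for index $2$ with both asymptotic orbits of odd Conley--Zehnder index, it uses the inequality $0 \leq \mathrm{wind}_{\pi}(U) \leq c_N(U)$ (Theorem \ref{thm-pitu}) to show $\mathrm{wind}_{\pi}(U) = 0$, hence $U$ is immersed, and then applies Wendl's automatic transversality criterion $\mathrm{Ind}(U) \geq 2g + \#\Gamma_{\mathrm{even}} - 1$ (Theorem \ref{thm-autotrans}) directly to $U$; this works for arbitrary compatible $J$ and does not care whether $U$ is multiply covered. For index $2$ with both asymptotics even and SFT-good, Lemma \ref{lemma-even covers} shows via the linear-growth formula for $\mathrm{CZ}$ that the cylinder is in fact simply covered, so Dragnev's genericity applies. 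The nonexistence of negative-index cylinders is again a $\mathrm{wind}_{\pi}$ count valid for every compatible $J$. If you want to salvage your route, you must actually carry out the isotypic decomposition and prove surjectivity of the twisted summands; otherwise the simpler immersed-plus-automatic-transversality argument is the one that closes the case you left open.
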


\subsubsection{Some Inequalities}

Let $\pi$ denote the projection of $TV$ to the contact structure $\rm{ker}(\lambda)$.  Given a finite energy surface $U = (a,u)$\footnote{We use the identification $\psi_{\lambda}: W_{\xi} \rightarrow \R \times V$ and let $a = \pi_{\R} \circ U$, $u = \pi_{V} \circ U$.}, the section $\pi \circ Tu$ of $\Omega^{0,1}(T\Sigma,u^*\xi)$ is quite useful.  It is proved in \cite{HWZ_propsII} that either it vanishes identically or it has isolated zeros.  We let $\rm{wind}_{\pi}(U)$ denote the oriented count of zeros of this section, which is proved in \cite{HWZ_propsII} to be non-negative.  Also, we will use the following topological quantities:

\begin{itemize}
 \item Denote by $\Gamma$ the set of punctures of the map $U$ and by $\Gamma^{\pm}$ the subsets of positive and negative punctures.  Also, $\# \Gamma_{\mathrm{even}}$ denotes the number of punctures asymptotic to orbits of even Conley-Zehnder index, and $\# \Gamma_{\mathrm{odd}}$ denotes the number of punctures asymptotic to orbits of odd Conley-Zehnder index.
 \item  
\begin{equation} \label{eq-CZ(U)}
 \mathrm{CZ}(U) = \sum_{z^+ \in \Gamma^+} {\mathrm{CZ}}(z^+) -\sum_{z^- \in \Gamma^-} {\mathrm{CZ}}(z^-)
\end{equation}
\noindent (this requires a choice of trivialization of $U^*\xi$, but is independent of that choice).
 \item  $\mathrm{Ind}(U) = \mathrm{CZ}(U) - \chi(\Sigma) + \# \Gamma$, and $2 c_N(U) = \mathrm{Ind}(U) - \chi(\Sigma) + \# \Gamma_{\mathrm{even}}$
\end{itemize}

\noindent
We will use the following theorem, proved in \cite{HWZ_propsII}, to show that certain curves are immersed in order to apply the automatic transversality Theorem $2.8$.

\begin{theorem} (\cite{HWZ_propsII}, Theorem $5.8$)
\label{thm-pitu}
Suppose $\pi \circ T u$ does not vanish identically.  Then it vanishes only a finite number of times, and $0 \leq \mathrm{wind}_{\pi}(U) \leq c_N(U)$.
\end{theorem}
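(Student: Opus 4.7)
My plan is to exhibit $\pi \circ Tu$ as a holomorphic section of a complex line bundle, control its winding at each puncture using the exponential asymptotic formulas, and combine these through a degree (Poincar\'e--Hopf) computation.

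\textbf{Step 1 (Cauchy--Riemann-type equation for $\pi \circ Tu$).} Since $J$ is cylindrical it preserves $\xi$, and projecting the equation $TU \circ j = J \circ TU$ onto $\xi$ shows that $\sigma := \pi \circ Tu$ is complex linear from $(T\Sigma, j)$ to $(u^*\xi, J|_\xi)$. Thus $\sigma$ is a section of the complex line bundle $L := \mathrm{Hom}_{\C}(T\Sigma, u^*\xi)$ over $\Sigma \setminus \Gamma$. Differentiating the Cauchy--Riemann equation once more and using the $d\lambda$-compatibility of $J$ on $\xi$ yields a perturbed equation $\bar\partial_L \sigma + A\sigma = 0$ with $A$ a bounded zeroth-order operator. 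By the similarity principle, if $\sigma \not\equiv 0$ then its zeros are isolated and each contributes positively to $\mathrm{wind}_\pi(U)$; in particular $\mathrm{wind}_\pi(U) \geq 0$ once finiteness is known.

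\textbf{Step 2 (Finiteness).} At each puncture $z \in \Gamma$ the asymptotic formulas of \cite{HWZ_propsI, Siefring1} give, in cylindrical coordinates $(s,t)$ near $z$, $\sigma(s,t) = e^{\mu s}(f(t) + r(s,t))$ with $r \to 0$, where $\mu \neq 0$ is an eigenvalue of the asymptotic operator at the asymptotic orbit and $f$ is a nowhere-vanishing eigensection. Hence $\sigma$ is non-vanishing on a sufficiently small punctured neighborhood of $z$, and the total number of zeros is finite.

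\textbf{Step 3 (Upper bound).} Each $f$ above has a well-defined winding $w_\infty(U, z)$ with respect to the trivialization $\Phi$, and a degree calculation for sections of a complex line bundle over a punctured Riemann surface gives
\begin{equation*}
\mathrm{wind}_\pi(U) \;=\; c_1(L, \Phi) + \sum_{z \in \Gamma^+} w_\infty(U, z) - \sum_{z \in \Gamma^-} w_\infty(U, z).
\end{equation*}
At a positive puncture one must have $\mu < 0$ (the correction decays as $s \to +\infty$), so $f$ is an eigensection for a negative eigenvalue, and the winding-monotonicity recalled in Section \ref{sec-CZindices} gives $w_\infty(U, z^+) \leq \alpha^-_\Phi(z^+)$; symmetrically $w_\infty(U, z^-) \geq \alpha^+_\Phi(z^-)$. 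Substituting these, using $\mathrm{CZ} = \alpha^- + \alpha^+$, the parity identity $\alpha^+ - \alpha^- \in \{0,1\}$ (equal to $1$ iff $\mathrm{CZ}$ is odd), the definitions $\mathrm{Ind}(U) = \mathrm{CZ}(U) - \chi(\Sigma) + \#\Gamma$ and $2 c_N(U) = \mathrm{Ind}(U) - \chi(\Sigma) + \#\Gamma_{\mathrm{even}}$, and a standard computation of $c_1(L, \Phi)$ in terms of $c_1(u^*\xi, \Phi)$ and the Euler characteristic of $\Sigma$, routine arithmetic yields $\mathrm{wind}_\pi(U) \leq c_N(U)$.

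I expect Step 3 to be the main obstacle: the subtle point is not the degree calculation but the asymptotic inequality $w_\infty(U, z^+) \leq \alpha^-_\Phi(z^+)$. A priori the leading decay eigenvalue $\mu$ need not be the largest negative eigenvalue of the asymptotic operator, so one must invoke the ordering of eigenvalues by non-decreasing windings --- a key technical ingredient of \cite{HWZ_propsII} --- in order to conclude that every eigensection for a negative eigenvalue has winding bounded above by $\alpha^-_\Phi$.
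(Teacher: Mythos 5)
The paper does not prove this statement at all --- it is quoted directly as Theorem $5.8$ of \cite{HWZ_propsII} --- and your outline reconstructs essentially the argument of that reference: complex linearity of $\pi\circ Tu$ plus the similarity principle give isolated positive-index zeros, the asymptotic eigensection representation gives nonvanishing near the punctures (hence finiteness and $\mathrm{wind}_{\pi}(U)\geq 0$), and the relative-degree count together with the monotonicity of winding in the eigenvalue gives the upper bound. With the paper's convention that $\Phi$ trivializes $U^*\xi$ (so the relative $c_1(u^*\xi,\Phi)$ term drops out) and the identities $\mathrm{CZ}=2\alpha^- + p=\alpha^-+\alpha^+$ and $\#\Gamma-\tfrac{1}{2}\#\Gamma_{\mathrm{odd}}=\tfrac{1}{2}\left(\#\Gamma+\#\Gamma_{\mathrm{even}}\right)$, the arithmetic in your Step 3 does close up to $\mathrm{wind}_{\pi}(U)\leq c_N(U)$, so the proposal is correct and follows the same route as the cited source.
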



\subsubsection{Review of some results about transversality}
\label{sec-knowntransresults}

The Fredholm theory here was developed in \cite{Dragnev}.  There the author defines a Banach manifold 
\[ \B_g^{1,p;d}(x_1,\dots,x_k;y_1,\dots,y_l),\] 
\noindent where $p > 2$, of $W^{1,p}_{\rm{loc}}$ maps from a surface of genus $g$ with $k$ `positive' punctures and $l$ `negative' punctures into $\R \times V$ which converge exponentially with weight $d$ in $W^{1,p}$ to the periodic orbits $x_1,\dots,x_k$ at positive punctures and to $y_1,\dots, y_l$ at negative punctures.  We will really only need the cases $g=0, k = 1, l = 1$.  Given this structure we have a section

\begin{equation*}
 \overline{\partial}_J(U) = d U + J \circ d U \circ j
\end{equation*}

\noindent
defined over $\B$ into a smooth bundle $\E$ over $\B$ with fibers

\begin{equation*}
 \E = \bigcup_{U \in \B} \{ U \} \times L^{p,\delta}(\Omega^{0,1}(T \dot{\Sigma} \otimes_{\C} U^*TW))
\end{equation*}

At a zero of this section, $U$, we define $F_{U}$ to be the projection of $D \overline{\partial}_J (U)$ to the vertical part $T_{U} \E \cong T_{U}\B \oplus \E_{U}$.  It is then proved in \cite{Dragnev} that $F_{U}$ is a Fredholm operator.  We say the Cauchy-Riemann operator is \emph{transverse} or \emph{regular} at $U$ (or $U$ is a transverse solution, etc.) if $F_U$ is a surjective operator, and that a subset of solutions is \emph{transverse} if each element is a transverse solution.  Its index (computed in \cite{Dragnev}) gives the dimension of parameterized $(j,J)$-holomorphic maps near $U$ when $U$ is transverse.  The dimension of the moduli space (allowing $j$ to vary and dividing by automorphisms of the domain) is then given by the topological quantity $\mathrm{Ind}(U) = \mathrm{CZ}(U) + \left( \frac{\mathrm{dim}(\R \times V)}{2} -3 \right) \left(\chi_{\Sigma} - \# \Gamma \right)$ which coincides with the definition of $\rm{Ind}(U)$ given earlier since $\frac{\rm{dim}(\R \times V)}{2} = 2$.

We cite a special case of Theorem $0.1$ in \cite{Wendl2008} (valid for $\rm{dim}(\R \times V) = 4$):

\begin{theorem}
\label{thm-autotrans}
\cite{Wendl2008}
 Suppose that $U$ is an immersed finite energy surface.  Letting $g$ denote the genus of the domain of $U$, the linearized operator is surjective if
\begin{equation*}
  \rm{Ind}(U) \geq 2g + \# \Gamma_{\mathrm{even}} - 1.
\end{equation*}
\end{theorem}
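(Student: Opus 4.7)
\emph{Proof plan.} The strategy, following Wendl's version of the automatic transversality technique originating in \cite{HWZ_propsII}, is to reduce surjectivity of the full linearized Cauchy-Riemann operator to surjectivity of a Cauchy-Riemann type operator on a complex line bundle, and then to bound the kernel of the latter by counting zeros.

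Since $U$ is immersed, the pullback $U^*TW_\xi$ admits a $J$-invariant splitting $U^*TW_\xi = TU \oplus N_U$, where $TU = dU(T\Sigma)$ and the normal bundle $N_U$ is a complex line bundle. The linearized Cauchy-Riemann operator $D_U$ respects this splitting modulo compact (in fact lower-order) perturbations, giving $D_U \approx D_T \oplus D_N$. The tangential operator $D_T$ is always surjective, since infinitesimal reparametrizations and variations of the domain complex structure realize any element of its cokernel. Thus the problem reduces to the surjectivity of the normal operator $D_N$ on $W^{1,p,\delta}(N_U)$, where the exponential weights $\delta > 0$ are chosen small and generic so that $\mathrm{ind}_\R(D_N) = \mathrm{Ind}(U)$ via the Riemann-Roch formula for real-linear Cauchy-Riemann operators on complex line bundles over punctured Riemann surfaces.

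The next step is to control $\dim_\R \ker D_N$. Since $D_N$ is a real-linear Cauchy-Riemann operator on a complex line bundle, a similarity principle (Carleman-type local representation together with unique continuation) implies any nonzero $\eta \in \ker D_N$ has only isolated zeros of finite positive multiplicity, whose total algebraic count $Z(\eta)$ is a non-negative integer. The leading-order behavior of $\eta$ at each puncture is described by a nontrivial eigensection of the asymptotic operator, with winding constrained by the extremal windings $\alpha^\pm$ from section~\ref{sec-CZindices}: $\mathrm{wind}^\infty(\eta,z^+) \leq \alpha^-(z^+)$ at positive punctures, and $\mathrm{wind}^\infty(\eta,z^-) \geq \alpha^+(z^-)$ at negative punctures. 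At odd punctures an additional parity argument using $\alpha^- < \alpha^+$ yields a one-unit refinement in the winding contribution. Combined with the topological identity $Z(\eta) = c_1^\tau(N_U) - \sum_{z^+}\mathrm{wind}^\infty + \sum_{z^-}\mathrm{wind}^\infty$, these estimates provide a sharp upper bound $Z_{\max}$ on $Z(\eta)$ whose value is arranged so that $2Z_{\max}+1 \leq \mathrm{Ind}(U)$ precisely under the hypothesis $\mathrm{Ind}(U) \geq 2g + \#\Gamma_{\mathrm{even}} - 1$.

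Finally, because imposing a zero of order $k$ at a fixed point is a real-codimension-$2k$ linear condition on $\ker D_N$, if $\dim_\R \ker D_N$ exceeded $2Z_{\max}+1$ we could produce a nonzero element with $Z(\eta) > Z_{\max}$, a contradiction. Hence $\dim_\R \ker D_N \leq 2Z_{\max}+1 \leq \mathrm{Ind}(U) = \mathrm{ind}_\R(D_N)$, so $\mathrm{coker}(D_N) = 0$ and $D_N$, and thus $D_U$, is surjective. The main obstacle is the sharp asymptotic winding analysis of the preceding paragraph, especially the parity refinement at odd punctures that makes $\#\Gamma_{\mathrm{even}}$ rather than $\#\Gamma$ appear in the final inequality; this uses the detailed spectral information on the asymptotic operators from \cite{HWZ_propsI, HWZ_propsII, Siefring1}, and the observation that at an odd puncture the distinct windings of eigensections on either side of zero prevent an ``extremal'' section from attaining the worst-case contribution.
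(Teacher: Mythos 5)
First, a point of comparison: the paper does not prove this statement at all --- it is quoted verbatim as a special case of Theorem $0.1$ of \cite{Wendl2008}, so the only thing to measure your argument against is Wendl's proof (building on \cite{HWZ_propsII}).

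Your reduction to the normal operator $D_N$ with $\mathrm{ind}(D_N)=\mathrm{Ind}(U)$, the similarity principle, and the asymptotic winding bounds are all consistent with that proof, but the concluding counting step has a genuine gap. From ``every nonzero $\eta\in\ker D_N$ has at most $Z_{\max}$ zeros'' the jet-evaluation argument gives $\dim_{\R}\ker D_N\le 2Z_{\max}+2$, not $2Z_{\max}+1$; and, more seriously, the sharpest bound obtainable from the winding estimates is $Z(\eta)\le c_N(U)$, where $2c_N(U)=\mathrm{Ind}(U)-2+2g+\#\Gamma_{\mathrm{even}}$, so the inequality $2Z_{\max}+1\le\mathrm{Ind}(U)$ is \emph{not} ``arranged precisely under the hypothesis'': one has $2c_N(U)+1=\mathrm{Ind}(U)-1+2g+\#\Gamma_{\mathrm{even}}\ge\mathrm{Ind}(U)$ as soon as $2g+\#\Gamma_{\mathrm{even}}\ge 1$. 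Concretely, a genus-zero cylinder of index $2$ with both punctures even satisfies the hypothesis ($2\ge 2g+\#\Gamma_{\mathrm{even}}-1=1$), yet $c_N(U)=1$ and your bound only yields $\dim_{\R}\ker D_N\le 4$, which says nothing about the cokernel. In other words, bounding the kernel can only prove automatic transversality under a much stronger index hypothesis. The actual argument does not estimate $\ker D_N$ at all: it identifies $\mathrm{coker}\,D_N$ with the kernel of the formal adjoint, which is again a Cauchy--Riemann type operator on a complex line bundle over the punctured surface, and shows that a nontrivial element of that kernel would have a non-negative algebraic zero count while the corresponding topological count (an adjusted Chern number computed from $-\mathrm{Ind}(U)$ and the extremal windings, with the parity refinement at odd punctures) is negative exactly when $\mathrm{Ind}(U)\ge 2g+\#\Gamma_{\mathrm{even}}-1$; hence the adjoint kernel vanishes and $D_N$ is surjective. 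So the winding and parity analysis you describe is the right ingredient, but it must be applied to the adjoint to kill the cokernel directly, not used to bound the kernel by the index.
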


We also need results for somewhere-injective curves.  Following the notation of \cite{Dragnev}, define the set $\M = \mathcal{M}(x_1, \dots, x_n;y_1\dots,y_n)$ the set $(C,J)$ of pairs consisting of non-parametrized curves $C$ and compatible almost-complex structures $J \in \J$ (where $\J = \J(\lambda)$ or $\J(\widehat{J}_-,\widehat{J}_+)$) for which $C$ is $J$-holomorphic (for some parametrization) positively asymptotic to the Reeb orbits $x_i$ and negatively to the $y_j$.  In case $\J = \J(\lambda)$ one quotients by the free $\R$-action on solutions as well.  We have the following from \cite{Dragnev} (Theorem 1.8 and its Corollary) which holds in any dimension:

\begin{theorem}
\label{thm-genericJ}
 The set $\mathcal{M}$ carries a structure of a separable Banach manifold.  The projection map $\eta: \mathcal{M} \rightarrow \J$, $\eta(C,J) = J$, is a Fredholm map with Fredholm index $\mathrm{Ind} (C) = \mathrm{Ind}(U)$, where $U:(S,j) \backslash \Gamma \rightarrow \R \times V$ parametrizes $C$.

For regular values $J$ of $\eta$, $\M_J = \eta^{-1}(J)$ is a smooth, finite dimensional manifold whose dimension agrees with the above index $\mathrm{Ind}(C)$.  By the Sard-Smale theorem, the set of regular values is a residual set.  Consequently there is a residual set $\J_{\mathrm{gen}} \subset \J$ of compatible almost-complex structures such that for every $J \in \J_{\mathrm{gen}}$ if $U : S \backslash \Gamma \rightarrow \R \times V$ is a somewhere injective finite energy surface for $J$, then $\rm{Ind}(U) \geq 1$ provided $\pi \circ Tu$ does not vanish identically.
\end{theorem}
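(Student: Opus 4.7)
The plan is the standard universal-moduli-space argument, executed in the exponentially-weighted Banach setting appropriate for finite-energy curves with cylindrical ends. First I would fix the asymptotic data and form the Banach manifold $\B = \B^{1,p;d}(x_1,\dots;y_1,\dots)$ of weighted $W^{1,p}_{\mathrm{loc}}$ maps, together with the Banach manifold $\J$ of admissible almost-complex structures. Over $\B\times\J$ one assembles the Banach bundle $\E$ whose fiber over $(U,J)$ is $L^{p,\delta}(\Omega^{0,1}(T\dot\Sigma\otimes_{\C} U^*TW))$, and the smooth section $\overline{\partial}(U,J) = dU + J\circ dU\circ j$. The universal moduli space $\widetilde{\M}$ is the intersection of $\overline{\partial}^{-1}(0)$ with the open subset of $\B\times\J$ consisting of somewhere injective maps.

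The decisive step is surjectivity of the full linearization $D\overline{\partial}_{(U,J)}:T_U\B\oplus T_J\J\to\E_{(U,J)}$ at each $(U,J)\in\widetilde{\M}$. Since the fiberwise operator $F_U = D_U\overline{\partial}$ is Fredholm, the cokernel of $D\overline{\partial}_{(U,J)}$ is finite-dimensional, and it suffices to show that any $\eta\in\ker F_U^*$ satisfying $\langle \eta, D_J\overline{\partial}(Y)\rangle = 0$ for every admissible variation $Y$ must vanish identically. Somewhere injectivity provides a point $z_0\in\dot\Sigma$ at which $U$ is an immersion and $U^{-1}(U(z_0))=\{z_0\}$; near $U(z_0)$ one has enough freedom inside the appropriate admissible class ($\widehat\xi$-valued perturbations for $J\in\J(\lambda)$, and compactly-supported perturbations in the non-cylindrical middle region for $J\in\J(\widehat J_-,\widehat J_+)$) to choose $Y$ with $\langle\eta,D_J\overline{\partial}(Y)\rangle\neq 0$ unless $\eta$ already vanishes on a neighborhood of $z_0$; the Carleman similarity principle then forces $\eta\equiv 0$ everywhere. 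This is the main obstacle of the proof: one must verify that the restricted classes still admit enough infinitesimal variations of $J$ at an injective point, which is where somewhere-injectivity is essential and where the analogous argument fails for multiply covered curves. With surjectivity in hand, the implicit function theorem upgrades $\widetilde{\M}$ to a smooth separable Banach manifold (separability by a Lindel\"of argument on $\B$), and quotienting by the free actions of $\mathrm{Aut}(\Sigma,j)$ (and of $\R$-translations in the cylindrical case) yields the moduli space $\M$ of the theorem.

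The projection $\eta:\M\to\J$ inherits the Fredholm property from $F_U$, whose index equals the topological quantity $\mathrm{Ind}(U)$ by the Riemann-Roch calculation for exponentially weighted punctured surfaces \cite{HWZ_propsI, Dragnev}. The Sard-Smale theorem then produces the residual set $\J_{\mathrm{gen}}\subset\J$ of regular values, over each of which $\M_J=\eta^{-1}(J)$ is a smooth manifold of the asserted dimension. Finally, for the last clause, fix $J\in\J_{\mathrm{gen}}$ and let $U$ be somewhere injective with $\pi\circ Tu\not\equiv 0$; by \cite{HWZ_propsII} this condition is equivalent to $U$ being non-$\R$-invariant, so the $\R$-action on the parametrized solution space is free near $U$ and contributes a one-dimensional orbit to it. Since that parametrized space has dimension equal to $\mathrm{Ind}(U)$, one concludes $\mathrm{Ind}(U)\geq 1$.
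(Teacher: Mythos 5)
The paper itself offers no proof of this statement -- it is quoted verbatim from \cite{Dragnev} (Theorem $1.8$ and its corollary) -- and your outline follows the same universal-moduli-space/Sard--Smale strategy as that reference. However, there is a genuine gap at precisely the step you call decisive: surjectivity of the universal linearization for the class $\J(\lambda)$ of \emph{cylindrical} almost-complex structures. An admissible variation $Y$ of such a $J$ is an $\R$-invariant, $d\lambda$-compatible variation of $J|_{\xi}$, i.e.\ a section defined over $V$; its support is therefore of the form $\R \times \mathcal{O}$ with $\mathcal{O} \subset V$, and it cannot be localized near a single point $U(z_0) \in \R \times V$. Consequently a point $z_0$ with $U^{-1}(U(z_0)) = \{z_0\}$ does not suffice: the pairing of the cokernel element $\eta$ with the term $Y(u)\circ \pi \circ du \circ j$ receives contributions from \emph{every} domain point whose projection to $V$ enters $\mathcal{O}$, possibly at different $\R$-heights. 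What the argument actually requires is a point $z_0$ with $\pi \circ Tu(z_0) \neq 0$, with $u(z_0)$ off the asymptotic orbits, and with $u^{-1}(u(z_0)) = \{z_0\}$ for the \emph{projected} map $u : \dot{\Sigma} \rightarrow V$. Producing such points from somewhere injectivity of $U = (a,u)$ together with $\pi \circ Tu \not\equiv 0$ is the main technical content of Dragnev's theorem: one must compare $U$ with its own $\R$-translates and use unique continuation and the asymptotic behavior to rule out the projected map being multiply covered or self-intersecting on open sets. Your sketch substitutes the easy injectivity of $U$ in $\R \times V$ for this projected injectivity, so as written the surjectivity step fails exactly in the cylindrical case, which is the case of principal use in the paper (and is also where the proviso $\pi \circ Tu \not\equiv 0$ enters the hypotheses).

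Two smaller points. First, in the cobordism case $\J(\widehat{J}_-,\widehat{J}_+)$ your compactly supported perturbations only work if the injective point can be taken in the non-cylindrical region; this follows from density of injective points unless the curve lies entirely in an end, in which case one must reduce to the cylindrical statement with $\widehat{J}_{\pm}$ chosen generically beforehand -- worth saying explicitly. Likewise, Sard--Smale gives regular values for each fixed collection of asymptotic orbits and topological type, and $\J_{\mathrm{gen}}$ is the countable intersection over these data (countability uses non-degeneracy of $\lambda$). Second, your final clause is the right argument, but the one-dimensional $\R$-orbit should be placed in the space of unparametrized curves \emph{before} dividing by translations, since that is the space whose dimension equals $\mathrm{Ind}(U)$; with that phrasing the conclusion $\mathrm{Ind}(U) \geq 1$ for non-$\R$-invariant somewhere injective curves is exactly the corollary cited in the paper.
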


\noindent Theorem \ref{thm-genericJ} can be proved if $\J = \J(\widehat{J}_-(\lambda_-),\widehat{J}_+(\lambda_+):Z)$ as well because any somewhere injective curve for such a $J$ will have a point of injectivity in $W \backslash (W^+ \cup W^- \cup Z)$ with the exceptions of $(1)$ components of $Z$, and $(2)$ holomorphic curves for $\widehat{J}(\lambda_+)$ contained entirely in a cylindrical end $W^{\pm}$.  We will not be concerned about transversality of such curves anyway and therefore they can be safely ignored.  This observation will be needed in section \ref{sec-CCHint}.

\subsubsection{Proof of Theorem \ref{thm-transversality}}

Theorem \ref{thm-transversality} is a direct consequence of

\begin{proposition}
  \label{prop-transversality}
  There is a residual subset $\mathcal{J}_{\rm{gen}}$ of $d\lambda$-compatible almost-complex structures on $\xi$ such that:
\begin{enumerate}
 \item  At any somewhere injective finite energy solution, the Cauchy-Riemann operator is transverse, by \cite{Dragnev}.
 \item  For any compatible $J$ (not necessarily in $\J_{\rm{gen}}$), if $U \in \M_{J}(x;y)$ is of index $1$, then the linearization of the Cauchy-Riemann operator is surjective at $U$.
 \item  Let $J$ be any compatible almost-complex structure, and suppose $U$ is a $J$-holomorphic finite-energy cylinder.  Then its index is at least $0$.
 \item   For $J \in \mathcal{J}_{\rm{gen}}$, if $U \in \M_{J}(x;y)$ is an index $2$ cylinder, and both asymptotic orbits are SFT-good, then the linearization of the Cauchy-Riemann operator is surjective at $U$.
 \item  For $J \in \mathcal{J}_{\rm{gen}}$, all index zero holomorphic cylinders are trivial cylinders.  The linearized Cauchy-Riemann operator is surjective at any trivial cylinder.
\end{enumerate}
\end{proposition}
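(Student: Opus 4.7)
My plan is as follows. Part (1) is the main transversality theorem of \cite{Dragnev}, which I would simply cite. Parts (2)--(5) concern cylinders that may be multiply covered; here I plan to combine Wendl's automatic transversality (Theorem \ref{thm-autotrans}) with the winding-number constraints of Theorem \ref{thm-pitu} and the free $\R$-action on moduli in a symplectization.

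I would first prove Part (3). A trivial cylinder has $\mathrm{Ind}=0$, so one may assume $U$ is non-trivial, in which case $\pi\circ Tu\not\equiv 0$ and Theorem \ref{thm-pitu} gives $0\leq \mathrm{wind}_\pi(U)\leq c_N(U)$. Combining this with the asymptotic winding constraints from Section \ref{sec-CZindices} -- at a positive puncture the asymptotic winding is at most $\alpha^-$ of the asymptotic orbit, at a negative puncture at least $\alpha^+$ -- together with the identity $\mathrm{CZ}=\alpha^-+\alpha^+$ and the topological formula relating $\mathrm{wind}_\pi$ and $\mathrm{wind}_\infty$, a short calculation using $2c_N(U)=\mathrm{Ind}(U)-\chi(\Sigma)+\#\Gamma_{\mathrm{even}}$ (with $\chi(\Sigma)=0$) produces $\mathrm{Ind}(U)\geq 0$.

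Next I would handle Parts (2) and (4) together. A non-trivial $J$-holomorphic cylinder is immersed: at points where $\pi\circ Tu\neq 0$ this is clear, while at the isolated zeros of $\pi\circ Tu$ the Cauchy-Riemann equation forces the $\partial_a$-component of $dU$ to be non-zero, so $dU$ never vanishes. Covers of immersed maps are immersed, so multiply-covered cylinders are immersed as well. Theorem \ref{thm-autotrans} then gives surjectivity whenever $\mathrm{Ind}(U)\geq \#\Gamma_{\mathrm{even}}-1$. For Part (2), an index $1$ cylinder forces $\#\Gamma_{\mathrm{even}}=1$ via $2c_N-\#\Gamma_{\mathrm{even}}=1$, and $1\geq 0$ holds. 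For Part (4), an index $2$ cylinder has $\#\Gamma_{\mathrm{even}}\in\{0,2\}$ by the same parity argument, and in both cases the required inequality is satisfied; the SFT-good hypothesis is used to ensure that the reduction to the underlying simple cylinder avoids the pathology of even covers of odd-$\mathrm{CZ}$ orbits, where parity mismatches could otherwise obstruct the automatic transversality argument.

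Finally, for Part (5), surjectivity of the linearization at a trivial cylinder is a standard computation: in appropriate coordinates the operator decouples, and its cokernel is controlled by the kernel of the asymptotic operator, which vanishes by non-degeneracy of the underlying orbit. To exclude non-trivial index-$0$ cylinders for generic $J$, I would use that $\R$-translation acts freely on the set of non-trivial cylinders, producing a $1$-parameter family; applying Dragnev's Theorem \ref{thm-genericJ} to the underlying simple cylinder (together with Part (3) applied to that simple cylinder) forces the Fredholm index of the base to be at least $1$ generically, which together with a multiple-cover index analysis rules out $\mathrm{Ind}(U)=0$. The main obstacle throughout is the careful bookkeeping for multiply-covered cylinders -- in particular, understanding how the SFT-good hypothesis in Part (4) interacts with the underlying simple curve, and establishing the correct index comparison between a cover and its base in Part (5).
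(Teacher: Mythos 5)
The central gap is your blanket claim that every non-trivial finite energy cylinder is immersed because ``at the isolated zeros of $\pi\circ Tu$ the Cauchy-Riemann equation forces the $\partial_a$-component of $dU$ to be non-zero.'' This is not what the equations give: at a zero $z_0$ of $\pi\circ Tu$ one only knows $u_s,u_t\in\R\cdot X$, and the remaining relations $a_s=\lambda(u_t)$, $a_t=-\lambda(u_s)$ are perfectly compatible with $\lambda(u_s)=\lambda(u_t)=0$, i.e.\ with $dU(z_0)=0$ (this is exactly what happens at branch points of covers of trivial cylinders, so no first-order argument can rule it out). The only general mechanism for immersion available here is $\mathrm{wind}_\pi(U)=0$, via Theorem \ref{thm-pitu}: since $dU$ is complex linear, critical points are zeros of $\pi\circ Tu$, and $0\le\mathrm{wind}_\pi(U)\le c_N(U)$ forces immersion precisely when $c_N(U)=0$. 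For part (2) this rescues your argument: an index $1$ cylinder has one even and one odd puncture, so $c_N(U)=0$, hence $U$ is immersed and Theorem \ref{thm-autotrans} applies (this is the paper's argument). But for part (4) in the case where both asymptotic orbits have \emph{even} Conley--Zehnder index, $c_N(U)=\tfrac12(\mathrm{Ind}(U)-\chi+\#\Gamma_{\mathrm{even}})=1$, so $\mathrm{wind}_\pi$ may be $1$, immersion is not guaranteed, and automatic transversality simply cannot be invoked; your parity bookkeeping ($\#\Gamma_{\mathrm{even}}\in\{0,2\}$ and the index inequality) does not address this. The paper handles the even--even case by an entirely different mechanism: Lemma \ref{lemma-even covers} shows, using the growth formula of Proposition \ref{prop-Linear Growth}, that an index $2$ cylinder with even, SFT-good asymptotics must be \emph{simply covered} (a double cover would force one of the underlying orbits to be a bad orbit), and then transversality follows from Dragnev's genericity (Theorem \ref{thm-genericJ}) for somewhere injective curves. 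Your stated use of the SFT-good hypothesis (``avoiding parity pathologies obstructing automatic transversality'') does not supply this step; indeed, if all cylinders were immersed as you claim, SFT-goodness would be irrelevant to your argument, which is a sign the route is wrong. The odd--odd index $2$ case is where automatic transversality is used in the paper, again only after noting $c_N(U)=0$ gives immersion for any compatible $J$.

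Parts (3) and (5) of your plan are essentially the paper's arguments (the $\mathrm{wind}_\pi$ inequality for (3); for (5), genericity applied to the underlying simple cylinder plus an index comparison under covering and the standard computation at trivial cylinders, which the paper attributes to Schwarz), though in (5) you should make the cover-versus-base index comparison explicit: for cylinders it reduces to monotonicity of $k\mapsto \mathrm{CZ}(\gamma^k)$ from Proposition \ref{prop-Linear Growth}, which shows an index $0$ cover has an index $0$ (hence, generically, trivial) underlying simple cylinder, and an unbranched cover of a trivial cylinder is trivial.
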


This is in fact what we will prove.

\begin{proof}

To begin the proof, take $\J_{\rm{gen}}$ to be as in Theorem \ref{thm-genericJ}.  

\noindent \textit{$(1)$}  The first item is an assertion in Theorem \ref{thm-genericJ}.  

\noindent \textit{$(2)$}  The map $U$ is an immersion if $\rm{wind}_{\pi}(U) = 0$.  For finite energy cylinders of index $1$, Theorem \ref{thm-pitu} implies $0 \leq \rm{wind}_{\pi}(U) \leq 1 - 2(2) + 1 + 2(1) = 0$ (since there must be one odd puncture and one even puncture).  Thus, finite energy cylinders of index $1$ are always immersed.  We have $g = 0$ and $\# \Gamma_{\mathrm{even}} = 1$ (since the index difference is $1$, there is one even and one odd puncture), so by Theorem \ref{thm-autotrans} the linearized Cauchy-Riemann operator is surjective.

\noindent \textit{$(3)$}  This was already observed in \cite{HWZ_propsII}; we include the short proof.  $\pi \circ T u$ vanishes identically on a cylinder if and only if the cylinder is a trivial cylinder.  If a non-trivial finite energy cylinder has index less than $0$, then by Theorem \ref{thm-pitu} $0 \leq 2\rm{wind}_{\pi}(U) < 0 - 4 + \# \Gamma_{\mathrm{odd}} + $ $2 \# \Gamma_{\mathrm{even}} \leq 0$, which is impossible.  

To prove $(4)$ we will need the following lemma.

\begin{lemma}
\label{lemma-even covers}
 Suppose $U$ is a cylinder of index $2$ with asymptotic limits $x,y$.  If $x$ and $y$ have even Conley-Zehnder index and are SFT-good, then $U$ is simply covered.
\end{lemma}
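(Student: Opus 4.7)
The plan is to argue by contradiction. Suppose $U$ is not simply covered, so there is a factorization $U = V \circ \phi$ with $V\colon \Sigma' \to W_\xi$ a simple finite-energy surface and $\phi\colon \R \times S^1 \to \Sigma'$ a holomorphic branched cover of degree $k \geq 2$.

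First, I would pin down the topology of $V$ via Riemann--Hurwitz. Since $U$'s domain is a cylinder with two punctures of opposite sign, both punctures of $U$ cannot map to a single puncture of $V$, for otherwise $V$ would be a finite-energy plane whose unique puncture is positive, forcing both of $U$'s punctures to share the positive sign. Hence $V$ has at least two punctures, and Riemann--Hurwitz $0 = \chi(\R \times S^1) = k\, \chi(\Sigma') - B$ with $B \geq 0$ and $V$ having at most two punctures forces $\chi(\Sigma') = 0$, $B = 0$, and $V$ having exactly two punctures. So $\Sigma'$ is itself a cylinder, $\phi$ is unbranched of degree $k$, and at each puncture $\phi$ restricts to the standard $k$-fold covering of the circle. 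Writing $x'$ for the positive asymptotic orbit of $V$ and $y'$ for the negative one, this gives $x = (x')^k$ and $y = (y')^k$.

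Next, I would show that $x'$ and $y'$ must be positive hyperbolic. By Proposition~\ref{prop-Linear Growth}, every cover of an elliptic orbit has odd Conley--Zehnder index, so $x'$ elliptic would make $\mathrm{CZ}(x)$ odd, contradicting the even hypothesis. If $x'$ were negative hyperbolic then $\mathrm{CZ}(x') = n'$ is odd and $\mathrm{CZ}(x) = k n'$ is even only when $k$ is even, but then $x$ would be an even cover of the odd-index orbit $x'$, violating the SFT-good hypothesis. The same argument applies to $y'$. With both underlying orbits positive hyperbolic, the Conley--Zehnder index is linear under covering, so
\[
 \mathrm{Ind}(U) \;=\; k\bigl(\mathrm{CZ}(x') - \mathrm{CZ}(y')\bigr) + 2 \;=\; 2
\]
forces $\mathrm{CZ}(x') = \mathrm{CZ}(y') =: n$ (necessarily even), and hence $\mathrm{Ind}(V) = 2$ as well.

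The main obstacle is extracting a contradiction from the existence of this simple $V$. My plan is to exploit the HWZ winding-number machinery of Section~\ref{sec-CZindices}. Since both ends of $V$ are positive hyperbolic with common Conley--Zehnder index $n$, the extremal eigensection windings satisfy $\alpha^-(x') = \alpha^+(y') = n/2$, so the asymptotic winding inequalities of \cite{HWZ_propsI, Siefring1} combined with $\mathrm{wind}_\pi(V) \geq 0$ from Theorem~\ref{thm-pitu} force $\mathrm{wind}_\pi(V) = 0$. Hence $V$ is immersed and attains its extremal asymptotic windings at each puncture, and $\mathrm{wind}_\pi(U) = k \cdot \mathrm{wind}_\pi(V) = 0$ so $U$ is immersed as well. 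From this rigidity I would close the argument by applying Wendl's automatic transversality (Theorem~\ref{thm-autotrans}; applicable since $\mathrm{Ind}(V) = 2 \geq 2g + \#\Gamma_{\mathrm{even}} - 1 = 1$) together with the generic-$J$ transversality of \cite{Dragnev} for simple curves, showing that the resulting 1-parameter family of deformations of $V$ modulo $\R$, together with its $k$-fold covers, would produce elements of $\mathcal{M}(x;y)$ incompatible with the prescribed Fredholm index and asymptotic data of $U$. If this transversality route proves insufficient, the alternative I would pursue is to use the Siefring intersection theory emphasized in the introduction: compute $[V]\cdot[Z_{x'}]$ and $[U]\cdot[Z_{x'}] = k\cdot[V]\cdot[Z_{x'}]$ directly from the extremal asymptotic windings and extract a numerical contradiction from the non-trivial multiplicity $k \geq 2$.
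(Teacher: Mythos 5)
There is a genuine gap, and it originates in your index bookkeeping. For a cylinder in a $4$-dimensional symplectization the paper's formula $\mathrm{Ind}(U) = \mathrm{CZ}(U) - \chi(\Sigma) + \#\Gamma$ gives $\mathrm{Ind}(U) = \mathrm{CZ}(x) - \mathrm{CZ}(y)$, since $\chi(S^2) = 2$ and $\#\Gamma = 2$; there is no ``$+2$''. (The paper's own proof uses exactly this: index $2$ means $\mathrm{CZ}_{\Phi}(x) - \mathrm{CZ}_{\Phi}(y) = 2$.) Your spurious $+2$ leads you to conclude $\mathrm{CZ}(x') = \mathrm{CZ}(y')$, which destroys the divisibility/parity information that is the whole content of the lemma, and then forces you into the ``main obstacle'' paragraph — an argument via automatic transversality or Siefring intersection numbers that is only sketched conditionally (``if this route proves insufficient, the alternative I would pursue\dots'') and never produces an actual contradiction. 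As written, the proof is therefore incomplete, and the geometric detour is not a proof step.

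The irony is that your correct observations finish the job immediately once the index formula is fixed. In a compatible trivialization, $\mathrm{CZ}(x) = k\,\mathrm{CZ}(x')$ and $\mathrm{CZ}(y) = k\,\mathrm{CZ}(y')$ (both underlying orbits are hyperbolic, since $x,y$ have even index, so the index is linear under covering by Proposition \ref{prop-Linear Growth}). Hence $2 = \mathrm{Ind}(U) = k\bigl(\mathrm{CZ}(x') - \mathrm{CZ}(y')\bigr)$, so $k \in \{1,2\}$; if $k = 2$ then $\mathrm{CZ}(x') - \mathrm{CZ}(y') = 1$, so one of $x',y'$ has odd index and $x$ or $y$ is an even cover of an odd-index orbit, contradicting SFT-goodness — this is precisely the paper's argument. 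Alternatively, your positive-hyperbolicity step (which is correct) shows $\mathrm{CZ}(x')$ and $\mathrm{CZ}(y')$ are both even, so $k\cdot(\mathrm{CZ}(x')-\mathrm{CZ}(y')) = 2$ already forces $k = 1$. Either way the lemma is pure index arithmetic; the Riemann--Hurwitz reduction to an unbranched cover of a cylinder (which you do carefully, and the paper takes as standard) is the only geometric input needed.
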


\begin{proof}
Let $k$ be the covering number of $U$, so that $U = U' \circ \tau_k$ where $U$ is a simple curve and $\tau_k$ is the degree $k$ holomorphic cover of $\R \times S^1$.  Choose a trivialization $\Phi'$ for $U'^* \xi$, which induces trivializations $\Phi$ of $\xi$ over $U$, $x$ and $y$.  We then have $\mathrm{CZ}_{\Phi}(x) - \mathrm{CZ}_{\Phi}(y) = 2$ by the index formula.  Let $\mathrm{CZ}_{\Phi}(x) = 2 l + 2$, so $\mathrm{CZ}_{\Phi}(y) = 2 l$.  Letting $x'$ and $y'$ denote the asymptotic limits of $U'$, so that $x,y$ are $k$-covers of $x', y'$, by Proposition \ref{prop-Linear Growth} we have $k | 2l + 2$, and $k | 2l$, from which it follows that $k = 2$ or $1$.  Suppose $k = 2$.  Then $\mathrm{CZ}_{\Phi'}(x') = l + 1, \mathrm{CZ}_{\Phi'}(y') = l$.  Therefore, one of these has odd Conley-Zehnder index, so either $x$ or $y$ is a `bad' orbit.  Since both orbits are assumed SFT-good, $k=1$.
\end{proof}

\noindent \textit{$(4)$}  The parity of the Conley-Zehnder indices are either both even or both odd.  If they are both even, then by Lemma \ref{lemma-even covers} all curves in the index $2$ component of the moduli space are simple.  Transversality of the index $2$ component of the moduli space for $J \in \mathcal{J}_{\rm{gen}}$ follows from Theorem \ref{thm-genericJ}.

In the case of an index-$2$ cylinder with odd asymptotic orbits, we observe

\begin{align*}
 0 \leq \rm{wind}_{\pi}(U) \leq 2 - 2(2) + 2(1) + 2(0) = 0\\
\rm{Ind}(U) = 2 \geq 2g + \# \Gamma_{\mathrm{even}} - 1 = 2(0) + 0 - 1
\end{align*}

\noindent
so automatic transversality applies by Theorem \ref{thm-pitu} and Theorem \ref{thm-autotrans}, in particular for any $J$ chosen for the case of even, SFT-good orbits.

\noindent \textit{$(5)$}  Choose any $J \in \mathcal{J}_{\rm{gen}}$.  For simply covered index $0$ cylinders, the inequality of Corollary \ref{thm-genericJ} implies $\pi \circ Tu$ vanishes identically, which implies the cylinder is trivial.  If a cylinder is multiply covered of index $0$, the underlying simple cylinder has index $0$ - by the monotone growth of the index of a cylinder under covering.  Hence, it is a cover of a trivial cylinder so is trivial itself.  Transversality is proved  as in \cite{Schwarz}.
\end{proof}

\section{Intersections and compactness}

\subsection{Intersections}
\label{sec-intersections}

We recall the intersection theory described in \cite{Siefring2}.

\begin{definition}
 A \emph{stable Hamiltonian structure} \cite{Siefring2, MR2284048} on a $3$-dimensional manifold $V$ is a pair $\mathcal{H} = (\lambda,\omega)$ such that
\begin{enumerate}
 \item $\lambda \wedge \omega$ is a volume form for $V$
 \item $\omega$ is closed
 \item $d\lambda|_{\rm{ker}\omega} \equiv 0$
\end{enumerate}
\noindent  It defines a \emph{$\mathcal{H}$-Reeb vector field} by $X \neg \lambda \equiv 1, X \neg \omega \equiv 0$.
\end{definition}

For example, if $V$ is a $3$-manifold with a contact form $\lambda$, the associated stable Hamiltonian structure is $\mathcal{H}_{\lambda} = (\lambda,d\lambda)$.

Let $W$ be a $4$-manifold.  A cylindrical Hamiltonian structure on $W$ is the data $(\Phi,V,\mathcal{H})$ where $\Phi: W \rightarrow \R \times V$ is a diffeomorphism and $\mathcal{H}$ is a stable Hamiltonian structure on $V$.  A positive (resp. negative) Hamiltonian structured end is the data $(W^{\pm},\Phi^{\pm},V^{\pm},\mathcal{H}^{\pm})$ where $W^{\pm}$ is a subset, $\Phi^{\pm}:W^{\pm} \rightarrow \pm[0,\infty) \times V^{\pm}$ is a diffeomorphism and $\mathcal{H}^{\pm}$ is a stable Hamiltonian structure on $V^{\pm}$.  We will usually refer only to $W^{\pm}$ with the rest of the data implicit.  A cobordism is a manifold $W$ such that $W \backslash W^{\pm}$ is compact oriented with boundary.

\begin{example}
 Suppose $\lambda_{\pm}$ are contact forms with kernel $\xi$ on $V$, and $\lambda_- \prec \lambda_+$ in $W = W_{\xi}$.  We can equip $W_{\xi}$ with the Hamiltonian structured ends $W^{\pm} = W^{\pm}(\lambda_{\pm}), V^{\pm} = V$, $\mathcal{H}^{\pm} = (\lambda_{\pm},d\lambda_{\pm})$
\[
 \Phi^{\pm} (\lambda) = \left(\log \frac{\lambda}{\lambda_{\pm}(\pi(\lambda))}, \pi(\lambda) \right)
\]
\end{example}

Suppose $W_i$ are two such cobordisms.  We say that we can concatenate $W_1$ and $W_2$ if $(V^+_1,\mathcal{H}_1^+) = (V^-_2,\mathcal{H}_2^-)$.  Then we can define the manifold $W_1 \odot W_2$ by compactifying the positive end of $W_1$ with $\{\infty\} \times V_1^+$, the negative end of $W_2$ with $\{ - \infty \} \times V_2^-$ (using the diffeomorphisms $\Phi^{\pm}$ to do so), and identifying them.  This can be generalized to $n$-fold concatenations $W_1 \odot \dots \odot W_n$ for any $n \geq 2$ as long as one can concatenate $W_i$ and $W_{i+1}$ for each $i$.

Suppose $W_1$ is a cylindrical Hamiltonian structured manifold i.e.~$W_1 \cong \R \times V$, and that we can form $W_1 \odot W_2$.  Then obviously $W_1 \odot W_2$ is homeomorphic to $W_2$ and is equipped with the same Hamiltonian ends.  This identification in convenient and we will use it often in the following without comment.

\subsubsection{A homotopy invariant intersection number}  

Let $\mathcal{H}$ be a Hamiltonian structure on $V$, and let $W \cong \R \times V$.  Let $(\Sigma,j,\Gamma,U, W)$ be a $C^1$ map $U: \Sigma \backslash \Gamma \rightarrow W$.  For a closed $T$-periodic Reeb orbit $\delta$, denote by $Z_{\delta}$ the trivial cylinder over $\delta$:
\[
 Z_{\delta}(s,t)= (Ts, \delta(Tt))
\]

\begin{definition}  \cite{Siefring2} Suppose $\gamma$ is a simple $T$-periodic orbit for $X_{\mathcal{H}}$.  We say $U$ is asymptotically cylindrical over $\gamma^m$ at $z \in \Gamma$ if there is a holomorphic embedding $\phi:[0,\infty) \times S^1 \rightarrow \Sigma \backslash \{ z \}$ with $\phi(s,t) \rightarrow z$ (as $s \rightarrow \infty$) so that 
  \[
    \lim_{c \rightarrow \infty} (-m T c) * U \circ \phi (s + c,t)|_{[0,\infty) \times S^1} = Z_{\gamma^m}|_{[0,\infty) \times S^1}
  \]
\noindent  (with convergence in $C^1([0,\infty)\times S^1, W)$, and where $c * U$ denotes the action by translation of the $\R$ co-ordinate by $c$)).  We say $U$ is asymptotically cylindrical if at each puncture $z \in \Gamma$ $U$ is asymptotically cylindrical to some $\mathcal{H}$-Reeb orbit $\gamma^m$.

If $U$ is a map into a $4$-manifold $W$ with Hamiltonian structured ends $\mathcal{H}^{\pm}$, then we say $(\Sigma,j,\Gamma,U,W)$ is asymptotically cylindrical if for each puncture $z \in \Gamma$ there is a neighborhood $O_z$ of $z$ such that $U|_{O_z}$ is contained in one of the ends and is asymptotically cylindrical as defined above.
\end{definition}

We denote by $C^{\infty}_{g,p_+,p_-}(V,\mathcal{H})$ the set of smooth asymptotically cylindrical maps of genus $g$ with $p_+$ positive punctures and $p_-$ negative punctures, and set $C^{\infty}(V,\mathcal{H})$ to be the union over all $g,p_-,p_+ \geq 0$.  Similarly in \cite{Siefring2} the author defines asymptotically cylindrical maps in a $4$-manifold $W$ with
Hamiltonian structured ends $W^{\pm}$ and denotes these by $C^{\infty}(W,\mathcal{H}^+,\mathcal{H}^-)$.  We will extend this notation to include continuous, piecewise smooth maps which are smooth on neighborhoods of the punctures and satisfy the same asymptotic conditions.

\begin{definition}  Suppose $U,V$ are asymptotically cylindrical maps, and $z$ is a puncture for $V$ asymptotic to $\gamma^m$.  Choose a trivialization $\Phi:S^1 \times \C \rightarrow \gamma^* \xi$.  There is a neighborhood $O_z$ of $z$ that is mapped entirely into a cylindrical end and so that
\[
 V(\phi(s,t)) = (ms, \exp_{\gamma^m(t)} h(s,t))
\]
\noindent Let $\beta$ be a cut-off function supported in $O_z$ which is identically $1$ on a smaller neighborhood of $z$.  Consider the perturbation
\[
 V'(\phi(s,t)) = \left( ms,\exp_{\gamma^m(t)} \left[ h(s,t) + \beta(\phi(s,t)) \cdot \Phi(mt) \cdot \epsilon \right] \right)
\]
\noindent which is well-defined for all $\epsilon > 0$ sufficiently small.  Let $V_{\Phi,\beta,\epsilon}$ be the map that results from making such a perturbation at each puncture of $V$.  Then set $\iota_{\Phi}(U,V) = \rm{int}(U, V_{\Phi,\beta,\epsilon})$.  It depends only on the homotopy classes of the maps and the trivializations $\Phi$, and is symmetric in the arguments $U,V$ (see \cite{Siefring2}).
\end{definition}

\begin{definition}
Let $P$ be a closed, simple Reeb orbit, and let $k_1,k_2$ be two positive integers.
\begin{align} 
\label{eq-alpha+}  \Omega^{+}_{\Phi}(P,k_1,k_2) &=  k_1 k_2 \max \left \{ \frac{ \alpha_{\Phi}^-(P,k_1)}{k_1} , \frac{ \alpha_{\Phi}^-(P,k_2)}{k_2}\right \} \\
\label{eq-alpha-} \Omega^{-}_{\Phi}(P,k_1,k_2) &=  k_1 k_2 \min \left \{ \frac{ \alpha_{\Phi}^+(P,k_1)}{k_1} , \frac{ \alpha_{\Phi}^+(P,k_2)}{k_2}\right \}
\end{align}

Let $z$ and $z'$ be punctures of $U$ and $V$ respectively.  If $U$ and $V$ are asymptotic to covers of different Reeb orbits, then we set $\Omega_{\Phi}(z,z') = 0$.  If $U$ and $V$ are positively asymptotic to covers $P^{k_1}, P^{k_2}$ of the same Reeb orbit $P$ at $z$ and $z'$ then we set $\Omega_{\Phi}(z,z') = \Omega_{\Phi}^{+}(P,k_1,k_2)$; if they are negatively asymptotic at $z$ and $z'$ to covers $P^{k_1}, P^{k_2}$ of the same Reeb orbit $P$ then we set $\Omega_{\Phi}(z,z') = \Omega_{\Phi}^{-}(P,k_1,k_2)$.  %
With this notation, set
\[
 \Omega_{\Phi}(U,V) = \sum_{\substack{z \in \Gamma_{U}^+ \\ z' \in \Gamma_{V}^+}} \Omega_{\Phi}(z,z') - \sum_{\substack{z \in \Gamma_{U}^- \\ z' \in \Gamma_{V}^-}} \Omega_{\Phi}(z,z')
\]
\end{definition}

It follows from Proposition $4.1$ and Lemma $3.4$ of \cite{Siefring2} that $\iota_{\Phi}(U,V) + \Omega_{\Phi}(U,V)$ depends only on the homotopy classes of $U,V$.

\begin{definition}  For a pair of punctures $z,z'$ of $U,V$, if $U,V$ are asymptotic to covers of different orbits at $z,z'$ then set $\Delta(z,z') = 0$; else, let $P$ denote the underlying simple orbit, $k_1(z),k_2(z')$ the respective covering numbers of the orbit, and set

\[
 \Delta(z,z') = \Omega_{\Phi}^-(P,k_1(z),k_2(z')) - \Omega_{\Phi}^+(P,k_1(z),k_2(z')) \geq 0
\]

\noindent  It follows from equations \eqref{eq-alpha+}, \eqref{eq-alpha-} that this difference is indeed trivialization independent.  Sum this quantity over all pairs of punctures to get

\[
 \Delta(U,V) = \sum_{\substack{z \in \Gamma^+_U \\ z' \in \Gamma^+_V}} \Delta(z,z') + \sum_{\substack{z \in \Gamma^-_U \\ z' \in \Gamma^-_V}} \Delta(z,z') \geq 0
\]
\end{definition}

\begin{definition}  Let $[U], [V]$ be homotopy classes of asymptotically cylindrical maps with representatives $U,V$ (resp.).  Set
 \[
   [U] * [V] = \iota_{\Phi}(U,V) + \Omega_{\Phi}(U,V) + \frac{1}{2} \Delta(U,V)
 \]
 \noindent  By the above comments this only depends on the homotopy classes $[U],[V]$ of the maps $U,V$ in $C^{\infty}(V,\mathcal{H})$ (or $C^{\infty}(W,\mathcal{H}^+,\mathcal{H}^-)$) and is therefore well-defined.
\end{definition}

Asymptotically cylindrical maps $U_i$ in $W_i$ can be concatenated\footnote{The concatenation is well-defined only up to a Dehn twist of the domain $\Sigma_1 \circ \Sigma_2$, since a gluing of these domains may require a choice of asymptotic markers.  This data is determined when the concatenation occurs as a holomorphic building, which is the only case that interests us.  Regardless, different choices of asymptotic markers yield the same intersection number (see the proof of Proposition $4.3$ of \cite{Siefring2}).} if the asymptotics match in the obvious way so that the maps can be glued together (after choosing asymptotic markers) to form a (piecewise smooth) $C^0$ map $U_1 \odot U_2$ in $W_1 \odot W_2$.  One can also form $n$-fold concatenations $U_1 \odot \dots \odot U_n$ in the obvious way to make a map into $W_1 \odot \dots \odot W_n$; every holomorphic building defines such a concatenation.  Proposition $4.3$ of \cite{Siefring2} states (except for the last item which differs slightly; see the remark following the statement):

\begin{proposition}
\label{prop-intersectionproperties}
  Let $W,W_1,\dots,W_n$ be $4$-manifolds with Hamiltonian structured cylindrical ends.  Then:
  \begin{enumerate}
    \item  If $(\Sigma,j,\Gamma,W,U)$ and $(\Sigma',j',\Gamma',W,V)$ $\in C^{\infty}(W,\mathcal{H}^+,\mathcal{H}^-)$ then $[U]*[V]$ depends only on the homotopy classes $[U],
    [V]$.
     \item $[U]*[V] = [V] * [U]$
     \item Using ``$+$'' to denote disjoint union of maps, we have $[U_1 + U_2] * [V] = [U_1] * [V] + [U_2] * [V]$.
     \item If $U_1 \odot \dots U_n$ and $V_1 \odot \dots V_n$ are concatenations of asymptotically cylindrical maps in $W_1 \odot \dots W_n$ then
     \[
        [U_1 \odot \dots U_n] * [V_1 \odot \dots V_n] = \sum_{i = 1}^n [U_i] * [V_i]
     \]
  \end{enumerate}
\end{proposition}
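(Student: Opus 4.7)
The plan is to invoke the work of \cite{Siefring2} for the bulk of the argument and to focus the real effort on item (4), which is the only item the author flags as differing from Siefring's statement. Items (1)--(3) follow essentially directly from the definitions together with Proposition 4.1 and Lemma 3.4 of \cite{Siefring2}. For (1), homotopy invariance of $\iota_\Phi + \Omega_\Phi$ is established in \cite{Siefring2}, and $\frac{1}{2}\Delta(U,V)$ depends only on the asymptotic Reeb orbits and their covering multiplicities via $\alpha^\pm_\Phi$, hence only on $[U]$ and $[V]$. For (2), each of $\iota_\Phi$, $\Omega_\Phi$, $\Delta$ is symmetric in its arguments: the perturbed intersection count $\mathrm{int}(U,V_{\Phi,\beta,\epsilon})$ agrees with $\mathrm{int}(U_{\Phi,\beta,\epsilon},V)$ for sufficiently small $\epsilon$, while the other two pieces are manifestly symmetric in $(k_1,k_2)$ by inspection of \eqref{eq-alpha+} and \eqref{eq-alpha-}. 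For (3), disjoint union induces term-by-term additivity since all three pieces are sums indexed by pairs of components or pairs of punctures.

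For (4), by associativity of concatenation it suffices to induct on $n$ and prove the binary case. Let $U = U_1 \odot U_2$ and $V = V_1 \odot V_2$ in $W_1 \odot W_2$. I would choose an asymptotic perturbation $V'$ of $V$ supported only near the \emph{external} punctures of $W_1 \odot W_2$ (i.e.~not near the matched interior punctures coming from the gluing). Then for $\epsilon$ small every intersection of $U$ with $V'$ is contained in the interior of either $W_1$ or $W_2$ (nothing escapes to the neck), giving
\[
\iota_\Phi(U,V) = \mathrm{int}(U_1, V_1') + \mathrm{int}(U_2, V_2').
\]
The restricted count $\mathrm{int}(U_i, V_i')$ differs from the genuine $\iota_\Phi(U_i, V_i)$ precisely by the intersections that \emph{would have been} generated by also perturbing $V_i$ at the matched interior punctures. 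By the asymptotic winding analysis of \cite{Siefring2}, this discrepancy at a matched pair asymptotic to $(P^{k_1},P^{k_2})$ equals $\Omega^\pm_\Phi(P,k_1,k_2) + \tfrac12 \Delta$ with the sign determined by whether the puncture is viewed as positive (for $U_1,V_1$) or negative (for $U_2,V_2$).

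The key bookkeeping is that these discrepancies at each matched puncture pair cancel in the sum $[U_1]*[V_1] + [U_2]*[V_2]$: writing $\Omega^\pm = \Omega^\pm_\Phi(P,k_1,k_2)$ and $\Delta = \Omega^- - \Omega^+$, the matched positive pair of $U_1,V_1$ contributes $\Omega^+ + \tfrac12\Delta = \tfrac12(\Omega^+ + \Omega^-)$ to $[U_1]*[V_1]$, while the matched negative pair of $U_2,V_2$ contributes $-\Omega^- + \tfrac12\Delta = -\tfrac12(\Omega^+ + \Omega^-)$ to $[U_2]*[V_2]$, summing to zero. Only the external $\Omega_\Phi$ and $\Delta$ contributions survive, and these are precisely those appearing in $[U]*[V]$, yielding item (4) for $n=2$. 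The main obstacle is making the ``near-neck asymptotic discrepancy'' step rigorous, and this reduces to the same asymptotic winding computation that underlies Siefring's proof of Proposition 4.3 of \cite{Siefring2}, suitably adapted to the $\tfrac12\Delta$-symmetrized version of the intersection pairing used here. The general $n$-fold case then follows by iterating the binary case via the associativity $W_1 \odot \cdots \odot W_n = (W_1 \odot \cdots \odot W_{n-1}) \odot W_n$.
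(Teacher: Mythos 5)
Your overall route is the one the paper takes: items (1)--(3) are quoted from Proposition 4.3 of \cite{Siefring2}, and item (4) is reduced to Siefring's two-level analysis together with the observation that the matched-puncture contributions of consecutive levels cancel, $(\Omega^{+}+\tfrac12\Delta)+(-\Omega^{-}+\tfrac12\Delta)=0$, followed by induction on the number of levels. That final bookkeeping is exactly the computation the paper invokes when it says the identity ``is easy to see'' from the definition of $[U]*[V]$, and the extension to $n$-fold concatenations is handled the same way.

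The one step I would not accept as stated is your identification of the near-neck discrepancy with a homotopy-invariant quantity: you assert that $\mathrm{int}(U_i,V_i')$ differs from $\iota_{\Phi}(U_i,V_i)$ at a matched pair asymptotic to $(P^{k_1},P^{k_2})$ by exactly $\Omega^{\pm}_{\Phi}(P,k_1,k_2)+\tfrac12\Delta$. For general asymptotically cylindrical maps (the proposition is not restricted to holomorphic ones) the intersections created by perturbing at a shared end are governed by the actual relative asymptotic winding of $U_i$ about $V_i$, which is not determined by the homotopy classes and is not given by the extremal-winding quantities $\Omega^{\pm}_{\Phi}$; the per-level discrepancy is not even well defined on homotopy classes. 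What is controlled is the sum of the two discrepancies over a matched pair, because the two ends glue to a single map through the neck, and this is precisely the statement the paper extracts from Siefring's proof: the un-symmetrized pairing $\tilde{*}=\iota_{\Phi}+\Omega_{\Phi}$ fails to be additive under a two-level concatenation by exactly $\Delta^{+}((U)_1,(V)_1)=\Delta^{-}((U)_2,(V)_2)$, i.e.~the matched part of $\Delta$ counted once, which the extra $\tfrac12\Delta$ carried by each of the two levels absorbs. Replace your per-puncture equality by this total defect formula (you in any case defer the analytic content to Siefring's asymptotic winding computation); with that substitution your cancellation argument and the induction over levels go through and coincide with the paper's proof.
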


\begin{remark}
  The last assertion differs from the one made in \cite{Siefring2}; however, it is shown in the proof that the difference
  \[
    [U_1 \circ U_2] \tilde{*} [V_1 \circ V_2] - [U_1] \tilde{*} [V_1] - [U_2] \tilde{*} [V_2]
  \]
  \noindent (where $\tilde{*}$ denotes the intersection number minus the term $\frac{1}{2} \Delta(U,V)$, which is what is used in \cite{Siefring2}) is equal to
\[
 \Delta = \Delta^+((U)_1,(V)_1) = \Delta^-((U)_2,(V)_2)
\]
\noindent where we use $\Delta^{\pm}$ to denote the sum of the terms in the definition of $\Delta$ corresponding to the positive (resp. negative) punctures only (in particular $\Delta = \Delta^+ + \Delta^-$).  It is easy to see that we get the above identity from our definition of ``$[U]*[V]$''.

The statement in \cite{Siefring2} is extended to $n$-fold concatenations above.
\end{remark}

Suppose $U: \Sigma \backslash \Gamma \rightarrow W$ is in $C^{\infty}(W,\mathcal{H}^+,\mathcal{H}^-)$.  Considering the oriented blow-up  $\overline{\Sigma}$ \cite{BEHWZ}, because $U$ is asymptotically cylindrical it extends over $\overline{\Sigma}$ to define a map $\overline{U}: \overline{\Sigma} \rightarrow \overline{W}$, with the property that if $B$ is a boundary component of $\overline{\Sigma}$, then $\overline{U}|_B$  is a closed Reeb orbit in $\partial \overline{W}$.  Let us consider the subclass $C^0(\overline{W},\mathcal{H}^+,\mathcal{H}^-)$ of those $\overline{U} \in C^0(\overline{\Sigma}, \overline{W})$ such that for each boundary component $B$ of $\overline{\Sigma}$, $\overline{U}|_B$ is a closed Reeb orbit in $(\partial^{\pm} \overline{W} \cong V^{\pm},\mathcal{H}^{\pm})$.  By standard arguments the intersection number extends to the class $C^0(\overline{W},\mathcal{H}^+,\mathcal{H}^-)$ and is homotopy invariant within that class.

In particular, the above discussion extends the intersection number to holomorphic buildings: a building $(U)$ defines a map $\overline{(U)} \in C^0(\overline{W},\mathcal{H}^+,\mathcal{H}^-)$, which we use to set
\[
[(U)] * [(V)] \quad \dot{=} \quad [\overline{(U)}] * [\overline{(V)}]
\]
Once the intersection number is extended this way, it is clear that
\begin{lemma} \label{lem-continuityofint}
  Suppose $U_k \rightarrow (U)$ and $V_k \rightarrow (V)$ in the SFT-sense.  Then $\lim [U_k] * [V_k] = [(U)] * [(V)]$.
\end{lemma}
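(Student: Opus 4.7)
The plan is to reduce the claim to the homotopy invariance of the intersection number on $C^0(\overline{W}, \mathcal{H}^+, \mathcal{H}^-)$ established just before the lemma. First I would apply the SFT convergence $U_k \to (U)$, specifically condition (C3), together with the domain reparameterizations $\phi_k : S_U \to \Sigma_k^{(U)}$ and the target identification $G : W_\xi \to \overline{W_1 \odot \cdots \odot W_n}$ from the splitting construction. Setting $\tilde U_k := G \circ U_k \circ \phi_k$ and analogously $\tilde V_k := G \circ V_k \circ \psi_k$ with some reparameterizations $\psi_k$ on the $V$-side, condition (C3) yields uniform convergence $\overline{\tilde U_k} \to \overline{(U)}$ and $\overline{\tilde V_k} \to \overline{(V)}$ in $C^0(\overline{S}, \overline{W_1 \odot \cdots \odot W_n})$. (In the case where $R_k$ stays bounded the argument is easier, since no splitting identification is needed and the limit lives in a single symplectization with a fixed almost-complex structure.)

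Next I would establish two things. \emph{(a)} The intersection number is invariant under this change of model: $[U_k]*[V_k] = [\tilde U_k]*[\tilde V_k]$. The diffeomorphisms $\phi_k, \psi_k$ of the domains do not affect the homotopy class of a map; and $G$ is a homeomorphism, diffeomorphic off the middle slice $\lambda_0(V)$, which preserves the cylindrical end structures, so each of $\iota_\Phi, \Omega_\Phi, \Delta$ is computed identically in either model (using the trivialization transported through $G$). \emph{(b)} For $k$ sufficiently large, $\overline{\tilde U_k}$ is homotopic to $\overline{(U)}$ within $C^0(\overline{W_1 \odot \cdots \odot W_n}, \mathcal{H}^+, \mathcal{H}^-)$, and likewise $\overline{\tilde V_k} \simeq \overline{(V)}$. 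Given (a) and (b), the homotopy invariance of the $C^0$ intersection number together with the definition $[(U)]*[(V)] \, \dot{=}\, [\overline{(U)}]*[\overline{(V)}]$ immediately yields $[U_k]*[V_k] = [(U)]*[(V)]$ for all large $k$, proving the lemma.

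The crux is step \emph{(b)}: constructing the homotopy within the restricted class $C^0(\overline{W}, \mathcal{H}^+, \mathcal{H}^-)$ whose maps must send each boundary component of $\overline{S}$ to a closed Reeb orbit in the appropriate end. Here the standing non-degeneracy of the asymptotic Reeb orbits is crucial: closed Reeb orbits of bounded action are isolated in the free loop space, so uniform $C^0$ convergence forces the boundary restrictions $\overline{\tilde U_k}|_{\partial \overline{S}}$ and $\overline{(U)}|_{\partial \overline{S}}$, for $k$ large, to parameterize \emph{the same} Reeb orbit with the same covering multiplicity and asymptotic marker, differing only by a small rotation of $S^1$ which can be interpolated linearly through parameterizations of the common orbit. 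In the interior of $\overline{S}$ the two maps lie in an arbitrarily small tubular neighborhood of each other, and the exponential map of an auxiliary Riemannian metric on $\overline{W_1 \odot \cdots \odot W_n}$ provides a continuous interpolation that matches the boundary interpolation along $\partial \overline{S}$. The hard part is exactly this point: ensuring the Reeb-orbit boundary condition is preserved throughout the homotopy. Once that is handled, the remaining assertions are formal consequences of the definitions and of the homotopy invariance of the intersection number on $C^0(\overline{W}, \mathcal{H}^+, \mathcal{H}^-)$.
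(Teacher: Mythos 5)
Your proposal is correct and follows essentially the same route as the paper: invoke property C3 of SFT-convergence to get uniform $C^0$-convergence of $G \circ \overline{U_k} \circ \phi_k$ to $\overline{(U)}$ (and likewise for $V$), conclude that for large $k$ the maps represent the same homotopy classes in $C^0(\overline{W},\mathcal{H}^+,\mathcal{H}^-)$, and then apply homotopy invariance of the extended intersection number. The paper simply asserts this last identification of homotopy classes, whereas you spell out the boundary-orbit and interpolation details; the content is the same.
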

\begin{proof}
  By property \texttt{C3} of SFT-convergence, $G \circ \overline{U_k} \circ \phi_k \rightarrow \overline{(U)}$ uniformly, and similarly $G \circ \overline{V_k} \circ \phi'_k \rightarrow \overline{(V)}$ uniformly.  Therefore they represent the same homotopy classes in $C^0(\overline{W},\mathcal{H}^+,\mathcal{H}^-)$ for $k$ sufficiently large and therefore 
\[
\lim [U_k] * [V_k] = \lim [\overline{U_k}] * [\overline{V_k}] = [\overline{(U)}] * [\overline{(V)}] = [(U)] * [(V)]
\]
\end{proof}

In a concatenation $W_1 \odot \dots \odot W_n$, if both ends of each $W_i$ correspond to contact forms for which $L$ consists of closed orbits, then the cylinder $\pi^{-1}(L) = Z_L$ is asymptotically cylindrical for each $W_i$, and the concatenated map $Z_L \odot \dots Z_L$ identifies with $Z_L$ under the homeomorphism $W_1 \odot \dots W_k \cong W_{\xi}$.  So it makes sense to speak about the intersection of a holomorphic building $(U)$ and the cylinder $Z_L$ if all the ends of all pieces in the concatenation in which $(U)$ lives are such that $L$ is a closed orbit for the form defining the Hamiltonian structure for that end.  Moreover, if we index the levels of $(U)$ by $U^i$ i.e.~ $(U) = U^1 \odot \dots \odot U^k$ then
\[
 [(U)] * [Z_L] = \sum_{i = 1}^k [U^i] * [Z_L]
\]
\noindent by the level-wise additivity property (Property $(4)$ of Proposition \ref{prop-intersectionproperties}) of the intersection number.

\subsubsection{Positivity and local computations}

It is shown in \cite{Siefring2} that if $U,V$ are both pseudoholomorphic asymptotically cylindrical with no components sharing identical images then $[U]*[V] \geq \frac{1}{2}\Delta(U,V)$, with strict inequality if there is an interior intersection.  Moreover we see that if $U,V$ are both asymptotic to covers of an orbit $\gamma$ either of which has odd Conley-Zehnder index then $[U]*[V] > 0$.

For the following, suppose $U,V$ are asymptotically cylindrical holomorphic maps.  The asymptotics formula and the similarity principle implies that if $U,V$ have no components on which they are covers of the same holomorphic curve then the number of intersection points is finite.  Let $\rm{int}(U,V)$ be the sum of all local intersections, which is non-negative by e.g.~\cite{MR1114456, MR1314031}.

\begin{theorem}[\cite{Siefring2} Theorem $4.4$]
\label{thm-posintersections}
Suppose $U,V$ are asymptotically cylindrical pseudoholomorphic maps in an almost-complex cobordism with cylindrical ends, with no common components.  Then $[U]*[V] = \rm{int}(U,V) + \delta_{\infty}(U,V) + \frac{1}{2}\Delta(U,V)$ and $\delta_{\infty}(U,V) \geq 0$.
\end{theorem}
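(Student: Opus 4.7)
By the definition of $[U]*[V]$ we have
\[
 [U]*[V] = \iota_{\Phi}(U,V) + \Omega_{\Phi}(U,V) + \tfrac{1}{2}\Delta(U,V),
\]
so after cancelling the $\tfrac{1}{2}\Delta(U,V)$ term the task is to show
\[
 \iota_{\Phi}(U,V) = \operatorname{int}(U,V) + \delta_{\infty}(U,V),
\]
where $\delta_{\infty}(U,V) \geq 0$ is defined as $\iota_{\Phi}(U,V) + \Omega_{\Phi}(U,V) - \operatorname{int}(U,V)$ and the content is that this quantity is non-negative. My plan is to analyze the zeros of $U - V_{\Phi,\beta,\epsilon}$ locally, dividing them into those occurring on a fixed compact piece of the domain and those escaping to the punctures as $\epsilon \downarrow 0$.

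First I would verify that $\operatorname{int}(U,V)$ is well-defined, i.e.~that $U$ and $V$ have only finitely many interior intersections. Away from the punctures this follows from the similarity principle (so intersection points are isolated) together with the compactness of compact subsets of $\Sigma \setminus \Gamma$. Near a puncture pair $(z,z')$ where $U,V$ are asymptotic either to different simple orbits or to covers of the same orbit, the asymptotic formula of \cite{HWZ_propsI, Siefring1} describes $U$ and $V$ as exponentially small perturbations of trivial cylinders whose transverse behaviour is controlled by a leading eigensection of the asymptotic operator. In either case one deduces that there is a punctured neighbourhood of $z$ (resp.~$z'$) on which $U$ and $V$ cannot intersect: in the different-orbit case because the projections to $V$ lie near disjoint orbits, and in the common-orbit case because the leading eigensections' asymptotic winding prevents intersection for small enough $s$.

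Next I would analyze the perturbation $V_{\Phi,\beta,\epsilon}$ used to define $\iota_{\Phi}(U,V)$. For sufficiently small $\epsilon$, stability of transverse intersections (using positivity of intersections of $J$-holomorphic curves, see \cite{MR1114456, MR1314031}) shows that outside a small neighbourhood of each puncture the zero set of $U - V_{\Phi,\beta,\epsilon}$ is in bijection with the interior intersections of $U$ and $V$, each counting with the same (positive) local multiplicity; so the contribution away from the punctures is exactly $\operatorname{int}(U,V)$. The remaining intersections are those created near the punctures by the pushoff $\epsilon \cdot \Phi(mt)$. This is the technical core of the argument, and is the step I expect to be the main obstacle: at a pair of positive punctures asymptotic to $(P,k_1)$ and $(P,k_2)$, the leading-order behaviour of $U$ and $V$ is governed by eigensections $e_U, e_V$ of the asymptotic operators with windings $w_U \in [\alpha^-_\Phi(P,k_1),\alpha^+_\Phi(P,k_1)]$ and $w_V \in [\alpha^-_\Phi(P,k_2),\alpha^+_\Phi(P,k_2)]$. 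A winding/degree argument (comparing the winding of $k_2 e_U - k_1 e_V - \epsilon k_1 k_2 \Phi$ around a large loop) shows that the number of intersections of $U$ with $V_{\Phi,\beta,\epsilon}$ in the neighbourhood of $(z,z')$ is at least $\Omega^+_{\Phi}(P,k_1,k_2)$, with equality precisely when the leading windings take the extremal values in the above intervals. A dual argument handles pairs of negative punctures and gives an upper bound by $-\Omega^-_{\Phi}(P,k_1,k_2)$.

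Combining these computations across all pairs of punctures and interior contributions yields
\[
 \iota_{\Phi}(U,V) = \operatorname{int}(U,V) + \Omega_{\Phi}(U,V) + \delta_{\infty}(U,V)
\]
with $\delta_{\infty}(U,V) \geq 0$ given by the total deficit between the actual leading windings of $U$ and $V$ and their extremal values, plus contributions from higher-order corrections to the asymptotic formula; adding $\tfrac{1}{2}\Delta(U,V)$ to both sides gives the claimed identity.
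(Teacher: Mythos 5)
Your overall strategy --- push $V$ off near the punctures via $\Phi$, split the intersections of $U$ with $V_{\Phi,\beta,\epsilon}$ into a compact part recovering $\mathrm{int}(U,V)$ and near-puncture parts controlled by windings of asymptotic eigensections against the extremal windings $\alpha^{\pm}_{\Phi}$ --- is indeed the skeleton of the actual argument. Note, however, that the paper does not prove this statement at all: it is imported from Theorem $4.4$ of \cite{Siefring2} (the only change being that $\frac{1}{2}\Delta(U,V)$ is added to both sides to match the modified definition of $*$), and $\delta_{\infty}$ is \emph{defined} there by local asymptotic data, not by the identity itself. So the comparison is with Siefring's proof, and against that your sketch has two genuine gaps.

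First, the bookkeeping does not assemble to the theorem. You set $\delta_{\infty} := \iota_{\Phi} + \Omega_{\Phi} - \mathrm{int}$, state the goal as $\iota_{\Phi} = \mathrm{int} + \delta_{\infty}$, and conclude with $\iota_{\Phi} = \mathrm{int} + \Omega_{\Phi} + \delta_{\infty}$; substituting the last identity into $[U]*[V] = \iota_{\Phi} + \Omega_{\Phi} + \frac{1}{2}\Delta$ leaves an extra $\Omega_{\Phi}$. The identity you need is $\iota_{\Phi} + \Omega_{\Phi} = \mathrm{int} + \delta_{\infty}$, and the local bounds are stated with the wrong signs and directions: at a pair of positive punctures the near-puncture contribution to $\iota_{\Phi}$ must be bounded \emph{below} by $-\Omega^{+}_{\Phi}(P,k_1,k_2)$, and at a pair of negative punctures bounded below by $+\Omega^{-}_{\Phi}(P,k_1,k_2)$. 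Your versions (``at least $\Omega^{+}_{\Phi}$'', ``at most $-\Omega^{-}_{\Phi}$'') cannot be correct, since under a change of trivialization the near-puncture count and $\Omega^{\pm}_{\Phi}$ shift in opposite directions (only the combination $\iota_{\Phi}+\Omega_{\Phi}$ is trivialization independent). Moreover, with $\delta_{\infty}$ defined as the deficit the displayed identity is a tautology; the content of the theorem, and what the paper actually uses (formula \eqref{eq-deltainfty} in Lemma \ref{lem-compactness1}), is that this deficit is a sum of local, non-negative asymptotic quantities, which your outline does not establish. Second, the analysis cannot be run with the individual leading eigensections $e_U$, $e_V$ alone. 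When $U$ and $V$ are asymptotic at same-sign punctures to covers of the same orbit, those eigensections may have equal winding or the leading terms may coincide, so neither your finiteness argument for $\mathrm{int}(U,V)$ near such punctures (``the leading eigensections' asymptotic winding prevents intersection'') nor your winding/degree count of intersections with the pushoff goes through as stated. The missing tool is the \emph{relative} asymptotic formula of \cite{Siefring1}: suitable covers of the two ends differ by a section whose leading term is again an eigensection of the asymptotic operator of the common covering orbit, and $\delta_{\infty}\geq 0$ then follows from the winding bounds (winding $\leq \alpha^{-}$ at positive punctures, $\geq \alpha^{+}$ at negative punctures) applied to that relative eigensection; this is precisely the content of Siefring's asymptotic intersection indices, which your mention of ``higher-order corrections'' gestures at but does not supply.
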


The number $\delta_{\infty}(U,V)$ is described in \cite{Siefring2} in terms of local quantities for holomorphic maps.  We refer to \cite{Siefring2} for the definition.  For later use we now recall what the quantity $\delta_{\infty}(U,V)$ is in the particular case $V = Z_{L_i}$ where $L_i$ is a closed Reeb orbit for both $\lambda_{\pm}$ in a cobordism of the form $\overline{W}(\lambda_-,\lambda_+) \subset W_{\xi}$ and $Z_{L_i} = \pi^{-1}(L_i)$.
In the following formula, let $L_i \sim U(w)$ mean that $U$ is asymptotic to a cover of $L_i$ at $w$, let $k_w$ be the covering number of the $L_i$ to which $U$ is asymptotic at $w$, let $w_{\lambda,\Phi}(U,w)$ be the winding number of the loop $(\Phi^{k_w})^{-1}e_U$, where $e_U$ is the eigensection of the asymptotic operator appearing in the asymptotic formula of \cite{Siefring1} for $U$ at the puncture $w$; let $\alpha_{\lambda,J,\Phi}^{\pm}(L_i,k)$ be the extremal winding number of the asymptotic operator associated with $(L_i^k,\lambda,J)$ and trivialization $\Phi^k$, and finally let $p_{\lambda}(x) \in \{0, 1 \}$ be the parity of the orbit $x$ with respect to the form $\lambda$.  Then

\begin{align}
\label{eq-deltainfty} \begin{split}
\delta_{\infty}(U,Z_{L_i}) &= \sum_{\substack{w \in \Gamma^+_U \\ L_i \sim U(w)}} - w_{\lambda_+,\Phi}(U,w) + \alpha_{\lambda_+,J_+,\Phi}^-(L_i,|k_w|)\\
			   &+ \sum_{\substack{w \in \Gamma^-_U \\ L_i \sim U(w)}} + w_{\lambda_-,\Phi}(U,w) - \alpha_{\lambda_-,J_-,\Phi}^+(L_i,|k_w|) 
\end{split} 
\end{align}


\subsubsection{Moduli spaces}

Let $(W,J)$ be a symplectic $4$ manifold with Hamiltonian structured ends and compatible $J$ adjusted to the ends.  We use $\M_J(x;y_1,\dots,y_k)$ to denote the moduli space of finite energy $J$-holomorphic spheres with one positive asymptotic orbit $x$ and perhaps several negative asymptotic orbits $y_1,\dots y_k$ (modulo reparametrizations of the domain).  Usually we are interested in moduli space of cylinders with one positive and one negative end, $\M_J(x;y)$.  If it is clear we may drop the subscript.  If $J$ is a cylindrical almost-complex structure $J =\widehat{J}(\lambda)$ then we will actually mean the moduli space modulo the further $\R$-action by translations.  We assume that all Hamiltonian structures considered are non-degenerate, so we have the asymptotic formula \cite{HWZ_propsI, BEHWZ} which implies all solutions are asymptotically cylindrical and the intersection theory described above applies.

Given a holomorphic curve $Z$, we denote
\[
\M_J(x;y_1,\dots,y_k: Z) = \left\{ [U]  | [U] * [Z] = 0 \right\} \subset \M_J(x;y_1, \dots y_k)
\]
\noindent For homotopies $l \mapsto J_l$, we write 
\[
  \calN_{\{ J_l \}}(x;y_1,\dots,y_k) = \bigcup_{l \in [0,1]} \{l\} \times \M_{J_l}(x;y_1,\dots,y_k) \}
\] 
\noindent and
\[ 
 \calN_{ \{ J_l \} }(x;y_1,\dots,y_k:Z) = \bigcup_{l \in [0,1]} \{l\} \times \M_{J_l}(x;y_1,\dots,y_k :Z) \}
\]

In the following, $Z$ will almost always be $Z_L = \pi_V^{-1}(L)$ (recall $\pi_V:W_{\xi} \rightarrow V$ is the projection $T^*V \rightarrow V$ restricted to $V$), though we do not specialize yet.  

\begin{lemma} \label{lem-zerointstructure}
 Suppose $Z$ is a finite energy surface with all asymptotic orbits elliptic (e.g. $Z_L = \pi^{-1}(L)$ under condition $(E)$), and $[U] \in \M_J(x;y_1,\dots,y_n)$ is such that no component has image contained in a component of $Z$.  Then $[U] \in \M_J(x;y,\dots:Z)$ if and only if $U$ and $Z$ never intersect and are never asymptotic to covers of the same asymptotic orbit with the same sign (in the case $Z = Z_L$ this is the same as saying $x,y \notin D_{\pm}'$ and $U$ never intersects $Z$).
\end{lemma}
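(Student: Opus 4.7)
The plan is to deduce the equivalence directly from the positivity of intersections formula in Theorem \ref{thm-posintersections}. Since the hypothesis rules out any component of $U$ being contained in a component of $Z$ (and, by the asymptotic formula and the assumption that $U$ has only finitely many punctures asymptotic to given orbits, no component of $Z$ can be contained in $U$ either), the decomposition
\[
[U] * [Z] \;=\; \mathrm{int}(U,Z) + \delta_\infty(U,Z) + \tfrac{1}{2}\Delta(U,Z)
\]
applies, with every summand nonnegative. Consequently $[U]*[Z] = 0$ if and only if each of the three nonnegative terms vanishes. The term $\mathrm{int}(U,Z)$ is the count of geometric intersection points, so $\mathrm{int}(U,Z) = 0$ is exactly the assertion that $U$ and $Z$ are disjoint. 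The remaining task is to show that the vanishing of $\Delta(U,Z)$ and $\delta_\infty(U,Z)$ together is equivalent to $U$ and $Z$ never being asymptotic, at matching signs, to covers of the same simple Reeb orbit.

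For the $\Delta$ term, I would use the explicit formulas of Proposition \ref{prop-Linear Growth}. For each pair $(z,z')$ of punctures of $U$ and $Z$ that share a sign and are asymptotic to covers $P^{k_1},P^{k_2}$ of a common simple Reeb orbit $P$, the contribution $\Omega^-_{\Phi}(P,k_1,k_2) - \Omega^+_{\Phi}(P,k_1,k_2)$ is nonnegative by definition. Since $P$ is elliptic (by hypothesis on $Z$) and non-degenerate (as $\lambda$ is), its rotation number $\theta$ is irrational, so Proposition \ref{prop-Linear Growth} gives $\alpha^-_{\Phi}(P,k) = \lfloor k\theta \rfloor$ and $\alpha^+_{\Phi}(P,k) = \lceil k\theta \rceil = \lfloor k\theta \rfloor + 1$ for every $k \ge 1$. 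In particular $\alpha^-_{\Phi}(P,k)/k < \theta < \alpha^+_{\Phi}(P,k)/k$, and therefore
\[
\tfrac{\Omega^-_{\Phi}(P,k_1,k_2)}{k_1 k_2} \;=\; \min_{i}\tfrac{\alpha^+_{\Phi}(P,k_i)}{k_i} \;>\; \theta \;>\; \max_{i}\tfrac{\alpha^-_{\Phi}(P,k_i)}{k_i} \;=\; \tfrac{\Omega^+_{\Phi}(P,k_1,k_2)}{k_1 k_2},
\]
making each such contribution strictly positive. Hence $\Delta(U,Z) = 0$ precisely when there is no matching puncture pair.

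Finally, by the definition of $\delta_\infty$ from \cite{Siefring2} (and as recorded in \eqref{eq-deltainfty} for the case $Z = Z_{L_i}$), $\delta_\infty(U,Z)$ is itself a sum of nonnegative contributions indexed by exactly those same matching puncture pairs. Thus the vanishing of $\Delta(U,Z)$ automatically forces $\delta_\infty(U,Z) = 0$, and combining this with the $\Delta$ analysis yields the claimed equivalence. Specialising to $Z = Z_L = \pi^{-1}(L)$, the punctures of each component $Z_{L_i}$ consist of one positive and one negative puncture, both asymptotic to $L_i$, so the matching-asymptotics condition becomes exactly $x, y_1,\dots,y_n \notin D_{\pm}'$. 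The only step that genuinely exploits the hypotheses is the strict inequality $\Omega^-_\Phi > \Omega^+_\Phi$ at matching puncture pairs, and I expect this to be the main point to verify carefully; everything else is bookkeeping on the positivity formula.
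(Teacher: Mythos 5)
Your proposal is correct and follows essentially the same route as the paper: decompose $[U]*[Z]$ via Theorem \ref{thm-posintersections} into the three non-negative terms and observe that ellipticity (hence irrational rotation, via Proposition \ref{prop-Linear Growth}) forces $\Delta(z,z')>0$ at any same-sign matching puncture pair, while disjointness and absence of matching pairs kill $\mathrm{int}$ and $\delta_\infty$. The only difference is that you spell out the strict inequality $\Omega^-_{\Phi}>\Omega^+_{\Phi}$ and the vanishing of $\delta_\infty$ explicitly, which the paper leaves implicit.
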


\begin{proof}
 If $U$ intersects $Z$, by positivity of intersections $U \cdot Z > 0$, and since all other terms are non-negative $[U]*[Z] > 0$.  Since the asymptotic orbits of $Z$ are elliptic, if $U,Z$ share a common asymptotic orbit we would have $\Delta(U,Z) > 0$. Therefore in either case $[U]*[Z] > 0$ by Theorem \ref{thm-posintersections}.  If $[U]*[Z] = 0$ both of these terms must be zero so the converse holds as well.
\end{proof}

\subsection{Compactness}

Let us call a subset $C$ of ${\M}$ \emph{closed under compactification} if for every $[(U)] \in \partial C \subset \overline{\M}$, each component $U^i$ of $(U)$ represents an element $[U^i] \in C$.  Denoting by $\M_{J}(1;k)$ the union of all $\M_J(x;y_1,\dots,y_k)$, then $\M_J(1;*) = \cup_{k \geq 0} \M_J(1;k)$ is closed under compactification.  Denoting by $\M_J^{\leq 2}(1;1)$ the subset of cylinders of
index at most $2$, the main obstacle to defining cylindrical contact homology is to show that $\M_J^{\leq 2}(1;1)$ is closed under compactification.  Similarly set $\M(1;k:Z)$ to be the union of all $\M_J(x;y_1,\dots,y_k:Z)$.  Our goal is to show the subsets
$\M^{[a]}_J(1;1:Z_L)$ of cylinders in $\M_J(1;k:Z_L)$ connecting asymptotic orbits in the homotopy class of loops $[a]$ are closed under compactification when
$(\lambda_+,L) \geq (\lambda_-,L)$ satisfy $(E)$, or when $(\lambda_{\pm},L,[a])$ satisfy $(PLC)$.

\subsubsection{Condition $(E)$}

We investigate the compactification of finite energy holomorphic curves in $W_{\xi}$ with intersection number $0$ with $[Z = Z_L = \pi_V^{-1}(L)]$ (in the sense defined above).  We assume $(\lambda_+,L) \geq (\lambda_-,L)$ throughout, and later that both satisfy condition $(E)$.  We suppose $J \in \J(\widehat{J}_-,\widehat{J}_+:Z)$ (in the cylindrical case $\lambda_+ = \lambda_-$ and $J \in \J(\lambda_{\pm})$; then $Z$ is automatically $J$-holomorphic).

\begin{lemma}
\label{lem-compactness1}
 Suppose $(\lambda_+,L) \geq (\lambda_-,L)$ (with $L_{\pm}$ all elliptic), $Z = \pi_V^{-1}(L)$ and $J \in \J(\widehat{J}_-(\lambda_-),\widehat{J}(\lambda_+):Z)$ (allow the possibility $J$ is cylindrical i.e.~$J \in J(\lambda_-) = \J(\lambda_+)$).  Let $L_i$ be a connected component of $L$, and $C = \pi^{-1}(L_i)$.  Let $U$ be a possibly branched cover of $C$ with $1$ positive puncture and genus $0$.  Then $[U]*[C] \geq \frac{1}{2}(1 - \# \Gamma_-)$, where $\# \Gamma_-$ is the number of negative punctures of $U$.
\end{lemma}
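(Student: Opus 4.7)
The plan is to apply Siefring's decomposition
\[
[U]*[C] \;=\; \iota_\Phi(U, C) + \Omega_\Phi(U, C) + \tfrac{1}{2}\Delta(U, C)
\]
and estimate each term. Since $U$ is $J$-holomorphic with image contained in the $J$-holomorphic cylinder $C$, we factor $U = C \circ \tau$ for a holomorphic branched cover $\tau : \Sigma \to \R \times S^1$. Let $k$ be the local degree of $\tau$ at the unique positive puncture (so $U$ is positively asymptotic to $L_i^k$), and $k_1, \ldots, k_m$ the local degrees at the $m = \#\Gamma_-$ negative punctures, so $\sum_j k_j = k$. Because $L_\pm$ are non-degenerate elliptic, the rotation angles $\theta_\pm$ of $L_i$ with respect to $\lambda_\pm$ are irrational, and Proposition \ref{prop-Linear Growth} gives $\alpha^-_\Phi(L_i,k') = \lfloor k'\theta_\pm\rfloor$ and $\alpha^+_\Phi(L_i,k') = \lceil k'\theta_\pm\rceil$ at the corresponding ends.

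The $\Delta$ and $\Omega$ contributions can then be computed directly from \eqref{eq-alpha+} and \eqref{eq-alpha-}. At each of the $1+m$ puncture pairs between $U$ and $C$, the irrationality of $\theta_\pm$ yields the $\Delta$ contribution $\lceil k'\theta_\pm\rceil - \lfloor k'\theta_\pm\rfloor = 1$, so $\Delta(U,C) = 1 + m$. Similarly
\[
\Omega_\Phi(U, C) \;=\; \lfloor k\theta_+\rfloor - \sum_{j=1}^m \lceil k_j\theta_-\rceil.
\]
The hypothesis $(\lambda_+, L) \geq (\lambda_-, L)$ is equivalent via Proposition \ref{prop-Linear Growth} to $\lfloor k\theta_+\rfloor \geq \lfloor k\theta_-\rfloor$, which combined with $\lceil k_j\theta_-\rceil = \lfloor k_j\theta_-\rfloor + 1$ and sub-additivity of floor (so $\sum_j \lfloor k_j\theta_-\rfloor \leq \lfloor k\theta_-\rfloor$) gives $\Omega_\Phi(U,C) \geq -m$, whence $\Omega_\Phi + \tfrac{1}{2}\Delta \geq \tfrac{1-m}{2}$.

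It remains to show $\iota_\Phi(U, C) \geq 0$, which is the main subtle step because $U$ and $C$ share image, so $\iota_\Phi$ cannot literally be a transverse intersection count. The key observation is that the normal bundle $N$ of $C$ in $W_\xi$ is a complex line bundle over a cylinder, hence trivial, so the eigensection perturbation $\epsilon\Phi$ (which defines the perturbed $C'$ near each puncture of $C$) extends to a globally nowhere-vanishing smooth section $\sigma$ of $N$. The resulting perturbed cylinder $C_\sigma$ is disjoint from $C$ for $\epsilon$ small, hence disjoint from the image of $U$, and homotopy invariance of $\iota_\Phi$ within the class of asymptotically cylindrical perturbations with the prescribed puncture behavior then gives $\iota_\Phi(U, C) = 0$. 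Combining, $[U]*[C] \geq 0 + (-m) + \tfrac{1+m}{2} = \tfrac{1-m}{2}$, as claimed. The main obstacle is justifying this homotopy invariance argument in the degenerate covering configuration, where the standard definition of $\iota_\Phi$ via a local puncture perturbation does not immediately yield a transverse count.
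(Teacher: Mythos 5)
Your proposal is correct in substance, but it runs along a genuinely different track than the paper's proof, so let me compare. You use the definitional decomposition $[U]*[C]=\iota_\Phi(U,C)+\Omega_\Phi(U,C)+\tfrac12\Delta(U,C)$ and evaluate each term: $\Omega_\Phi$ and $\Delta$ exactly from the elliptic asymptotic data (your formulas are right, though they rest on the one-line observations $\lfloor k'\theta\rfloor\geq k'\lfloor\theta\rfloor$ and $\lceil k'\theta\rceil\leq k'\lceil\theta\rceil$, which identify which term realizes the max/min in \eqref{eq-alpha+}, \eqref{eq-alpha-}; worth writing down), and $\iota_\Phi(U,C)=0$ by replacing the puncture-localized pushoff of $C$ with a global nowhere-vanishing pushoff disjoint from $C$, hence from the image of $U$ (in fact the constant section $\epsilon\Phi$ already does this -- note the defining perturbation is in the direction of the trivialization, not an eigensection). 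The paper instead perturbs the other curve: it pushes the branched cover $U$ off $C$ by a section of the normal bundle with extremal-eigensection behavior at the punctures, so the perturbed map $S_\sigma$ stays in the same homotopy class of asymptotically cylindrical maps, and then applies the decomposition $\mathrm{int}+\delta_\infty+\tfrac12\Delta$ of Theorem \ref{thm-posintersections}: the asymptotic choice makes $\delta_\infty=0$ via \eqref{eq-deltainfty}, the interior term becomes the mapping degree $\lfloor k\theta_+\rfloor-\sum_j(\lfloor k_j\theta_-\rfloor+1)$ of the section, and the same floor-superadditivity together with $\theta_+\geq\theta_-$ (from $(\lambda_+,L)\geq(\lambda_-,L)$) gives the bound. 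The two computations agree term by term: your $\Omega_\Phi$ equals the paper's degree, and your exact $\Delta=1+\#\Gamma_-$ refines the paper's bound $\Delta\geq\#\Gamma(U)$. What the paper's route buys is that the degenerate-overlap issue you flag never arises: since $S_\sigma$ is homotopic to $U$ through asymptotically cylindrical maps, homotopy invariance of $*$ applies verbatim, whereas your step $\iota_\Phi(U,C)=\mathrm{int}(U,C_\sigma)$ homotopes the pushed-off copy of $C$, which is no longer asymptotically cylindrical over a Reeb orbit, so one must argue separately that the count is unchanged under homotopies fixed near the ends (no intersections escape to infinity because $U$ converges exponentially to the orbit cylinder while the pushoffs sit at distance $\epsilon$ there). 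That argument is true and standard in Siefring's framework, but in your write-up it is asserted rather than proved; filling it in, or switching to the paper's device of perturbing $U$ within its homotopy class, closes the only soft spot.
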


\begin{proof}  Let $S^2 \backslash \Gamma(U)$ be the domain of $U$, so we have a lift $\hat{U}:S^2 \backslash \Gamma \rightarrow \R \times S^1$ i.e. $U = C \circ \hat{U}$.  Using the metric $g_J$, since $J \in \J(\widehat{J}_-,\widehat{J}_+:Z)$ we can identify the normal bundle to $C$ with $\xi$, and with the exponential map and a unitary trivialization $\Phi: C^*\xi \rightarrow \R \times S^1 \times \C$ we can identify a tubular neighborhood of $C$ with $(\R \times S^1) \times D_{\epsilon}$, where $D_{\epsilon}$ is a disc containing the origin in $\C$.  Let us denote this map $\hat{f} = \exp \circ \Phi^{-1} : (\R \times S^1) \times D_{\epsilon} \rightarrow W$.  Consider the pull-back bundle $N = U^*\xi$ over $S^2\backslash \Gamma(U)$.  Using the trivialization and lift, we have a map $f = \hat{f} \circ (\hat{U} \times Id): S^2 \backslash \Gamma(U) \times D_{\epsilon} \rightarrow W$, with the property that $f(z,c) \in C$ if and only if $c = 0$ (possibly taking $D_{\epsilon}$ smaller if needed).  It is a local-diffeomorphism away from the branch points.

Sufficiently small sections of $N$ can be identified with functions $\sigma:S^2 \backslash \Gamma(U) \rightarrow D_{\epsilon}$ by this trivialization, and it defines a map $S_{\sigma}: S^2 \backslash \Gamma(U) \rightarrow W_{\xi}$ by $z \mapsto f(z,\sigma(z))$.  Choose any section $\hat{\sigma}$ of $N$ such that 

\begin{itemize}
 \item near the punctures it has the form\footnote{In cylindrical coordinates at the puncture: $(s,t) \in [0,\infty) \times \R /\Z \cong D_{z^+} \backslash \{z^+\}$ at a positive puncture $z^+$, and $(s,t) \in (-\infty,0] \times \R /\Z \cong D_{z^-} \backslash \{z^-\}$ at a positive puncture $z^-$.}
\[
 \hat{\sigma}(s,t) = e^{\lambda s}e_{\lambda}(t)
\]
\noindent
where $\lambda$ is the extremal eigenvalue of the asymptotic operator of the orbit to which $U$ is asymptotic at that puncture, and $e_{\lambda}(t)$ represents an eigensection associated to $\lambda$,
 \item it has only finitely many transverse zeros none of which occur at branch point,
 \item it is small enough to be represented by a function $\sigma:U \rightarrow D_{\epsilon}$.
\end{itemize}

We compute $[C] * [U]$ by computing $[C] * [S_{\sigma}]$ using the formula of Theorem \ref{thm-posintersections} 
\[
 [C] * [S_{\sigma}] = \mathrm{int}(C,[S_{\sigma}]) + \delta_{\infty}(C,S_{\sigma}) + \frac12 \Delta([C],[S_{\sigma}]),
\]
since $S_{\sigma}$ is homotopic to $U$ via $\tau \mapsto S_{\tau \cdot \sigma}$. In view of the representation of $\hat{\sigma}$ near the punctures we have
\begin{equation} \label{eq-ws}
\begin{split}
- w_{\lambda_+,\Phi}(S_{\sigma},w) + \alpha_{\lambda_+,\Phi}^-(L_i,k_w) & = 0, \mbox{ if } w \in \Gamma^+(S_{\sigma})\\
+ w_{\lambda_-,\Phi}(S_{\sigma},w) - \alpha_{\lambda_-,\Phi}^+(L_i,k_w) & = 0, \mbox{ if } w \in \Gamma^-(S_{\sigma})
\end{split}
\end{equation}
\noindent The formula \eqref{eq-deltainfty} for $\delta_{\infty}$ applies here, so the equations \eqref{eq-ws} imply $\delta_{\infty}(C,S_{\sigma}) = 0$.  Since all asymptotic limits are elliptic, it follows that each puncture $w \in \Gamma(S_{\sigma})$ contributes $\Delta(\infty_w, w) \geq 1 $ ($\infty_w$ denotes the puncture of $C$ of the same sign as $w$; this inequality is demonstrated in the proof of Proposition $4.3$ item $(4)$ in \cite{Siefring2}).  Therefore $\frac{1}{2} \Delta(C,S_{\sigma}) \geq \frac{1}{2} \# \Gamma(U)$.

It remains to compute $\mathrm{int}(C, S_{\sigma})$.  Since the map $\hat{f}: \R \times S^1 \times D_{\epsilon} \rightarrow W$ is an orientation preserving local-diffeomorphism away from branch points, and all zeros occur away from branch points by construction, the computation reduces to a signed count of intersections of this map $\sigma$ with $0 \in \C$ (the mapping degree $deg(\sigma;0)$).  This degree is computed by the homology class of $\sigma|_{\partial{S}} \in H_1(\C \backslash 0)$ (where $S$ is $S^2$ minus the small disc-like neighborhoods $D_{z^{\pm}}$ of the punctures $\Gamma(U)$ on which the asymptotic expression holds, i.e. $\sigma(s,t) = e^{\lambda s}e_{\lambda}(t)$).  The extremal winding numbers are given by $\alpha^{\mp}_{\lambda_{\pm},\Phi}(L_i;k_w)$ where $k_w$ is the covering number of $L_i$ at each puncture $w$ of $U$, using $\alpha^-$ with respect to $\lambda_+$ at positive punctures, and $\alpha^+$ with respect to $\lambda_-$ at negative punctures.  We recall from section \ref{sec-CZindices} that there are irrational numbers $\theta^i_{\pm}$ so that
\begin{align*}
 \alpha^-_{\lambda_+, \Phi}(L_i;k_w) &= \lfloor k_w \theta^i_+ \rfloor \\
 \alpha^+_{\lambda_-, \Phi}(L_i;k_w) &= \lceil k_w \theta^i_- \rceil = \lfloor k_w \theta^i_- \rfloor + 1
\end{align*}
\noindent
Therefore, the mapping degree is
\[
 \lfloor k^+ \theta^i_+ \rfloor - \sum_{z_-} (\lfloor k^-_{z_-} \theta^i_- \rfloor + 1)
\]
\noindent where $k^+$ is the covering number of the positive puncture, and the $k^-_{z_-}$ are the covering numbers at the negative punctures $z_-$, so $k^+ = \sum_{z_- \in \Gamma_-} k^-_{z_-}$.  By the hypothesis $(\lambda_+,L) \geq (\lambda_-,L)$ we must have that $\theta^i_+ \geq \theta^i_-$ which implies
\[
 \lfloor k^+ \theta^i_+ \rfloor - \sum_{z_-} \lfloor k^-_{z_-} \theta^i_- \rfloor \geq 0,
\]
\noindent so the mapping degree is at least $-\# \Gamma_-$.  Summing everything together,
\begin{align*}
  [C]*[U] &= C \cdot S_{\sigma} + \delta_{\infty}(C,S_{\sigma}) + \frac{1}{2} \Delta(C,U) \\
& \geq - \# \Gamma_-(U) + 0 + \frac{1}{2} \# \Gamma(U) \\
& = \frac{1}{2}(1 - \# \Gamma_-(U))
\end{align*}
\end{proof}

\begin{remark}
If we drop the hypothesis $(\lambda_+,L) \geq (\lambda_-,L)$, then we lose this control on this intersection number since $ \lfloor k^+ \theta^i_+ \rfloor - \sum_{z_-} \lfloor k^-_{z_-} \theta^i_- \rfloor $ could be negative.
\end{remark}

\begin{lemma} \label{lem-planepositivity}
 Suppose $(\lambda_{+},L) \geq (\lambda_{-},L)$ satisfy condition $(E)$ (actually we only need to assume $(\lambda_+,L)$ satisfies $(E)$) and $J \in \J(\widehat{J}_-(\lambda_-),\widehat{J}_+(\lambda_+):Z)$.  Then for every finite energy $J$-plane $P$ in $W_{\xi}$, $[P] * [Z] \geq 1/2$.  If it is not asymptotic to an orbit in $L$ then $[P] * [Z] \geq 1$.

 If $(\lambda_{+},L) \sim (\lambda_{-},L)$ satisfy condition $(PLC)$ instead, then for every finite energy $J$-plane $P$ in $W_{\xi}$ $[P] * [Z] \geq 1$.
\end{lemma}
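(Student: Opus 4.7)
The plan is to split on which of the two hypotheses is in force, and in the ellipticity case further split on whether the plane's asymptotic limit lies in $L$. Throughout, let $P$ be a finite energy $J$-plane with (positive) asymptotic orbit $\gamma$. Since $P$ itself provides a null-homotopy of $\gamma$ in $W_\xi \simeq \R \times V$, the orbit $\gamma$ is contractible in $V$, and the map $\pi_V \circ \overline{P}:\overline{\C} \rightarrow V$ is a continuous disc with boundary $\gamma$. Before applying Theorem~\ref{thm-posintersections} to $P$ and $Z_L$ I would dispense with the shared-component issue: the components of $Z_L = \pi_V^{-1}(L)$ are cylinders, and a finite energy plane cannot (up to branched covering) be a holomorphic parametrization of a trivial cylinder for topological reasons, so $P$ and $Z_L$ share no components.

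Assume first that $(\lambda_+, L) \geq (\lambda_-, L)$ satisfy condition $(E)$. If $\gamma \notin L$, then the second bullet of $(E)$ applied to the contractible orbit $\gamma$ and the disc $\pi_V \circ \overline{P}$ forces some component of $L$ to intersect the interior of that disc. Lifting this intersection to $W_\xi$, we obtain $\mathrm{int}(P, Z_L) \geq 1$, so Theorem~\ref{thm-posintersections} gives $[P]*[Z_L] \geq 1$ (the remaining terms $\delta_\infty$ and $\tfrac12\Delta$ being non-negative). If instead $\gamma$ is a $k$-fold cover of a component $L_i \subset L$, I apply Theorem~\ref{thm-posintersections} with $Z = C := Z_{L_i}$. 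Because $L_i$ is elliptic with irrational rotation number $\theta$, a direct check from the definitions of $\Omega^\pm$ (identical in spirit to the estimate $\Delta(\infty_w,w)\geq 1$ used inside Lemma~\ref{lem-compactness1}) shows $\Delta(P,C) \geq 1$ at the single positive puncture, yielding $[P]*[C] \geq \tfrac12$. Since $[P]*[Z_L] = \sum_j [P]*[Z_{L_j}]$ with each summand non-negative, $[P]*[Z_L] \geq \tfrac12$.

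Now assume $(\lambda_\pm, L)$ satisfy $(PLC)$. The second bullet of Definition~\ref{def-PLC} applies directly to any closed Reeb orbit $y$, including orbits that lie in $L$, and asserts that every disc bounding $y$ has an interior intersection with some component of $L$. Applied to the disc $\pi_V \circ \overline{P}$ bounding $\gamma$, this gives an interior point of $P$ mapping into $Z_L$, so once again $\mathrm{int}(P, Z_L) \geq 1$ and Theorem~\ref{thm-posintersections} yields $[P]*[Z_L] \geq 1$.

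The only delicate step I foresee is Subcase~$(E)$ with $\gamma$ a multiple cover of $L_i$: one needs the bound $\Delta(P,C) \geq 1$, which relies on the ellipticity (irrationality of $\theta$) and must be extracted from the $\Omega^\pm$ formulas rather than from any geometric intersection—there might be none. The geometric cases (plane not asymptotic to $L$, and the $(PLC)$ case) are then immediate consequences of the linking hypothesis and positivity of intersections; the main work is the algebraic $\Delta$ estimate that replaces geometric intersection data with asymptotic winding data.
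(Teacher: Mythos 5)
Your proposal is correct and follows essentially the same route as the paper: when the plane is asymptotic to a cover of an (elliptic, hence odd-index) orbit in $L$ you extract the bound from the non-vanishing $\Delta$-term in Theorem~\ref{thm-posintersections}, and otherwise (and in the $(PLC)$ case) the linking hypothesis forces a geometric intersection of $\pi_V\circ P$ with $L$, which by positivity of intersections gives $[P]*[Z]\geq 1$. Your extra details (ruling out common components and computing $\Delta\geq 1$ from the $\Omega^{\pm}$ formulas, componentwise over $Z_L$) merely spell out what the paper cites as the parity contribution of $1/2$.
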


\begin{proof}
If $P$ is asymptotic to $d \in L$ then the first assertion is immediate by the formula of Theorem \ref{thm-posintersections} since the parity term will contribute $1/2$ and all terms are non-negative.  Suppose instead that $P$ is asymptotic to an orbit $x \notin L _+$.  Then $\pi_V \circ P: \C \rightarrow V$ must intersect $L$ by hypothesis, and therefore $P$ must intersect $Z_L$.  Since both are $J$-holomorphic the intersection contributes $1$ to the intersection, which bounds below $[P] * [Z] \geq 1$ by positivity of intersections.

If $(\lambda_+,L)$ satisfies $(PLC)$ then $\pi_V \circ P$ intersects $L$ by hypothesis, so $[P] * [Z] \geq 1$ as explained above.
\end{proof}

We will call a puncture of a holomorphic building $(U)$ a \emph{free puncture} if there is a path from the puncture to an end of the tree such that all nodes along this path represent trivial cylinders.

\begin{lemma}
\label{lem-compactness2}
  Suppose that  $(\lambda_+,L) \geq (\lambda_-,L)$, and $J \in \J(\widehat{J}_-(\lambda_-),\widehat{J}_+(\lambda_+):Z)$.  Let $(U)$ be any genus $0$ holomorphic building with only $1$ positive end.  Then
\begin{itemize}
 \item If no component of $(U)$ is a branched cover of a component of $Z$, then $[(U)]*[Z] \geq 0$.  It is strictly greater than zero if either 
\begin{itemize}
\item  there is an edge in the bubble tree corresponding to an orbit in $L$, or 
\item  the projection of some component intersects $L$
\end{itemize}
\noindent In particular, if both $(\lambda_+,L), (\lambda_-,L)$ satisfy $(E)$ and some component of $(U)$ is a plane then one of these holds necessarily, so $[(U)]*[Z_L] > 0$.
\item If some components of $(U)$ are branched covers of components of $Z$, then $[(U)]*[Z] \geq \frac{1}{2}(1 - \# \Gamma^-_{free}((U),L)$), where $\Gamma^-_{free}((U),L)$ denotes the set of free negative punctures belonging to components which are branched covers of components of $Z$.
\end{itemize}
\end{lemma}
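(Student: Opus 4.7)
The plan is to use the component-wise additivity of the intersection number from Proposition \ref{prop-intersectionproperties}(4):
\[
[(U)] * [Z] = \sum_i [U^i] * [Z],
\]
summed over the components $U^i$ of the building. I partition these components into class $A$, those whose image is not contained in $Z$, and class $B$, those whose image is contained in some component $C_j = Z_{L_j}$ of $Z$ (the possibly branched covers of $Z_{L_j}$ to which Lemma \ref{lem-compactness1} applies).

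For $U^i \in A$, Theorem \ref{thm-posintersections} gives $[U^i] * [Z] \geq 0$. This bound is strict (at least $\frac{1}{2}$) whenever $U^i$ has a puncture asymptotic to a cover of an orbit in $L$, by the $\frac{1}{2}\Delta$ term in Theorem \ref{thm-posintersections} (each such puncture contributes at least $\frac{1}{2}$ for elliptic orbits, as already used in the proof of Lemma \ref{lem-compactness1}); and it is also strict whenever $\pi_V \circ U^i$ meets $L$, since then $U^i$ and $Z_L$ have a $J$-holomorphic interior intersection contributing at least $1$ to $\mathrm{int}(U^i, Z_L)$. For $U^i \in B$ covering $C_j = Z_{L_j}$, Lemma \ref{lem-compactness1} gives $[U^i] * [C_j] \geq \frac{1}{2}(1 - \#\Gamma^-(U^i))$; since distinct components of $L$ are disjoint, $[U^i] * [Z_{L_{j'}}] = 0$ for $j' \neq j$, so $[U^i] * [Z] \geq \frac{1}{2}(1 - \#\Gamma^-(U^i))$.

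In the first case of the lemma (no class-$B$ components), summing the non-negative per-component bounds yields $[(U)] * [Z] \geq 0$. For the strict inequality: an edge of the bubble tree labeled by an orbit in $L$ makes an adjacent class-$A$ component asymptotic to a cover of an elliptic orbit of $L$, triggering the $\Delta$ criterion; and an intersection of $\pi_V \circ U^i$ with $L$ triggers the interior-intersection criterion. For the plane specialization under $(E)$: if the plane $P$ is asymptotic to an orbit in $L$ the first criterion applies, while otherwise $P$ is asymptotic to a contractible Reeb orbit not in $L$ and the linking clause of $(E)$ forces the disc $\pi_V \circ P$ to meet $L$, triggering the second criterion.

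For the second case I combine the per-component bounds with a walk over the gluing tree. The genus-$0$, one-positive-end structure of $(U)$ forces each component to have exactly one positive puncture, so the tree is naturally rooted at the overall positive end. Each negative puncture of a class-$B$ component is either external, glued to a positive puncture of a class-$B$ component, or glued to a positive puncture of a class-$A$ component; in the last of these cases the neighbouring class-$A$ component acquires a puncture asymptotic to an elliptic cover of an orbit in $L$ and hence gains an extra $+\frac{1}{2}$ in its intersection with $Z$. Matching these $+\frac{1}{2}$ bonuses against the $-\frac{1}{2}\#\Gamma^-(U^i)$ contributions from class-$B$ components and using that a class-$B$ component with exactly one negative puncture is a trivial cylinder contributing exactly $0$ (Lemma \ref{lem-compactness1} with $\#\Gamma^- = 1$), the only uncompensated negative punctures of class-$B$ components are precisely those reached by a downward chain of trivial cylinders ending at an external puncture, i.e.~the free negative punctures. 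This yields the claimed bound $[(U)] * [Z] \geq \frac{1}{2}(1 - \#\Gamma^-_{free}((U),L))$.

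The main obstacle I foresee is the combinatorial accounting in the second case: verifying that only free negative punctures of class-$B$ components escape compensation by the $\Delta$ bonuses of adjacent class-$A$ components, and that chains of trivial cylinders neither introduce extra deficits nor destroy the compensation. A clean way to carry this out is by induction on the number of components, processing the bubble tree from its leaves inward and tracking which negative punctures of class-$B$ components remain unabsorbed at each stage.
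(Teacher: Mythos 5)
Your strategy is the same as the paper's: split $[(U)]*[Z]$ over components via Proposition \ref{prop-intersectionproperties}, bound the components not contained in $Z$ by Theorem \ref{thm-posintersections} (positivity at interior intersections, and the $\tfrac12\Delta$ contribution at punctures asymptotic to the elliptic orbits of $L$), bound the covers of $Z$ by Lemma \ref{lem-compactness1}, and then run a matching argument over the tree. Your treatment of the first bullet and of the plane statement under $(E)$ is correct and essentially identical to the paper's.

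The gap is in the ledger for the second bullet. As written, you match only the class-$A$ bonuses (one $+\tfrac12$ for each cover puncture glued, possibly through trivial cylinders, to a class-$A$ component) against class-$B$ deficits which you record as $-\tfrac12\#\Gamma^-(U^i)$; i.e.\ you have silently dropped the $+\tfrac12$ from Lemma \ref{lem-compactness1}'s bound $\tfrac12(1-\#\Gamma^-)$. This leaves two holes. First, a non-free negative puncture of a cover which sits (through a chain of trivial cylinders) above another \emph{non-trivial} cover receives no class-$A$ bonus at all; it must be compensated by that lower cover's own $+\tfrac12$, and one needs this assignment to be injective --- which it is, because each component has a single positive puncture, so the upward chain of trivial cylinders from a compensating vertex reaches at most one such puncture. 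Second, even once every non-free puncture is compensated, the count you describe only yields $[(U)]*[Z]\geq -\tfrac12\,\#\Gamma^-_{\mathrm{free}}((U),L)$; the ``$1$'' in the claimed $\tfrac12\bigl(1-\#\Gamma^-_{\mathrm{free}}((U),L)\bigr)$ must come from one $+\tfrac12$ that is never consumed, and the paper supplies it by noting that a cover with no other cover above it in the tree is never used as a compensator, so its $+\tfrac12$ survives. Both repairs fit inside your framework, but without them the sketch does not reach the stated constant, which matters downstream (Proposition \ref{prop-splitcompactness} uses exactly these half-integer distinctions). A minor further point: ``contributes exactly $0$'' for a one-negative-puncture cover is unjustified --- Lemma \ref{lem-compactness1} gives only $\geq 0$, and in a cobordism with $(\lambda_+,L)\geq(\lambda_-,L)$ the trivial cylinder over a cover of $L_i$ may have strictly positive intersection with $Z$ --- though this is harmless since only lower bounds are needed.
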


\begin{proof}
  Let $U_i$ be the vertices of $(U)$ which represent curves that are not branched covers of components of $Z$, and let $V_j$ be the vertices of $(U)$ which represent curves that are branched covers of components of $Z$.  The level-wise additivity and disjoint union properties of the intersection number (Proposition \ref{prop-intersectionproperties}) imply

\begin{align*}
 [(U)]*[Z] &= [\sum U_i + \sum V_j]*[Z] \\
	&= \sum [U_i]*[Z] + \sum [V_j]*[Z] \\
\end{align*}
\noindent
In the first case, the second sum is empty.  Since $[U_i]*[Z] \geq 0$, and is strictly positive if it is asymptotic to some $x \in L$ or if $\pi_V \circ U^i$ intersects $L$ in its interior (Lemma \ref{lem-zerointstructure}), the first statement is clear.

For the second case, using the estimate from Lemma \ref{lem-compactness1} we have

\begin{align*}
 \sum_{V_j} [V_j]*[Z] &\geq \sum_{V_j} \frac{1}{2}(1 - \# \Gamma_-(V_j)) \\
	&= \frac{1}{2} \# V_j  + (\sum_{V_j} \sum_{\Gamma_-(V_j)} -\frac{1}{2} )\\
\end{align*}
\noindent
Consider a negative puncture $z \in \cup \Gamma_-(V_j)$ which is not a free puncture for $(U)$.  There is a first vertex on the path below this puncture which does not represent a trivial cylinder, which is either a $U_i$ or a $V_{j'}$ (and which conversely determines $(V_j,z)$).  If this vertex is is not a $V_{j'}$, then it is a $U_i$ asymptotic to an orbit in $L$ and therefore $[U_i]*[Z] \geq 1/2$.  Else, it is a $V_{j'}$.  Therefore, given a term in the sum $\sum_{V_j} \sum_{\Gamma_-(V_j)} -\frac{1}{2}$ which does not represent a free negative puncture, we find a uniquely determined term in the sum $\sum [U_i]*[Z] + \sum_{V_j} \frac{1}{2}$ that combines with it to make the total non-negative.  By this grouping of terms we determine
\begin{align*}
 [(U)]*[Z] &= \sum [U_i]*[Z] + \sum [V_j]*[Z] \\
	&= \sum [U_i]*[Z] + \frac{1}{2} \# V_j + (\sum_{V_j} \sum_{\Gamma_-(V_j)} -\frac{1}{2} )\\
	& \geq \frac{1}{2} (1 - \# \Gamma^-_{free}(U,L) )
\end{align*}
\noindent
the extra $1$ coming from the fact that there is at least one $V_j$ that is not used to cancel one of the terms in $\sum_{V_j} \sum_{\Gamma_-(V_j)} -\frac{1}{2}$ (there must be one $V_j$ not on a lower level than some other $V_{j}$).
\end{proof}

Now the following proposition is a consequence of Lemma \ref{lem-compactness2} and Lemma \ref{lem-zerointstructure}.

\begin{proposition}
\label{prop-compactness}
 Suppose $(\lambda_+,L) \geq (\lambda_-,L)$ satisfy $(E)$, $x \in \mathcal{P}(\lambda_+)$, $y \in \mathcal{P}(\lambda_-)$ are not in $L$, and $(U) \in \overline{\M}_J(x;y)$ has genus $0$.  Then $[(U)] * [Z] \geq 0$.  

If $[(U)] * [Z] = 0$, then each level $U^i$ of $(U)$ is a cylinder with $[U^i] \in \M_{J'}(a;b:Z)$ (where $J' \in \{\widehat{J}_+(\lambda_+), J, \widehat{J}_-(\lambda_-)$ determined by the level $i$).  In particular, this holds whenever $(U)$ is a SFT-limit of a sequence $U_k$ in $\M_J(x;y:Z_L)$, since Lemma \ref{lem-continuityofint} implies $[(U)]*[Z] = 0$.

If instead we only have $(\lambda_+,L) \geq (\lambda_-,L)$ and each component of $L$ is elliptic for both $\lambda_{\pm}$, then if $(U)$ is a holomorphic building with one positive puncture, no asymptotic limits in $L$ (so $\Gamma^-_{\mathrm{free}}((U),L) = 0$), and $[(U)] * [Z_L] = 0$, then no component $U^i$ of $(U)$ is asymptotic to $L$ or such that $\pi_V \circ U^i$ intersects $L$ in the interior.  In particular, the compactified projected image $\pi_V \circ \overline{(U)} \subset V \backslash L$.
\end{proposition}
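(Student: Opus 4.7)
The plan is to extract the proposition from Lemmas \ref{lem-compactness2}, \ref{lem-zerointstructure}, and \ref{lem-continuityofint}, together with a short combinatorial argument about the tree underlying a genus-zero building. Throughout let $U_i$ denote the vertices of $(U)$ that are not covers of a component of $Z = Z_L$, and $V_j$ the vertices that are.

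First I would establish $[(U)]*[Z] \geq 0$. Since $(U)$ has a single positive puncture and genus $0$, Lemma \ref{lem-compactness2} applies. If no $V_j$ occurs the bound is immediate. If some $V_j$ occurs, Lemma \ref{lem-compactness2} gives $[(U)]*[Z] \geq \tfrac{1}{2}(1 - \#\Gamma^-_{\mathrm{free}}((U),L))$. The key observation is that a free negative puncture of some $V_j$ is connected, by a chain of trivial cylinders over an orbit in $L$, to a negative external puncture of $(U)$, i.e.~to $y$; since $y \notin L$ by hypothesis, no such chain exists, so $\Gamma^-_{\mathrm{free}}((U),L) = \emptyset$ and $[(U)]*[Z] \geq \tfrac{1}{2}$.

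Next, suppose $[(U)]*[Z] = 0$. The paragraph above shows no $V_j$ appears, so no component covers $Z$. Lemma \ref{lem-compactness2} then also forces: (i) no edge of the building's tree is labeled by an orbit of $L$; (ii) the projection of no component meets $L$; and, since both ends satisfy $(E)$, (iii) no component is a plane. The combinatorial step is to convert (iii) into the statement that every component is a cylinder. In a genus-$0$ building with one positive and one negative external puncture, the underlying tree has $n$ vertices and $n-1$ internal edges, so the total puncture count across all components equals $2(n-1) + 2 = 2n$; combined with $k_i \geq 2$ for all $i$ from (iii), this forces $k_i = 2$ for every $i$. Having established that each $U^i$ is a cylinder, (i), (ii), and $x,y \notin L$ together with the elliptic hypothesis on $L$ put each $U^i$ into the setting of Lemma \ref{lem-zerointstructure}: no interior intersection with $Z$ and no shared asymptotic with $Z$ of the same sign, hence $[U^i] \in \M_{J'}(\cdot;\cdot : Z)$. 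The SFT-limit clause is then immediate from Lemma \ref{lem-continuityofint}, which gives $[(U)]*[Z] = \lim [U_k]*[Z] = 0$.

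For the final assertion under the weaker hypothesis, condition $(E)$ is no longer available to rule out planes. However the hypothesis ``no asymptotic limits in $L$'' directly gives $\Gamma^-_{\mathrm{free}}((U),L) = \emptyset$ by the same trivial-cylinder chase. Lemma \ref{lem-compactness2} applied to $[(U)]*[Z_L] = 0$ then excludes any $V_j$ and forces the projection of every component to avoid $L$ in its interior; combined with the hypothesis on asymptotics, this is exactly the claim that $\pi_V \circ \overline{(U)} \subset V \setminus L$.

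The main obstacle is the tree-combinatorics step in part (2): ensuring that excluding planes under $(E)$ is enough to force every component to be a cylinder. Everything else is bookkeeping that aggregates the intersection-theoretic input of Lemmas \ref{lem-compactness2} and \ref{lem-zerointstructure} and uses continuity of the intersection number under SFT-convergence.
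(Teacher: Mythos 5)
Your argument is correct and matches the paper's intended proof: the paper simply states the proposition as a consequence of Lemmas \ref{lem-compactness2} and \ref{lem-zerointstructure} (with Lemma \ref{lem-continuityofint} supplying the SFT-limit clause), which is precisely what you have fleshed out, including the observation that $y \notin L$ forces $\Gamma^-_{\mathrm{free}}((U),L) = \emptyset$. The only point worth adding to your puncture count is that each component also has exactly one positive puncture (every nonconstant finite-energy component has at least one, and a tree with a single external positive end has exactly $n$ positive punctures in total), so each two-punctured component is indeed a cylinder with one positive and one negative end.
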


\subsubsection{Compactness under splitting}
\label{sec-compactnessundersplitting}
In the following, suppose $\lambda_+ \succ \lambda_0 \succ \lambda_-$, $(\lambda_+,L) \geq (\lambda_0,L) \geq (\lambda_-,L)$, and $L$ is elliptic for each.  Consider almost-complex structures $J_1 \in \J(\widehat{J_-}(\lambda_-),\widehat{J_0}(\lambda_0):Z_L), J_2 \in \J(\widehat{J_-}(\lambda_-),\widehat{J_0}(\lambda_0):Z_L)$, and the almost-complex structures $J_R \in \J(J_1,J_2)$ described in section \ref{sec-symplectizations} obtained from $J_1,J_2$.  

\begin{proposition}
 \label{prop-splitcompactness}  
Suppose further that $(\lambda_+,L) \geq (\lambda_0,L) \geq (\lambda_-,L)$ all satisfy condition $(E)$.  Then each level $U^i$ of $(U)$ is a cylinder $[U^i] \in \M_J(a;b:Z)$ where $J \in \{ \widehat{J}_+(\lambda_+), J_2, \widehat{J}_0(\lambda_0), J_1, \widehat{J}_-(\lambda_-) \}$.\end{proposition}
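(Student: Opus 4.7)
The plan is to mirror the strategy of Proposition \ref{prop-compactness}, now applied to the splitting family $J_R$. Let $U_k \in \M_{J_{R_k}}(x;y:Z_L)$ be a sequence with $R_k \to \infty$ that converges in the SFT sense to the building $(U)$, where $x,y \notin L$. By hypothesis $[U_k]*[Z_L] = 0$ for every $k$, so by the continuity of the intersection number under SFT-convergence (Lemma \ref{lem-continuityofint}) one obtains $[(U)] * [Z_L] = 0$. The goal is to show that vanishing of this total intersection, together with condition $(E)$ applied pairwise to $(\lambda_+,\lambda_0)$, $(\lambda_0,\lambda_-)$, and $(\lambda_+,\lambda_-)$, forces each level of $(U)$ to be a single cylinder disjoint from $Z_L$.

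Next I would apply Lemma \ref{lem-compactness2} to the building $(U)$ viewed as a whole. First one must rule out branched covers of components of $Z_L$: since trivial cylinders preserve the asymptotic orbit, a free negative puncture of a branched cover $V_j$ would carry its $L$-orbit asymptotic all the way down through the chain of trivial cylinders below it. But the overall bottom asymptotic orbit is $y \notin L$, so no such path exists, i.e.\ $\# \Gamma^-_{\mathrm{free}}((U),L) = 0$. Hence the second case of Lemma \ref{lem-compactness2} would force $[(U)]*[Z_L] \geq \tfrac12$, contradicting $[(U)]*[Z_L] = 0$, so no branched cover of $Z_L$ appears. The first case of Lemma \ref{lem-compactness2} then gives $[(U)]*[Z_L] \geq 0$, with equality only if no internal edge is asymptotic to an orbit in $L$ and no projection meets $L$; in particular, Lemma \ref{lem-planepositivity} (which applies to each cobordism level by $(\lambda_\pm,L)\geq(\lambda_0,L)$) rules out planes.

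With planes and branched covers of $Z_L$ excluded, every component of $(U)$ has at least two punctures. A connected genus-$0$ bubble tree with exactly two overall free ends and $\sum n_v = 2(V-1)+2$ then forces $n_v = 2$ for each vertex $v$, so each level of $(U)$ is a single cylinder. By the level-wise additivity of the intersection number (Proposition \ref{prop-intersectionproperties}(4)),
\[
0 \;=\; [(U)]*[Z_L] \;=\; \sum_i [U^i]*[Z_L],
\]
with every summand non-negative by the first case of Lemma \ref{lem-compactness2}. Hence $[U^i]*[Z_L] = 0$ for each level, and Lemma \ref{lem-zerointstructure} identifies each level as an element of $\M_{J^i}(a_i;b_i:Z_L)$, where $J^i$ is the almost-complex structure governing level $i$.

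The step I expect to be most delicate is the bookkeeping for Lemma \ref{lem-compactness2} when it is applied to the five-piece building rather than to a building in a single symplectization: one has to verify that free negative punctures, and their inability to reach the overall bottom $y \notin L$, behave well across level transitions between $J_1$ and $J_2$ and the intermediate $\widehat{J_0}(\lambda_0)$-symplectizations. Once this is confirmed, the combinatorics of non-negative contributions cancelling against the $-\tfrac12$'s from free negative punctures proceeds exactly as in the proof of Lemma \ref{lem-compactness2}, and the remaining Euler characteristic argument is standard.
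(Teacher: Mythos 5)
Your argument is correct in substance, but it takes a genuinely different route from the paper. The paper does \emph{not} apply Lemma \ref{lem-compactness2} to the whole split building: it cuts $(U)$ into the upper sub-building $(U)_2$ (the $|1|k_+$ levels, which is connected) and the connected components $(U_1^i)$ of the lower sub-building, applies Lemma \ref{lem-compactness2} to each of these inside its own single cobordism -- where the lemma literally applies -- and then runs a case analysis precisely because the sub-building $(U)_2$ can have free negative punctures asymptotic to covers of $L$ (these get capped below by planar components of $(U)_1$, each contributing at least $\tfrac12$), so the $-\tfrac12$ terms from the second alternative of Lemma \ref{lem-compactness2} must be cancelled against the planar caps' contributions; forcing $k=1$ is the heart of that argument. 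Your global approach sidesteps this entirely: since the full building's only free ends are $x$ and $y$, neither in $L$, and chains of trivial cylinders preserve the asymptotic orbit, $\#\Gamma^-_{\mathrm{free}}((U),L)=0$, so branched covers of $Z_L$, then edges in $L$, projections meeting $L$, and planes are excluded in one sweep, and the Euler-characteristic count finishes the job. The price is that Lemma \ref{lem-compactness2} as stated concerns buildings in a single cobordism, so you must rerun its proof for a $k_-|1|k_0|2|k_+$ building; this is routine, since its ingredients are per-component estimates (Lemma \ref{lem-compactness1}, Lemma \ref{lem-zerointstructure}, Lemma \ref{lem-planepositivity}) that hold level by level under $(\lambda_+,L)\geq(\lambda_0,L)\geq(\lambda_-,L)$, together with level-wise additivity of the intersection number, which the paper establishes for concatenations -- but you only flag this step rather than carry it out. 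Two small corrections: the hypothesis needed to invoke Lemma \ref{lem-planepositivity} level by level is that the contact form at the \emph{positive end of each level} satisfies $(E)$ (which is why all three of $\lambda_\pm,\lambda_0$ must satisfy $(E)$), not the monotonicity $(\lambda_\pm,L)\geq(\lambda_0,L)$ as you wrote; and in the Euler-characteristic step you should also note that closed components are excluded by stability and that every component has at least one positive puncture (Stokes), so that ``each vertex has exactly two punctures'' really does force a single chain of cylinders with one positive and one negative end per level. With those points supplied, your proof is a valid and arguably cleaner alternative to the paper's splitting argument.
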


\begin{proof}
  Let $(U)$ be a SFT-limit for the sequence $U_k$; by continuity of the intersection number (Lemma \ref{lem-continuityofint}) we must have $[(U)]*[Z_L] = 0$.  Let $(U)_2$ be the building obtained by looking only at the top $|1|k_+$-levels of $(U)$, and $(U)_1$ be the building obtained by looking only at the bottom $k_-|1|k_0$ levels of $(U)$.  Then $(U)_1$ represents the homotopy class $[(U)_1]$ of asymptotically cylindrical maps in $W_1 = (W_{\xi},J_1)$, $(U)_2$ represents the homotopy class $[(U)_2]$ of asymptotically cylindrical maps in $W_2 = (W_{\xi},J_2)$, and $[(U)] = [(U)_2 \odot (U)_1]$.  The map $(U)_2$ has a connected domain, while the map $(U)_1$ may have a disconnected domain: let us denote by $(U_1^i)$ the connected components of $(U)_1$.  Then
  \[
     [(U)] = [(U)_2 \odot (U)_1] = \left( \sum [(U_1^i)] \right) \odot [(U)_2]
  \]
  The level-wise additivity and disjoint union properties of Proposition \ref{prop-intersectionproperties} imply
 \[
   [(U)]*[Z_L] = [(U)_2]*[Z_L] +  \sum [(U_1^i)] *[Z_L] 
 \]
  Let $k$ be the number of negative punctures of $(U)_2$.  Then there are $k$-components $(U_1^i)$: label them so that for $1 \leq i \leq k-1$, the component $(U_1^i)$ is planar (has no negative punctures), while $(U_1^k)$ is cylindrical (has one negative puncture asymptotic to $y$).  

Suppose the second alternative of Lemma \ref{lem-compactness2} holds for $(U)_2$.  Then
  \[
    [(U)_2]*[Z_L] \geq \frac{1}{2} - \frac{1}{2} \cdot k
  \]
\noindent If for $(U_1^k)$ the positive asymptotic orbit is not in $L$, the stronger inequality
  \[
     [(U)_2]*[Z_L] \geq \frac{1}{2} - \frac{1}{2} \cdot (k-1)
  \]
\noindent holds.  By Lemma \ref{lem-compactness2}, for the $k-1$ planar components $(U_1^i)$:
  \[
    [(U_1^i)] *[Z_L] \geq \frac{1}{2}
  \]
For $(U_1^{k})$ (by Lemma \ref{lem-compactness2} as well):
  \[
    [(U_1^{k})] *[Z_L] \geq 0
  \]
Moreover, if $(U_1^{k})$ has a positive asymptotic limit in $L$, then the last inequality is strengthened to $\geq 1/2$.  This is because if the second alternative to Lemma \ref{lem-compactness2} holds for $(U_1^{k})$, the inequality implies $[(U_1^k]*[Z] \geq 1/2$.  If the first holds, then some level of $(U_1^k)$ is asymptotic to $L$: Lemma \ref{lem-compactness2} asserts both that level will have strictly positive intersection, and that each level has non-negative intersection, so by level-wise additivity $[(U_1^k)]*[Z] \geq 1/2$ again.  It follows that whether or not $(U_1^k)$ has positive asymptotic orbit in $L$, the inequality $[(U)]*[Z] \geq 1/2$ holds.  Therefore the first alternative to Lemma \ref{lem-compactness2} must apply to $(U)_2$.

Since the first alternative to Lemma \ref{lem-compactness2} holds for $(U)_2$, $[(U)_2]*[Z] \geq 0$.  Lemma \ref{lem-compactness2} implies that for $1 \leq i \leq k-1$, $[(U_1^i)] * [Z] \geq 1/2$ and that $[(U_1^k)] * [Z] \geq 0$.  Therefore $0 = [(U)]*[Z] \geq \frac{k-1}{2}$, and therefore $k = 1$ i.e~ $(U)_1 = (U_1^k)$.  Lemma \ref{lem-compactness2} implies that $[(U)_1] * [Z] \geq 0$.  Since $[(U)_2]*[Z] \geq 0$, it must be that $[(U)_2] *[Z] = [(U)_1]*[Z] = 0$.  Lemma \ref{lem-compactness2} therefore asserts that $(U)_2$ has no planar components and is never asymptotic to an orbit in $L$ - because $k = 1$ it must be that every level of $(U)_2$ is a single cylinder.  Since $[(U)_1] *[Z] = 0$ and $\# \Gamma^-_{\mathrm{free}}((U)_1,L) = 0$ (this is defined in Lemma \ref{lem-compactness2}), the second alternative of Lemma \ref{lem-compactness2} cannot hold, so the first alternative to Lemma \ref{lem-compactness2} must apply.  It implies that $(U)_1$ also has no planar components and is never asymptotic to an orbit in $L$.  Therefore, every level $U^i$ for $(U)$ is a cylinder and $[U^i]*[Z] \geq 0$ - it follows by level-wise additivity $[U^i] * [Z] = 0$ for all levels $i$, in particular $U^i \in \M_{J'}(a,b:Z)$.
\end{proof}

In case $(\lambda_+,L) \geq (\lambda_0,L) \geq (\lambda_-,L)$ do not necessarily satisfy condition $(E)$ (but are each such that $L$ is elliptic), it will be useful in the proof of Theorem \ref{thm-forcing2} to note the following.  Let $J_n$ be a sequence of almost-complex structures splitting into $J_1 \odot J_2$.

\begin{proposition} \label{prop-splitcompactnessNOT-E}
Suppose $x,y \notin L$ and that the sequence $U_n \in \M_{J_n}(x;y:Z)$ has the SFT-limit $(U) = (U)_2 \odot (U)_1$.  Then $[(U)_1]*[Z_L] = 0, [(U)_2]*[Z_L] = 0$, and therefore by Lemma \ref{lem-compactness2} each component $U^i$ of the limit satisfies the following properties:
\begin{itemize}
 \item No asymptotic limit of $U^i$ is an orbit in $L$
 \item $\pi_V \circ U^i$ has no interior intersections with $L$
\end{itemize}
\noindent  In particular, the compactified image $\pi_V \circ \overline{(U)_i} \subset V \backslash L$ (in particular it defines a homotopy from $x$ to $y$ since its domain is a compactified cylinder).
\end{proposition}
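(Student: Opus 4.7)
The plan is to mimic the proof of Proposition \ref{prop-splitcompactness}, with the hypothesis $x,y \notin L$ replacing condition $(E)$ to rule out free negative punctures on branched covers of $Z_L$.

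First, by Lemma \ref{lem-continuityofint},
\[
  [(U)]*[Z_L] \;=\; \lim_{n \to \infty} [U_n]*[Z_L] \;=\; 0.
\]
Next I claim $\Gamma^-_{\mathrm{free}}((U),L) = \emptyset$. Any such puncture would belong to a component covering some component of $Z_L$, hence be asymptotic to an orbit in $L$. The trivial cylinders along the path to an end of the bubble tree of $(U)$ preserve that orbit, so the orbit at the end reached—which is either $x$ or $y$—would lie in $L$, contradicting the hypothesis. Given this, the second alternative of Lemma \ref{lem-compactness2} would force $[(U)]*[Z_L] \geq \tfrac{1}{2}$, incompatible with the previous display. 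Therefore $(U)$ has no branched-cover components of $Z_L$; in particular neither does $(U)_2$ nor any connected component $(U_1^i)$ of $(U)_1$.

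Each of these pieces is a genus zero building with a single positive puncture, so the first alternative of Lemma \ref{lem-compactness2} gives $[(U)_2]*[Z_L] \geq 0$ and $[(U_1^i)]*[Z_L] \geq 0$. By the level-wise additivity of the intersection pairing (Proposition \ref{prop-intersectionproperties}(4)),
\[
  0 \;=\; [(U)]*[Z_L] \;=\; [(U)_2]*[Z_L] \;+\; \sum_i [(U_1^i)]*[Z_L],
\]
so every summand vanishes. In particular $[(U)_2]*[Z_L] = 0$ and $[(U)_1]*[Z_L] = \sum_i [(U_1^i)]*[Z_L] = 0$. Applying the \emph{strict} inequality clause in the first alternative of Lemma \ref{lem-compactness2} to $(U)_2$ and to each $(U_1^i)$ then forces that no edge of the corresponding bubble tree is asymptotic to an orbit in $L$, and no projection $\pi_V \circ U^i$ meets $L$ in its interior. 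These are exactly the two listed properties, from which $\pi_V \circ \overline{(U)_i} \subset V \setminus L$ is immediate.

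The only delicate point is the free-puncture observation in the second step; once that is in place, the rest is a direct appeal to the continuity of the intersection pairing, level-wise additivity, and the already-established Lemma \ref{lem-compactness2}.
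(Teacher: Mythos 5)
Your proof reaches the right conclusion and is close in spirit to the paper's, but it diverges at the decisive step and two points need to be made explicit. The paper never applies Lemma \ref{lem-compactness2} to the whole split building: it applies the lemma only to $(U)_2$ and to the connected components $(U_1^i)$ of $(U)_1$ (each of which is a building for a \emph{single} cobordism, as in the lemma's statement), and it kills branched covers by noting that each free negative puncture of $(U)_2$ sitting on a cover of $Z_L$ is capped by a planar component of $(U)_1$ contributing at least $\tfrac{1}{2}$, so the total would be positive. You instead feed the entire two-cobordism building $(U)$ into the second alternative of Lemma \ref{lem-compactness2}, using $x,y \notin L$ to get $\Gamma^-_{\mathrm{free}}((U),L) = \emptyset$ (your trivial-cylinder propagation argument for this is correct, and it is exactly the role the hypothesis $x,y\notin L$ plays here). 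This is a legitimate, arguably cleaner route, but the lemma as stated concerns buildings of type $k_-|1|k_+$ for one $J \in \J(\widehat{J}_-,\widehat{J}_+:Z)$, whereas $(U)$ has two cobordism levels; to use it you should say that its proof extends verbatim, since it only invokes level-wise additivity (Proposition \ref{prop-intersectionproperties}), Lemma \ref{lem-zerointstructure}, and Lemma \ref{lem-compactness1}, and the latter applies to every level of the split building because $(\lambda_+,L) \geq (\lambda_0,L) \geq (\lambda_-,L)$ with $L$ elliptic for all three forms.

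The second point is at the end. From $[(U)_2]*[Z_L] = 0$ and $[(U_1^i)]*[Z_L] = 0$ you invoke the strict-inequality clause to exclude orbits of $L$ at the \emph{edges of the bubble trees of the sub-buildings}; but the asymptotic limits at the interface between $(U)_2$ and $(U)_1$ are free punctures of those sub-buildings, not edges of their trees, while the proposition's first bullet asserts that \emph{no} asymptotic limit of any component lies in $L$. The gap is easily closed, and it is how the paper reads its own lemma: since no component is a cover of $Z_L$ and the orbits of $L$ are elliptic, Lemma \ref{lem-zerointstructure} gives $[U^i]*[Z_L] > 0$ for any component $U^i$ having \emph{any} asymptotic limit (interior edge or free puncture) on a cover of $L$; because every component's intersection number has been shown to vanish, the interface orbits are excluded from $L$ as well, and the free ends $x,y$ are excluded by hypothesis. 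With these two clarifications your argument is complete.
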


\begin{proof}
The proof is similar to the proof of Proposition \ref{prop-splitcompactness}.  First, by continuity of the intersection number $[(U)]*[Z_l] = 0$.  Let $(U_1^i)$ be the connected components of $(U)_1$.  Then
\[
0 =  [(U)]*[Z_L] = [(U)_2]*[Z_L] +  \sum [(U_1^i)] *[Z_L] 
\]
Let $k$ be the number of negative punctures of $(U)_2$, and $l = \# \Gamma^-_{\mathrm{free}}((U)_2,L) \leq k-1$ (recall from earlier that this is the number of free negative punctures of $(U)_2$ asymptotic to a cover of $L$).  Then by Lemma \ref{lem-compactness2}
  \[
    [(U)_2]*[Z_L] \geq \frac{1}{2} - \frac{1}{2} \cdot l
  \]
\noindent and for each puncture of $\Gamma^-_{\mathrm{free}}((U)_2,L)$ there is a planar component of $(U)_1$, say $(U)_1^1,\dots,(U)_1^l$ such that $[(U)_1^i]*[Z_L] \geq \frac{1}{2}$ (again by Lemma \ref{lem-compactness2}).  Taken together this implies $[(U)]*[Z_L] > 0$, which is impossible so in fact $l = 0$.  Thus every component $(U)_2, (U)_1^1, \dots (U)_1^k$ satisfies $\Gamma^-_{\mathrm{free}}((U)_2,L) = \Gamma^-_{\mathrm{free}}((U)_1^j,L) = \emptyset$.  Applying Lemma \ref{lem-compactness2} it follows that each term in the above sum is non-negative, and therefore each term is zero.  Reading Lemma \ref{lem-compactness2} once again, it now asserts that the components have the properties stated above.
\end{proof}

\subsubsection{Proper link classes $(PLC)$}

If $(\lambda_{\pm},L,[a])$ satisfy $(PLC)$, we can prove a compactness theorem for holomorphic cylinders within the class of broken trajectories such that all asymptotic limits lie within the class $[a]$:

\begin{proposition}
\label{prop-PBCcompactness}
 Suppose $(\lambda_+,L) \sim (\lambda_-,L)$, and both $(\lambda_{\pm},L,[a])$ satisfy $(PLC)$.  Let $J \in \J(\widehat{J}_-(\lambda_-),\widehat{J}_+(\lambda_+):Z)$ and $x_{\pm}$ be $\lambda_{\pm}$-Reeb orbits in the class $[a]$.  If $[U_k] \in \M_J(x_+;x_-:Z)$ converges to a building $[(U)]$ in $\overline{\M_J}$, then each level $U^i$ of $(U)$ represents an element in $\M_{J'}(b;c:Z)$ ($J' \in \{ \widehat{J}_+(\lambda_+), J, \widehat{J}_-(\lambda_-) \}$) and $b,c \in [a]$.
\end{proposition}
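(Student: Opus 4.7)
The plan is to follow the structure of Proposition \ref{prop-splitcompactness} but to replace the ellipticity-based inputs by the ``every disc bounding a Reeb orbit intersects $L$'' conclusion of Lemma \ref{lem-planepositivity} together with the proper link class condition of $(PLC)$.

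First, Lemma \ref{lem-continuityofint} gives $[(U)]*[Z_L]=0$. I then show that $(U)$ is a chain of cylinders. A combinatorial analysis of the dual tree of a genus-$0$ stable building with one positive and one negative free puncture, together with action-positivity (which rules out components with $p=0$, i.e.~caps, since the symplectic area of such a component would be a sum of negative orbit actions), shows that any smooth component with $p+n\geq 3$ forces the existence of a plane somewhere in $(U)$. Planes are in turn ruled out by Lemma \ref{lem-planepositivity} in its $(PLC)$ form: any plane $P$ satisfies $[P]*[Z_L]\geq 1$, so combined with $[U_i]*[Z_L]\geq 0$ for non-cover components (Theorem \ref{thm-posintersections}) and the bound $[V_j]*[Z_L]\geq \frac{1}{2}(1-\#\Gamma_-(V_j))$ for branched cover components $V_j$ of $Z_L$-components (an analogue of Lemma \ref{lem-compactness1}), a plane forces $[(U)]*[Z_L]\geq 1$ after the cancellation argument in the proof of Lemma \ref{lem-compactness2}, contradicting $[(U)]*[Z_L]=0$.

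The main technical step is to verify the cover-component bound in the $(PLC)$ setting, since Lemma \ref{lem-compactness1} is stated under the $(E)$ hypothesis $(\lambda_+,L)\geq (\lambda_-,L)$. I would redo the perturbation-of-section computation from the proof of Lemma \ref{lem-compactness1}, observing that $\sim$ ensures $\lambda_\pm$ define a common contact structure and orbit set $L$, so the extremal winding data $\alpha^\pm_{\lambda_\pm,\Phi}(L_i,k)$ at covers of a common simple orbit $L_i$ satisfy the monotonicity needed for the mapping-degree estimate. With $(U)$ then known to be a chain of cylinders, I use the proper link class part of $(PLC)$ to exclude cover components. If some cylinder in the chain were a multiple cover of a component $\R\times L_k$ of $Z_L$, the chain would contain a transition cylinder $W$ with one asymptote in the class $[a]$ (traced back to $x_\pm\notin L$) and the other asymptote a cover of $L_k\subset L$; the projection $\pi_V\circ \overline{W}$ would homotope an $[a]$-representative to a component of $L$, and if this homotopy were contained in $V\setminus L$ it would violate the proper link class condition of $(PLC)$, so $\pi_V\circ W$ must intersect $L$ in its interior, giving $[W]*[Z_L]\geq 1$ and contradicting $[(U)]*[Z_L]=0$.

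Hence $(U)$ is a chain of non-cover cylinders, each satisfying $[U^i]*[Z_L]=0$ by vanishing of the non-negative sum $\sum_i [U^i]*[Z_L]$, so $[U^i]\in \M_{J'}(b;c:Z)$ at each level. The projection $\pi_V\circ \overline{(U)}$ is then a piecewise-smooth homotopy in $V\setminus L$ from $x_+$ through successive breaking orbits to $x_-$; since $[x_+]=[x_-]=[a]$ in $\pi_1(V\setminus L)$, every breaking orbit lies in the class $[a]$. The main anticipated obstacle is the cover-intersection bound under $(PLC)$ alone; once that is in place, the remainder of the argument is combinatorial and topological.
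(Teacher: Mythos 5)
Your route is genuinely different from the paper's: you try to run the global intersection-number bookkeeping of Lemmas \ref{lem-compactness1} and \ref{lem-compactness2} (level/component additivity plus a lower bound for branched covers of components of $Z_L$) in the $(PLC)$ setting, and this is where the argument breaks. There is no analogue of Lemma \ref{lem-compactness1} under $(PLC)$ alone. The relation $\sim$ only says $\ker\lambda_+=\ker\lambda_-$ and that $L$ is a link of closed orbits for both forms; it gives \emph{no} relation between the extremal winding data $\alpha^{\pm}_{\lambda_+,\Phi}(L_i,k)$ and $\alpha^{\pm}_{\lambda_-,\Phi}(L_i,k)$, since the transverse rotation numbers along $L$ can be changed arbitrarily by a perturbation of the $2$-jet of the form along $L$ without leaving the $\sim$-class (cf.\ Lemma \ref{lem-2jetpert}). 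The mapping-degree estimate in the proof of Lemma \ref{lem-compactness1} needs exactly $\theta^i_+\geq\theta^i_-$, i.e.\ the hypothesis $(\lambda_+,L)\geq(\lambda_-,L)$, and the remark following that lemma states that the control is lost without it. Moreover $(PLC)$ does not require $L$ to be elliptic, so the per-puncture contribution $\Delta\geq 1$ used there can also vanish (hyperbolic covers). Since both your plane-exclusion and your cover-exclusion steps conclude via additivity together with non-negativity of all remaining terms --- which is precisely what the missing cover bound would supply --- neither step is established; the cover-exclusion step is in addition circular, because the offending cover component is itself one of the terms whose non-negativity you invoke when you contradict $[(U)]*[Z_L]=0$.

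The paper avoids this entirely by arguing locally rather than through homotopy-invariant additivity. If some level $U^i$ of the limit is a plane, or is the first (from the top) cylinder whose negative asymptote lies in $L$ or outside $[a]$, then the second condition of $(PLC)$, resp.\ the proper-link-class condition, forces $\pi_V\circ U^i$ to have an \emph{interior} intersection with $L$, i.e.\ $U^i$ has an isolated geometric intersection with $Z_L$ (such a level is not contained in $Z_L$: its positive asymptote is in $[a]$, and a finite-energy plane cannot lie in a trivial cylinder). By property \texttt{C2}/\texttt{CHCE2} of SFT-convergence, $U_k\circ\phi^i_k\to U^i$ in $C^{\infty}_{\mathrm{loc}}$, so positivity of intersections forces $\mathrm{int}(U_k,Z_L)>0$ for large $k$, contradicting $\mathrm{int}(U_k,Z_L)=0$, which follows from $[U_k]*[Z_L]=0$ and Theorem \ref{thm-posintersections}. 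This requires no control whatsoever of intersection numbers of covers of $Z_L$. Once planes and breaking orbits in $L$ or outside $[a]$ are excluded this way, the remainder of your argument (non-negativity and vanishing of each $[U^i]*[Z_L]$, and the homotopy statement for the breaking orbits) does go through.
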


\begin{proof}
Condition $(PLC)$ prevents bubbling off as follows.  Let $U_k$ be a convergent subsequence with limit building $(U)$.  Denote the connected components by $U^i$ (vertices of the bubble tree) indexed by $i \in I$.  Let $S$ be the underlying nodal domain of the building with components $S^i$ corresponding to the $U^i$.  By properties \texttt{CHCE2, CHCE3} of SFT-convergence\footnote{\label{footnote-CHCE} These properties are analogs of properties \texttt{C2}, \texttt{C3} introduced earlier when considering the compactness theorem for cylindrical ends (\emph{without splitting} along a contact-type hypersurface) - we refer to Section $8$ of \cite{BEHWZ} for the precise statements.} there are maps $\phi^i_k : S^i \rightarrow \R \times S^1$ from $S^i$ to the domain of $U_k$ such that $U_k \circ \phi^i_k|_{S^i}$ converges to $U^i$ in $C^{\infty}_{\rm{loc}}$ (up to a translation in cylindrical levels).  Suppose $U^i$ is a plane for some $i$.  Then by Lemma \ref{lem-planepositivity} and condition $(PLC)$ $[U^i] * [Z] \geq 1$.  It follows that $U_k \circ \phi^i_k|_{S^i} \cdot Z \neq 0$, because $U_k \circ \phi^i_k|_{S^i}$ converges to $U^i$ in $C^{\infty}_{\rm{loc}}$ and positivity of intersections implies that all maps $C^0$-near $U^i$ intersect $Z$ as well.  Therefore $\mathrm{int}(U_k, Z) \neq 0$ and so $[U_k] * [Z] > 0$ contrary to the hypothesis $[U_k] \in \M(x_+;x_-:Z)$.  This shows that the bubble tree is linear i.e.~all components are cylinders.

Since $x \notin L$, it is easy to see as follows that no asymptotic orbit between levels is in $L$ or not in $[a]$.  If not, there would be a first level $U^i$ with positive asymptotic orbit $x' \in [a]$ and negative asymptotic orbit $y'$ either in $L$ or not in $[a]$.  But $\pi_V \circ U^i$ provides a homotopy between $x'$ and $y'$, so by condition $(PLC)$ there must be an interior intersection with $L$ in either case.  But then the $C^{\infty}_{\mathrm{loc}}$-convergence property of SFT convergence cited in the previous paragraph implies by the same argument used above that for large $k$, $\mathrm{int}(U_k,Z) > 0$, which violates Theorem \ref{thm-posintersections} since $U_k, Z_L$ have non-identical images and $[U_k] * [Z_L] = 0$.  After establishing this fact about the asymptotic limits, no $U^i$ can have identical image with $Z$ and the same $C^{\infty}_{\mathrm{loc}}$-convergence argument shows that $\mathrm{int}(U^i,Z) = 0$ for each level.  The formula of Theorem \ref{thm-posintersections} shows $[U^i]*[Z] = 0$.  So $[U^i] \in \M_{J'}(x';y':Z)$ and $x', y' \in [a]$.
\end{proof}

\begin{remark}
 The same proof goes for the limit of a sequence of such $J_k$-holo\-morphic cylinders where the $J_k$ converge in $C^{\infty}$ to a $J \in \J(\widehat{J}_+,\widehat{J}_-:Z)$; in particular we get a similar statement when considering $\calN_{\{ J_l \} }(1;1:Z)$.
\end{remark}

A similar result holds for a sequence of $J_R \in \J(J_1,J_2)$.  In the following, suppose $\lambda_+ \succ \lambda_0 \succ \lambda_-$, that $(\lambda_{+,0,-},L,[a])$ each satisfy $(PLC)$, and that $J_{R_k} \in \J(J_1,J_2)$ with $R_k \uparrow \infty$ (where $J_1 \in \J(\widehat{J}_-,\widehat{J}_0:Z), J_2 \in \J(\widehat{J}_0,\widehat{J}_+:Z)$).

\begin{proposition}
 \label{prop-PBCsplitcompactness}  If $(U)$ is a SFT-limit of a sequence $U_k \in \M_{J_{R_k}}(x;y:Z)$ with $R_k \uparrow \infty$ and $[x] = [y] = [a]$, then each level $U^i$ represents a cylinder $[U^i] \in \M_J(b;c:Z)$ where $[b] = [c] = [a]$ and $J \in \{ \widehat{J}_+(\lambda_+), J_2, \widehat{J}_0(\lambda_0), J_1, \widehat{J}_-(\lambda_-) \}$.
\end{proposition}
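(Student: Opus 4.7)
The strategy is to combine the proof of Proposition \ref{prop-PBCcompactness} with the splitting framework of Proposition \ref{prop-splitcompactness}. Specifically, I will exploit continuity and homotopy-invariance of Siefring's intersection number $[\cdot]*[Z_L]$ together with positivity of intersections, using the (PLC) hypothesis (via Lemma \ref{lem-planepositivity}) to exclude planar bubbling.

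By continuity of the intersection number (Lemma \ref{lem-continuityofint}), $[(U)]*[Z_L] = \lim_k [U_k]*[Z_L] = 0$. Decomposing $(U) = (U)_2 \odot (U)_1$ as in Proposition \ref{prop-splitcompactness} and writing the smooth components as $U^j$, the level-wise and disjoint-union additivity of intersection numbers (Proposition \ref{prop-intersectionproperties}) gives $\sum_j [U^j]*[Z_L] = 0$. No component $U^j$ can have image contained in a component of $Z_L$ (the topmost component lies in $W^+(\lambda_+)$ with positive asymptotic $x \notin L$, and a plane cannot be contained in the cylinder $Z_{L_i} \cong \R \times S^1$), so Theorem \ref{thm-posintersections} applies to each $U^j$ to yield $[U^j]*[Z_L] \geq 0$. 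Combined with the sum identity, this forces $[U^j]*[Z_L] = 0$ for every component.

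Next I rule out planar components. Because each of $(\lambda_+, L, [a])$, $(\lambda_0, L, [a])$, $(\lambda_-, L, [a])$ satisfies (PLC), Lemma \ref{lem-planepositivity} gives $[P]*[Z_L] \geq 1$ for any plane $P$ appearing as a component of $(U)$ at any level, contradicting $[U^j]*[Z_L] = 0$. Hence the bubble tree of $(U)$ is linear and every component is a cylinder. To exclude breaking orbits in $L$ or outside $[a]$, suppose such an orbit existed and let $U^j$ be the topmost cylinder whose positive end $x' \in [a]$ but whose negative end $y'$ either lies in $L$ or does not represent $[a]$ (such a $U^j$ exists by comparing the top and bottom ends of $(U)$). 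Then $\pi_V \circ U^j$ is a homotopy in $V$ between $x'$ and $y'$, and (PLC) forces it to meet $L$ in the interior. Combining the $C^{\infty}_{\mathrm{loc}}$-convergence of $U_k$ to $U^j$ (after suitable domain diffeomorphisms and vertical rescalings) guaranteed by the SFT-convergence properties \texttt{C1}--\texttt{C3} in the splitting setup with positivity of intersections (Theorem \ref{thm-posintersections}), this produces an interior intersection of $U_k$ with $Z_L$ for large $k$ and so $[U_k]*[Z_L] > 0$, contradicting the hypothesis. Hence every breaking orbit lies in $[a] \setminus L$, so each level $U^j \in \M_{J'}(b;c:Z_L)$ with $b, c \in [a]$ and $J'$ the appropriate almost-complex structure on that level.

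The main technical obstacle is ensuring that the $C^{\infty}_{\mathrm{loc}}$-convergence-and-positivity-of-intersections argument carries through at the newly-created middle level $\widehat{J}_0(\lambda_0)$, where the vertical rescalings $c^{(\lambda_0, j_0)}_k$ must be invoked to extract the limit component and where the $G \circ U_k$ convergence takes place in a space $W_1 \odot W_2$ rather than a single symplectization. Property \texttt{C2} guarantees the former while property \texttt{C3} guarantees the latter, so the same argument from Proposition \ref{prop-PBCcompactness} applies verbatim and no genuinely new compactness input is required beyond the ingredients already assembled in this section.
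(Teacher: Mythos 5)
Your overall strategy uses the same ingredients as the paper (continuity of the Siefring number, Lemma \ref{lem-planepositivity}, the $C^{\infty}_{\mathrm{loc}}$-convergence/positivity transfer, and the $(PLC)$ homotopy argument for breaking orbits), but the step where you deviate has a genuine gap. You deduce $[U^j]*[Z_L]\geq 0$ for every smooth component from Theorem \ref{thm-posintersections}, after asserting that no component has image inside a component of $Z_L$; your justification only notes that the top component has positive asymptote $x\notin L$ and that a plane cannot cover the cylinder $Z_{L_i}$. This does not exclude intermediate or lower components (cylinders or multiply-punctured spheres) that are branched covers of $Z_{L_i}$: a priori the building could break along covers of $L_i$, and such components are exactly what these compactness statements must rule out. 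For such components Theorem \ref{thm-posintersections} does not apply (they share their image with $Z_L$), and their intersection number with $Z_L$ need not be non-negative --- under $(PLC)$ the orbits of $L$ need not be elliptic and there is no monotonicity of Conley-Zehnder indices, so not even the estimate of Lemma \ref{lem-compactness1} (which can itself be negative when there are several negative punctures) is available. Excluding these components amounts to excluding breaking orbits in $L$, which in your write-up happens only later and relies on the all-components-are-cylinders conclusion drawn from the very step in question; as ordered, the argument is circular.

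The repair is the paper's route: do not use additivity plus componentwise non-negativity to kill planes. If a plane $P$ bubbles off, $(PLC)$ forces $\pi_V\circ P$ to meet $L$ in the interior, so $P$ has a genuine intersection point with $Z_L$; by the $C^{\infty}_{\mathrm{loc}}$ part of SFT-convergence (properties \texttt{C2}, \texttt{C3} in the splitting setting, as you note) and positivity of local intersections, $U_k$ then intersects $Z_L$ for large $k$, contradicting $[U_k]*[Z_L]=0$. With planes excluded, the genus-zero building with one free positive and one free negative end is a chain of cylinders, your breaking-orbit argument (which is the paper's, from the second paragraph of the proof of Proposition \ref{prop-PBCcompactness}) applies as written, and only then does one conclude that no component shares an asymptotic orbit or its image with $Z_L$; the same local argument gives $\mathrm{int}(U^i,Z_L)=0$, while $\Delta=0$ and $\delta_{\infty}=0$ because no end lies on $L$, so $[U^i]*[Z_L]=0$ and $[U^i]\in\M_{J'}(b;c:Z_L)$ with $b,c\in[a]$.
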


\begin{proof}
The proof is similar to the previous proposition.  Properties \texttt{C2, C3} of SFT-convergence and condition $(PLC)$ prohibit the bubbling of planes as in the above proof, so each level $U^i$ is connected and cylindrical.  Similarly, the arguments in the second paragraph of the proof of Proposition \ref{prop-PBCcompactness} prevent any level from intersecting $Z_L$, prevent any level from being asymptotic to any component of $L$, and force any asymptotic orbit $x$ between levels to be in the homotopy class $[a]$.  So the SFT-limit must have the form stated.
\end{proof}

In the proof of Theorem \ref{thm-forcing2}, we will require a more general fact.  Suppose that $L$ satisfies the linking condition of Theorem \ref{thm-forcing2}, namely:
\begin{quote}
 $L$ is such that every disc $F$ with boundary $\partial F  \subset L$ and $[\partial F] \neq 0 \in H_1(L)$ has an interior intersection with $L$
\end{quote}
\noindent and that $[a]$ is a proper link class for $L$ (Definition \ref{def-PLC}).  Suppose only that $\lambda_+,\lambda_0,\lambda_-$ are  each non-degenerate and such that $L$ is closed under the Reeb flows.  Then

\begin{proposition}
 \label{prop-PBCsplitcompactnessNOT-PLC}  Consider a sequence $U_k$ of cylinders in $\M_{J_{R_k}}(x;y:Z)$, with $R_k \uparrow \infty$ and $[x] = [y] = [a]$.  Then any SFT-limit $(U) = (U)_1 \odot (U)_2$ is such that $[(U)_1]*[Z_L] = 0, [(U)_2]*[Z_L] = 0$, and each component $U^i$ of the limit satisfies the following properties:
\begin{itemize}
 \item No asymptotic limit of $U^i$ is an orbit in $L$
 \item $\pi_V \circ U^i$ has no interior intersections with $L$
\end{itemize}
\noindent  In particular, the compactified images $\pi_V \circ \overline{(U)} \subset V \backslash L$.
\end{proposition}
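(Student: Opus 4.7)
The plan is to mirror the proof of Proposition \ref{prop-PBCsplitcompactness}, but replacing the full $(PLC)$ hypothesis by the two weaker ones available here: the linking hypothesis on discs bounded by $L$, and the proper link class condition on $[a]$. I would first apply Lemma \ref{lem-continuityofint} and the level-wise additivity property (Proposition \ref{prop-intersectionproperties}(4)) to conclude
\[
 [(U)_1]*[Z_L] + [(U)_2]*[Z_L] = 0.
\]
The strategy is then to verify the two bullet-point properties component-by-component, after which the vanishing of each summand separately will follow from Theorem \ref{thm-posintersections} and additivity.

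For the ``no interior intersections'' property, I would argue exactly as in Proposition \ref{prop-PBCcompactness}: properties \texttt{C2, C3} of SFT-convergence yield reparametrizations $\phi_k^i$ with $c_k^i * U_k \circ \phi_k^i \to U^i$ in $C^\infty_{\mathrm{loc}}$, and an isolated interior intersection of some component $U^i$ with $Z_L$ would be transverse by positivity of intersections, forcing $U_k$ to inherit a nearby intersection for large $k$ (here one uses that $Z_L$ is $\R$-invariant in each level, so the translates $c_k^i$ do not change the intersection count). This would contradict $\mathrm{int}(U_k, Z_L)=0$.

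For the ``no asymptote in $L$'' property, I argue by contradiction. Suppose some edge of the bubble tree is labelled by an orbit $\gamma$ with image in a component $L_m$ of $L$. Cutting the tree at $\gamma$ produces two sub-trees. If one sub-tree contains neither of the external half-edges $x$, $y$, then its only external asymptote is $\gamma$, and the projection $\pi_V$ of its components assembles into a continuous disc in $V$ bounding $\gamma$; since $[\gamma]\neq 0 \in H_1(L)$, the linking hypothesis of Theorem \ref{thm-forcing2} forces an interior intersection with $L$, contradicting the previous paragraph. Otherwise $\gamma$ lies on the spine from $x$ to $y$, and concatenating the projected spine components from $x$ down to $\gamma$ yields a continuous homotopy $H:[0,1]\times S^1 \to V$ with $H(0,\cdot)=x$, $H(1,\cdot)$ a parametrization of a cover of $L_m$, and $H([0,1)\times S^1)\subset V\setminus L$ (using the previous bullet component-wise and the asymptotic convergence of each projected spine cylinder to its limit orbit); after a reparametrization reducing the multi-cover, this violates the proper link class property of $[a]$.

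Together the two properties give $\mathrm{int}(U^i,Z_L) = \delta_{\infty}(U^i,Z_L) = \Delta(U^i,Z_L) = 0$ for every component, so Theorem \ref{thm-posintersections} yields $[U^i]*[Z_L] = 0$; level-wise additivity then gives $[(U)_1]*[Z_L] = [(U)_2]*[Z_L] = 0$, and the final assertion $\pi_V\circ\overline{(U)}\subset V\setminus L$ is immediate. The main technical obstacle is the spine case of the third paragraph: carrying out the reparametrization needed to convert a forbidden homotopy ending at a multi-cover $L_m^\ell$ into one ending at the simple component $L_m$ in the sense of Definition \ref{def-PLC}. Handling planes hanging off the spine whose asymptotes lie in $L$ is already subsumed by the disc case above.
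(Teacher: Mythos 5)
Your strategy is the same as the paper's: rule out interior intersections of components with $Z_L$ via the $C^{\infty}_{\mathrm{loc}}$-convergence of SFT-limits and positivity of intersections, then rule out edges in $L$ by a tree-cutting dichotomy (planar subtree versus spine), using the disc-linking hypothesis in the first case and the proper-link-class hypothesis in the second, and finally deduce $[(U)_1]*[Z_L]=[(U)_2]*[Z_L]=0$ from Theorem \ref{thm-posintersections} and level-wise additivity (this last derivation is fine, and the paper leaves it implicit). However, both halves of your dichotomy have a gap at the same spot: you cut at an \emph{arbitrary} edge $\gamma$ lying in $L$. In the planar case, the linking hypothesis of Theorem \ref{thm-forcing2} does force the assembled disc to meet $L$ in its interior, but that intersection may occur precisely at another internal edge of the subtree that also lies in $L$ --- i.e.\ at a node between levels, not in the interior of any component --- and then there is no contradiction with your first bullet. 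The paper's proof avoids this by first passing to an edge $E'$ below which no edge of the tree lies in $L$ (possible because the subtree is planar), so that the forced intersection must be an interior intersection of some component. Likewise, in the spine case your homotopy $H$ is only guaranteed to satisfy $H([0,1)\times S^1)\subset V\backslash L$ if no spine edge strictly above $\gamma$ lies in $L$, and if the planar sub-buildings hanging off the spine (which you must use to cap the extra negative punctures of the spine components so that $H$ is actually defined on $[0,1]\times S^1$ --- ``concatenating the projected spine components'' alone does not produce a cylinder) contain no edges in $L$. The paper secures both points by taking $E$ to be the \emph{highest} edge in $L$ on the spine and by feeding the hanging planar parts through the already-settled planar case. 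These extremal-edge selections are the real content of the second bullet and are missing from your argument as written; without them the claimed contradictions do not follow.

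On the issue you single out as the ``main technical obstacle'': the paper performs no reparametrization at all. It reads Definition \ref{def-PLC} as forbidding a homotopy through $V\backslash L$ from a representative of $[a]$ to a loop contained in a component of $L$ --- which includes a multiply covered parametrization --- so once the homotopy $\pi_V\circ\overline{(X)}$ is produced the contradiction with the proper-link-class hypothesis is immediate. So the multi-cover point is a matter of how the definition is read rather than a step requiring new work, whereas the genuinely missing ingredient in your proposal is the choice of innermost/highest edges described above.
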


\begin{proof}
The assertion that for each component $U^i$ that $\pi_V \circ U^i$ has no interior intersections with $L$ is argued using the $C^{\infty}_{\mathrm{loc}}$ convergence properties of SFT-convergence in a familiar way.  By properties \texttt{CHCE2, CHCE3}\footnote{See footnote \ref{footnote-CHCE}.} of SFT-convergence there are maps $\phi^i_k : S^i \rightarrow \R \times S^1$ from $S^i=$ the domain of $U^i$ to the domain of $U_k$ such that $U_k \circ \phi^i_k|_{S^i}$ converges to $U^i$ in $C^{\infty}_{\rm{loc}}$ (up to a translation in cylindrical levels).  Since $U^i$ intersects $Z_L$ positively, it follows that $U_k$ must intersect $Z_L$ for all $k$ large enough, but this contradicts $[U_k] * [Z_L] = 0$.  So we conclude $U^i$ has no interior intersections with $Z_L$.

The assertion that no asymptotic orbit of any component $U^i$ is in $L$ (equivalently, no edge in the bubble tree represents an orbit in $L$) is argued as follows.  Suppose some edge $E$ in the bubble tree represents an orbit in $L$.  Suppose first that the portion of the holomorphic building corresponding to the subtree below this edge is planar.  Choose an edge $E'$ in this subtree such that no edge lower on the bubble tree lies in $L$ (this is possible because the tree is planar so there are no free negative asymptotic orbits, thus every edge has a vertex below it).  Let $(P)$ be the holomorphic building corresponding to the subtree below this edge $E'$.  Then $\pi_V \circ \overline{(P)}$ has an interior intersection with $L$, which must correspond to an interior intersection of one of its components (since no asymptotic orbit is in $L$), contradicting the first assertion which we have already proved.

Otherwise, the portion below $E$ has one free negative puncture.  $E$ is therefore on the unique path from the free positive puncture to the free negative puncture.  Without loss of generality, suppose $E$ is the highest edge along this path (i.e.~closest to the positive free puncture).  Let $(X)$ be the tree obtained by removing from the original bubble tree the portion below this edge $E$ (by hypothesis $E$ is not the positive free puncture so this is non-empty).  By the previous step, any free negative puncture for $(X)$ bounds a planar building $(P)$ such that each component satisfies the conclusions of this proposition.  Hence, its compactified image satisfies $\pi_V \circ \overline{(P)} \subset V \backslash L$.  It will now follow that the interior of $\pi_V \circ \overline{(X)} \subset V \backslash L$, since no edge of $(X)$ represents an orbit in $L$ (except $E$) and we have already shown that no component of $(U)$ has an interior intersection with $Z_L$.  However, the negative asymptotic orbit of $(X)$ is in $L$ (since by hypothesis $E$ represents an orbit in $L$), and $\pi_V \circ \overline{(X)}$ realizes a homotopy between this orbit in $L$ and the positive asymptotic orbit $x \in [a]$ through $V \backslash L$.  But this contradicts the hypothesis that $[a]$ is a proper link class, by definition.  Therefore no such edge $E$ can exist, which finally proves that no asymptotic orbit of any component $U^i$ is in $L$ as claimed.
\end{proof}

\section{Cylindrical Contact Homology of Complements}
\label{sec-CCHint}

The two crucial requirements for the definition of contact homology are compactness and regularity of the moduli spaces.  Proposition \ref{prop-compactness}, \ref{prop-PBCcompactness}, \ref{prop-splitcompactness}, \ref{prop-PBCsplitcompactness} supply the required compactness for the spaces $\M_J(x;y:Z)$.  In the cylindrical case, Theorem \ref{thm-transversality} provides the required transversality for the moduli spaces (to prove $\partial^2 = 0$), but in the non-cylindrical case these arguments do not apply and we need to make some hypotheses.  When considering non-cylindrical almost-complex structures, we will consider only simple homotopy classes of loops in $V \backslash L$.  Then following well-known arguments as in e.g. \cite{MR1360618,Dragnev, MR2200047,MR2284048}, for each fixed choice of cylindrical ends $\widehat{J}_{\pm}$ there is a residual subset\footnote{One should specify a Banach space of perturbations; the procedure is now standard so we refer to \cite{MR1360618, Dragnev, MR2284048}.  The key point is that each holomorphic cylinder with a positive and negative puncture which is not a cover of a cylinder in $Z$ has a point of injectivity outside $W_+ \cup W_- \cup Z$ where we are free to infinitesimally perturb the almost-complex structure.} $\J_{\rm{gen}} = \J_{\rm{gen}}(\widehat{J}_-,\widehat{J}_+:Z)$ of admissible almost-complex structures for which the Cauchy-Riemann operator will be regular for all $J$-holomorphic cylinders connecting loops in simple classes $[a]$ if $J \in \J_{\rm{gen}}$.  Similarly, there are generic paths of almost-complex structures $\J_{\tau, \rm{gen}} = \J_{\tau,\rm{gen}}(\widehat{J}_-,\widehat{J}_+:Z)$.  

\subsection{Boundary Maps; Chain maps and homotopies}
\label{sec-cchint}
With the properties of intersections from Section \ref{sec-intersections}, we can describe our cylindrical contact chain complex on $V \backslash L$ in detail and show that it has the required properties.  The proofs of Propositions \ref{prop-dsquared}, \ref{prop-chainmap}, \ref{prop-chainhomotopy} stated below are postponed to the end of the section.

Suppose $(\lambda,L)$ satisfies $(E)$, or that $(\lambda,L,[a])$ satisfies $(PLC)$, and choose $J \in \J_{\rm{gen}}(\lambda)$.  Let $Z = Z_L$ be the cylinders over $L$ ($Z = \pi_V^{-1}(L)$).  Let $\mathcal{G}$ denote the set of SFT-good Reeb orbits, and $\mathcal{G}'$ denote $\mathcal{G} \backslash L$.  Let $I$ be an ideal in $\rm{ker}(c_1(\xi))$ where $c_1(\xi):H_2(V;\Z) \rightarrow \Z$ is the first Chern class of $\xi$, and let $R$ be the ring $R = \Q[H_2(V;\Z) / I]$.  We will denote elements of $R$ as sums $\sum q_i e^{[A_i]}$, where $q_i \in \Q$, $ \{A_i\} \subset H_2(V;\Z)$ is a finite set and $[A_i]$ denotes its image in the quotient $H_2(V;\Z) / I$.  Consider the $R$-module $CC_*(\lambda \mbox{ rel } L)$ freely generated by elements $q_x$ (one for each $x \in \mathcal{G}'$).  When considering $(PLC)$ we only consider orbits in the homotopy class $[a]$ and denote it by $CC^{[a]}_*(\lambda \mbox{ rel } L)$ instead.  Grade the module as in \cite{EGH}.  Recall from \cite{EGH} that this also determines a homology class $A \in H_2(V;\Z)$ for each asymptotically cylindrical curve $U$, and we denote by $\M^{A}_J$ the moduli space of solutions representing the class $A$.

Define a degree $-1$ map $\partial_L = \partial_L(J)$ on the generators of $CC_*(\lambda \mbox{ rel } L)$ (resp. $CC^{[a]}_*(\lambda \mbox{ rel } L)$) by summing over index $1$ solutions (and extend linearly):

\begin{equation} \label{eq-boundary}
\begin{split}
	n^{A}_{x y} &= \sum_{[U] \in \M^{A}_J(x;y:Z)} m_{x} \cdot \frac{\epsilon([U])}{\rm{cov}([U])}\\
	n_{x y} &= \sum_{A \in H_2(V;\Z)} n^A_{x y} e^{[A]} \in R\\
	\partial_{L} q_x &= \sum_{y \in \mathcal{G}'} n_{x y} q_y
\end{split}
\end{equation}

\noindent
Here $m_x$ denotes the covering number of the orbit $x$, and $\rm{cov}(U)$ denotes the covering number of the map $U$.  The sign $\epsilon[U] = \pm 1$ is determined by comparing a choice of coherent orientation \cite{MR2092725} with the canonical orientation given by the free $\R$-action (it is well-defined since both asymptotic orbits are SFT-good).  Since the moduli spaces we consider are open/closed subsets of the moduli space for $(V,J)$, a coherent orientation chosen in the usual way will restrict to one for our moduli spaces and it does not require further comment (see \cite{MR2092725} or \cite{EGH} for details).  Though we suppressed it in the notation, it is an essential ingredient in the above definition.  This sum will always be finite by compactness (Proposition \ref{prop-compactness}, or \ref{prop-PBCcompactness}) and transversality (Theorem \ref{thm-transversality}).

\begin{proposition} \label{prop-dsquared}
Under the above hypotheses $\partial_L^2 = 0$.  Denote the chain complex $\left(CC_*(\lambda \mbox{ rel } L), \partial_L(J) \right)$ by $CC_*(\lambda, J \mbox{ rel } L)$ (or $CC^{[a]}_*(\lambda, J \mbox{ rel } L)$ for $(PLC)$).

Under hypothesis $(E)$, the complex splits according to homotopy classes of loops in $V\backslash L$:
\[
 CC_*(\lambda,J \mbox{ rel } L) = \bigoplus_{[a] \in \pi_0(\Omega V)} CC^{[a]}_*(\lambda,J \mbox{ rel } L)
\]
\end{proposition}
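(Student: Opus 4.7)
The plan is to imitate the standard proof that the differential in cylindrical contact homology squares to zero, with the compactness results from Section $3$ playing the role usually played by the Bourgeois--Eliashberg--Hofer--Wysocki--Zehnder compactness theorem, and with the intersection constraint ``$[U]*[Z_L]=0$'' used to keep all curves disjoint from $L$ throughout. First I would assemble the $1$-dimensional moduli space
\[
 \mathcal{M}^{\leq 2}_J(x;y:Z)/\R \subset \mathcal{M}_J(x;y:Z)/\R
\]
of index $2$ cylinders for a pair $x,y\in\mathcal G'$ and argue, using Theorem \ref{thm-transversality}(2) together with the fact that $x,y\in\mathcal G'$ are SFT-good (automatic under $(E)$ since elliptic orbits in the orbit set are SFT-good by definition), that this space is a smooth $1$-manifold. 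The main input is then a compactness statement: every sequence in this moduli space has a subsequence converging in the SFT sense to a broken trajectory consisting of two index $1$ cylinders, each lying in $\mathcal{M}_J(\cdot;\cdot :Z)$ with intermediate asymptotic orbit in $\mathcal G'$.

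To establish that compactness statement I would invoke Proposition \ref{prop-compactness} under $(E)$ and Proposition \ref{prop-PBCcompactness} under $(PLC)$. Both assert that any SFT-limit $(U)$ of a sequence in $\mathcal{M}_J(x;y:Z)$ has $[(U)]*[Z]=0$ and that each level is a cylinder in $\mathcal{M}_{J'}(\cdot;\cdot :Z)$ whose asymptotic orbits lie in $\mathcal G'$ (and in the class $[a]$ in the $(PLC)$ case). This rules out plane-bubbles, pair-of-pants bubbles, and limits hitting an orbit in $L$, leaving only standard two-level breakings. Additivity of the Fredholm index along the break, together with Theorem \ref{thm-transversality}(1) (which says negative index cylinders do not exist and index $\leq 1$ cylinders are automatically transverse), forces each level to be of index $1$ when we start from an index $2$ family, so the broken configurations match precisely the monomials appearing in $\partial_L^2 q_x$. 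Standard gluing theory along non-degenerate orbits (cf.~\cite{MR2092725}) then identifies the ends of $\mathcal{M}^{\leq 2}_J(x;y:Z)/\R$ exactly with those broken trajectories, in a manner compatible with the coherent orientations used to define the sign $\epsilon$; summing over signed boundary components of a compact oriented $1$-manifold gives $0$, which yields $\partial_L^2=0$.

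For the splitting statement under $(E)$, I would note that Proposition \ref{prop-compactness} guarantees that for any $[U]\in \mathcal{M}_J(x;y:Z)$ the compactified projection $\pi_V\circ\overline{U}$ lies in $V\setminus L$, so $U$ provides a homotopy from $x$ to $y$ as loops in $V\setminus L$. Hence any nonzero matrix element $n_{xy}$ requires $[x]=[y]\in\pi_0(\Omega(V\setminus L))$, and the differential preserves the free-homotopy decomposition, yielding the direct-sum decomposition of $CC_*(\lambda,J\text{ rel }L)$ claimed in the proposition. (In the $(PLC)$ case the decomposition is built into the definition because we only take generators in the class $[a]$, and Proposition \ref{prop-PBCcompactness} guarantees $\partial_L$ preserves this restriction.)

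I expect the main obstacle to be bookkeeping rather than conceptual: verifying that the intersection constraint ``$*\,[Z_L]=0$'' is preserved at every end of the $1$-dimensional moduli space, so that the gluing theorem really does produce neighboring elements of $\mathcal{M}^{\leq 2}_J(x;y:Z)$ (and not merely of $\mathcal{M}^{\leq 2}_J(x;y)$). This is precisely where the openness of the condition $[U]*[Z_L]=0$ under $C^\infty_{\mathrm{loc}}$ convergence (Lemma \ref{lem-continuityofint} plus positivity of intersections) is needed, together with the fact that broken configurations compatible with the intersection constraint glue back to curves satisfying it; once this is recorded the rest of the argument is the familiar signed count.
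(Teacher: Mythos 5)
Your proposal is correct and follows essentially the same route as the paper: both reduce the statement to the standard boundary-of-a-one-dimensional-moduli-space argument, with Theorem \ref{thm-transversality} supplying transversality, Propositions \ref{prop-compactness}/\ref{prop-PBCcompactness} supplying compactness of $\M_J(1;1:Z_L)$, and Lemma \ref{lem-continuityofint} together with homotopy invariance and level-wise additivity of the intersection number ensuring the constraint $[U]*[Z_L]=0$ persists under breaking and gluing; the splitting under $(E)$ is likewise proved in the paper by the same positivity-of-intersections observation you give. (Only a cosmetic slip: generators in $\mathcal G'$ are SFT-good by the definition of $\mathcal G'$, not because of ellipticity of $L$.)
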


We remark that this is the usual definition of the cylindrical contact homology chain complex for the pair $(\lambda|_{V\backslash L}, J|_{V\backslash L})$ on $V \backslash L$.

\subsubsection{Chain maps}

Suppose that $(\lambda_+,L) \geq (\lambda_-,L)$ satisfy $(E)$ and $[a]$ is a homotopy class of simple loops, or that $(\lambda_+,L,[a]) \sim (\lambda_-,L,[a])$ satisfy $(PLC)$ and $[a]$ is simple.  Choosing ${J}_{\pm} \in \J_{\rm{gen}}(\lambda_{\pm})$, we consider the chain complexes 
\[
  CC^{[a]}_*(\lambda_+,{J}_+ \mbox{ rel } L), \quad CC^{[a]}_*(\lambda_-,{J}_- \mbox{ rel } L)
\] 
\noindent defined in section \ref{sec-cchint}.  We will denote their boundary maps $\partial_L({J}_\pm)$ when it is needed to clarify.  Also, we use $\mathcal{G}'(\lambda_{\pm})$ to denote the set of SFT-good Reeb orbits not in $L$.

Choose and fix a $J \in \J_{\rm{gen}}(\widehat{J}_-,\widehat{J}_+:Z)$.  We will define a map
\[
  \Phi^{[a]}_{-+}(J): CC^{[a]}_*(\lambda_+, {J}_+ \mbox{ rel } L) \rightarrow CC^{[a]}_{*}(\lambda_-, {J}_- \mbox{ rel } L)
\]
For $x \in \mathcal{G}'(\lambda_+)$, we sum over index $0$ components of the moduli space of cylinders to define as in \cite{EGH}\footnote{The sign $\epsilon(U)$ is again determined by the choices of coherent orientations assigned \cite{EGH, MR2092725}.}:
\begin{equation}
\begin{split}\label{eq-chainmap}
	\Phi^{[a]}_{-+}(J) q_x &:= \sum_{y \in \mathcal{G}'(\lambda_-)} m_{xy} q_y
\end{split}
\end{equation}
\noindent where $m_{xy}$ and $m^A_{xy}$ are defined as
\begin{equation}
\begin{split}\label{eq-chainmapaux}
	m^{A}_{xy} &= \sum_{U \in \M^{A}_J(x;y:Z)} m_{x} \cdot \frac{\epsilon(U)}{\rm{cov}(U)} \\
	m_{xy} &= \sum_{A \in H_2(V;\Z)} m^{A}_{xy} e^{\pi(A)} \in R\\
\end{split}
\end{equation}

This sum is finite by compactness (Proposition \ref{prop-compactness}, or \ref{prop-PBCcompactness}) and transversality ($[a]$ simple plus Theorem \ref{thm-genericJ}).  It decreases the action by Stokes' theorem (see the comment after Lemma \ref{lem-finiteness}) and preserves the class $[a]$ (else the cylinders would have non-trivial intersection with $Z$).  \emph{Since we only consider homotopy classes of loops with only simple Reeb orbits}, the numbers $m_x$ and $\rm{cov}(U)$ are equal to $1$, so in fact the first equation in \eqref{eq-chainmapaux} simplifies to
\[
 	m^{A}_{xy} = \sum_{U \in \M^{A}_J(x;y:Z)} \epsilon(U) \\
\]

\begin{proposition}  \label{prop-chainmap}
Under the hypotheses used to define the operators above (including the regularity hypothesis and that $[a]$ is simple):
 \begin{align*}
  \partial_L(J_-) \circ \Phi^{[a]}_{-+}(J) &- \Phi^{[a]}_{-+}(J) \circ \partial_L(J_+) = 0\\
 \end{align*}
\end{proposition}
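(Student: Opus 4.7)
The plan is to run the standard chain-map-via-cobordism argument, using the compactness and transversality results already established to ensure that the relevant $1$-dimensional moduli spaces compactify to compact $1$-manifolds with boundary whose boundary points exactly correspond to the two compositions in the claim.

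First, for each pair $x \in \mathcal{G}'(\lambda_+), y \in \mathcal{G}'(\lambda_-)$ with $[x] = [y] = [a]$, and each homology class $A$, I would consider the index-$1$ portion of $\M^A_J(x;y:Z)$. By Theorem \ref{thm-transversality} together with the genericity hypothesis $J \in \J_{\rm{gen}}(\widehat{J}_-,\widehat{J}_+:Z)$ and the fact that $[a]$ is simple (so every cylinder in this class is somewhere injective), this moduli space is a smooth $1$-manifold. The main job is to identify its SFT-compactification and its boundary. By Proposition \ref{prop-compactness} (in the $(E)$ case) or Proposition \ref{prop-PBCcompactness} (in the $(PLC)$ case), every SFT-limit $(U)$ of a sequence in $\M_J(x;y:Z)$ has all levels cylinders lying in the relevant $\M_{J'}(b;c:Z)$ with all intermediate orbits in $\mathcal{G}' \cap [a]$; in particular no bubbling of planes occurs, no branched covers of components of $Z_L$ appear, and no intermediate orbit lies in $L$.

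Next, the index is additive across levels and each level has index $\geq 0$ by Theorem \ref{thm-transversality}. Combined with the free $\R$-action on levels in the cylindrical symplectizations, the only broken configurations of total index $1$ that can occur as boundary points are two-level buildings of precisely one of the following two types: either a $\widehat{J}_+$-holomorphic index-$1$ cylinder from $x$ to an intermediate orbit $z$ followed by a $J$-holomorphic index-$0$ cylinder from $z$ to $y$, or a $J$-holomorphic index-$0$ cylinder from $x$ to some intermediate $w$ followed by a $\widehat{J}_-$-holomorphic index-$1$ cylinder from $w$ to $y$. Both factors lie in moduli spaces with intersection $0$ with $Z_L$ by the level-wise additivity in Proposition \ref{prop-intersectionproperties}, so they are precisely the curves counted in $\partial_L(J_+)$, $\Phi^{[a]}_{-+}(J)$ and $\partial_L(J_-)$.

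The final step is a standard gluing argument: the two-level configurations described above are genuine boundary points of the compactification, and every such configuration arises from a unique gluing end. Coherent orientations \cite{MR2092725} are chosen so that counting boundary points of the compact oriented $1$-manifold with sign yields that the algebraic count of the first type equals $\Phi^{[a]}_{-+}(J) \circ \partial_L(J_+)$ on the coefficient of $q_y$ in $\Phi^{[a]}_{-+}(J) \circ \partial_L(J_+) q_x$, while the second type yields $\partial_L(J_-) \circ \Phi^{[a]}_{-+}(J)$; summing over $y$ and $A$ and using $\partial(\text{compact } 1\text{-manifold}) = 0$ gives the desired identity.

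The main obstacle is the compactness statement ruling out unwanted degenerations; once the compactness Propositions \ref{prop-compactness} and \ref{prop-PBCcompactness} force every SFT-limit to consist purely of cylinders in the relevant $:Z$ classes with intermediate orbits in $[a] \cap \mathcal{G}'$, the remainder is entirely routine. Transversality at the generic $J \in \J_{\rm{gen}}(\widehat{J}_-,\widehat{J}_+:Z)$ is likewise only nontrivial because of simpleness of $[a]$, which sidesteps the multiple-cover problems that would otherwise plague the cobordism argument.
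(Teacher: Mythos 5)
Your proposal is correct and follows essentially the same route as the paper: the paper's proof is exactly this standard boundary-of-a-one-dimensional-moduli-space argument, with the only non-routine ingredients being the level-wise additivity and SFT-continuity of the intersection number (Proposition \ref{prop-intersectionproperties}, Lemma \ref{lem-continuityofint}) to keep glued and broken configurations inside the $\M(1;1:Z_L)$ spaces, and the compactness Propositions \ref{prop-compactness}/\ref{prop-PBCcompactness} to rule out bubbling and asymptotics on $L$ — all of which you invoke. The only cosmetic point is that transversality of the index-$0$ cobordism cylinders comes from the genericity statement for somewhere injective curves (Theorem \ref{thm-genericJ}, via simpleness of $[a]$) rather than from Theorem \ref{thm-transversality}, which is stated for cylindrical almost-complex structures, but you already note the somewhere-injectivity, so the argument stands.
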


\subsubsection{Homotopies between chain maps}

Again, let $(\lambda_+,L) \sim (\lambda_-,L)$ and suppose either they both satisfy $(E)$ and $(\lambda_+,L) \geq (\lambda_-,L)$ or that $[a]$ is a proper link class and both satisfy $(PLC)$.  Let $\widehat{J}_{\pm} \in \J_{\rm{gen}}(\lambda_{\pm})$ so the chain complexes
\[
 CC_*(\lambda_{\pm},J_{\pm} \mbox{ rel } L)
\]
are defined as above.

Consider two choices $J_0,J_1 \in \J_{\rm{gen}}(\widehat{J}_-,\widehat{J}_+:Z)$, which defines chain maps $\Phi^{[a]}(J_0), \Phi^{[a]}(J_1)$ as above in the simple homotopy class $[a]$.  Choose a homotopy $J_{l} \in \J_{\tau, \rm{gen}}(\widehat{J}_-,\widehat{J}_+:Z)$ between the two.  Define for $x \in \mathcal{G}'(\lambda_+)$ (summing only over the index $0$ part of the moduli space $\calN_{\{ J_l \} }(x;y:Z)$)\footnote{Again, everything must be oriented appropriately, the details of which we omit.  Also, by ``index $0$'' we refer to the total Fredholm index including the parameter $\mu$: this means that $\rm{Ind}(\mu,U) = \rm{Ind}(U) + 1$, so we mean that $\rm{Ind}(U) = - 1$.}

\begin{equation}\label{eq-prismop}
\begin{split}
	k^{A}_{xy} &= \sum_{(\mu,U) \in \calN_{\{ J_l \} }^{A}(x;y:Z)} m_{x} \frac{\epsilon(\mu,U)}{\mathrm{cov}(U)} \\
	k_{xy} &= \sum_{A \in H_2(V;\Z)} k^{A}_{xy} e^{[A]} \in R\\
	K^{[a]}_{-+}(J_{\tau}) q_x &:= \sum_{y \in \mathcal{G}'(\lambda_-)} k_{x y} q_y
\end{split}
\end{equation}

\noindent (again, $m_x, \mathrm{cov}(U)$ are $1$ because $[a]$ is simple, and this sum is finite by compactness (Proposition \ref{prop-compactness}, or \ref{prop-PBCcompactness}) and transversality (Theorem \ref{thm-genericJ})).  Then
\begin{proposition} \label{prop-chainhomotopy}  Under the above hypotheses
 \begin{align*}
  & K^{[a]}_{-+}(J_{\tau}) \circ \partial_L(J_+) + \partial_L(J_-) \circ K^{[a]}_{-+}(J_{\tau}) \\ 
  &= \Phi^{[a]}_{-+}(J_1) - \Phi^{[a]}_{-+}(J_0) 
 \end{align*}
\end{proposition}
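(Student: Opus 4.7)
The proof follows the standard template for chain homotopy identities in Floer-type theories. The plan is to analyze the boundary of the one-dimensional parameterized moduli space $\calN^{\mathrm{Ind}(\mu,U) = 1}_{\{J_l\}}(x;y:Z)$ and identify its boundary components with the four terms appearing in the asserted identity. First I would verify smoothness and dimension: by Theorem \ref{thm-genericJ} applied to paths (combined with the fact that $[a]$ simple forces all relevant cylinders to be somewhere injective, as noted after Theorem \ref{thm-genericJ}), for $J_\tau \in \J_{\tau,\mathrm{gen}}(\widehat{J}_-,\widehat{J}_+:Z)$ this parameterized moduli space is a smooth 1-manifold. At the slices $\mu = 0$ and $\mu = 1$ the $\mathrm{Ind}(\mu,U) = 1$ condition reduces to $\mathrm{Ind}(U) = 0$ cylinders for $J_0$ and $J_1$, which are exactly the cylinders counted by $\Phi^{[a]}_{-+}(J_0)$ and $\Phi^{[a]}_{-+}(J_1)$ respectively.

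Next, compactness. Given a sequence $(\mu_k, U_k)$, pass to a subsequence with $\mu_k \to \mu_\infty$ and SFT-limit $(U)$. Since $Z = Z_L$ remains $J_\mu$-holomorphic for every $\mu$ (by the definition of $\J_\tau(\widehat{J}_-,\widehat{J}_+:Z)$), Lemma \ref{lem-continuityofint} ensures $[(U)]*[Z] = 0$ in the limit. Under $(E)$, Proposition \ref{prop-compactness} (whose proof goes through verbatim for the fixed $J_{\mu_\infty}$) forces every level of $(U)$ to be a cylinder in some $\M_{J'}(a;b:Z)$ with $a,b \notin L$. Under $(PLC)$, Proposition \ref{prop-PBCcompactness} gives the same conclusion with the additional property that all intermediate asymptotic orbits lie in the class $[a]$. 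Index additivity across levels and the fact that generic $J_\tau$ admit no index-$(-2)$ cylinders (which would require index-$(-1)$ somewhere in every path) then show that any non-trivial breaking in the compactified one-dimensional moduli consists of exactly two smooth pieces: one of index $1$ in a cylindrical end $\widehat{J}_\pm$, and one of index $-1$ at some parameter $\mu \in (0,1)$ in the family $J_\mu$.

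The standard gluing theorem in SFT (valid in this setting from the asymptotic framework of \cite{HWZ_propsI, Siefring1} together with the transversality above) then identifies the compactified boundary of $\calN^{\mathrm{Ind}(\mu,U)=1}_{\{J_l\}}(x;y:Z)$ with exactly four types of configurations:
\begin{itemize}
\item $\mu = 0$ endpoints, contributing $\Phi^{[a]}_{-+}(J_0) q_x$;
\item $\mu = 1$ endpoints, contributing $\Phi^{[a]}_{-+}(J_1) q_x$;
\item top-breakings, an index $1$ $\widehat{J}_+$-cylinder from $x$ to some $z$ concatenated with an index $-1$ cylinder from $z$ to $y$ in the family, contributing $K^{[a]}_{-+}(J_\tau) \circ \partial_L(J_+) q_x$;
\item bottom-breakings, an index $-1$ cylinder from $x$ to some $w$ in the family concatenated with an index $1$ $\widehat{J}_-$-cylinder from $w$ to $y$, contributing $\partial_L(J_-) \circ K^{[a]}_{-+}(J_\tau) q_x$.
\end{itemize}
The standard coherent-orientation analysis of \cite{MR2092725,EGH} assigns signs so that the algebraic count of the boundary of this compact oriented $1$-manifold is zero, yielding
$$\partial_L(J_-) K^{[a]}_{-+}(J_\tau) + K^{[a]}_{-+}(J_\tau) \partial_L(J_+) = \Phi^{[a]}_{-+}(J_1) - \Phi^{[a]}_{-+}(J_0),$$
which is the desired identity.

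The main obstacle is ruling out bubbling of planes or escape of asymptotic orbits into $L$ along the homotopy, both of which would introduce boundary components not accounted for by these four types. This is exactly where the hypotheses enter: under $(E)$ the second bullet of the ellipticity condition together with Lemma \ref{lem-compactness2} forces any plane to meet $Z_L$ positively, contradicting $[(U)]*[Z_L] = 0$; under $(PLC)$, Lemma \ref{lem-planepositivity} again gives $[P]*[Z_L] \ge 1$ for any plane, and moreover the definition of a proper link class obstructs any level from having an asymptotic orbit in $L$ (the projected cylinder would provide a forbidden homotopy). In both cases the limit must be a strictly cylindrical building avoiding $L$, which is precisely the content of the compactness propositions invoked above.
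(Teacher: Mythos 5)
Your proposal is correct and follows essentially the same route as the paper: the paper's (joint) proof of Propositions \ref{prop-dsquared}, \ref{prop-chainmap}, \ref{prop-chainhomotopy} is the same standard one-dimensional parameterized moduli space argument, with the boundary structure controlled by exactly the ingredients you invoke — level-wise additivity and SFT-continuity of the intersection number to keep everything in $\M(1;1:Z_L)$, the compactness Propositions \ref{prop-compactness}/\ref{prop-PBCcompactness} to exclude bubbling and asymptotics in $L$, transversality from simplicity of $[a]$, gluing, and coherent orientations. The only difference is presentational (you read off the boundary of the compactified $1$-manifold, the paper glues broken configurations and follows the resulting interval to its other end), which is not a substantive divergence.
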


\begin{proof}  The proofs of Proposition \ref{prop-dsquared}, \ref{prop-chainmap}, \ref{prop-chainhomotopy} are standard in contact homology (see \cite{EGH} sections $1.9.1, 1.9.2$); the only parts that need to be verified are that the moduli spaces have the correct boundary structure, which is done as follows:

\begin{itemize}
 \item Each proof involves the count of two level holomorphic buildings of cylinders, each in $\M(1;1:Z_L)$.  For example, in the proof $\partial^2 = 0$, one counts pairs of index $1$ cylinders in $\M_J(1;1:Z_L)$ which can be concatenated, $U_1 \odot U_2$.  By level-wise additivity (Proposition \ref{prop-intersectionproperties}), this configuration also has intersection number $0$ with $Z_L$.
 \item After choosing asymptotic markers, the pair can be glued, and by SFT-continuity of the intersection number (Lemma \ref{lem-continuityofint}), the glued cylinders must be in $\M(1;1:Z_L)$.
 \item By index additivity and transversality, the glued solutions belong to a connected component of the moduli space diffeomorphic to an interval (after quotienting the free $\R$-action in the $\R$-invariant case).  By homotopy invariance of the intersection number it is a subset of $\M(1;1:Z_L)$.
 \item At the other end of the interval, there is an SFT-limit.  The compactness Propositions \ref{prop-compactness} and \ref{prop-PBCcompactness} show that the limit is necessarily a broken trajectory with each cylinder in $\M(1;1:Z_L)$ again; transversality implies there are only two levels and determines the index of both cylinders, so it represents an object corresponding to another term in the algebraic identity.
 \item Comparison of the orientations shows that the terms representing opposite ends of the moduli space cancel, proving the required algebraic identity (Proposition \ref{prop-dsquared}, \ref{prop-chainmap}, or \ref{prop-chainhomotopy}).
\end{itemize}

In the cylindrical case the required transversality for non-simple curves is guaranteed by Theorem \ref{thm-transversality}, so $\partial_L^2 = 0$ even in non-simple homotopy classes $[a]$.

It is clear that all maps preserve the class $[a]$; else, some curve in the count (say $[U]$) is such that $\pi_V \circ U$ intersects $L$, but then $[U] *[Z] > 0$ because $\rm{int}(U,Z) \neq 0$ (see Theorem \ref{thm-posintersections}), contradicting $[U] \in \M_J(1;1:Z)$.
\end{proof}

\subsection{Consequences of chain maps/homotopies}

The following proposition implies Theorem \ref{thm-forcing1} as an obvious corollary; most of it follows already from Propositions \ref{prop-dsquared}, \ref{prop-chainmap}, \ref{prop-chainhomotopy}:

\begin{proposition}
\label{prop-functoriality}
 Given $(\lambda_+,L) \geq (\lambda_-,L)$ both satisfying $(E)$, and a simple class $[a]$ of loops in $V \backslash L$: for each generic $J \in \J_{\rm{gen}}(\lambda_-,\lambda_+:Z_L)$ there is a chain map
\[
\Phi^{[a]}_{-+}(J): CC^{[a]}_*(\lambda_+,J_+  \mbox{ rel } L) \rightarrow CC^{[a]}_*(\lambda_-,J_-  \mbox{ rel } L)
\]
\noindent Moreover, given two such chain maps $\Phi^{[a]}_{-+}(J_1),\Phi^{[a]}_{-+}(J_2)$ and a generic homotopy $J_{\tau} \in \J_{\tau, \rm{gen}}(\lambda_-,\lambda_+:Z_L)$, there is a chain homotopy operator $K(J_{\tau})$ i.e.
\[
\Phi^{[a]}_{-+}(J_1) - \Phi^{[a]}_{-+}(J_2) = K(J_{\tau}) \circ \partial_+ + K(J_{\tau}) \circ \partial_-
\]
\noindent Therefore, there are natural maps in homology 
\[
\left(\Phi^{[a]}_{-+} \right)_*: CCH^{[a]}_*(\lambda_+,J_+  \mbox{ rel } L) \rightarrow CCH^{[a]}_*(\lambda_-,J_-  \mbox{ rel } L)
\]
\noindent which furthermore satisfy
\[
 (\Phi^{[a]}_{-0})_* \circ (\Phi^{[a]}_{0+})_* = (\Phi^{[a]}_{-+})_*
\]
\noindent  If $(\lambda_-,L) \equiv (\lambda_+,L)$ then this map is an isomorphism.

Similarly, if $(\lambda_+,L,[a]) \sim (\lambda_-,L,[a])$ both satisfy $(PLC)$ and $[a]$ is simple then for each generic $J$ as above there is a chain map $\Phi^{[a]}_{-+}(J)$, and given two such maps and a generic homotopy between them $J_{\tau}$ there is a chain homotopy $K(J_{\tau})$ satisfying the above identity, so there are natural maps 
\[
\left(\Phi^{[a]}_{-+} \right)_*: CCH^{[a]}_*(\lambda_+,J_+  \mbox{ rel } L) \rightarrow CCH^{[a]}_*(\lambda_-,J_-  \mbox{ rel } L)
\]
\noindent which obey the above composition law and are also isomorphisms.
\end{proposition}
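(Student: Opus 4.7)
The chain-level maps $\Phi^{[a]}_{-+}(J)$ and chain homotopies $K(J_\tau)$ are exactly the objects constructed in Propositions~\ref{prop-chainmap}--\ref{prop-chainhomotopy}; combined with the contractibility of $\J(\widehat{J}_-,\widehat{J}_+:Z_L)$ (Lemma~\ref{lem-admcplxstr}) this immediately produces a well-defined induced map $(\Phi^{[a]}_{-+})_*$ on homology that is independent of the choice of $J$. The substantive content is therefore (i) the composition law $(\Phi^{[a]}_{-0})_*\circ(\Phi^{[a]}_{0+})_*=(\Phi^{[a]}_{-+})_*$ and (ii) that $(\Phi^{[a]}_{-+})_*$ is an isomorphism when $(\lambda_-,L)\equiv(\lambda_+,L)$ (respectively when $(\lambda_-,L,[a])\sim(\lambda_+,L,[a])$ in the $(PLC)$ setting).

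For (i) my plan is the standard stretching-the-neck argument. Pick $\lambda_-\prec\lambda_0\prec\lambda_+$ and generic $J_1\in\J_{\rm{gen}}(\widehat{J}_-,\widehat{J}_0:Z_L)$, $J_2\in\J_{\rm{gen}}(\widehat{J}_0,\widehat{J}_+:Z_L)$, and form the family $J_R\in\J(J_1,J_2)$ from Section~\ref{sec-symplectizations}. Each $J_R$, after a small generic perturbation supported away from the necks if needed, lies in $\J_{\rm{gen}}(\widehat{J}_-,\widehat{J}_+:Z_L)$, so $\Phi^{[a]}_{-+}(J_R)$ is defined and chain-homotopic to any other $\Phi^{[a]}_{-+}(J)$ by Proposition~\ref{prop-chainhomotopy}. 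It suffices to show
\[
\Phi^{[a]}_{-+}(J_R)=\Phi^{[a]}_{-0}(J_1)\circ\Phi^{[a]}_{0+}(J_2)
\]
at the chain level for $R$ sufficiently large. The degeneration direction is compactness: any SFT-limit of a sequence of index-zero cylinders $U_k\in\M^A_{J_{R_k}}(x;y:Z_L)$ with $R_k\uparrow\infty$ is controlled by Proposition~\ref{prop-splitcompactness} (respectively Proposition~\ref{prop-PBCsplitcompactness}) --- each level is a cylinder meeting $Z_L$ trivially, and index additivity together with the transversality of Section~\ref{sec-transversality} and the simplicity of $[a]$ force the only non-trivial configuration to consist of one index-zero piece $U_2$ in the upper cobordism matched at an orbit $z\in\mathcal{G}'(\lambda_0)\cap[a]$ to one index-zero piece $U_1$ in the lower cobordism. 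The gluing direction is the standard SFT pregluing plus implicit-function theorem: each such transverse matched pair glues uniquely and orientation-compatibly to a $J_R$-holomorphic cylinder for all $R$ large. Summing over $z$ with coherent signs yields the chain-level identity, and passage to homology produces the composition law.

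For (ii), when the two ends are equivalent, choose $C,C'$ large enough that $C\lambda_-\succ\lambda_+$ and $C'\lambda_+\succ C\lambda_-$ pointwise (possible since $V$ is closed and $\lambda_+/\lambda_-$ is bounded; rescaling preserves Reeb dynamics up to time reparametrization, so hypothesis $(E)$ or $(PLC)$ persists). Applying (i) to the two chains $\lambda_-\prec\lambda_+\prec C\lambda_-$ and $\lambda_+\prec C\lambda_-\prec C'\lambda_+$ gives
\[
(\Phi^{[a]}_{-,C\lambda_-})_*=(\Phi^{[a]}_{-+})_*\circ(\Phi^{[a]}_{+,C\lambda_-})_*,\quad (\Phi^{[a]}_{+,C'\lambda_+})_*=(\Phi^{[a]}_{+,C\lambda_-})_*\circ(\Phi^{[a]}_{C\lambda_-,C'\lambda_+})_*.
\]
Each rescaling chain map on a left side is itself an isomorphism --- the tautological identification of generators --- because deforming through the families of exact cobordisms $\lambda\prec s\lambda$ to the cylindrical case and invoking chain homotopies reduces the count to the cylindrical setting, where Theorem~\ref{thm-transversality} together with the constraint $[U]*[Z_L]=0$ (Theorem~\ref{thm-posintersections}) eliminates every index-zero contribution other than trivial cylinders. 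Consequently $(\Phi^{[a]}_{+,C\lambda_-})_*$ has both a left and a right inverse on homology, making it an isomorphism, whence $(\Phi^{[a]}_{-+})_*$ is one too. The main technical obstacle throughout is the gluing step in (i) --- producing an orientation-compatible bijection between index-zero $J_R$-cylinders and matched pairs of index-zero cylinders in the two sub-cobordisms --- but this is a well-established SFT construction, regular in our setting thanks to the transversality results of Section~\ref{sec-transversality} and the restriction to simple $[a]$.
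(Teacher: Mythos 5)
Your proposal is correct and follows essentially the same route as the paper: chain maps and homotopies from Propositions \ref{prop-chainmap} and \ref{prop-chainhomotopy}, the composition law via neck-stretching with compactness supplied by Propositions \ref{prop-splitcompactness}/\ref{prop-PBCsplitcompactness} plus gluing and the transversality afforded by generic $J$ and simple $[a]$, and the isomorphism statement by composing with a map in the opposite direction and using a cylindrical almost-complex structure (where index-zero cylinders are trivial) to see the self-compositions are the identity. Your explicit rescaling chains $\lambda_-\prec\lambda_+\prec C\lambda_-$, etc., merely spell out what the paper's maps $\Phi_{+-}$, $\Phi_{\pm\pm}$ implicitly use, so this is the same two-sided-inverse argument.
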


\begin{proof} The proof of Proposition \ref{prop-functoriality} is complete once the composition law is verified and we prove that the chain maps give isomorphisms.  The composition law is verified by a gluing/compactness argument, and we only need to check that we have compactness under stretching and enough transversality; the required compactness is guaranteed by Propositions \ref{prop-splitcompactness}, \ref{prop-PBCsplitcompactness}, and the required transversality is guaranteed by choosing $J_1,J_2 \in \J_{\rm{gen}}$ and considering only simple homotopy classes of loops.  Once the composition law is verified, one considers the compositions $\left( \Phi_{-+} (J_1) \circ \Phi_{+-}(J_2) \right)_* \cong \left( \Phi_{--}\right)_*$, and $\left( \Phi_{+-}(J_2) \circ \Phi_{-+}(J_1) \right)_* \cong \left( \Phi_{++}\right)_*$ (under condition $(E)$, both compositions can only be made if $(\lambda_+,L) \equiv (\lambda_-,L)$, hence the hypothesis in Proposition \ref{prop-functoriality}).  Using a cylindrical almost-complex structure we find that $\left( \Phi_{\pm \pm}\right)_*$ is the identity map.  This proves the maps $(\Phi_{\pm \mp})_*$ are both injective and surjective and therefore isomorphisms.
\end{proof}

\begin{remark} \label{remark-simple}
We would like to point out that the condition ``$[a]$ is simple'' can be relaxed to the condition ``$[a]$ is such that, for one of $\lambda_{\pm}$, all closed Reeb orbits in $[a]$ are simple''.  First, the chain complexes are defined even for $[a]$ not simple.  Then the condition that one of $\lambda_{\pm}$ has only simple closed orbits in $[a]$ guarantees that holomorphic cylinders in a cobordism used to define the chain maps above are somewhere injective \cite{HWZ_propsI, Siefring1} (hence also have an open set of somewhere injective points), and therefore chain maps/homotopies can be constructed for almost-complex structures in $\J_{\mathrm{gen}}$ just as above.  The conclusion of Proposition \ref{prop-functoriality} holds even if the contact form at the opposite end of the cobordism has multiply covered Reeb orbits in $[a]$.  We will use this observation in Section \ref{sec-examples}.
\end{remark}

\subsection{Action filtration}

As in Morse theory, the chain complexes can be filtered by action.  We consider the subspace $CC^{[a],\leq N}_*(\lambda, J \mbox{ rel } L)$ of $CC^{[a]}_*(\lambda, J \mbox{ rel } L)$ generated by orbits with action $\mathcal{A}(x) = \int_x \lambda \leq N$.  By Stokes' theorem, the differential decreases the action.  For $A \leq B$ (one may take $B = \infty$ as well here) there are natural chain maps
\[
 \iota_{B,A}: CC^{[a], \leq A}_*(\lambda,J \mbox{ rel } L) \rightarrow CC^{[a], \leq B}_*(\lambda,J \mbox{ rel } L), \qquad q_x \mapsto q_x 
\]
\noindent  and clearly $\iota_{C,B} \circ \iota_{B,A} = \iota_{C,A}$.  The morphisms $\Phi$ and homotopies $K$ respect the filtration: specifically, given $\Phi_{10}: CC^{[a]}_*(\lambda_0,J_0 \mbox{ rel } L) \rightarrow CC^{[a]}_*(\lambda_1,J_1 \mbox{ rel } L)$
\[
 \Phi_{10} \left(CC^{[a], \leq A}_*(\lambda_0,J_0 \mbox{ rel } L) \right) \subset CC^{[a], \leq A}_*(\lambda_1,J_1 \mbox{ rel } L)
\]
\noindent again by Stokes' theorem, and any two such maps (defined by different choices of $J$) are chain homotopic.

In fact, in order to construct $CC^{[a],\leq N}_*(\lambda, J \mbox{ rel }L)$ with $N < \infty$ one only requires that $\lambda$ satisfies $(E)$ up to action $N$.  This means that the assertion $(E)$ (resp.~$(PLC)$) about contractible in $V \backslash L$ Reeb orbits holds for orbits with action less than or equal to $N$.  Then even though $CC^{[a]}_*(\lambda \mbox{ rel } L)$ may not be defined, for every $A < N$ one can define the chain complexes
\[
 CC^{[a], \leq A}_*(\lambda, J \mbox{ rel } L)
\]
\noindent as above, and they are filtered by the action in the same way.  This follows exactly as the construction of the above chain complexes.  The proofs of compactness are carried out as straightforward extensions of Propositions \ref{prop-compactness}, \ref{prop-splitcompactness}, \ref{prop-PBCcompactness}, \ref{prop-PBCsplitcompactness}, using the fact that any contractible (in $V \backslash L$) Reeb orbit has action greater than $N$ to rule out bubbling for moduli spaces of holomorphic cylinders.  The Fredholm theory remains the same.  We will use these filtered chain complexes later.

\section{Forms with degeneracies}
\label{sec-degeneracies}
It is possible to obtain existence results which include cases where the form $\lambda$ may be degenerate or have closed orbits which are contractible in $V \backslash L$, namely Theorem \ref{thm-forcing2}.  The strategy is to take a sequence of small perturbations of a degenerate form to sufficiently non-degenerate approximating forms and deduce the existence of closed Reeb orbit for the approximating forms by a stretching argument.  Then, using the action filtration to bound the action of the orbits found this the way (which provides $C^1$-bounds), the Arzela-Ascoli theorem is used to find the desired Reeb orbit for the original degenerate form.  

We will also briefly justify the use of the Morse-Bott complex (when $\lambda$ satisfies $(PLC)$ or $(E)$ but is Morse-Bott non-degenerate) to compute $CCH^{[a]}_*([\lambda] \mbox{ rel } L)$ for $L \neq \emptyset$

\subsection{Perturbing degenerate contact forms}  The following Lemma is based on the proof in \cite{2008arXiv0809.5088C} Lemma $7.1$ (pages 36-37) of the fact that non-degeneracy is dense.

\begin{lemma}
 Suppose $(\lambda,L)$ is such that $L$ consists of non-degenerate closed Reeb orbits for $\lambda$ (including multiple covers).  Given any $N > 0$, there is a sequence $\lambda_n = f_n \cdot \lambda \rightarrow \lambda$ (i.e.~$f_n \rightarrow 1$ in $C^{\infty}(V;\R)$) such that $(\lambda_n,L) \equiv (\lambda,L)$ and such that all periodic orbits of the Reeb vector field for $\lambda_n$ of action at most $N$ are non-degenerate.
\end{lemma}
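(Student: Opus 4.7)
The plan is to perturb $\lambda$ to a nearby contact form $\lambda_n = (1 + g_n)\lambda$, where $g_n$ is small in $C^\infty$ and vanishes on a neighborhood of $L$, so that the Reeb dynamics on $L$ are preserved exactly (hence $(\lambda_n, L) \equiv (\lambda, L)$), and then to use a standard transversality argument to achieve non-degeneracy of all closed Reeb orbits of $\lambda_n$ of action at most $N$. First I would fix a tubular neighborhood $\Omega$ of $L$, together with a smaller open neighborhood $\Omega'$ with $\overline{\Omega'} \subset \Omega$, both small enough that no closed Reeb orbit of $\lambda$ of action at most $N$ is contained in $\overline{\Omega}$ except orbits in $L$ itself. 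Such neighborhoods exist because by hypothesis every cover of every component of $L$ is non-degenerate: the linearized Poincar\'{e} return map has no $+1$ eigenvalue, so the implicit function theorem (together with the action bound, which bounds the relevant covering numbers) isolates $L$ from other closed orbits of action $\leq N$.

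Next consider the space of perturbations $g : V \to \R$ supported in $V \setminus \overline{\Omega'}$ with the $C^\infty$ topology. For $g$ of sufficiently small $C^0$ norm, $\lambda_g := (1 + g)\lambda$ is a contact form with $\lambda_g \equiv \lambda$ on $\Omega'$, so its Reeb vector field coincides with that of $\lambda$ on a neighborhood of $L$. In particular every orbit of $L$ and each of its multiple covers remains a closed Reeb orbit of $\lambda_g$ with the identical linearized return map, and hence identical Conley-Zehnder indices, giving $(\lambda_g, L) \equiv (\lambda, L)$. The standard density theorem for non-degenerate contact forms (as in the proof of Lemma $7.1$ of \cite{2008arXiv0809.5088C}) then produces, within any $C^\infty$-neighborhood of $0$ in this space of admissible perturbations, a $g$ such that every closed Reeb orbit of $\lambda_g$ of action $\leq N$ not in $L$ is non-degenerate. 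Since orbits in $L$ remain non-degenerate by construction, setting $f_n := 1 + g_n$ and $\lambda_n := f_n \lambda$ for a sequence $g_n$ with $\|g_n\|_{C^\infty} \leq 1/n$ having the above property yields the desired sequence.

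The main obstacle is verifying that the restriction on the support of admissible perturbations does not obstruct the transversality argument. Concretely, one must show that every closed Reeb orbit of $\lambda_g$ of action $\leq N$ not in $L$ passes through $V \setminus \overline{\Omega'}$, so that the jet of the linearized return map at such an orbit can be varied by infinitesimal perturbations of $g$. This follows from the choice of $\Omega'$ combined with a persistence argument: for all sufficiently small $g$, no closed Reeb orbit of $\lambda_g$ of action $\leq N$ outside of $L$ is contained in $\overline{\Omega}$, by a compactness-continuity argument that uses the non-degeneracy of $L$ and continuous dependence of the Reeb flow on $\lambda$ in $C^\infty$. Once this is established, the usual Sard-Smale argument on the universal moduli space of pairs (perturbation, closed orbit of action $\leq N$) proceeds exactly as in the unrestricted setting, and a standard diagonal procedure over an exhausting sequence of $C^\infty$-neighborhoods of $0$ produces the required sequence $g_n$.
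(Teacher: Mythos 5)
Your proposal is correct and takes essentially the same approach as the paper: leave the form untouched on a neighborhood of $L$ (using non-degeneracy of $L$ and all its covers to isolate $L$ from other closed orbits of action $\leq N$, and a persistence argument so no new orbits appear there) and achieve non-degeneracy up to action $N$ by the Colin--Honda Lemma $7.1$ perturbation scheme supported away from $L$. The paper only differs organizationally, running the perturbation as a finite induction over flow-box neighborhoods covering the compact set of orbits of action $\leq N$ and obtaining the unperturbed neighborhood of $L$ a posteriori, rather than fixing $\Omega'$ in advance and restricting the perturbation class.
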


\begin{proof}
Around each Reeb orbit $x$ of action at most $N$ choose neighborhoods $x \subset U_x \subset V_x \subset V$ satisfying 
\begin{enumerate}
 \item For all such $x$, $V_x \cong S^1 \times D^2(2)$, and in these coordinates $U_x \cong S^1 \times D^2(1)$
 \item The Reeb vector field $X_{\lambda}$ is transverse to the pages $\{t \} \times D^2(2)$ of $V_x$
 \item For all $t \in S^1$, any orbit starting at a point $\{t\} \times D^2(1)$ (i.e.~starting at a point in $U_x$) stays inside $V_x$ for at least time $N+1$, and thus at least until the first return to $\{ t \} \times D^2(2)$ which occurs before $N + \epsilon$
\end{enumerate}
\noindent  If $x \notin L$ is an orbit of action at most $N$, then we choose $V_x$ so that, in addition, $V_x \cap L = \emptyset$.  
%
Moreover, select the neighborhoods $V_{L_i}$ around the components $L_i$ of $L$ such that the $V_{L_i}$ are mutually disjoint and contain no other orbits of period at most $N$ in $V_{L_i}$ (other than iterates of $L_i$).  This is possible because the $L_i$ and covers are assumed non-degenerate.

By the Arzel\`a-Ascoli theorem, the set $\Gamma_N$ of periodic orbits of action at most $N$ is compact.  By the compactness of the orbit set, it is not difficult to see that the image of these orbits in $V$ (also denoted $\Gamma_N$) may be covered by finitely many of the above sets $U_i$, i.e.~ $\Gamma_N \subset U_1 \cup \dots \cup U_{k}$.  Order them so that $U_{1},\dots, U_{m}$ are the neighborhoods of the form $U_{L_i} \subset V_{L_i}$ for components $L_i$ of $L$ among $\{U_1,\dots,U_k\}$.

We modify $\lambda$ on $V_1,\dots, V_k$ so that there are no degenerate orbits of period at most $N$ in these $V_i$ by the method of \cite{2008arXiv0809.5088C} Lemma $7.1$.  Choose any of the neighborhood pairs $(U_i,V_i)$.  Let $\mathcal{U}$ be any neighborhood of $\lambda$.  The proof in \cite{2008arXiv0809.5088C} shows how one can find a form $\lambda' \in \mathcal{U}$ such that 
\begin{enumerate}
 \item $\lambda'$ differs from $\lambda$ only on a compact subset of $V_i$,
 \item any $\lambda'$ orbit that originates in $U_i$ of action at most $N$ remains in $V_i$ for time $N + 1 - \epsilon$,
 \item all closed orbits of period at most $N$ contained in $U_i$ are non-degenerate,
 \item there are no closed orbits of action at most $N$ originating in the complement $V \backslash \bigcup_{j=1}^{k} U_j$ (for $\lambda_n'$ sufficiently near $\lambda$); therefore all closed orbits are still contained in $\bigcup_{j=1}^k U_j$.
\end{enumerate}

Proceeding one neighborhood pair at a time, one finds a form $\lambda^i \in \mathcal{U}$ such that $\bigcup_{j=1}^k U_j$ still covers all the closed Reeb orbits, but all closed Reeb orbits in $\bigcup_{j=1}^i U_j$ are non-degenerate (the above procedure shows how to make all Reeb orbits in $U_i$ non-degenerate without introducing new ones in $V \backslash \bigcup_{j=1}^{k} U_j$, and by choosing the perturbation small enough all Reeb orbits contained in $U_j$ for $j \leq i$ remain non-degenerate).  The form $\lambda^k$ will be the desired form.

However, we have to take care about the perturbations so that they satisfy the conclusions stated above.  By hypothesis all periodic orbits originating in $U_1,\dots,U_{m}$ of period at most $N$ are non-degenerate, by the choice of the $V_{L_i}$ and the choice of the ordering.  In this case, in the above process we do not need to perturb the form for the first $m$ steps, i.e.~ we can take $\lambda^m = \lambda$.  In subsequent steps, the new perturbations $\lambda^i, i > m$ differ from $\lambda^m$ only on $\bigcup_{j=m+1}^i V_j$, which is disjoint from $L$ by choice; in particular there is a neighborhood $\mathcal{N}$ containing $L$ on which $\lambda^i|_{\mathcal{N}} = \lambda|_{\mathcal{N}}$.  Therefore in the end $(\lambda^k, L) \equiv (\lambda,L)$.

To summarize, for any neighborhood $\mathcal{U}$ of $\lambda$, there is a $\lambda'$ such that
\begin{enumerate}
 \item $\lambda'$ has only non-degenerate periodic orbits of period at most $N$ contained in $U_1 \cup \dots \cup U_k$.
 \item $\lambda'$ has no periodic orbits of period at most $N$ which enter $V \backslash \bigcup_{i=1}^{k} U_i$.
 \item There is a neighborhood $\mathcal{N}$ of $L$ such that $\lambda'|_{\mathcal{N}} = \lambda|_{\mathcal{N}}$, thus $(\lambda',L) \equiv (\lambda,L)$
\end{enumerate}

The desired sequence is obtained by choosing a countable neighborhood basis $\mathcal{U}_n$ at $\lambda$ and choosing $\lambda_n \in \mathcal{U}_n$ as above.
\end{proof}

Let $[(\lambda,L)]$ denote the subset of the set of (positive) contact forms $\mu$ for $\xi$ (which we will denote $\Lambda(\xi)$) such that $(\mu,L) \equiv (\lambda,L)$, equipped with the $C^{\infty}$ topology as a subspace of $\Lambda(\xi)$.  It is a closed subset of $\Lambda(\xi)$ so its topology can be obtained from a complete metric space structure on it.  Let us denote by $\Lambda_{\mathrm{gen}}(\xi)$ the set of non-degenerate contact forms - as is well-known this is the intersection of a countable set of open dense sets (a $G_{\delta}$), and therefore dense in $\Lambda_{\mathrm{gen}}(\xi)$.  Then in fact

\begin{lemma} \label{lem-Gdelta}
 Suppose $\lambda$ is such that all orbits in $L$ are non-degenerate.  The set
\[
 [(\lambda,L)]_{\mathrm{gen}} :=  \left\{ \mu | \mu \in [(\lambda,L)], \mu \in \Lambda_{\mathrm{gen}}(\xi) \right\}
\]
\noindent is the intersection of a countable collection of open, dense sets (in $[(\lambda,L)]$) and therefore also dense.
\end{lemma}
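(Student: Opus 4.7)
The plan is to exhibit $[(\lambda,L)]_{\mathrm{gen}}$ as a countable intersection of open dense subsets of $[(\lambda,L)]$ and then invoke the Baire category theorem. Observe first that $[(\lambda,L)] \subset \Lambda(\xi)$ is closed in the $C^\infty$-topology (the conditions $\ker\mu = \xi$, $L$ tangent to $X_\mu$, and equality of the Conley--Zehnder indices of all iterates of components of $L$ are all closed), and $\Lambda(\xi)$ with the $C^\infty$-topology is a Fréchet (hence Baire) space, so $[(\lambda,L)]$ inherits the Baire property.

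For each positive integer $N$, define
\[
U_N = \bigl\{\, \mu \in [(\lambda,L)] \;\bigm|\; \text{every closed Reeb orbit for } \mu \text{ of action} \leq N \text{ is non-degenerate} \,\bigr\}.
\]
Then $[(\lambda,L)]_{\mathrm{gen}} = \bigcap_{N \geq 1} U_N$, because non-degeneracy of \emph{every} closed orbit is the same as non-degeneracy of the closed orbits of each finite action bound. So it suffices to show that each $U_N$ is open and dense in $[(\lambda,L)]$. Density is exactly the content of the previous lemma, once one notes that the sequence $\lambda_n \to \lambda$ produced there lies in $[(\lambda,L)]$ (this is the assertion $(\lambda_n,L) \equiv (\lambda,L)$); therefore any $\mu \in [(\lambda,L)]$ (whose orbits in $L$ are automatically non-degenerate) can be $C^\infty$-approximated by forms in $U_N$.

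The step I expect to be the main technical point is openness of $U_N$, which will go as follows. Suppose $\mu \in U_N$. By compactness (Arzel\`a--Ascoli) and non-degeneracy, the set of closed Reeb orbits of action $\leq N+1$ for $\mu$ consists of finitely many isolated orbits, each with a Poincar\'e return map whose linearization has no eigenvalue equal to $1$. Around each such orbit one may construct a transverse disc as in the proof of the previous lemma and apply the implicit function theorem to the return map: for $\mu'$ sufficiently $C^\infty$-close to $\mu$ in $[(\lambda,L)]$, each orbit persists as a unique nearby closed orbit which is again non-degenerate, and its action varies continuously with $\mu'$. A standard compactness argument (if closed orbits of action $\leq N$ for a sequence $\mu_k \to \mu$ left every neighborhood of the known orbits of $\mu$, a subsequence would converge to a new closed orbit of $\mu$ of action $\leq N$, a contradiction) rules out the emergence of new orbits of action $\leq N$ under small perturbations. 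Hence a whole $C^\infty$-neighborhood of $\mu$ in $[(\lambda,L)]$ lies in $U_N$.

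Once openness and density of each $U_N$ is established, the Baire category theorem applied to $[(\lambda,L)]$ gives that $\bigcap_N U_N = [(\lambda,L)]_{\mathrm{gen}}$ is dense in $[(\lambda,L)]$, proving the lemma.
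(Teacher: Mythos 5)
Your proposal is correct and takes essentially the same route as the paper: both write $[(\lambda,L)]_{\mathrm{gen}}$ as $\bigcap_N G_N$ (your $U_N$), get density of each $G_N$ from the preceding perturbation lemma, and conclude by Baire using that $[(\lambda,L)]$ is closed in $\Lambda(\xi)$. The only difference is that the paper simply cites the argument of Colin--Honda (Lemma $7.1$) for openness of $G_N$, whereas you sketch the standard persistence/compactness argument directly; in doing so, state the finiteness and non-degeneracy claim for orbits of action $\leq N$ rather than $\leq N+1$, since $\mu \in U_N$ only controls orbits up to action $N$ (and nothing more is needed).
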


\begin{proof}
Let $\widetilde{G_N}$ denote the subset of $\Lambda(\xi)$ for which all periodic orbits of action $\leq N$ are non-degenerate, and $G_N = \widetilde{G_N} \cap [(\lambda,L)]$.  The argument of \cite{2008arXiv0809.5088C} Lemma $7.1$ shows that $\widetilde{G_N}$ is open in $\Lambda(\xi)$, and therefore $G_N$ is also open in the subspace $[(\lambda,L)]$.  In the previous Lemma it was shown that $G_N$ is dense in $[(\lambda,L)]$.  The conclusion follows from the observation that 
\[
 [(\lambda,L)]_{\mathrm{gen}} =  \bigcap_{N = 1}^{\infty} G_N
\]
\noindent is a $G_{\delta}$.
\end{proof}

Finally, we note the following

\begin{lemma} \label{lem-2jetpert}
 If $\lambda$ is a contact form and $L$ is a closed link for the Reeb vector field, then in any $C^{\infty}$-neighborhood $\mathcal{U}$ of $\lambda$ there is a $\lambda'$ such that $(\lambda',L) \sim (\lambda,L)$ and each component of $L$ (including multiple covers) is non-degenerate for $\lambda'$.
\end{lemma}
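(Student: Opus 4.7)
The plan is to perturb $\lambda$ conformally, setting $\lambda' = (1+\phi)\lambda$ for a small smooth function $\phi$, and to choose $\phi$ so that $L$ remains a Reeb orbit set and all iterates of its components become non-degenerate. A direct calculation shows that the Reeb vector field of $\lambda' = f\lambda$ is $X_{\lambda'} = \frac{1}{f} X_\lambda + Y$, where $Y \in \xi$ is uniquely determined by the equation $d\lambda(Y,\cdot)|_\xi = \frac{1}{f^2} d\phi|_\xi$. Thus $X_{\lambda'}$ is tangent to $L$ if and only if $d\phi|_\xi \equiv 0$ along $L$. In particular, on each component of $L$, the values of $\phi$ and its $L$-tangential derivatives are free, while its first derivatives in the $\xi$ direction must vanish; the Hessian of $\phi$ restricted to $\xi$ along $L$ is then completely unconstrained.

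Since the components $L_1,\ldots,L_n$ of $L$ are pairwise disjoint closed orbits, I would choose pairwise disjoint tubular neighborhoods $U_i \supset L_i$ and construct $\phi = \sum \phi_i$ with $\mathrm{supp}(\phi_i) \subset U_i$, treating one component at a time. Within a suitable contact normal form on $U_i$ (from the contact neighborhood theorem for closed Reeb orbits), the linearized Poincar\'e return map $A_i \in Sp(2,\R)$ of $L_i$ is determined by the $1$-jet of $\lambda$ along $L_i$, so that the effect of $\phi_i$ on the new return map is mediated by the $2$-jet of $\phi_i$ along $L_i$, i.e.~by a choice of smooth Hessian profile $Q\colon L_i \to \mathrm{Sym}^2(\xi|_{L_i}^*)$. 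A direct local computation shows that the resulting map $Q \mapsto A_i(\phi_i)$ is a submersion at $Q \equiv 0$ onto an open neighborhood of $A_i$ in $Sp(2,\R)$.

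Granted the submersion property, the conclusion follows easily. The ``bad set'' $B \subset Sp(2,\R)$ of matrices $A$ for which some iterate $A^k - I$ is singular is the countable union, over $k \geq 1$, of the proper real-analytic hypersurfaces $\{\det(A^k - I) = 0\}$, and is therefore meager. Hence every neighborhood of $A_i$ in $Sp(2,\R)$ contains matrices in the complement of $B$; by the submersion property we find an arbitrarily small $\phi_i$ supported in $U_i$ such that the linearized return map of $L_i$ for the form $(1+\phi_i)\lambda$ avoids $B$, so that every iterate $L_i^k$ is non-degenerate. Summing the perturbations and taking each small enough that $\phi = \sum \phi_i$ lies in the prescribed $C^\infty$-neighborhood of $0$ yields the desired $\lambda' = (1+\phi)\lambda$ with $(\lambda',L) \sim (\lambda,L)$ and every component of $L$ (including multiple covers) non-degenerate for $\lambda'$.

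The main technical obstacle is the submersion claim for $Q \mapsto A_i(\phi_i)$: concretely, that the linear map from allowed $2$-jets of $\phi_i$ along $L_i$ into $\mathfrak{sp}(2,\R) = T_{A_i} Sp(2,\R)$ is surjective. This reduces to a finite-dimensional calculation in a Weinstein-type normal form for $\lambda$ around $L_i$, using the fact that the linearized Reeb flow along $L_i$ is governed by a time-dependent quadratic Hamiltonian whose quadratic form can be shifted by an arbitrary smooth $Q(t)$ via the Hessian of $\phi_i$. Given the $3$-dimensional freedom in $Q(t)$ at each parameter $t$ and the $3$-dimensional target $\mathfrak{sp}(2,\R)$, this surjectivity is expected to hold for every starting contact form, but the explicit verification is where the substance of the argument lies.
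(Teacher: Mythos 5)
Your proposal is correct and is essentially the paper's own (one-sentence) argument: perturb the $2$-jet of $\lambda$ along $L$ while leaving the $1$-jet untouched so that $L$ stays tangent to the Reeb field, here implemented by a conformal factor $1+\phi$ with $\phi$ vanishing to first order along each component of $L$ and with prescribed normal Hessian. The submersion you flag is indeed the only substantive point, and it does hold: writing the linearized flow in a symplectic trivialization of $\xi|_{L_i}$ as $\dot\Phi = J_0\bigl(S(t)+Q(t)\bigr)\Phi$, the derivative of the return map at $Q\equiv 0$ in direction $\delta Q$ is $\Phi(T)\int_0^T \Phi(t)^{-1} J_0\,\delta Q(t)\,\Phi(t)\,dt$, and since $J_0\cdot\mathrm{Sym}(2,\R)=\mathfrak{sp}(2,\R)$ and conjugation by the fixed matrix $\Phi(t_0)$ is an isomorphism of $\mathfrak{sp}(2,\R)$, Hessian profiles $\delta Q$ concentrated near a single time $t_0$ already sweep out all of $\mathfrak{sp}(2,\R)$.
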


This can be achieved by an arbitrarily small perturbation to the $2$-jet of $\lambda$ along $L$ (leaving the $1$-jet unperturbed so $L$ is still tangent to the Reeb vector field).

\subsection{The proof of the implied existence Theorem \ref{thm-forcing2}} \label{sec-forcing}

We prove Theorem \ref{thm-forcing2}:

\begin{proof}
Let $\lambda'$ be any form such that $L$ is closed for the Reeb vector field and elliptic non-degenerate.  Suppose $(\lambda,L) \equiv (\lambda',L)$ is non-degenerate and satisfies $(E)$, and by rescaling if necessary that $\lambda' \prec \lambda$.  Choose a constant $c$ such that also $c \lambda \prec \lambda'$.  Choose $J \in \J_{\mathrm{gen}}(\lambda), J' \in \J_{\mathrm{gen}}(\lambda')$, and $J_1 \in \J_{\mathrm{gen}}(J,J':Z_L)$, $J_0 \in \J_{\mathrm{gen}}(J',J:Z_L)$ (to be specific $J_0$ is equal to $J$ on $W^-(c \lambda)$).  Then we have the path $J_R, R\geq 0$ of almost-complex structures in the symplectization splitting along $\lambda'$ to $(W_{\xi},J_0) \odot (W_{\xi},J_1)$.

\emph{Claim: for each $R$ sufficiently large there is a $J_R$-holomorphic cylinder $U$ such that $[U] * [Z_L] = 0$ with positive asymptotic orbit having $\lambda$-action at most $N$ and asymptotic limits in $[a]$.}  To see why this is so, we may find a sequence $J_n \rightarrow J_R$ with each $J_n \in \J_{\mathrm{gen}}$ (since $\J_{\mathrm{gen}}$ is Baire), and each such defines a morphism 
\[
 \Phi(J_n): CC^{[a],\leq N}(\lambda,J \mbox{ rel } L) \rightarrow CC^{[a], \leq N}(c \cdot \lambda,J \mbox{ rel } L)
\]
\noindent Such a chain map is chain homotopic to the morphism obtained from a cylindrical almost-complex structure $\widehat{J}$, which can be identified with inclusion
\begin{align*}
 \iota_{N/c,N}: CC^{[a],\leq N}(\lambda,J \mbox{ rel } L) & \rightarrow CC^{[a], \leq N}(c \cdot \lambda,J \mbox{ rel } L) \cong CC^{[a], \leq N/c}(\lambda,J \mbox{ rel } L) \\
 q_{x} &\mapsto q_{x}
\end{align*}
\noindent so $\Phi(J_n)$ is non-zero if the homology is non-zero.  Let $q$ be a $\partial_J$-closed element of $CC^{[a]}_*(\lambda,J \mbox{ rel } L)$ generated by orbits of action $\leq N$ such that $[q] \neq 0$ when considered as an element of $CCH^{[a]}_*(\lambda,J \mbox{ rel } L)$.  Since $\Phi(J_n)$ is chain homotopic to the identity and $q$ is closed but not exact, it is easy to deduce that $\Phi(J_n)(q) \neq 0$, so there must be a $J_n$ finite energy holomorphic cylinder $U_n$ with $[U_n] *[Z_L] = 0$ and positive asymptotic orbit of action at most $N$.  Taking a SFT-limit as $n\rightarrow \infty$, by Proposition \ref{prop-compactness} (and using the fact that $(\lambda,L), (c\lambda,L)$ satisfy $(E)$) we obtain a $J_R$-holomorphic cylindrical building, which has a component $U_R$ which is a $J_R$-holomorphic cylinder with $[U_R] * [Z_L] = 0$, positive asymptotic orbit of action at most $N$, and asymptotic limits in $[a]$ as claimed.

Since $R$ was arbitrary, we make take a sequence of $J_{R_n}$ finite energy cylinders, $U_n$,  with $R_n \uparrow \infty$, each with positive asymptotic orbit of action at most $N$, $[U_n] * [Z_L] = 0$, and both asymptotic limits in $[a]$.  To continue, first suppose that $\lambda'$ is non-degenerate up to action $N$.  Then by SFT-compactness we obtain a limit holomorphic building $(U) = (U)_0 \odot (U)_1$ in $(W_{\xi},J_0)\odot (W_{\xi},J_1)$.  Proposition \ref{prop-splitcompactnessNOT-E}  asserts that the compactified image $\pi_V \circ \overline{(U)} \subset V \backslash L$.  The image is a piecewise-smooth cylinder with one boundary component equal to $x$ and the other boundary component equal to $y$.  Suppose $(U)_1$ has $k$ free negative punctures.  Let $(U)_0^1,\dots,(U)_0^{k-1}$ be the planar components of $(U)_0$.  Consider now the connected sub-building
\[ 
(X) = \left( (U)_0^1 + \dots (U)_0^{k-1} \right) \odot (U)_1
\]
\noindent with one free negative puncture which is a negative asymptotic orbit for $(U)_1$, and therefore a closed Reeb orbit for $\lambda'$, say $z$.  The compactified image $\pi_V \circ \overline{(X)}$ is a compact cylinder with boundary components $x$ and $z$ and image $\subset V \backslash L$, and therefore $[z] = [x] = [a]$.  Thus $z$ is the desired closed $\lambda'$-Reeb orbit in $[a]$ with action at most $N$ (since action is non-increasing as one descends levels).

To conclude in the case $\lambda'$ is degenerate, select $\lambda_n' \rightarrow \lambda'$ in $C^{\infty}$ with $(\lambda_n',L) \equiv (\lambda',L)$ such that $\lambda_n'$ is non-degenerate up to action $N$ by Lemma \ref{lem-Gdelta}.  For each $n$ sufficiently large we obtain a closed $\lambda_n'$-Reeb orbit $x_n$ as above (since eventually $\lambda_n' \prec \lambda$).  Applying the Arzela-Ascoli theorem, we obtain a closed $\lambda'$-Reeb orbit $x$ as a $C^{\infty}$ limit of the $x_n$.  %
$x$ cannot be a $l$-fold cover of a component of $L$ for any $l \geq 1$, because if it were then one could show that $1$ is an eigenvalue for the corresponding linearized $l$th-return map of the Reeb flow, contradicting the hypothesis that each cover of $L$ is elliptic non-degenerate.  %
It follows easily that $[x] = [a]$, since $x_n \rightarrow x$ in $C^{\infty}$, any loop $C^0$-close enough to $x$ is in the same homotopy class as $x$, and every $x_n \in [a]$.  The orbit $x$ also has action at most $N$.

If instead $L$ satisfies the linking condition of the Theorem and $[a]$ is a proper link class for $L$, select any $(\lambda',L,[a])$ satisfying $(PLC)$.  Assuming first $\lambda$ is non-degenerate, repeat the above argument using Proposition \ref{prop-PBCsplitcompactnessNOT-PLC} instead.  To pass to the degenerate case, use Lemmas \ref{lem-2jetpert}, \ref{lem-Gdelta} to similarly find approximating non-degenerate Reeb vector fields and a sequence of orbits $x_n \in [a]$ with $x_n \rightarrow x$ with $x$ a closed $\lambda$ Reeb orbit.  It is clear that $x$ cannot be a cover of a component of $L$ since no sequence of loops in $[a]$ can converge to a cover of a component of $L$: this is because $[a]$ is a proper link class relative to $L$.
\end{proof}

\subsection{A brief justification of Morse-Bott computations}

Suppose that $\lambda$ satisfies $(E)$ and is Morse-Bott non-degenerate (a similar argument will work if $\lambda$ satisfies $(PLC)$ instead).  Choose a Morse function on the Reeb orbit sets of $\lambda$.  The Morse-Bott chain complex $CC^{[a]}_*(\lambda,J \mbox{ rel } L)$ is generated by critical points of the Morse function chosen on the orbit sets, removing the generators corresponding to Reeb orbits with image in $L$.  The differential counts generalized holomorphic cylinders (holomorphic cylinders between orbit sets with cascades along the gradient flow lines of the Morse function, described in detail in \cite{Bourgeois_thesis}) of index $1$ which neither intersect nor are asymptotic to $L$.

Using the perturbation of Lemma $2.3$ of \cite{Bourgeois_thesis} one finds, given any $N>0$ and $C^{\infty}$-neighborhood of $\lambda$, a form $\lambda_N$ (which in fact only differs from $\lambda$ on an arbitrarily small tubular neighborhood the orbits sets of action at most $N + 1$) such that

\begin{itemize}
\item $\lambda_N$ is in the $C^{\infty}$-neighborhood chosen
\item All closed orbits that have period at most $N+1$ correspond to critical points of a Morse function on the orbit sets of $\lambda$ and are non-degenerate.
\item There are no contractible (in $V \backslash L$) Reeb orbits of period at most $N$ (because a sequence of such orbits for a sequence of forms converging to $\lambda$ would produce a contractible in $V \backslash L$ Reeb orbit for $\lambda$ of action at most $N$ by Arzela-Ascoli, contradicting hypotheses)
\end{itemize}

Such forms can be given explicitly in terms of the Morse functions on the orbit set in \cite{Bourgeois_thesis}.  Given such a $\lambda_N$, one may construct the cylindrical chain complex (for generic $J_N$ and Morse functions)
\[
 \left( CC^{[a], \leq N}_*(\lambda_N), \partial(\lambda_N,J_N) \right)
\]
\noindent  For any fixed $N$, choosing a sequence of such $\lambda_n \rightarrow \lambda$ (with corresponding $J_n \rightarrow J$), the work of \cite{Bourgeois_thesis} guarantees that for large $n$ the cylindrical contact chain complex generated by orbits of action at most $N$ is canonically identified with the Morse-Bott complex for $(\lambda,J)$ generated by the orbit sets of action at most $N$, so this portion of the homology can be canonically identified.  It is not difficult to see that a sequence $U_n$ of $J_n$-holomorphic cylinders with $U_n * Z_L = 0$ will converge to a generalized holomorphic cylinder of the type used to construct the differential above.  Similarly, if any sequence converges to such a generalized holomorphic cylinder, it is easy to see that $\lim U_n * Z_L = 0$.  The results of \cite{Bourgeois_thesis} imply that for large $n$
\[
 \left( CC^{[a], \leq N}_*(\lambda_n \mbox{ rel } L), \partial_L(\lambda_n,J_n) \right) \cong \left( CC^{[a], \leq N}_*(\lambda \mbox{ rel } L), \partial_L(\lambda,J) \right)
\]
\noindent as chain complexes.

Now given any non-degenerate form $\lambda'$ satisfying $(E)$ and $(\lambda',L) \equiv (\lambda,L)$, and $J' \in \J_{\mathrm{gen}}(\lambda')$, we have chain maps (choosing $C > 1$ so $C \lambda \succ \lambda'$, $c < 1$ so $c \lambda \prec \lambda'$, and $n$ large enough) 
\[
 CC^{\leq N/C}_*(\lambda_n,J_n \mbox{ rel } L) \rightarrow CC^{\leq N}_*(\lambda',J' \mbox{ rel } L) \rightarrow CC^{\leq N/c}_* (\lambda_n,J_n \mbox{ rel } L)
\]
\noindent with the composition chain homotopic to the filtration inclusion $\iota_{N/c,N/C}$.  Given any non-zero class in $CCH^{[a]}_*(\lambda \mbox{ rel }L)$, it is represented in $CC^{[a], \leq N/C}_*(\lambda_n)$ and $CC^{[a], \leq N/c}_*(\lambda_n)$ by a closed, non-exact element (if $N$ was chosen large enough and $\lambda_n$ is chosen near enough $\lambda$).  Since the composition above is chain homotopic to $\iota$, we get a closed, non-exact element in $CC^{[a], \leq N}_*(\lambda' \mbox{ rel }L)$.  For any two such choices of representatives, we can find $N$ large enough so the elements represent the same (non-zero) class in $CCH^{[a], \leq N}_*(\lambda' \mbox{ rel }L)$.  If this class were zero in $CCH^{[a]}_*(\lambda',J' \mbox{ rel }L)$, then it would be exact in $CC^{[a], \leq N}_*$ for some $N$.  Thus we have an inclusion in homology $CCH^{[a]}_*(\lambda,J \mbox{ rel } L) \hookrightarrow CCH^{[a]}_*(\lambda',J' \mbox{ rel } L)$.

Similar considerations using instead the composition of morphisms
\[ 
  CC^{\leq N/C'}_*(\lambda',J' \mbox{ rel }L) \rightarrow CC^{\leq N}_*(\lambda_n,J_n \mbox{ rel }L) \rightarrow CC^{\leq N / c'}_*(\lambda',J' \mbox{ rel }L) 
\] 
\noindent can be used to find a surjective map on homology 
\[ 
  CCH^{[a]}_*(\lambda,J \mbox{ rel }L) \twoheadrightarrow CCH^{[a]}_*(\lambda',J' \mbox{ rel }L)
\]

\section{Examples in $S^3$} \label{sec-examples}

We will use two approaches to computing contact homology on some orbit complements in $S^3$.  The first is to use an integrable model for which the dynamics are known precisely, usually because of some symmetry, and compute using the symmetry to make deductions about holomorphic curves as well.  The Morse-Bott technique introduced in \cite{Bourgeois_thesis} is a particularly useful tool when computing using such an approach.  The second technique is to use open book decompositions (an approach used in \cite{2008arXiv0809.5088C} to compute contact homology for a large class of examples).  In this section we will give sample computations using both approaches.

Since $H_2(S^3;\Z) = 0$ we can use $\Q$ as the coefficient ring for all chain complexes, and a global trivialization of the contact structure is used to compute all Conley-Zehnder indices (and thus to grade the chain complexes) for the tight contact structure.

\subsection{Computations with Morse-Bott contact forms}

First, consider the ``irrational ellipsoids'' (Example \ref{example-1}).  Since there is a unique orbit in each homotopy class (labeled by linking number $\ell$) of loops in $S^3 \backslash P'$ it is trivial to compute
\[
 CCH_*^{\ell}(\lambda',J \mbox{ rel } P') = \Q \cdot q_{Q'^{\ell}}, \qquad  CCH_*^{\ell}(\lambda',J \mbox{ rel } P',Q') = 0
\]
\noindent In the following examples we will consider Morse-Bott non-degenerate forms, for which we apply the techniques of \cite{Bourgeois_thesis} to compute the contact homology.

We extend Example \ref{example-1} as follows.  Let $\theta_1, \theta_2$ be any irrational (possibly negative) real numbers.  Let $\gamma(t) = (x(t), y(t))$ for $t \in [0,1]$ be a smooth embedded parameterized curve in the first quadrant of $\R^2$.  Suppose that $\gamma$ has the following properties:
\begin{itemize}
 \item $x(0) > 0, y(0) = 0$, and $y'(0) > 0$;
 \item $x(1) = 0, y(1) > 0$, and $x'(1) < 0$;
 \item The ratio $y'(t)/x'(t)$ is strictly monotone (has non-zero first derivative) on the subdomains of $[0,1]$ for which it is defined
 \item $x \cdot y' - x' \cdot y > 0$ for all $t \in [0,1]$.  Equivalently, $\gamma$ and $\gamma'$ are never co-linear.
 \item $- \frac{x'(0)}{y'(0)} = \theta_1$;
 \item $- \frac{x'(1)}{y'(1)} = \theta_2$.
\end{itemize}

Given such a curve, we can construct a star-shaped hypersurface in $\C^2 = \R^4$ (which is thus of contact type) as follows:

\begin{example} \label{example-2}
Given such a $\gamma$, we define the surface $S = S_{\gamma}$ as (where $r_i,\theta_i$, $i = 1,2$, are polar coordinates in each $\R^2$ factor of $\R^4 = \R^2 \times \R^2$):
\[
 S = S_{\gamma} = \{(r_1,\theta_1, r_2, \theta_2) \in \R^4 | (r_1^2,r_2^2) \in \gamma \}
\]
\noindent with the contact form $\lambda' = \lambda_0 |_S$.  Alternately, it can be viewed as the contact form $f^2 \cdot \lambda_0$ on $S^3 = \{r_1^2 + r_2^2 = 1\}$, where $f:S^3 \rightarrow (0,\infty)$ satisfies $f(z) \cdot z \in S_{\gamma}$.  There are closed orbits $H_1 = S^3 \cap \C \times \{0\}$ and $H_2 = S^3 \cap \{0\} \times \C$.
\end{example}

The closed orbits $H_1$ and $H_2$ have Conley-Zehnder indices
\[
 CZ(H_1^k) = 2 \lfloor k(1 + \theta_1) \rfloor + 1, \qquad CZ(H_2^k) = 2 \lfloor k(1 + 1/\theta_2) \rfloor + 1
\]

It is not difficult to convince oneself that, once given real numbers $\theta_1, \theta_2$, such a curve $\gamma$ can be constructed.  If $\theta_1 = \theta_2 > 0$ are irrational, if one took $\gamma$ to be a straight line then $S$ would be the irrational ellipsoid (though, strictly speaking, it violates the hypothesis that $y'/x'$ be strictly monotone).

\begin{figure}
\begin{center}
\includegraphics[width=85mm, angle=270]{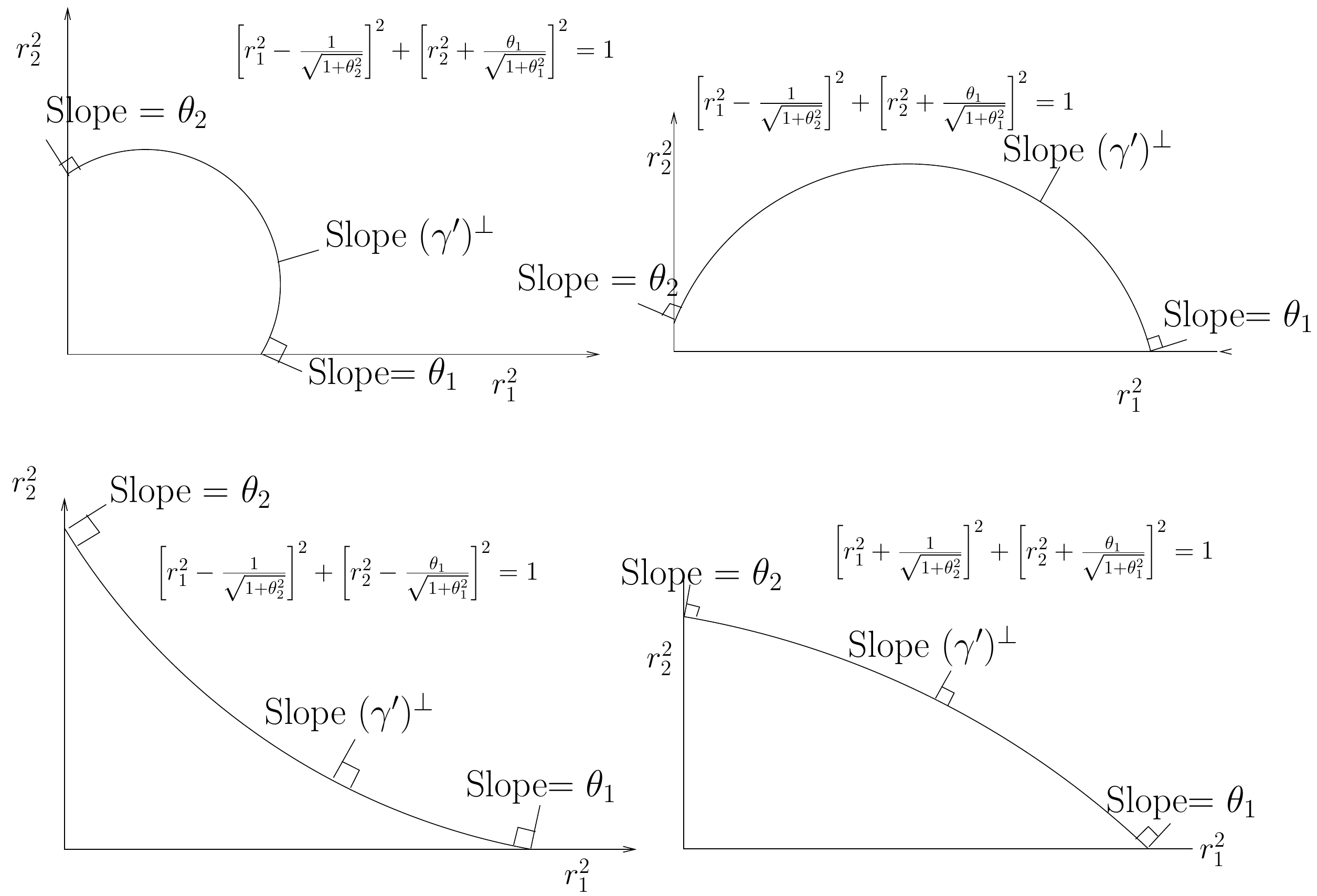} 
\end{center}
\caption{Curves $\gamma$ (given as level sets) with various endpoint normal slopes $\theta_1, \theta_2$.}
\end{figure}

First we will show that $S$ is a smooth, star-shaped hypersurface with respect to the origin in $\R^4$.  Let $H(x,y)$ be any smooth function such that $H \circ \gamma \equiv 1$ and $\nabla H \neq 0$ along $\gamma$.  Then $S$ is realized as the set $H(r_1^2, r_2^2) = 1$ (at least in some neighborhood of $S$).  We compute
\begin{align*}
r_1 \partial_{r_1} +  r_2 \partial_{r_2} &\neg d(H(r_1^2,r_2^2)) \\
&= r_1 \partial_{r_1} +  r_2 \partial_{r_2} \neg 2r_1 (\partial_x H)(r_1^2,r_2^2) dr_1 + 2r_2 (\partial_y H)(r_1^2,r_2^2) dr_2 \\
 &= 2 \left( r_1^2 (\partial_x H) + r_2^2 (\partial_y H) \right)
\end{align*}
\noindent  However, since $dH \circ \gamma' \equiv 0$ we mush have that $\nabla H (\gamma(t))$ is proportional to $j \cdot \gamma'(t) = (y'(t),-x'(t))$ (since this vector is perpendicular to $(x',y')$) and therefore, using $x(t) = r_1^2, y(t) = r_2^2$, the previous line is proportional to
\[
 2 (x (y') + y (-x')) = 2 (x \cdot y' - y \cdot x')  > 0
\]
\noindent  The proportionality constant is nowhere zero, so we see that the original quantity is nowhere vanishing and we can conclude that $dH \neq 0$ and hence $S$ is a smooth hypersurface in $\R^4$.  The radial vector field $r_1\partial_{r_1} + r_2 \partial_{r_2}$ is transverse to $S$ and therefore $S$ is star-shaped with respect to the origin.  Since the radial vector field is Liouville, the hypersurface is of contact type.  

The Hamiltonian vector field of $H$ is
\begin{align*}
X_H &= 2(\partial_x H) \partial_{\theta_1} + 2(\partial_y H) \partial_{\theta_2}  \\
& \propto \left( y'(t) \partial_{\theta_1} - x'(t) \partial_{\theta_2} \right)
\end{align*}
\noindent  So $y'(t) \partial_{\theta_1} -x'(t) \partial_{\theta_2}$ positively generates the characteristic foliation.

It is clear that $r_1, r_2$ are integrals for the flow, so $S$ foliates (on the complement of the Hopf link $H_1 \sqcup H_2 = (r_1 = 0 \sqcup r_2 =0)$) by tori, parameterized by points on the curve $(x(t),y(t)) = \gamma(t)$ on which the flow has slope $(y'(t),-x'(t))$.  In particular, we find for each $t$ such that
\[
( y'(t),-x'(t)) = C \cdot (p,q), \quad C > 0
\]
\noindent is rational a torus foliated by closed Reeb orbits.  The signs of $p,q$ will be determined by the direction of the vector $\gamma'(t)$.  The condition that $x'(t)/y'(t)$ is strictly monotone implies that this torus is Morse-Bott, so we have a Morse-Bott flow on $S$.  We shall see that closed orbits in different tori represent different homotopy classes of loops in $S^3 \backslash (H_1 \sqcup H_2)$ (which is homotopy equivalent to $T^2$).  In the following, let us denote the homotopy class of loops $[a]$ such that for $l \in [a]$ $\ell(l,H_2) = p$ and $\ell(l,H_1) = q$ by $[a] = (p,q) \in \Z^2$: this classifies the set of homotopy classes of loops in $S \backslash (H_1 \sqcup H_2)$.

Let us more explicitly describe the set of closed characteristics in $S$.  First, the knots $H_1$, $H_2$ corresponding respectively to the sets $r_2 = 0$, $r_1 = 0$ are closed Reeb orbits on $S$.  Using that the characteristic foliation is positively generated by $y'(t) \partial_{\theta_1} - x'(t) \partial_{\theta_2}$, one computes the Conley-Zehnder indices:
\[
 CZ(H_1^k) = 2 \lfloor k(1 - \frac{x'(0)}{y'(0)})\rfloor + 1 = 2 \lfloor k(1 + \theta_1 ) \rfloor + 1
\]
\[
 CZ(H_1^k) = 2 \lfloor k(1 - \frac{y'(1)}{x'(1)})\rfloor + 1 = 2 \lfloor k(1 + 1/\theta_2 ) \rfloor + 1
\]

The characteristic foliation on the torus at $(r_1^2,r_2^2) = \gamma(t)$ is (positively) generated by
\[
 X_H = y'(t) \partial_{\theta_1} - x'(t) \partial_{\theta_2}
\]
\noindent  The curve $\gamma$ divides into three distinct arcs: on the first arc $t \in [0,t_1)$ (for some $0 \leq t_1 < 1$), $x'(t) > 0$ and $y'(t) > 0$; on the second arc $t \in (t_1,t_2)$ (where $t_1 < t_2 \leq 1$), $x'(t) < 0$ and $y'(t) > 0$; on the third arc, $t \in (t_2,t_3]$ (where $t_2 \leq t_3 \leq 1$), $x'(t) < 0$ and $y'(t) < 0$.  Note that the first and third arcs may be empty.  We analyze the Reeb orbits belonging to points on each arc separately.

\textbf{On the first arc:}  $y'(t) > 0, x'(t) > 0$, so the ratio $-x'(t)/y'(t)$ is negative and increases until $t_1$ where $x'(t_1) = 0$.  

Then for those $t$ for which $-x'/y'$ is rational and equal to $(-q)/p$ in least terms (with $p,q > 0$), the leaves of the foliation form closed Reeb orbits which link $p$ times with $H_2$ and $-q$ times with $H_1$, i.e. it is in the homotopy class $(p,-q)$.  

This arc is non-empty if and only if $\theta_1 < 0$.  In this case, since $ -x'(t)/y'(t)$ is initially $\theta_1$, and strictly increases.  Thus, we find, for each relatively prime positive pair of integers $(p,q)$ such that 
\[ \theta_1 < (-q) / p < 0 \] 
\noindent a unique torus (corresponding to that $t_{(p,-q)}$ for which $-x'(t_{(p,-q)})/y'(t_{(p,-q)}) = (-q)/p$) foliated by Reeb orbits in the homotopy class $(p,-q)$.

\textbf{On the second arc:} $y'(t) > 0, x'(t) < 0$, so the ratio $y'(t)/x'(t)$ is negative and strictly increasing until $t_2$.  Either $t_2 = 1$, or, $t_2 < 1$ and $y'(t_2) = 0$.

Then, for those $t$ for which $-x'/y'$ is rational and equal to $q/p$ in least terms (with $p,q > 0$, the leaves of the foliation form closed Reeb orbits which link $p$ times with $H_2$ and $q$ times with $H_1$, i.e. it is in the homotopy class $(p,q)$.  

This arc is always non-empty.  Let us consider the case $-x'(t)/y'(t)$ is increasing, first.  If $\theta_1 > 0$, then $ -x'(t)/y'(t)$ is initially $\theta_1$; else, it is initially zero.  If $\theta_2 > 0$, then $t_2 = 1$ and $-x'(t)/y'(t)$ tends towards $\theta_2$ as $t \rightarrow 1^-$.  Else, $t_2 < 1$ and $-x'(t)/y'(t)$ is unbounded.

Thus, we find, for each relatively prime positive pair of integers $(p,q)$ such that 
\[
 \max \{ 0, \theta_1 \} < q / p <  \left \{ \begin{matrix} \theta_2, & \mbox{if } \theta_2 > 0 \\ \infty, & \mbox{if } \theta_2 < 0 \end{matrix} \right.
\]
\noindent a unique torus (corresponding to the point $\gamma(t_{(p,q)})$ where $-x'(t_{(p,q)})/y'(t_{(p,q)}) = q/p$) foliated by Reeb orbits in the homotopy class $(p,q)$.

There is also the possibility that $-x'(t)/y'(t)$ is \emph{decreasing}, which happens if and only if $0 < \theta_2 < \theta_1$.  In this case, we find for each
\[
 \theta_2 = \frac{-x'(1)}{y'(1)} < q/p = -x'(t_{(p,q}) / y'(t_{(p,q}) < \frac{-x'(0)}{y'(0)} = \theta_1
\]
\noindent the torus $(r_1^2,r_2^2) = \gamma(t_{(p,q)})$ foliated by orbits in the homotopy class $(p,q)$.  


\textbf{On the third arc:} both $x'(t)$ and $y'(t)$ are negative, and $y'(t)/x'(t)$ increases strictly until $t = 1$.

For those $t$ for which $-x'/y'$ is rational and equal to $q/(-p)$ in least terms (with $p,q > 0$, the leaves of the foliation form closed Reeb orbits which link $-p$ times with $H_2$ and $q$ times with $H_1$, i.e. it is in the homotopy class $(-p,q)$.  

This arc is non-empty if and only if $\theta_2 < 0$.  In this case, $ -x'(t)/y'(t)$ tends to $-\infty$ as $t \rightarrow t_2^+$, and strictly increases to the limiting value $\theta_2$ as $t \rightarrow 1$.  Thus, we find, for each relatively prime positive pair of integers $(p,q)$ such that 
\[ q / (-p) < \theta_2 \] 
\noindent a unique torus (corresponding to that $t_{(p,-q)}$ for which $-x'(t_{(-p,q)})/y'(t_{(-p,q)}) = q/(-p)$) foliated by Reeb orbits in the homotopy class $(-p,q)$.

Finally, there are the points $t_1, t_2$.  If $t_1 \neq 0$, then $x'(t_1) = 0$ and $y'(t_1) > 0$ so there will be a torus of closed orbits at $\gamma(t_1)$ representing the homotopy class $(1,0)$ (i.e.~linking once with $H_2$ and zero times with $H_1$).  If $t_2 \neq 1$, then $x'(t_2) < 0$ and $y'(t_2) = 0$ so there will be a torus of closed orbits at $\gamma(t_2)$ representing the homotopy class $(0,1)$ (i.e.~linking $0$ times with $H_2$ and once with $H_1$).

Thus, we can classify the closed characteristics on $S$ as follows:
\begin{enumerate}
\item If $0< \theta_1,\theta_2$ then for
\[
 \frac{q}{p} \in (\theta_1,\theta_2) \sqcup (\theta_2, \theta_1)
\]
\noindent there is a torus of closed orbits with each closed orbit representing the homotopy class $(p,q)$;
 \item If $\theta_1 < 0 < \theta_2$, then for each fraction (written in least terms)
\[
 \frac{q}{p} \in (\theta_1,\theta_2)
\]
\noindent (taking $q < 0$ when this fraction is negative) there is a torus of simple closed orbits with each closed orbit representing the homotopy class $(p,q)$;
 \item If $\theta_2 < 0 < \theta_1$, then for each fraction (written in least terms)
\[
 \frac{p}{q} \in (\frac{1}{\theta_2},\frac{1}{\theta_1})
\]
\noindent (taking $p < 0$ when this fraction is negative) there is a torus of simple closed orbits with each closed orbit representing the homotopy class $(p,q)$;
\item If $\theta_1, \theta_2 < 0$, then for $p,q$ non-negative and relatively prime such that
\[
 \frac{-q}{p} \in (\theta_1,0), \qquad \frac{q}{p} \in [0,+\infty], \qquad \frac{q}{-p} \in (-\infty,\theta_2)
\]
\noindent (where we consider $1/0 = +\infty$), there is a torus of simple closed orbits with each closed orbit representing the homotopy class $(p,-q)$, $(p,q)$ or $(-p,q)$ (respectively).
\item  In each case above, these are all the closed orbits i.e.~there are no other closed orbits (besides the closed orbits $H_1$, $H_2$).
\end{enumerate}

\noindent (The reader may have noticed that case $(3)$ above is superfluous, since by interchanging the roles of $H_1, H_2$ it can be realized by case $(2)$.)  In particular, there are no closed Reeb orbits which are contractible in $S \backslash H_1 \sqcup H_2$, and $H_1,H_2$ are elliptic, so $CCH_*^{[a]}(S \mbox{ rel } H_1 \sqcup H_2)$ exists.  By the Morse-Bott calculation we find that for $[a] = (p,q)$ satisfying the above conditions, $CCH_*^{(p,q)}$ is isomorphic to the $H_1(S^1)$ up to a grade shift.

Thus, given a transverse link Hopf link $H_1 \sqcup H_2$ which realizes the maximal Thurston-Bennequin bound, if the Conley-Zehnder indices of $H_1$ and $H_2$ satisfy
\[
 CZ(H_1^k) = 2 \lfloor k(1 + \theta_1) \rfloor + 1, \qquad CZ(H_2^k) = 2 \lfloor k(1 + 1/\theta_2) \rfloor + 1
\]
\noindent then we have computed
\begin{enumerate}
 \item If $\theta_1, \theta_2$ are both positive and $p,q$ are relatively prime, then
\[
 CCH^{(p,q)}_*(S \mbox{ rel } H_1 \sqcup H_2) = \left \{ \begin{matrix} \Q^2, & \mbox{if } p,q > 0 \mbox{ and } \frac{q}{p} \in (\theta_1,\theta_2) \cup (\theta_2,\theta_1) \\ 0, & \mbox{otherwise} \end{matrix} \right.
\]
\noindent  If $\theta_1 < \theta_2$, then the generators in the homotopy class $(p,q)$ have grading $2(p+q), 2(p+q) + 1$, and if $\theta_2 < \theta_1$, then the generators have grading $2(p+q)$, $2(p+q) - 1$ instead.

 \item If $\theta_1 < 0 < \theta_2$ (relabel $L_1, L_2, \theta_1, \theta_2$ appropriately so that this is true, if necessary) then for relatively prime $p,q$:
\[
 CCH^{(p,q)}_*(S \mbox{ rel } H_1 \sqcup H_2) = \left \{ \begin{matrix} \Q^2, & \mbox{if } p > 0 \mbox{ and } \frac{q}{p} \in (\theta_1,\theta_2)  \\ 0, & \mbox{otherwise} \end{matrix} \right.
\]
\noindent  The generators in the homotopy class $(p,q)$ have grading $2(p+q), 2(p+q) + 1$

 \item If $\theta_1, \theta_2$ are both negative, then for $p,q$ relatively prime such that:
\[
p > 0 \mbox{ and } \frac{q}{p} \in (\theta_1,1]; \mbox{ or } q > 0 \mbox{ and } \frac{p}{q} \in (\frac{1}{\theta_2},1]
\]
\noindent we have
\[
 CCH^{(p,q)}_*(S \mbox{ rel } H_1 \sqcup H_2) \cong \Q^2
\]
\noindent and for all other $(p,q)$ it is zero.  The generators in the homotopy class $(p,q)$ have grading $2(p+q), 2(p+q) + 1$.
\end{enumerate}

\begin{example}
 \label{example-3}  We consider the same form $\lambda'$ as above, and (for simplicity) assume that $0 < \theta_1 < \theta_2$.  Consider instead the two-component link $T = T_1 \sqcup T_2$ whose components are 
\begin{itemize}
 \item $T_1$ is the one leaf of the foliation of the torus $T_{(p,q)}$ by the Reeb vector field.
 \item $T_2$ is the one leaf of the foliation of the torus $T_{(p',q')}$ by the Reeb vector field.
\end{itemize}
\noindent where both pairs $(p,q)$ and $(p',q')$ are positive, relatively prime and
\[
\theta_1 < \frac{q'}{p'} < \frac{q}{p} < \theta_2
\]
\noindent Choose any relatively prime pair $(p'',q'')$ such that
\[
 \frac{p}{q} < \frac{p''}{q''} < \frac{p'}{q'}
\]
\noindent Let $[a] = [T_{(p'',q'')}]$ be the homotopy type of a leaf of the foliation of the torus $T_{(p'',q'')}$ by the Reeb vector field considered as an element in the space of loops in $S^3 \backslash L$.  Then $[T_{(p'',q'')}]$ is a simple homotopy class and $(\lambda',T = T_1 \sqcup T_2,[T_{(p'',q'')}])$ satisfies $(PLC)$ at least if $q'' \neq q,  p' \neq p''$.
\end{example}

To see this, first compute that if we have $K_{(l,m)} \subset T_{(l,m)}$ and $K_{(l',m')} \subset T_{(l',m')}$ simple closed Reeb orbits and $l / m < l' / m'$ then the linking number
\[
 \ell(K_{(l,m)},K_{(l',m')}) = l' \cdot m
\]
\noindent  This shows that the linking numbers 
\begin{align*}
\ell(T_1,[T_{(p'',q'')}]) = p'' \cdot q  & \neq \ell(T_1,T_2) = p' \cdot q \\ 
\ell(T_2,[T_{(p'',q'')}]) = p' \cdot q'' &\neq \ell(T_2,T_1) = p' \cdot q
\end{align*}
\noindent so $[T_{(p'',q'')}]$ cannot be homotopic to either $T_1$ or $T_2$ in the complement of $T_1 \sqcup T_2$ since these linking numbers are homotopy invariant in $S^3 \backslash T_1 \sqcup T_2$.  The linking numbers of the orbits in $T_{(p'',q'')}$ with $T_1, T_2$ show that orbits in different orbit sets do in fact lie in different homotopy classes of loops, so (as in the previous example) the Morse-Bott complex is simply that of a Morse function on the orbit set.  Therefore for $[T_{(p'',q'')}]$
\[
 CCH^{[a]= [T_{(p'',q'')}]}_*(\xi \mbox{ rel } T) \cong \left\{ 
\begin{matrix}
 \Q, & * = 2\cdot(p'' + q'') \hbox{ or } 2\cdot(p'' + q'') + 1 \\ 
 0 & \hbox{otherwise}
\end{matrix}  \right.
\]

Using the above calculation we derive the following corollary.  Note that we are not assuming that the contact form $\lambda$ is non-degenerate.

\begin{corollary} \label{cor-forcing2}
Let $\lambda$ be a tight contact form on the $3$-sphere.  Suppose there is a pair of knots $L' = L_1' \sqcup L_2'$ such that there is a contactomorphism taking this pair to the pair of torus knots described in Example \ref{example-3}:
\[
 T = T_1 \sqcup T_2
\]
\noindent Then for any $p'',q''$ relatively prime and such that
\[ 
 \frac{p}{q} < \frac{p''}{q''} < \frac{p'}{q'} \mbox{, } q'' \neq q' \mbox{, and }  p \neq p'' 
\]
\noindent there is a closed Reeb orbit in the homotopy class $[a]= [T_{(p'',q'')}]$ of $S^3 \backslash T$.
\end{corollary}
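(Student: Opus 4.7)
The plan is to reduce to Example \ref{example-3} via the given contactomorphism and then invoke Theorem \ref{thm-forcing2} in its $(PLC)$ branch. Applying the contactomorphism and its inverse to the Reeb vector field, I may assume $L' = T = T_1 \sqcup T_2 \subset (S^3, \xi_{\mathrm{std}})$ is a Reeb link of a (possibly degenerate) tight contact form $\hat{\lambda}$ with $\ker \hat{\lambda} = \ker \lambda' = \xi_{\mathrm{std}}$, where $\lambda'$ denotes the Morse--Bott model of Example \ref{example-3}. Since the Morse--Bott reference $\lambda'$ satisfies $(PLC)$ with respect to $[a] = [T_{(p'',q'')}]$, the invariant $CCH^{[a]}_{*}([\lambda'] \mbox{ rel } T)$ is defined, and in the $(PLC)$ case it depends only on the $\sim$-equivalence class, i.e.~only on $(\xi_{\mathrm{std}}, T)$; the non-vanishing result from the Morse--Bott computation at the end of Example \ref{example-3} (justified in the setting $L \neq \emptyset$ by the final subsection of Section \ref{sec-degeneracies}) therefore transfers to $\hat{\lambda}$.

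Next, I would verify the hypotheses of Theorem \ref{thm-forcing2} in the $(PLC)$ branch applied to $(\hat{\lambda}, T, [a])$. Simplicity of $[a]$ follows from $\gcd(p'', q'') = 1$, which makes $[T_{(p'', q'')}]$ primitive in the space of loops in $S^3 \setminus T$. For the link-topological hypothesis in Theorem \ref{thm-forcing2}, any disc $F$ with $\partial F \subset T$ representing a non-zero class in $H_1(T) \cong \Z \oplus \Z$ has $\partial F$ freely homotopic on $T$ to a non-zero multiple of $T_1$ or $T_2$; using the linking formula recalled in Example \ref{example-3}, the non-zero value $\ell(T_1, T_2) = p' q$ forces a signed count of intersections of $F$ with the other component that is non-zero, hence an interior intersection with $T$. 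For the proper-link-class condition, the linking numbers with the components of $T$ are homotopy invariants of loops in $S^3 \setminus T$; the pair $(\ell([T_{(p'',q'')}], T_1), \ell([T_{(p'',q'')}], T_2)) = (p''q,\, p'q'')$ differs, under the standing inequalities $p \neq p''$ and $q'' \neq q'$, from the pair realized by any small push-off of $T_1$ or $T_2$ into the complement, ruling out a homotopy in $S^3 \setminus T$ from $[a]$ to either component.

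With these hypotheses verified and the non-vanishing $CCH^{[a]}_{*}([\hat{\lambda}] \mbox{ rel } T) \neq 0$ already established, Theorem \ref{thm-forcing2} produces a closed $\hat{\lambda}$-Reeb orbit in $[T_{(p'',q'')}]$; pulling back by the contactomorphism yields the asserted closed $\lambda$-Reeb orbit. The main delicacy I expect is purely bookkeeping: matching the inequalities in the hypothesis of this corollary to those that make the linking-number pair of $[T_{(p'',q'')}]$ distinct from those of the two components of $T$, and carefully checking that the disc hypothesis of Theorem \ref{thm-forcing2} (phrased for discs with boundary in $L$) is exactly what is supplied by the non-triviality of $\ell(T_1, T_2)$ for nested torus-knot components, rather than the slightly different-looking disc condition that appears in the original Definition \ref{def-PLC}.
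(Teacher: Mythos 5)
Your proposal is correct and is essentially the paper's own argument: transport the pair by the contactomorphism so that $(\lambda,L')\sim(\lambda',T)$, use the Morse--Bott computation of Example \ref{example-3} for the non-vanishing of $CCH^{[a]}_*$, verify that $[a]=[T_{(p'',q'')}]$ is simple and a proper link class via linking numbers, and conclude with the $(PLC)$ branch of Theorem \ref{thm-forcing2} (your explicit check of the disc condition via $\ell(T_1,T_2)=p'q\neq 0$ is a detail the paper leaves implicit). The bookkeeping point you flag is real: the robust invariant is the linking number with the \emph{other} component (linking with the component itself is framing-dependent under push-off), so the argument needs $q''\neq q$ and $p''\neq p'$ exactly as stated in Example \ref{example-3}, rather than the literal inequalities $q''\neq q'$ and $p\neq p''$ appearing in the corollary's hypotheses --- but this discrepancy is present in the paper's own statement and proof, not an additional gap in your argument.
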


\begin{proof}
Applying the contactomorphism we can assume without loss of generality that $(\lambda,L') \sim (\lambda',T)$ from Example \ref{example-3}.  Then this follows from Theorem \ref{thm-forcing2} and the computation in Example \ref{example-3}, since the homotopy class $[a]$ of $K$ (which has the homotopy type of the leafs of the foliation of $T_{(p'',q'')}$ by the Reeb vector field in that example) is simple and a proper link class relative to $T_1 \sqcup T_2$.  The hypotheses $q'' \neq q',  p \neq p''$ imply that that the linking numbers $\ell(T_1,[a]) \neq \ell(T_1,T_2)$ and $\ell(T_2,[a]) \neq \ell(T_2,T_1)$ which ensures that $[a]$ is a proper link class, though $[a]$ might be a proper knot class even if these inequalities are not satisfied.
\end{proof}

\begin{remark}
 Similarly, one can consider the cases where at least one of $\theta_1, \theta_2$ is negative and obtain an analogous result for certain cases where $p,p',q,q'$ may be negative.
\end{remark}

It is also true that $(\lambda', T, [H_i])$ satisfies $(PLC)$ for the knots $H_1, H_2$ in Example \ref{example-2} (the components of the Hopf link) if $p' \neq 1$ and $q \neq 1$.  In this case, it is easiest to compute $CCH^{[H_i]}_*$ by choosing $\theta_1, \theta_2 > 0$ of $\lambda'$ appropriately.  Select them such that
\begin{align*}
 \left \lfloor \frac{p}{q}  \right \rfloor < \frac{1}{\theta_2} <  \frac{p}{q}  \\
 \left \lfloor \frac{q'}{p'} \right \rfloor < \theta_1 < \frac{ q'}{ p'}
\end{align*}
\noindent  This ensures that the only orbits in the homotopy classes $[H_1], [H_2]$ are the orbits $H_1, H_2$ themselves\footnote{This can be checked by a linking argument which we omit.} and therefore
\begin{align*}
 CCH^{[a] = [H_1]}_*(\xi \mbox{ rel } T) \cong \Q \cdot q_1, & CCH^{[a] = [H_2]}_*(\xi \mbox{ rel } T) \cong \Q \cdot q_2
\end{align*}
\noindent where the generators $q_1, q_2$ have degrees 
\[
 |q_1| = 2\left\lfloor 1 + \frac{q'}{p'} \right \rfloor, |q_2| = 2\left\lfloor 1 + \frac{p}{q} \right \rfloor 
\]

\subsection{Application to reversible Finsler metrics on $S^2$}
\label{sec-FinslerMetrics}
The geometric set-up described in the next three paragraphs is well-known; a clear treatment can be found in section $4$ of \cite{Harris-Paternain}.

Let us recall the double-covering map $S^3 \cong SU(2) \rightarrow SO(3)$:
\small
\[
 \begin{bmatrix}
  a + \qi b & c + \qi d \\ -c + d \qi & a - \qi b
 \end{bmatrix} \mapsto \begin{bmatrix}
 a^2 + b^2 - c^2 - d^2 & 2(ad + bc ) & 2(-ac + bd) \\
 2(-ad + bc ) & (a^2 - b^2 + c^2 - d^2) & 2(ab+cd)  \\
 2(ac + bd)   & 2(-ab + cd) & (a^2 - b^2 - c^2 + d^2) \\  
 \end{bmatrix}
\] \normalsize
\noindent  By taking the second and third columns\footnote{One could take different pairs of columns instead.}, we obtain a double-covering map $G: S^3 \equiv SU(2) \rightarrow M_0$, where $M_0 = \{(x,v) \in \R^3 \times \R^3 | |x|^2 = |v|^2 = 1, x \cdot v = 0 \}$ is the unit tangent bundle of $S^2$ with the round metric
\[
\begin{bmatrix}
 a \\ b \\ c \\d
\end{bmatrix}
  \mapsto \begin{bmatrix}
 2(ad + bc ) & 2(-ac + bd) \\
 (a^2 - b^2 + c^2 - d^2) & 2(ab+cd)  \\
 2(-ab + cd) & (a^2 - b^2 - c^2 + d^2) \\  
 \end{bmatrix}
\]
\noindent which gives an explicit formula for the double-covering map $S^3 \rightarrow S^1 T S^2$.

Let
\begin{itemize}
 \item $\ell_0: T S^2 \backslash 0 \rightarrow T^* S^2 \backslash 0$ denote the Legendre transform with respect to the round Riemannian metric of unit curvature.  In general, $\ell_F$ denotes the one obtained from a Finsler metric $F$.
 \item $F$ is a Finsler metric on $S^2$, while $F^*$ is the co-metric obtained via $F \circ \ell_F^{-1}$.
 \item Set $M_{F}^* = \{F^*(p) = 1 \}$ be the unit co-sphere bundle with respect to $F^*$, and let $M_{0}^*$ be the unit co-sphere bundle for $F = | \cdot |$ the norm for the round Riemannian metric. 
 \item $g_F: T^* S^2 \backslash 0 \rightarrow (0,\infty)$ is such that $F^*(x,p) = g_F(x,p)|p|$ i.e.~$g_F(x,p)$ is the homogeneous degree $0$ part of $F^*$.  Also, let $r_F:M_0^* \rightarrow M_F^*$ be the map $r_F(x,p) = (x,p/g_F(x,p))$.
 \item $G$ is the map $SU(2) \rightarrow S^1 TS^2 = M_0$ considered above.
 \item $\lambda$ is the Liouville form on $T^* S^2$, while $\lambda_0$ is the standard contact form on $S^3$.  Note that $M_F^*$ is contact type.
\end{itemize}

\begin{lemma} \cite{Harris-Paternain}
 If $f = \frac{1}{g_F} \circ \ell_0$, then
\[
 G^* \ell_0^* r_F^* (\lambda|_{M_F^*}) = 2 (f \circ G) \lambda_0 
\]
The Reeb flow of $2(f \circ G) \lambda_0$ is smoothly conjugate to the double cover of the geodesic flow on $M_F$.
\end{lemma}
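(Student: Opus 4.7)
The plan is to compute the pullback $G^* \ell_0^* r_F^*(\lambda|_{M_F^*})$ stage by stage along the composition $S^3 \xrightarrow{G} M_0 \xrightarrow{\ell_0} M_0^* \xrightarrow{r_F} M_F^*$, and then to deduce the statement about Reeb flows from the fact that this composition is a smooth double cover.

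The first two pullbacks are immediate from the definition of the Liouville form. The map $r_F(x,p) = (x,p/g_F(x,p))$ commutes with the footpoint projection $\pi: T^*S^2 \to S^2$, so if $V$ is tangent to $M_0^*$ at $(x,p)$,
\[
(r_F^*\lambda)_{(x,p)}(V) \;=\; \tfrac{p}{g_F(x,p)}\bigl(d\pi(V)\bigr) \;=\; \tfrac{1}{g_F(x,p)} \lambda_{(x,p)}(V),
\]
so $r_F^*(\lambda|_{M_F^*}) = (1/g_F)\, \lambda|_{M_0^*}$. Pulling back further by $\ell_0$, which also covers the identity on $S^2$, yields $\ell_0^* r_F^*(\lambda|_{M_F^*}) = f \cdot \ell_0^*(\lambda|_{M_0^*})$.

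The crux is therefore the identity $G^*\ell_0^*(\lambda|_{M_0^*}) = 2\lambda_0$; combined with the previous step this gives the stated pullback formula. The plan is to verify this identity by observing that $G$ is a local diffeomorphism, so $G^*\ell_0^*(\lambda|_{M_0^*})$ is a contact form on $S^3$ defining the standard tight structure, hence equals $c\,\lambda_0$ for a positive function $c$. One then pins down $c \equiv 2$ either (i) by direct computation using the explicit formula for $G: SU(2) \to S^1 TS^2$ given in the excerpt together with the coordinate expression $\lambda_0 = \tfrac{1}{2}\sum r_i^2\, d\theta_i$, or (ii) by a period argument: the Reeb flow of $\lambda|_{M_0^*}$ is the geodesic flow of the round metric of curvature $1$, all of whose orbits close up in time $2\pi$; under the double cover $G$ those orbits lift to closed orbits of period $4\pi$, whereas the Reeb flow of $c\,\lambda_0$ has minimal period $2\pi/c$, forcing $c = 2$ (using invariance of both forms under the Hopf circle action to see that $c$ is constant). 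I expect this scalar-determination step to be the most delicate part of the argument.

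Finally, since the composition $r_F \circ \ell_0 \circ G: S^3 \to M_F^*$ is a smooth double cover and pullback of a contact form by a local diffeomorphism intertwines Reeb vector fields, the Reeb flow of $2(f\circ G)\,\lambda_0$ on $S^3$ is conjugated via this composition to the Reeb flow of $\lambda|_{M_F^*}$ on $M_F^*$. The latter is the geodesic flow of $F$ on $M_F$ transported by the Legendre transform $\ell_F$, so the Reeb flow on $S^3$ is smoothly conjugate to the double cover of the geodesic flow on $M_F$, as claimed.
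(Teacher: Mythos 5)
The reduction you propose—compute the pullback in stages and pin down the identity $G^*\ell_0^*(\lambda|_{M_0^*}) = c\,\lambda_0$—is the natural route (the paper itself offers no proof here, it cites \cite{Harris-Paternain}), and your two preliminary steps ($r_F^*(\lambda|_{M_F^*}) = g_F^{-1}\lambda|_{M_0^*}$ and the $\ell_0$ pullback) as well as the closing conjugacy argument are correct. The gap is exactly in the scalar-determination step you flagged as delicate. First, ``$G^*\ell_0^*(\lambda|_{M_0^*})$ is a contact form defining the standard tight structure'' does not yield $c\,\lambda_0$: for that you need the kernel to coincide with $\ker\lambda_0$ \emph{pointwise}, not merely up to isotopy, so this reduction already requires either the explicit computation or an equivariance argument identifying the plane fields. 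Second, the period bookkeeping is backwards: the Reeb vector field of $c\,\lambda_0$ is $c^{-1}X_{\lambda_0}$, so minimal periods are \emph{multiplied} by $c$, not equal to $2\pi/c$; and with the normalization $\lambda_0=\tfrac12\sum r_i^2\,d\theta_i$ that you invoke from Example \ref{example-1}, the Hopf fibres are $\lambda_0$-Reeb orbits of period $\pi$, not $2\pi$. Even taking your formula at face value, equating $2\pi/c$ with $4\pi$ would force $c=1/2$; the value $2$ is asserted, not derived.

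Carrying the step out correctly: with the covering map $G$ displayed in the paper and $\alpha_{(x,v)}(W)=\langle v, d\pi(W)\rangle$ the canonical form on $M_0$, a direct computation gives $G^*\alpha = \pm 2\,(b\,da - a\,db + d\,dc - c\,dd)$ on $S^3$, i.e. $\pm 4\lambda_0$ for $\lambda_0=\tfrac12\sum r_i^2\,d\theta_i$, equivalently $\pm 2\lambda_0$ only when $\lambda_0$ denotes $\sum r_i^2\,d\theta_i$. The corrected period argument agrees: a geodesic-flow orbit in $S^1TS^2\cong SO(3)$ is a full rotation about an axis, hence generates $\pi_1(SO(3))=\Z/2$, so its lift to $S^3$ closes only after time $4\pi$ (a point you assume without justification, though it is true), and matching $c\pi = 4\pi$ gives $c=4$ in the $\tfrac12\sum r_i^2\,d\theta_i$ normalization. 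So to complete the proof you must fix the scaling law, actually prove the pointwise proportionality to $\lambda_0$ (the invariance of $G^*\alpha$ under the Hopf action, which you use to see $c$ is constant, itself needs the observation that the Hopf action corresponds under $G$ to the lifted geodesic flow), and be explicit about which normalization of the ``standard'' contact form the factor $2$ refers to; as written, the argument does not produce the stated constant.
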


There is a global frame $X_1, X_2, X_3$ on $M^*$ with the property that $X_1$ is the Reeb vector field, and $X_2, X_3$ trivialize $\xi = \ker \lambda|_{M_F^*}$.  This can all be pulled back via the double-cover $r_F \circ \ell_0 \circ G:S^3 \rightarrow M_F^*$, so we may use this frame to compute Conley-Zehnder indices.  The linearized flow along a closed Reeb orbit $z(t)$ can be computed with the following result (see e.g.~\cite{Harris-Paternain}).  Let $\phi_t$ denote the Reeb flow.  Let $x_0 X_2(z(0)) + y_0 X_3(z(0))$ be in $\xi_{z(0)}$.  Let $x(t), y(t)$ solve
\[
 d\phi_t(z(0)) = x(t) X_2(z(t)) + y(t) X_3(z(t)), \qquad x(0) = x_0, y(0) = y_0
\]
\noindent Then $x(t), y(t)$ solves the linear system
\[
 \left\{ \begin{matrix} \dot{x} &=& -K(\pi \circ z(t)) y \\ \dot{y} &=& x \end{matrix} \right.
\]
\noindent where $K:S^2 \rightarrow \R$ is determined by the Finsler structure and agrees with the curvature in the Riemannian case.

Poincar\'{e}'s rotation number of a geodesic $\gamma$ can be described as \cite{Angenent} 
\[
1/ \rho = \lim_{n \rightarrow \infty} \frac{y_{2n}}{nL}
\]
\noindent where $y_k$ is the $k^{th}$ positive zero of $y(t)$ which solves
\[
 y'' + K(\gamma(t)) \cdot y = 0
\]
\noindent and $L$ is the length of the geodesic.  This limit can also be described by
\[
 \rho = \lim_{N \rightarrow \infty} \frac{L}{2} \frac{\# \{ \mbox{zeros of } y \in [0,N] \}} {N}
\]

Let $z$ be the closed Reeb orbit which double covers $\gamma$, such that $z$ has period $T$.  We will now compare the Conley-Zehnder indices $CZ(z^k)$ to the inverse rotation number $\rho$.  The Conley-Zehnder index of $z^k$ satisfies (see, for example, the Appendix of \cite{HWZ_FEF}) 
\[
| CZ(z^k) - 2\Delta | < 2
\]
\noindent In the above, $2 \pi \Delta$ is the change in argument of the vector $(x(t),y(t))$ over the paths domain $[0,kT]$, where $(x,y)$ satisfies the linearized equation associated to $z$ above.  However, if $(x,y)$ solves this equation, then $y$ solves the equation (recall $G$ denotes the double cover)
\[
 y'' + K(G \circ z(t)) y = 0
\]
\noindent Moreover, if there are $k$ zeros of $y$, then $k -1 < 2\Delta < k+1$.  This claim is a consequence of the following observations.  First, we notice that all crossings of the $x$-axis are transverse: this is because if $y = 0$ and $y' = 0$ then $y \equiv 0$ by uniqueness of solutions.  Second, at any crossing, by the differential equation we see that if $y' > 0$ then $x > 0$, and if $y' < 0$ then $x < 0$; thus each zero contributes a transverse counterclockwise crossing of the path $(x(t),y(t))$ against the $x$-axis.  So $| 2\Delta - k | < 1$; hence (for any $k$):
\[
 | CZ(z|_{[0,kT]}) - \# \{ \mbox{zeros of } y \in [0, kT]\} | < 3
\]

Since $z$ has period $T$, $z^k$ has period $kT$.  Then
\[
0 \leq  \lim_{k \rightarrow \infty} \frac{ | CZ(z^k) - \# \{ \mbox{zeros of } y \in [0, kT]\} | }{k} \leq \lim 3 / k = 0
\]
\noindent so
\begin{align*}
  2\rho &= \lim_{kT \rightarrow \infty} \frac{2L}{2} \frac{\# \{ \mbox{zeros of } y \in [0,kT] \}} {kT} \\
& = \lim_{k \rightarrow \infty} \frac{T}{2} \frac{\# \{ \mbox{zeros of } y \in [0,kT] \}} {kT} \\
& = \lim_{k \rightarrow \infty} \frac{1}{2} \frac{\# \{ \mbox{zeros of } y \in [0,kT] \}} {k} \\
& = \lim_{k \rightarrow \infty} CZ(z^k)/2k.
\end{align*}
\noindent (The factor of $2$ is due to the fact that $z$ is a double cover of the simple geodesic $\gamma$, thus the period $T$ of $z$ is equal to $2L$, where $L$ is the length of the geodesic $\gamma$.)

If $\rho$ is irrational, then the characterization of Proposition \ref{prop-Linear Growth} shows that $z$ must be elliptic with Conley-Zehnder indices
\[
 CZ(z^k) = 2 \lfloor k (1 + (2 \rho -1) ) \rfloor + 1
\]
\noindent  Since the Finsler metric is reversible, the reversed geodesic gives rise to a second closed Reeb orbit $w$ which has the same Conley-Zehnder indices
\[
 CZ(w^k) = 2 \lfloor k (1 + (2 \rho -1) ) \rfloor + 1
\]

One may assume, without loss of generality, that $\gamma$ is an equator on $S^2$.  One sees by explicitly computing its preimage in $S^3$ that $z$ is an unknotted, self-linking number $-1$ transverse knot in $S^3$ (it will be, if $\gamma$ is chosen correctly, $S^3 \cap \C \times \{0 \}$).  Similarly, the second Reeb orbit $w$ corresponding to $\overline{\gamma}$ is also an unknotted, self-linking number $-1$ transverse knot.  Together, the pair forms a Hopf link of self-linking number $0$ (if $\gamma$ is chosen correctly it will be $(\{ 0 \} \times \C \cup \C \times \{ 0 \}) \cap S^3$).  

Since $\rho$ is irrational, $z$ and $w$ are elliptic and Corollary \ref{cor-forcing1} applies.  We find
\[
 \theta_1 = 2\rho - 1, \qquad \theta_2 = 1 / (2 \rho - 1)
\]
\noindent  Hence, if $\rho > \frac{1}{2}$, for each relatively prime pair $(p,q)$ such that $p,q >0$ and
\[
\frac{q}{p} \in \left( 2\rho-1, \frac{1}{2 \rho - 1} \right) \cup  \left(\frac{1}{2 \rho - 1}, 2\rho-1 \right)
\]
\noindent or equivalently
\[
 \frac{q+p}{2p} \mbox{ or } \frac{p+q}{2q} \in (\rho,1] \cup [1,\rho)
\]
\noindent (whichever is non-empty) we find a $(p,q)$-orbit i.e.~a closed Reeb orbit which links $q$ times with $z$ and $p$ times with $w$.  

If $\rho < \frac{1}{2}$, then instead for relatively prime $(p,q)$ such that
\[
p> 0, \frac{q}{p} \in (2 \rho -1,1], \mbox{ or } q> 0, \frac{p}{q} \in \left( 2\rho - 1, 1 \right] 
\]
\noindent or equivalently ($p,q$ can be negative, but the denominator is supposed to be positive)
\[
 \frac{q+p}{2p} \mbox{ or } \frac{p+q}{2q} \in (\rho,1] 
\]
\noindent we find a $(p,q)$-orbit i.e.~a closed Reeb orbit which links $q$ times with $z$ and $p$ times with $w$.  

Closed Reeb orbits correspond to closed (possibly double-covered) geodesics.  We have found an infinite set of closed geodesics distinguished by the linking numbers $p,q$.  We remark that the symmetry between $p,q$ above corresponds to a double-count of geodesics: a closed geodesic has two distinct lifts corresponding to the direction the geodesic is traversed, and reversing the direction of the geodesic changes the homotopy class of the lifted Reeb orbit from $(p,q)$ to $(q,p)$.

To compare with the geodesics found in \cite{Angenent} in the case $F$ is Riemannian (which are found under weaker regularity hypotheses and without any hypothesis about irrationality of $\rho$), we note that there are representatives in this homotopy class which are precisely double-covers (via $G$) of the curve $(\gamma_{p+q, 2p}, \gamma_{p+q, 2p}')$ in the unit tangent bundle, where $\gamma_{p+q,2p}$ is a $(p+q,2p)$-satellite of the geodesic $\gamma$.  However, we cannot assert using Corollary \ref{cor-forcing1} that the geodesics we have found are the $(p+q,2q)$-satellites found in \cite{Angenent}, because there are many different flat knot types lifting to transverse curves in this homotopy class in $S^3 \backslash (w \sqcup z)$.

\subsection{Open book decompositions}

Contact forms supporting an open book provide a vast class of examples.  Let $(\pi,B)$ be an open book decomposition for $(V,\xi)$.  This means that $B$ is a transverse link in $V$ (the binding), $\pi: V \backslash B \rightarrow S^1$ is a fibration, and every component $B' \subset B$ has a neighborhood $U$ such that $\pi|_{U \backslash B'} \cong S^1 \times \dot{D^2}$ has the form $(t,(r,\theta)) \mapsto \theta$ (where $(r,\theta)$ are polar coordinates on the punctured disc).  The open book \emph{supports} a contact structure $\xi$ if there is a contact form $\lambda$ (called a \emph{supporting} contact form) with $\ker (\lambda) = \xi$ such that 
\begin{itemize}
 \item $B$ is a closed for the Reeb vector field
 \item The form $d\lambda$ is symplectic on the \emph{pages} $\pi^{-1}(\theta)$, i.e.~$d\lambda|_{\pi^{-1}(\theta)} > 0, \forall \theta \in S^1$.
\end{itemize}
\noindent In particular, the Reeb vector field is transverse to the pages, so there are no contractible orbits in $V\backslash B$.  It turns out that every contact structure is supported by some open book (in fact, infinitely many \cite{Giroux}).

We first check an elementary topological fact:

\begin{lemma} \label{lem-bindingintersect}
The binding $B$ satisfies the second condition of Theorem \ref{thm-forcing2}, namely
\begin{quote}
 Every disc $F$ with $\partial F \subset B$ and $[\partial F] \neq 0 \in H_1(B)$ intersects $B$ in the interior.
\end{quote}
\noindent except when the page is a disc.
\end{lemma}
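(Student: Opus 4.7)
The plan is to suppose such a disc $F$ exists and derive a contradiction, under the assumption that the page $\Sigma$ of the open book is not a disc. First I would observe that $\partial F$ maps into a single component $B' \subset B$ with nonzero degree $d$, and push $\partial F$ slightly off $B'$ along a collar of $\partial F$ in $F$ to obtain a loop $\gamma' \subset T' = \partial N(B')$ bounding the disc $F' := F \setminus (\text{collar})$ inside $V \setminus B$. The goal becomes showing that $\gamma'$ represents a non-trivial class in $\pi_1(V \setminus B)$, which contradicts the existence of $F'$.

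To set up the contradiction I would first compute the class of $\gamma'$ in $H_1(T')$. Writing $\gamma' = d \cdot [l'] + k \cdot [m']$ in terms of the longitude and meridian of $B'$, the longitudinal component is $d$ by construction; to compute $k$ I would use the open-book fibration $\pi: V \setminus B \to S^1$, which near $B$ has the form $(t, r, \theta) \mapsto \theta$, so $\pi_*[l'] = 0$ and $\pi_*[m'] = 1$. Thus the degree of $\pi \circ \gamma': S^1 \to S^1$ equals $k$. On the other hand $\pi \circ F': F' \to S^1$ is a map from a disc to $S^1$, hence null-homotopic, forcing $k = 0$. Consequently $\gamma'$ is freely homotopic on $T'$ to $(l')^d$.

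Next I would use the mapping-torus structure of $V \setminus B$. The inclusion $V \setminus \mathrm{int}(N(B)) \hookrightarrow V \setminus B$ is a homotopy equivalence, and $V \setminus \mathrm{int}(N(B))$ is the mapping torus of the monodromy $h: \Sigma \to \Sigma$. From the fibration $\Sigma \hookrightarrow V \setminus \mathrm{int}(N(B)) \to S^1$ together with $\pi_2(S^1) = 0$, the long exact sequence of homotopy groups yields
\[
  1 \longrightarrow \pi_1(\Sigma) \longrightarrow \pi_1(V \setminus B) \longrightarrow \mathbb{Z} \longrightarrow 1.
\]
Under this inclusion $l'$ corresponds to the boundary loop $[\partial_{B'}\Sigma] \in \pi_1(\Sigma)$. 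Since $\Sigma$ is a compact connected surface with non-empty boundary, $\pi_1(\Sigma)$ is free and hence torsion-free, and the standard surface-topology fact that a boundary circle of such a $\Sigma$ is null-homotopic only when $\Sigma$ is a disc gives $[\partial_{B'}\Sigma] \neq 1$. Therefore $(l')^d \neq 1$ in $\pi_1(\Sigma)$, and by injectivity also in $\pi_1(V \setminus B)$, contradicting the fact that $\gamma'$ bounds $F'$. When the page \emph{is} a disc, a single page itself is a disc with boundary wrapping once around the (necessarily unknotted) binding and with interior in $V \setminus B$, showing the exception in the statement is sharp.

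The main obstacle is making the push-off step precise. For smooth or $C^1$ discs $F$ this is straightforward, and since the conclusion concerns only homotopical data one may first smooth or perturb $F$ near its boundary without changing whether it has an interior intersection with $B$. A secondary point is the non-triviality of $[\partial_{B'}\Sigma]$ in $\pi_1(\Sigma)$, which reduces to the standard fact that a compact connected surface with boundary is a disc if and only if some boundary circle is null-homotopic in it.
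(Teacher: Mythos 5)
Your argument is correct, and it reaches the same ultimate contradiction as the paper's proof --- a nonzero power of a boundary circle of the page cannot be null-homotopic in the page unless the page is a disc --- but the route is packaged differently. The paper lifts the truncated disc $F|_{D^2(r)}$ to the infinite cyclic cover $\R \times \mathrm{int}(S)$ of $V \setminus B$ (possible since the disc is simply connected), projects to a single page, and then identifies the boundary curve homologically in a collar of $\partial S$ as $k\cdot[\partial S]$ with $k \neq 0$, so that the null-homotopy supplied by the projected disc gives the contradiction directly inside $S$. You instead stay in $V \setminus B$: you record the class of the pushed-off boundary on the peripheral torus $T'$, use the degree of $\pi \circ \gamma'$ (which must vanish because $\pi \circ F'$ extends over a disc) to kill the meridian coefficient, and then invoke $\pi_1(\Sigma) \hookrightarrow \pi_1(V\setminus B)$ from the homotopy exact sequence of the mapping-torus fibration together with torsion-freeness of the free group $\pi_1(\Sigma)$. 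These two devices are two faces of the same structure (the infinite cyclic cover of the mapping torus is $\R \times \mathrm{int}(S) \simeq S$, and its existence is what makes the fiber $\pi_1$-injective), so neither argument is more general, but yours isolates cleanly why the meridian coefficient vanishes, while the paper's homological computation in the collar sidesteps the basepoint/conjugation bookkeeping that your $\pi_1$ phrasing requires (your statement ``$(l')^d \neq 1$ in $\pi_1(\Sigma)$'' should be read up to free homotopy/conjugacy, which is harmless since triviality is conjugation-invariant). The push-off step you flag is indeed only a matter of taking $r<1$ close enough to $1$ that $F$ maps the annulus $\{r \leq |z| \leq 1\}$ into $N(B')$, exactly as in the paper's use of $F|_{\partial D^2(r)}$, so there is no genuine gap.
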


\begin{proof}
Let $F$ be any smoothly map $D^2 \rightarrow V$ with $\partial F \subset B$.  Suppose that $F^{-1}(B) \subset \partial D^2$.  Consider the covering map 
\[
 \pi: \R \times int(S) \rightarrow \R \times int(S) / ((t,x) \sim (t-1,h(x)) \cong V \backslash B
\]
\noindent coming from the open book decomposition.  For any $r < 1$ we may lift $F|_{D^2(r)}$ to a map $G:D^2(r) \rightarrow \R \times int(S)$, uniquely determined if we choose once and fix $G(0) \in \pi^{-1} \{ F(0) \}$.  Thus we can in fact define $G: int(D^2) \rightarrow \R \times int(S)$.  We may homotopy $G$ to $G': int(D^2) \rightarrow \{0 \} \times int(S) \cong int(S)$ by scaling the $\R$-coordinate.  Let $N(B)$ be the tubular neighborhood of $B$ such that $N(B) \backslash B \cong S^1 \times int(N(\partial S))$ where $N((\partial S)$ is a collar neighborhood of $\partial S$ chosen small enough so that the monodromy $h|_{N(\partial S)}$ is the identity, so we have an inclusion map $\iota: S^1 \times int(N(\partial S)) \rightarrow N(B)$ and $F|_{\partial D^2(r)} = \iota \circ \pi \circ G|_{\partial D^2(r)}$ (where $r$ is taken large enough so that $F|_{\partial D^2(r)} \subset B$).  Applying the $H_1$-functor and using the fact that $G'$ is homotopic to $G$ and $[F|_{\partial D^2(r)}] = [F|_{\partial D^2}] \neq 0 \in H_1(N(B)) \cong H_1(B)$ we have
\[
(\iota \circ \pi)_*  [G'|_{\partial D^2(r)}] = [F|_{\partial D^2(r)}] \neq 0 \in H_1(N(B)) \cong H_1(B)
\]
\noindent From this it is easy to conclude that $[G'|_{\partial D^2(r)}] = k\cdot [\partial S] \in H_1(N(\partial S))$ for some $k \neq 0$, and therefore is homotopic to a $k$-fold cover of $\partial S$ with $k \neq 0$; hence $G'|_{\partial D^2(r)}$ is homotopic to $k \cdot \partial S$ in $S$.  However, $G'|_{\partial D^2(r)}$ is null-homotopic in $S$, while the $k$-fold cover of $\partial S$ is not if $k \neq 0$ (unless the page $S$ is a disc).  This contradiction shows that we cannot have $F^{-1}(B) \subset \partial D^2$, so $F$ intersects $B$ in the interior.
\end{proof}

\begin{example}  \label{example-4} \emph{Open Book Decompositions}  
Suppose $B$ supports an open book decomposition (other than the open book decomposition with the disc as the page).  Then $(\lambda,B,[a])$ satisfies $(PLC)$ for any simple proper link class $[a]$ (relative to $B$): any disc bounding a contractible Reeb orbit either links non-trivially with the binding (since the Reeb vector field is transverse to the pages), or is asymptotic to a component of the binding (in which case by Lemma \ref{lem-bindingintersect} it has an interior intersection with the binding), and by hypothesis $[a]$ is a proper link class.  If the page is not the disc or the annulus, there should be many such $[a]$.

If the binding is elliptic non-degenerate then it satisfies $(E)$ as well.
\end{example}

It follows, for simple proper link classes $[a]$ (relative to $B$), that $CCH^{[a]}_*(\xi \mbox{ rel } B)$ is well-defined (where $\xi$ is the contact structure supported by the open book).  In particular, Theorem \ref{thm-forcing2} can be applied to any proper link class $[a]$ for the binding $B$.

In \cite{2008arXiv0809.5088C}, the authors achieve the much more difficult task of computing the cylindrical contact homology of the manifold $(V,\xi)$ when it is defined.  For $V \backslash B$ the situation is considerably simpler.  First, cylindrical contact homology of the complement will always defined (at least for simple homotopy classes), and independent of the equivalence classes $[\lambda]$ of $\equiv$ in proper link classes relative to $B$.  Second, we only need to count a small subset of the holomorphic cylinders and can ignore holomorphic planes (since they will all intersect the binding), so the computation is almost always far easier.  Since one can read off computations from \cite{2008arXiv0809.5088C}, we will content ourselves to go through a particular example in some detail to illustrate the approach concretely.

Recall that the Nielsen fixed point classes of $h$ are the equivalence classes of fixed points under the equivalence relation $\sim$, where $x \sim y$ if there is a path $\gamma$ from $x$ to $y$ such that $\gamma$ is homotopic to $h(\gamma)$ rel endpoints.  A fixed point for $h$ (or more generally a periodic point) determines a loop in the mapping torus.  It is a well-known fact that different Nielsen equivalence classes of fixed points determine different free homotopy classes of loops in the mapping torus.  More generally, two loops in the mapping torus arising from two $k$-periodic orbits of $h$ are freely homotopic if and only if there are points on the corresponding orbits which are $h^k$-equivalent (in this case we will call the orbits Nielsen equivalent as well).  Hence, to Nielsen equivalence classes we may assign a unique homotopy class $[a]$ in $V \backslash B \cong_{h.e.} \Sigma(S,h)$.

\subsection{A concrete example on the figure-eight}
\label{example-5}
\subsubsection{A map on the punctured torus and the torus with one boundary component}

Consider the linear map on $\R^2$
\[
 \begin{bmatrix}
  2 & 1 \\ 1 & 1 
 \end{bmatrix} \in SL_2(\Z) \subset GL_2(\R)
\]
\noindent which fixes the integer lattice.  It therefore descends to a map $h:T^2 \rightarrow T^2$ of $T^2 = \R^2 / \Z^2$.  Since, moreover, it fixes the integer lattice it also defines a map on $T^2 \backslash \{0\} = (\R^2 \backslash \Z^2) / \Z^2$ which preserves the area form inherited from $\R^2$.  One may also wish to think of such a map on a torus with one boundary component, which we model by removing a $\epsilon$-disc around the each point in the integer lattice and denote $T^2 \backslash D^2(\epsilon)$.  The map $h$ can be represented by the time-$1$ flow
\[
 \begin{bmatrix}
  x \\ y 
 \end{bmatrix} 
\mapsto
 \begin{bmatrix}
  2 & 1 \\ 1 & 1 
 \end{bmatrix}^t
\begin{bmatrix}
  x \\ y 
\end{bmatrix} 
\]
\noindent
generated by the symplectic vector field
\[
X(x,y) = \log  \begin{bmatrix}
  2 & 1 \\ 1 & 1 
 \end{bmatrix} \cdot \begin{bmatrix}
  x \\ y 
\end{bmatrix}, 
%
\, \log \begin{bmatrix}
  2 & 1 \\ 1 & 1 
 \end{bmatrix} %
= \frac{1}{\sqrt{5}} \log \left( \frac{3 + \sqrt{5}}{2} \right) \begin{bmatrix}
1 & 2 \\ 
2 & - 1 \end{bmatrix}
\in \mathfrak{sl}_2(\R)
\]
\noindent and it is straightforward to compute that it is generated by the Hamiltonian flow of a quadratic form
\[
Q(x,y) = A x^2 + Bxy + C y^2 = \frac{1}{\sqrt{5}} \log \left( \frac{3 + \sqrt{5}}{2} \right) \left( x^2 - xy  - y^2 \right)
\]
\noindent of signature $(1,1)$.  Take $\phi:\R \rightarrow [0,\infty)$ to be any smooth function with the following properties:
\begin{itemize}
 \item $\phi'(x) \geq 0, \forall x \in \R$
 \item $\phi|_{[0,\frac{1}{2}]} \equiv 0$
 \item $\phi|_{[1,\infty)} \equiv 1$
\end{itemize}
\noindent and denote $\phi_{\epsilon} = \phi(\frac{1}{\epsilon} \cdot)$.  Consider the Hamiltonian function
\[
 \phi_{\epsilon^2} (x^2 + y^2) \cdot Q(x,y)
\]
\noindent and denote by $\tilde{h}$ its time-$1$ map.  Since it agrees with $h$ outside of a disc of radius $\epsilon$ we may define $\tilde{h}$ on $T^2 \backslash D^2(\epsilon/2)$ by replacing the map $h$ with $\tilde{h}$ inside the disc of radius $\epsilon$.  In this way we get an area-preserving map $\tilde{h}_{\epsilon}$ on $T^2\backslash D^2(\epsilon/2)$ which is the identity on a neighborhood of the boundary.

This abstract open book gives rise to $S^3$ with binding a figure-eight knot.  The contact structure supported by this open book, however, is over-twisted.

\subsubsection{Realizing the monodromy map by a Reeb flow}

Let $h:S \rightarrow S$ be a map that preserves the area form $\omega$.  Let $\beta$ be a primitive $\omega = d\beta$.  Consider the mapping torus of $h$, which is $[0,2\pi] \times S$ quotiented by $(1,x) \sim (0,h(x))$.  We wish to find a contact form with the Reeb vector field $\partial_t$ (we will use $t$ to denote the $S^1$ ``coordinate''), using $\beta$ to construct it.  

\begin{lemma} (Giroux, see e.g.~\cite{MR2460883})
\label{lem-ReebRealizability1}
Suppose that $[h^* \beta - \beta] = 0$ in $H^1(S;\R)$\footnote{Note that $h^*\beta - \beta$ is always closed because $h^*\omega = \omega$.}.  Then $h$ can be realized by a Reeb flow on the mapping torus of $h$.
\end{lemma}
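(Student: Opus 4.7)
The approach is to use the cohomological hypothesis to interpolate between $\beta$ and $h^{*}\beta$ through primitives of $\omega$, and write down an explicit contact form on the mapping torus whose Reeb flow realizes $h$.

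First I would use the hypothesis $[h^{*}\beta - \beta] = 0$ in $H^{1}(S;\R)$ to write $h^{*}\beta - \beta = df$ for some smooth $f: S \to \R$. Choose a smooth non-decreasing cut-off $\phi: [0, 2\pi] \to [0,1]$ with $\phi \equiv 0$ near $t = 0$ and $\phi \equiv 1$ near $t = 2\pi$, and set $\beta_t = \beta + \phi(t)\, df$. Each $\beta_t$ is a primitive of $\omega$, with $\beta_0 = \beta$ and $\beta_{2\pi} = h^{*}\beta$. On $[0, 2\pi] \times S$ define
\[
\lambda_N = N\, dt + \beta_t,
\]
for a constant $N > 0$ to be determined. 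At the seam, $\lambda_N|_{t=2\pi} = N\, dt + h^{*}\beta$ pulls back under $(2\pi,x)\sim(0,h(x))$ to $N\,dt + \beta = \lambda_N|_{t=0}$, and the cut-off ensures all $t$-derivatives of $\beta_t$ vanish at the endpoints, so $\lambda_N$ descends to a smooth one-form on the mapping torus $M_h$.

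Next I would verify the contact condition. A direct computation gives $d\lambda_N = \omega + \phi'(t)\, dt \wedge df$, and hence
\[
\lambda_N \wedge d\lambda_N = dt \wedge \bigl( N\omega - \phi'(t)\, \beta_t \wedge df \bigr).
\]
Since $\phi'$ and $\beta_t \wedge df$ are bounded on the compact manifold $M_h$, for $N$ sufficiently large the term $N\omega$ dominates uniformly and $\lambda_N \wedge d\lambda_N$ is a positive volume form; hence $\lambda_N$ is contact.

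Then I would identify the Reeb flow. Writing $R_N = a(t,x)\,\partial_t + X(t,x)$ with $X$ tangent to $S$, the equations $\lambda_N(R_N) = 1$ and $\iota_{R_N} d\lambda_N = 0$ give $\iota_X \omega = -a\, \phi'(t)\, df$ and $aN + \beta_t(X) = 1$. For $N$ large these force $a > 0$, so $R_N$ is transverse to the pages $\{t\} \times S$. Its time-$2\pi$ flow defines a symplectomorphism $\psi:(S,\omega) \to (S,\omega)$, and on $M_h$ the first-return map to a reference page is $h \circ \psi$. Since $X$ is proportional to the Hamiltonian vector field $X_f$, the map $\psi$ is Hamiltonian-isotopic to the identity, so the return map is Hamiltonian-isotopic to $h$; by a fiber-preserving diffeomorphism of $M_h$ (and a reparametrization of $t$ by $1/a$) this realizes the Reeb flow as the suspension of $h$, which is the sense of \emph{realization} intended.

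The main subtlety is the use of the cohomological hypothesis at the very first step: it is precisely what allows the potential $f$ to be \emph{globally} defined on $S$, so that $\beta_t$ is a single-valued smooth family of primitives of $\omega$; without it the interpolation would only exist locally and the construction of $\lambda_N$ as a globally well-defined one-form on $M_h$ would fail. The remaining care is bookkeeping: choosing the cut-off $\phi$ so that $\lambda_N$ is $C^\infty$ across the seam, and taking $N$ large enough to ensure the contact inequality and positivity of $a$ hold uniformly.
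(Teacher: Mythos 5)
The construction of $\lambda_N$ itself is fine: $\beta_t=\beta+\phi(t)\,df$ is a family of primitives of $\omega$, the form glues smoothly across the seam, and $\lambda_N\wedge d\lambda_N=dt\wedge(N\omega-\phi'\,\beta_t\wedge df)>0$ for $N$ large. The gap is in the last step. Your own computation shows the Reeb field has a page component proportional to $X_f$ wherever $\phi'\neq 0$, so the first-return map to a page is $h\circ\psi$, where $\psi$ is the time-one flow of the (autonomous) Hamiltonian $\mp f$. A fiber-preserving diffeomorphism of $M_h$ only conjugates the return map, and a time reparametrization does not change it at all; so your Reeb flow is (up to reparametrization) the suspension of $h\circ\psi$, and to identify it with the suspension of $h$ you would need $h\circ\psi$ to be \emph{conjugate} to $h$, not merely Hamiltonian-isotopic to it. That is false in general, and in particular it is not the sense in which the lemma is used later in the paper: the subsequent arguments (the explicit form $dz+(\tfrac{r^2}{2}-\tfrac{1}{2\pi})d\theta$ near the binding, and Lemma \ref{lem-ApproximationLemma0}, which extracts a \emph{unique nondegenerate} orbit in each Nielsen class of $\tilde h_\epsilon$) require the return map of the Reeb flow to be exactly the chosen monodromy, i.e.\ the Reeb field to be $\partial_t$ in mapping-torus coordinates. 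Realizing only the isotopy class of $h$ is strictly weaker.

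The standard way to close the gap is not to interpolate the primitive over the $t$-interval but to twist the gluing map instead. On $\R\times S$ take $\lambda=dt+\beta$, whose Reeb field is exactly $\partial_t$, and consider
\[
\Phi(t,x)=\bigl(t-f(x)-C,\;h(x)\bigr),\qquad \Phi^*\lambda=dt-df+h^*\beta=dt+\beta=\lambda ,
\]
where $C$ is chosen so that $f+C>0$. Then $\Phi$ is a strict contactomorphism generating a free, properly discontinuous $\Z$-action; the quotient is diffeomorphic to the mapping torus of $h$, the form and the vector field $\partial_t$ descend, the Reeb flow is transverse to the page $\{0\}\times S$, and the first return time is $f+C$ with return map \emph{exactly} $h$. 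This yields the conclusion in the strong sense the paper needs, whereas your interpolation argument, as written, only realizes a map Hamiltonian-isotopic to $h$.
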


\begin{lemma} \cite{MR2460883}
\label{lem-ReebRealizability2}  If $h^* - I$ is an isomorphism on $H^1(S;\R)$, we may find a primitive $\beta'$ such that $h^*\beta' - \beta'$ is zero in $H^1(S;\R)$.  Hence by Lemma \ref{lem-ReebRealizability1} $h$ can be realized by a Reeb flow on its mapping torus.
\end{lemma}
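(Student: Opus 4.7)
The plan is to correct $\beta$ by adding an appropriate closed $1$-form so that the cohomology obstruction to Lemma~\ref{lem-ReebRealizability1} vanishes. The key observation is that if we replace $\beta$ by $\beta' = \beta + \eta$ for any closed $1$-form $\eta$, then $d\beta' = d\beta + d\eta = \omega$, so $\beta'$ is still a primitive of $\omega$. Moreover,
\[
h^*\beta' - \beta' \;=\; (h^*\beta - \beta) + (h^*\eta - \eta),
\]
so at the level of de Rham cohomology
\[
[h^*\beta' - \beta'] \;=\; [h^*\beta - \beta] + (h^* - I)[\eta] \;\in\; H^1(S;\R).
\]

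Given the hypothesis that $h^* - I: H^1(S;\R) \to H^1(S;\R)$ is invertible, I would simply set
\[
[\eta] \;:=\; -(h^* - I)^{-1}\,[h^*\beta - \beta]\;\in\;H^1(S;\R),
\]
choose any closed representative $\eta$ of this class, and define $\beta' = \beta + \eta$. By construction $[h^*\beta' - \beta'] = 0$, and $\beta'$ is still a primitive of $\omega$, so Lemma~\ref{lem-ReebRealizability1} applies to $\beta'$ and produces the desired contact form on the mapping torus whose Reeb flow realizes $h$.

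There is essentially no obstacle beyond the algebraic step: the proof reduces to the tautology that a surjective linear map hits every element of its target. The only ingredient outside of linear algebra is the fact that $h^*\beta - \beta$ is automatically closed (since $h$ preserves $\omega = d\beta$, so $d(h^*\beta - \beta) = h^*\omega - \omega = 0$), which ensures $[h^*\beta - \beta]$ makes sense as an element of $H^1(S;\R)$ to which we can apply $(h^*-I)^{-1}$.
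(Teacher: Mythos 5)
Your proof is correct and is the standard argument behind this lemma (which the paper simply cites): adjust the primitive by a closed one-form $\eta$ with $[\eta] = -(h^*-I)^{-1}[h^*\beta-\beta]$, noting that $h^*\beta-\beta$ is closed because $h^*\omega=\omega$. Nothing is missing.
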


Consider specifically the maps $\tilde{h}_{\epsilon}$ on $T^2 \backslash D^2(\epsilon/2)$ with the area form inherited from the area form for $\R^2$ (thus $T^2 \backslash D^2(\epsilon/2)$ has area $1 - \pi \epsilon^2 /4$).  We can assume that the form $\beta$ has the form $(\frac{r^2}{2} - \frac{1}{2\pi})d\theta$ in $D^2(\epsilon)$.

Let $z$ be the $S^1$ coordinate of the mapping torus, so that the contact form $dt + \beta$ in these coordinates is
\[
 \lambda = dz + \left(\frac{r^2}{2} - \frac{1}{2 \pi} \right) d\theta
\]
\noindent  To construct the open book, this is identified with a neighborhood of the boundary of $S^1 \times D^2(\epsilon/2)$ with coordinates $(s,\phi,t) \in D^2(\epsilon/2) \times S^1$ by
\[
 s = r, \quad \phi = z, \quad t = -\theta
\]
\noindent in which the form is
\[
 \lambda = d\phi + \left( \frac{1}{2\pi} - \frac{s^2}{2} \right) dt
\]
\noindent and the pages are $\phi = const$.

We will now extend this form to the interior of $D^2(\epsilon/2) \times S^1$ using a well-known argument \cite{Thurston-Winkelnkemper}.  The contact form will be
\[
 f(s^2) d\phi + g(s^2) dt \Rightarrow X_{\lambda} = \frac{1}{f'(s^2) g(s^2) - f(s^2) g'(s^2)} \left( f'(s^2) \partial_t - g'(s^2) \partial_{\phi} \right)
\]
\noindent where the functions $f,g$ have the following properties (and $T>0$ is a parameter):
\begin{itemize}
 \item $f(s^2) = 1$ and $g(s^2) = \frac{1}{2 \pi} - s^2/2$ for $s \geq \epsilon/2$
 \item $f(s^2) = s^2$ and $g(s^2) = 1 - T s^2$ for $s \leq \delta$ for some $\delta > 0$ as small as required
 \item $f' g - f g' > 0$
 \item $g' < 0$: This ensures that the Reeb vector field is everywhere positively transverse to the pages $\phi = const$ and is the reason we need to assume $T > 0$.  Moreover, then there is some constant $c = c_{\epsilon,T} \geq 0$ such that $-g'(s) \geq c > 0$.  This bounds the first return time of the Reeb flow to a page $\phi = \mbox{const}$ near the binding $B$ by the constant $2 \pi /c$.
\end{itemize}
\noindent The explicit construction of such functions is straightforward but slightly tedious, so we will omit it.  We remark as in \cite{Thurston-Winkelnkemper} that it is equivalent to drawing a smooth curve in $\R^2$ to the point $(1,\frac{1}{2\pi})$ from the point $(0,1)$ with respective velocities at these points $(1, -T)$, and $(0,-1)$,  such that the position vector is never collinear with the velocity vector (and, by the fourth condition, the velocity vector must point downward at all times as well).

We see that the binding has Conley-Zehnder index equal to
\[
CZ(B^k) = 2 \lfloor k T \rfloor +1
\]
\noindent with respect to the framing given by the angular coordinate $\phi$ (i.e. with respect to the vectors $\partial_r ((r,0),t), \partial_r ((r,\pi/2),t)$ at the point $(0,0,t) \in B$).  Therefore we will select $T$ irrational so that $B$ is elliptic non-degenerate.  As constructed, it has an open neighborhood such that no periodic orbit enters the neighborhood.  The Reeb flow thus constructed is degenerate, but after a small perturbation can be made non-degenerate without altering the dynamics too much.  

The pages form a surface of section with bounded return time.  Moreover, there is a page $S \cong T^2 \backslash 0$ such that the restriction to a subsurface $S' \cong T^2 \backslash D^2(\epsilon/2)$ of the first return map is the map $\tilde{h}_{\epsilon}$, and to a further subsurface $S_{\epsilon} \cong T^2\backslash D^2(\epsilon)$ we have $\tilde{h}_{\epsilon}|_{S_{\epsilon}} = h|_{S_{\epsilon}}$.

Given $\epsilon > 0$ sufficiently small to perform the above construction, let $\lambda_{\epsilon}$ be such a contact form.

\begin{lemma} \label{lem-ApproximationLemma0}
For each $k \geq 1$, there is a $\epsilon_k$ such that if $\epsilon < \epsilon_k$ then for each non-trivial Nielsen fixed point class there is a unique non-degenerate closed Reeb orbit for $\lambda_{\epsilon}$ in the corresponding homotopy class $[a]$ in $S^3 \backslash B \cong \Sigma(T^2 \backslash 0,h)$.
\end{lemma}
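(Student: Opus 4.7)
The plan is to transport the pseudo-Anosov structure of $h|_{T^2 \setminus \{0\}}$ to the perturbed map $\tilde h_\epsilon|_{T^2 \setminus D^2(\epsilon/2)}$ for small $\epsilon$. Since $A = \begin{bmatrix}2 & 1\\1 & 1\end{bmatrix}$ is hyperbolic with dilatation $\mu = (3+\sqrt 5)/2$, and $0$ is a fixed point of $h$, the induced map $h|_{T^2 \setminus \{0\}}$ is pseudo-Anosov. Classical Thurston--Nielsen theory (or the explicit count of fixed points of $A^k$ on $T^2$) gives, for each $k \geq 1$, a finite set $F_k$ of $k$-periodic points of $h$ in $T^2 \setminus \{0\}$ such that each non-peripheral Nielsen class of $h^k$ contains exactly one point of $F_k$, and every such point is hyperbolic (with $dh^k = A^k$ and eigenvalues $\mu^{\pm k} \neq 1$). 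Under the natural identification of the mapping tori with $V \setminus B$, the non-peripheral Nielsen classes of $h^k$ correspond to the non-trivial Nielsen classes of $\tilde h_\epsilon^k$.

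I then choose $\epsilon_k > 0$ small enough that $F_k \cap D^2(\epsilon_k) = \emptyset$, possible because $F_k$ is finite and disjoint from $\{0\}$. For $\epsilon < \epsilon_k$ the map $\tilde h_\epsilon$ agrees with $h$ on an open neighborhood of $F_k$, so each $x \in F_k$ remains a $k$-periodic point of $\tilde h_\epsilon$ with linearization $A^k$. The corresponding closed orbit of the Reeb flow of $\lambda_\epsilon$ lies in the mapping-torus portion of $V$, has Poincar\'e return map $A^k$, and is therefore hyperbolic and non-degenerate. Its free homotopy class in $V \setminus B$ is the non-trivial Nielsen class represented by $x$.

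For uniqueness, let $y$ be any $k$-periodic point of $\tilde h_\epsilon$ lying in a non-trivial Nielsen class. If the entire $\tilde h_\epsilon$-orbit of $y$ lies outside $\overline{D^2(\epsilon)}$ then it coincides with an $h$-orbit, so $y \in F_k$ by pseudo-Anosov rigidity. Otherwise, some iterate of $y$ enters $\overline{D^2(\epsilon)} \setminus D^2(\epsilon/2)$ where $\tilde h_\epsilon$ has been damped to the identity; the associated closed loop in $V \setminus B$ can then be freely homotoped through this collar into an arbitrarily small solid torus neighborhood $N(B)$ of the binding, forcing its free homotopy class into the image of the peripheral subgroup $\pi_1(\partial N(B)) \to \pi_1(V \setminus B)$ and contradicting non-triviality. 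The same argument handles closed Reeb orbits contained in the binding solid torus itself, including the Morse--Bott tori in the interpolation region $\delta \leq s \leq \epsilon/2$.

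The main obstacle will be to verify that any orbit entering the identity or interpolation annulus genuinely represents a peripheral loop; this requires tracking the free homotopy class of the loop through the damping isotopy generating $\tilde h_\epsilon$ and the open-book gluing near $B$. This should follow from the explicit form of the generating Hamiltonian (supported in $D^2(\epsilon)$ and vanishing in $D^2(\epsilon/\sqrt 2)$) together with the local structure of the Reeb flow in the binding solid torus, but careful bookkeeping is required. Once that is done, the hyperbolic orbits produced in the second paragraph are isolated, hence they persist unchanged under the small $C^\infty$ perturbation of $\lambda_\epsilon$ needed to make the contact form non-degenerate (the perturbation can be localised near the peripheral degenerate orbits), which finishes the proof.
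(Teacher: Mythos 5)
Your existence half matches the paper's: for $\epsilon$ small the periodic points of $h$ of period $l\le k$ are untouched by the damping, so they persist as hyperbolic, hence non-degenerate, Reeb orbits of $\lambda_\epsilon$, one for each non-trivial Nielsen class. The gap is in the uniqueness step, and it is exactly the point you flag as the "main obstacle." From the fact that some iterate of a periodic point $y$ of $\tilde h_\epsilon$ enters $\overline{D^2(\epsilon)}\setminus D^2(\epsilon/2)$ you cannot conclude that the corresponding loop is peripheral: the rest of the orbit need not stay near the puncture. A point of the interpolation annulus lying near the zero set of $Q$ can leave $D^2(\epsilon)$ under the truncated Hamiltonian flow and then travel far along the unstable direction of the Anosov map before returning, so the loop it traces in the mapping torus is not contained in any small neighborhood of $B$, and there is no evident free homotopy into $N(B)$. (Also, $\tilde h_\epsilon$ is the identity only on $D^2(\epsilon/\sqrt{2})$, not on all of $\overline{D^2(\epsilon)}\setminus D^2(\epsilon/2)$.) Peripherality of the spurious orbits is in fact neither proved nor needed in the paper.

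The paper's proof replaces this step by two different ingredients. First, a confinement step: nested discs $D\supset D'\supset D''$ around the puncture are chosen using only $h$ and $k$, and $\epsilon_k$ is taken so small that every periodic orbit of $\tilde h_\epsilon$ of period $l\le k$ is either an unmodified $h$-orbit lying in $T^2\setminus D$ or is trapped in $D'$. Second, a lifting step: lifting $\tilde h_\epsilon$ to $h_\epsilon:\R^2\to\R^2$ with $h_\epsilon(0)=0$, a trapped periodic point lifts to an actual fixed point of $h_\epsilon^l$ (its lifted orbit stays in the component of the preimage of $D'$ containing the origin), while a genuine periodic point $y\in T^2\setminus D$ lifts to a point with $h_\epsilon^l(y)=h^l(y)\neq y$; since the endpoints of a connecting path $\gamma$ and of $h_\epsilon^l\circ\gamma$ then disagree, no homotopy rel endpoints can exist, so the trapped orbits are not Nielsen equivalent to the genuine ones and lie in different free homotopy classes. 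Combined with the direct check that the fixed points of the linear map $h^l$ lie in distinct Nielsen classes, this yields uniqueness. To salvage your route you would need an analogous confinement argument before any statement about the homotopy class of an orbit meeting the damped region can be made. Finally, note that your closing paragraph about persistence under the small non-degenerate perturbation of $\lambda_\epsilon$ is the content of the subsequent approximation lemma, not of this one, which concerns $\lambda_\epsilon$ itself.
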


\begin{proof}
The fixed point set of $h^k$ for each $k$ is a finite set of points.  Therefore, there is a disc $D$ containing the puncture $0$ which contains no fixed points of $h^l$ for $l \leq k$.  Choose an even smaller disc $ 0 \in D'$ such that 
\[
 \left( \bigcup_{l=1}^k h^l (T^2 \backslash D) \right) \cap D' = \emptyset 
\]
\noindent  Then if $D^2(\epsilon) \subset D'$ then the periodic points of period at most $k$ of $\tilde{h}_{\epsilon}$ and $h$ on $T^2 \backslash D$ are the same.

We may also choose $0 \in D'' \subset D'$ such that $h^l(D'') \subset D'$ for $1 \leq l \leq k$.  Select $\epsilon_k$ so that $D^2(\epsilon_k) \subset D''$.  Then, if $\epsilon < \epsilon_k$, all $l$-periodic orbits of $\tilde{h}_{\epsilon}$ contained in $D$ must enter $D''$, and are therefore in fact contained in $D'$.  So we only have to examine the cases $\tilde{x} \in D'$ and $\tilde{y} \in T^2 \backslash D$.

Let $h_{\epsilon}:\R^2 \rightarrow \R^2$ lift $\tilde{h}_{\epsilon}$ in the sense that $\pi \circ h_{\epsilon} = \pi \circ \tilde{h_{\epsilon}}$ (for $\pi: \R^2 \rightarrow T^2$ the universal cover), which is uniquely determined by demanding $h_{\epsilon}(0) = 0$.  Let $\tilde{x}$ be a $l$-periodic point in $D'$ and $\tilde{y}$ a $l$-periodic point in $T^2 \backslash D$, and $\tilde{\gamma}$ a path between these two points, with lifts $x,y,\gamma$ respectively such that $\gamma(0) = x, \gamma(1) = y$ and $x$ is contained in the preimage of $D'$ containing the origin in $\R^2$.  It is easy to see that the orbit $h^i_{\epsilon}(x)$ remains in this component for all $i$ and therefore $h^k_{\epsilon}(x) = x$.  On the other hand, $h^l_{\epsilon}(y) = h^l(y) \neq y$.  After these observations it is straightforward to deduce that $\tilde{x}, \tilde{y}$ are not Nielsen equivalent for $\tilde{h}_{\epsilon}^l$: a homotopy rel endpoints between $\tilde{\gamma}$ and $\tilde{h^l_{\epsilon}} \circ \tilde{\gamma}$ would lift to one between $\gamma$ and $h^l_{\epsilon} \circ \gamma$ rel endpoints, but the endpoints of these two paths do not even coincide so there can be no such homotopy in the first place.  Thus, $\tilde{y}, \tilde{x}$ lie in different equivalence classes.

One can show that the loops corresponding to $l$-periodic points $x$ and $y$ are homotopic in the mapping torus if and only if there are iterates $\tilde{h}^i_{\epsilon}(x)$, $\tilde{h}^j_{\epsilon}(y)$ which are $\tilde{h}_{\epsilon}^l$-equivalent.  Since $x$ is the only $\tilde{h}^l_{\epsilon}$ fixed point in its $\tilde{h}_{\epsilon}^l$-equivalence class (which follows by checking directly that the fixed points of the matrix map $h^l$ lie in different equivalence classes) this shows that the corresponding orbit is the unique closed Reeb orbit in this homotopy class.  It will be hyperbolic and therefore non-degenerate.

\end{proof}

Therefore, if $[a]$ is the homotopy class of a fixed point of $h$, the triple $(\lambda, B, [a])$ satisfies $(PLC)$ (and $(\lambda,B)$ satisfies $(E)$ as well), though $\lambda$ may not be non-degenerate.  Therefore we will need to perturb, and the following Lemma asserts that small non-degenerate perturbations retain important properties:

\begin{lemma}  \label{lem-Approximation_Lemma} (Approximation lemma)  In any $C^{\infty}$ neighborhood of $\lambda_{\epsilon}$, there is a non-degenerate contact form $\lambda_{\epsilon}'$ with the following properties:
\begin{itemize}
 \item Let $\phi^t$ denote the Reeb flow.  There are constants $c, C$ and a page $S$ of the open book decomposition such that for any $x \in S$, there is a least $0 < t$ such that $\phi^t(x) \in S$ and $c < t < C$.  In particular, every closed orbit has non-trivial linking number with $B$.
 \item For each Nielsen fixed point class $[a]$ such that $\ell([a],B) \leq k$, if $\epsilon < \epsilon_k$ then there is a unique non-degenerate closed Reeb orbit for $\lambda_{\epsilon}'$ in $[a]$.
\end{itemize}
\end{lemma}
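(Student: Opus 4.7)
The plan is to apply the standard $C^\infty$-density of non-degenerate contact forms in any neighborhood of $\lambda_\epsilon$ (the density result Lemma~7.1 of \cite{2008arXiv0809.5088C}, already used in Section~\ref{sec-degeneracies} above) and then verify that the two listed properties persist under a sufficiently small perturbation. Both properties are essentially open in $C^\infty$, so the idea is to perform the standard non-degeneracy perturbation while staying in the region where these openness statements hold.

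First, for the return-time property, observe that $\lambda_\epsilon$ itself already satisfies it: outside a neighborhood of $B$, the Reeb vector field is $\partial_z$, transverse to the pages $\{\phi=\mathrm{const}\}$; inside the chosen neighborhood of $B$, the construction forces $-g'(s^2) \geq c > 0$ pointwise, bounding the return time above by $2\pi/c$. Pointwise transversality of a vector field to a codimension-one foliation together with a positive uniform lower bound on the normal component are both $C^1$-open conditions, so any sufficiently small $C^\infty$ perturbation of $\lambda_\epsilon$ keeps its Reeb field positively transverse to the pages with only slightly worse but still uniform constants $c',C'$, from which the first bullet point follows.

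Second, for the unique-orbit-per-Nielsen-class statement, Lemma~\ref{lem-ApproximationLemma0} provides, for $\epsilon < \epsilon_k$, a unique non-degenerate closed Reeb orbit $\gamma_{[a]}$ of $\lambda_\epsilon$ in each Nielsen class $[a]$ with $\ell([a],B) = l \leq k$. Each $\gamma_{[a]}$ is a transverse, non-degenerate fixed point of the $l$-th iterate of the first-return map, so by the implicit function theorem applied to the return map it persists as a non-degenerate closed orbit under any sufficiently small $C^\infty$ perturbation of $\lambda_\epsilon$ (and it stays in $[a]$ by $C^0$-stability of free homotopy class). Uniqueness persists via Arzel\`a--Ascoli: if a sequence $\lambda'_n \to \lambda_\epsilon$ admitted closed orbits $\delta_n \neq \gamma_{[a],n}$ in $[a]$, the return-time bound $C'$ would force $\delta_n$ to have bounded period $\leq lC'$, so a subsequence would converge to a closed $\lambda_\epsilon$-Reeb orbit in $[a]$ distinct from $\gamma_{[a]}$, contradicting Lemma~\ref{lem-ApproximationLemma0}.

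Finally, to achieve global non-degeneracy, apply the perturbation scheme of Lemma~7.1 of \cite{2008arXiv0809.5088C} (carried out in detail in the first lemma of Section~\ref{sec-degeneracies}): fix an action threshold $N \gg kC'$, cover the compact set $\Gamma_N$ of $\lambda_\epsilon$-closed orbits of action $\leq N$ by finitely many pairs $U_i \subset V_i$ of Reeb-flow-transverse tubular neighborhoods, and perturb $\lambda_\epsilon$ one $V_i$ at a time to make all orbits of action $\leq N$ passing through $U_i$ non-degenerate. Since $B$ and each $\gamma_{[a]}$ with $\ell([a],B) \leq k$ are already non-degenerate, we arrange that the $V_i$ surrounding these finitely many orbits are mutually disjoint from the $V_j$ on which we perturb (and take the perturbation to vanish on the former). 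The resulting form $\lambda'_\epsilon$ can be chosen arbitrarily $C^\infty$-close to $\lambda_\epsilon$, so both previous steps apply and $\lambda'_\epsilon$ satisfies the lemma. The main technical point is simply the bookkeeping that the perturbation can simultaneously be made small enough to preserve return-time bounds and uniqueness in each of the finitely many bounded-linking Nielsen classes, while also being large enough to remove degeneracies at all actions $\leq N$; this is handled exactly as in the proof of the first lemma of Section~\ref{sec-degeneracies}, by exploiting disjointness of the support of the perturbation from the already non-degenerate orbits.
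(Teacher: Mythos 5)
Your proposal is correct and follows essentially the same route as the paper: perturb to non-degeneracy using the machinery of Section \ref{sec-degeneracies} with the perturbation supported away from $B$ (and away from the already non-degenerate orbits supplied by Lemma \ref{lem-ApproximationLemma0}), observe that the surface-of-section/return-time bounds are stable under a $C^\infty$-small perturbation, and deduce the second bullet from persistence and uniqueness of the non-degenerate orbits in each Nielsen class -- a point the paper leaves terse and you usefully spell out via the return map and an Arzel\`a--Ascoli argument. The one caveat is your final paragraph: perturbing only on neighborhoods of orbits of action $\leq N$ yields a form that is non-degenerate merely up to action $N$, whereas the statement (and its use in the subsequent proposition, where the chain complex is built) requires a genuinely non-degenerate form; this is repaired by invoking Lemma \ref{lem-Gdelta}, the Baire/$G_{\delta}$ refinement of that finite-action lemma, which gives density of fully non-degenerate forms $\lambda_{\epsilon}'$ with $(\lambda_{\epsilon}',B)\equiv(\lambda_{\epsilon},B)$ -- and this is exactly what the paper's proof cites in place of your truncated scheme.
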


\begin{proof}
By Lemma \ref{lem-ApproximationLemma0} the (degenerate) contact form $\lambda_{\epsilon}$ has the following properties:
\begin{itemize}
 \item Let $\phi^t$ denote the Reeb flow.  There are constants $c,C$ and a page $S$ of the open book decomposition such that for any $x \in S$, there is a first $t > 0$ such that $\phi^t(x) \in S$ and $c' < t < C'$.
 \item For each Nielsen fixed point class $[a]$ such that $\ell([a],B) \leq k$, if $\epsilon < \epsilon_k$ then there is a unique non-degenerate closed Reeb orbit for $\lambda_{\epsilon}'$ in $[a]$.
\end{itemize}
\noindent  By Lemma \ref{lem-Gdelta}, in any $C^{\infty}$-neighborhood of $\lambda_{\epsilon}$, we can find a non-degenerate contact form $\lambda_{\epsilon}'$ such that $(\lambda_{\epsilon}',B) \equiv (\lambda_{\epsilon},B)$.  In particular, if $\lambda_{\epsilon}'$ is chosen close enough, the page $S$ will still be a surface of section for $\lambda_{\epsilon}'$ because of the Conley-Zehnder index in a neighborhood of the binding, and because $\lambda_{\epsilon}$ has bounded return time on the (compact) complement of this neighborhood in $S$  if $\lambda_{\epsilon}'$ is chosen close enough.  Therefore there are $c,C$ such that if $x \in S$ then for some $c < t < C$ (in fact, the first such $t>0$) we have $\phi^t(x) \in S$.

The second assertion can be deduced from the $\lambda_{\epsilon}'$ non-degeneracy of the orbits which correspond to fixed points of the unperturbed map $h$.
\end{proof}

The following proposition follows immediately:

\begin{proposition}
Let $[a]$ denote a homotopy class in $S^3 \backslash B$ corresponding to a simple Nielsen fixed point class.  Then there is an $\epsilon$ such that for the contact form $\lambda_{\epsilon}'$ in Lemma \ref{lem-Approximation_Lemma} we have $(\lambda_{\epsilon}',B,[a])$ is non-degenerate, satisfies $(PLC)$ and for $J' \in \J_{\mathrm{gen}}$
\[
 CCH^{[a]}_*(\lambda_{\epsilon}', J' \mbox{ rel } B) \cong \Q
\]
\noindent It follows that (see the following Remark \ref{rem-simple2}) $CCH^{[a]}_*$ is defined and
\[
 CCH^{[a]}_*(\xi \mbox{ rel } B) \cong \Q
\]
\end{proposition}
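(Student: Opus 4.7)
The plan is to apply Lemma~\ref{lem-Approximation_Lemma} to pass to a nearby non-degenerate $\lambda'_\epsilon$ whose chain complex in the class $[a]$ has a unique generator, then show the differential vanishes on grading grounds, and finally invoke Proposition~\ref{prop-functoriality} together with Remark~\ref{rem-simple2} to promote this to the invariant statement about $\xi$.

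First I would verify $(PLC)$. Since $[a]$ comes from a Nielsen fixed-point class of $h$, representatives of $[a]$ link $B$ exactly once, so taking $\epsilon < \epsilon_1$ in Lemma~\ref{lem-Approximation_Lemma} provides a non-degenerate $\lambda'_\epsilon$ for which a page $S$ remains a global surface of section with bounded return time. Every closed Reeb orbit is therefore transverse to the pages and has nonzero linking number with $B$, which rules out closed orbits contractible in $S^3\setminus B$, giving the no-contractible-orbits half of $(PLC)$. The proper-link-class half is topological: any homotopy inside $S^3\setminus B$ from a representative of $[a]$ into a component of $B$ would force $\ell([a],B)=0$, contradicting $\ell([a],B)=1$.

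Next I would compute the chain complex. For a generic $J' \in \J_{\mathrm{gen}}(\lambda'_\epsilon)$, the complex $CC^{[a]}_*(\lambda'_\epsilon, J' \text{ rel } B)$ is the $\Q$-module freely generated by a single generator $q_x$ corresponding to the unique simple closed orbit $x$ in $[a]$ provided by Lemma~\ref{lem-Approximation_Lemma} (here $R = \Q$ because $H_2(S^3;\Z)=0$, and $x$ is SFT-good since it is simple). Contributions to $\partial_L q_x$ would come from index-one cylinders in $\M_{J'}(x;y:Z_B)$ with $y \in [a]$; uniqueness of $x$ forces $y = x$, but any cylinder from $x$ to itself has Fredholm index $\mathrm{CZ}(x) - \mathrm{CZ}(x) = 0$, so there are no index-one self-cylinders and $\partial_L q_x = 0$. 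Hence $CCH^{[a]}_*(\lambda'_\epsilon, J' \text{ rel } B) \cong \Q$, concentrated in a single degree.

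The final step will be to read off invariance. Proposition~\ref{prop-functoriality} gives independence of $CCH^{[a]}_*$ from the choice of non-degenerate form satisfying $(PLC)$ representing $\xi$ whenever $[a]$ is simple, and Remark~\ref{rem-simple2} allows this simplicity to be checked on just one side of any cobordism~--- the side where $\lambda'_\epsilon$ lives, which has only the simple orbit $x$ in $[a]$. Combined with the compactness theorems (Propositions~\ref{prop-PBCcompactness}, \ref{prop-PBCsplitcompactness}) and the transversality built into $\J_{\mathrm{gen}}$ in simple classes, this yields $CCH^{[a]}_*(\xi \text{ rel } B) \cong \Q$. The delicate part of the plan is that the approximation really must preserve both the bounded-return-time property and the count of orbits in $[a]$; that is precisely what Lemma~\ref{lem-Approximation_Lemma} supplies, by invoking the openness of non-degeneracy established in Lemma~\ref{lem-Gdelta}.
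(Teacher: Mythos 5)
Your overall route is the paper's: use Lemma \ref{lem-Approximation_Lemma} to get a nearby non-degenerate form with a unique (simple, hence SFT-good) orbit in $[a]$, observe that a rank-one complex has vanishing differential, and then invoke Proposition \ref{prop-functoriality} together with Remarks \ref{remark-simple}, \ref{rem-simple2} to pass to the invariant. Those parts are fine.

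The genuine gap is in your verification of $(PLC)$. First, the claim that a homotopy inside $S^3\setminus B$ carrying a representative of $[a]$ onto $B$ ``would force $\ell([a],B)=0$'' is false: a $(1,m)$-cable curve on $\partial N(B)$ can be shrunk onto the core $B$ through $N(B)\setminus B$, yet has linking number $m$ with $B$ for arbitrary $m$. Being homotopic into the binding does not constrain the linking number, so linking numbers alone cannot establish the proper-link-class half of Definition \ref{def-PLC}; what is actually needed is that the free homotopy class of an interior (non-trivial) Nielsen class is not represented by any loop that is boundary-parallel toward $B$ --- this is exactly the kind of Nielsen-separation argument in the proof of Lemma \ref{lem-ApproximationLemma0} (interior periodic points are not Nielsen equivalent to ones in the boundary collar, whose loops are the ones homotopic into $B$), and it is what the paper is implicitly relying on when it asserts $(PLC)$ before the proposition. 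Second, the ``no contractible orbits'' part of $(PLC)$ also requires that discs bounded by $B$ and its covers meet $B$ in their interior (the second bullet of Definition \ref{def-PLC} explicitly includes orbits in $L$); nonzero linking of the other orbits does not address this, and the paper handles it with Lemma \ref{lem-bindingintersect} (page not a disc). Finally, a minor quantitative slip: a simple Nielsen class of $h^k$ has $\ell([a],B)=k$, so you must take $\epsilon<\epsilon_k$ for the relevant $k$, not $\epsilon<\epsilon_1$; restricting to linking number one would not cover the classes needed later in Proposition \ref{prop-figure8}.
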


\begin{remark} \label{rem-simple2}
While the closed orbit above is assumed simple, it is conceivable that the homotopy class $[a]$ it is in is not (i.e.~contain multiply covered loops).  However, Remark \ref{remark-simple} explains why $CCH^{[a]}_*([\lambda] \mbox{ rel } B)$ is still well-defined.  Thus the chain maps/homotopies with the representative $\lambda_{\epsilon}'$ make sense if $\epsilon$ is sufficiently small and computes $CCH^{[a]}_*(\mu,J \mbox{ rel } B)$ for $(\mu,B) \sim (\lambda_{\epsilon}',B)$.
\end{remark}

Let us denote by $\xi_o$ the (over-twisted) contact structure supported by this open book.  One may apply Theorem \ref{thm-forcing2} to deduce, given any contact form for $\xi_o$ with a closed Reeb orbit transversely isotopic to $B$, information about the homotopy classes of Reeb orbits in $S^3 \backslash B$.  However, arguing a little more carefully we can also deduce the following growth rate of periodic orbits in the action (which proves Corollary \ref{cor-forcing3}):

\begin{proposition} \label{prop-figure8}
Suppose $\mu$ is a contact form for $\xi_o$, and its Reeb vector field has a closed periodic orbit transversely isotopic to the binding $B$.  Then
\begin{itemize}
 \item the number of geometrically distinct closed Reeb orbits of period at most $N$ grows at least exponentially in $N$, and
 \item the number of closed orbits $x$ such that $\ell(x,B) \leq k$ grows exponentially in $k$.
\end{itemize}
\end{proposition}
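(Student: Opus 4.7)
The plan is to combine Theorem \ref{thm-forcing2} with the exponential growth of Nielsen classes of periodic points of the pseudo-Anosov monodromy $h$. First, extending the transverse isotopy of the given orbit to $B$ to an ambient contact isotopy yields a contactomorphism after which $B$ itself is a closed $\mu$-Reeb orbit, so that $(\mu,B) \sim (\lambda_\epsilon', B)$ for the model form of Section \ref{example-5} with $\epsilon$ arbitrarily small. For each $k \geq 1$ and each primitive period-$k$ Nielsen class of $h$, the corresponding free homotopy class $[a]$ in $S^3 \setminus B$ is simple, has linking number $k$ with $B$, and is distinct from the class of any iterate of $B$. By Lemma \ref{lem-bindingintersect} and Example \ref{example-4} such $[a]$ is a proper link class for $B$, so the $(PLC)$ branch of Theorem \ref{thm-forcing2} applies. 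Combined with $CCH^{[a]}_*(\xi_o \text{ rel } B) \cong \Q$ from the preceding proposition, Theorem \ref{thm-forcing2} produces a closed $\mu$-Reeb orbit $x_{[a]}$ in class $[a]$. Since $[a]$ is primitive, $x_{[a]}$ is itself simple, and distinct primitive classes yield geometrically distinct orbits (different free homotopy classes cannot share an image).

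Next I would bound $\mathcal{A}_\mu(x_{[a]})$ linearly in $k$. After rescaling so that $\mu \prec \lambda_\epsilon'$, the proof of Theorem \ref{thm-forcing2} applied with $\lambda = \lambda_\epsilon'$ and $\lambda' = \mu$ produces $x_{[a]}$ from a stretching argument whose top-level piece is a finite energy holomorphic cylinder with positive asymptotic limit an orbit appearing in a nonzero cycle of $CC^{[a]}_*(\lambda_\epsilon', J)$. The natural cycle is the single orbit $\gamma_{[a]}$ furnished by Lemma \ref{lem-ApproximationLemma0}; its $\lambda_\epsilon'$-action is bounded by $C' k$ using the bounded return time of the Reeb flow of $\lambda_\epsilon'$ to the pages (Lemma \ref{lem-Approximation_Lemma}) and the fact that $\gamma_{[a]}$ crosses a page exactly $k$ times before closing. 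Stokes applied to each level of the limiting broken configuration, together with $\mu \prec \lambda_\epsilon'$, then gives $\mathcal{A}_\mu(x_{[a]}) \leq \mathcal{A}_{\lambda_\epsilon'}(\gamma_{[a]}) \leq C' k$; undoing the rescaling yields $\mathcal{A}_\mu(x_{[a]}) \leq C k$ for a constant $C$ independent of $k$.

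Finally I would count. The eigenvalues of $h$ are $\lambda_\pm = \frac{3 \pm \sqrt{5}}{2}$, so $|\mathrm{Lef}(h^k)| = |1 - \mathrm{tr}(h^k)| \sim \lambda_+^k$ for the action on $H_*$ of the punctured torus. Because $h$ is Anosov, every periodic point of $h^k$ is hyperbolic with the same index, hence $\#\mathrm{Fix}(h^k) = |\mathrm{Lef}(h^k)|$ and each Nielsen class is a singleton. Möbius inversion yields $\sim \lambda_+^k / k$ primitive period-$k$ Nielsen classes. Therefore the number of geometrically distinct $\mu$-orbits of linking $\leq k$ with $B$ is at least $\sum_{j \leq k} \lambda_+^j / j \gtrsim \lambda_+^k/k$ (second bullet), and the number with $\mu$-action $\leq N$ is at least $\sum_{j \leq N/C} \lambda_+^j / j \gtrsim \lambda_+^{N/C}/N$ (first bullet)---both exponential.

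The hard part is the action bound: Theorem \ref{thm-forcing2} is formulated purely as an existence statement, so one must unpack its proof to extract $\mathcal{A}_\mu(x_{[a]}) \leq Ck$. This reduces to tracking how the asymptotic orbit of the top level of the broken configuration, which carries the explicit $\lambda_\epsilon'$-action of $\gamma_{[a]}$, descends through intermediate levels to produce $x_{[a]}$, with actions non-increasing at each level thanks to Stokes and the inequality $\mu \prec \lambda_\epsilon'$. Once this is in place, the pseudo-Anosov counts are classical.
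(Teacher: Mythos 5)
Your proposal follows essentially the same route as the paper's proof: reduce to the model form $\lambda_{\epsilon}'$ by a contactomorphism and rescaling, bound the action of the orbit in each Nielsen class linearly in its linking number with $B$ via the bounded return time to the pages (Lemma \ref{lem-Approximation_Lemma}), unpack the stretching/action-monotonicity argument inside the proof of Theorem \ref{thm-forcing2} (rather than its bare existence statement) to transfer that bound to $\mu$-orbits, and count the exponentially many Nielsen classes of the monodromy. The only small imprecision is your assertion that each class $[a]$ is simple: the paper does not claim this, but only that the unique $\lambda_{\epsilon}'$-Reeb orbit in $[a]$ is simple, which suffices by Remarks \ref{remark-simple} and \ref{rem-simple2} and does not affect the argument.
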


\begin{proof}
Choose any $\lambda' = \lambda_{\epsilon}'$ as in Lemma \ref{lem-Approximation_Lemma}, so that $(\lambda',B,[a])$ satisfies $(PLC)$ for each homotopy class $[a]$ corresponding to a non-trivial Nielsen fixed point class for $h$.  Lemma \ref{lem-Approximation_Lemma} asserts that there is a surface of section $S$ such that each point $x \in S$ has first return time $t$ in bounded by $t < C$ for a constant $C$ independent of $x$.  In particular, for each closed Reeb orbit we have a bound $T(x) < C \cdot \ell(x,B)$.  The number of simple non-trivial Nielsen fixed point classes of $h^k$ is bounded below by $A e^{bk}$ for positive constants $A,b$.  These fixed point classes satisfy $1 \leq \ell([a],B) \leq k$ and therefore $T < C \cdot k$.  Therefore, the number of closed Reeb orbits of action at most $Ck$ is at least $Ae^{bk}$.  By scaling $b = b/C$ we have the number of closed Reeb orbits of action at most $k$ is at least $Ae^{bk}$.

Let $\mu$ be a contact form on $S^3$ with kernel $\xi_o$ with an orbit transversely isotopic to $B$, and if necessary pull-back by a contact isotopy to arrange that this orbit in fact coincides with $B$.  First suppose that $\mu \prec \lambda'$.  Arguing as in the proof of Theorem \ref{thm-forcing2}, for each non-trivial simple Nielsen fixed point class $[a]$ such that $\ell([a],B) \leq k$ we obtain a $\mu$ orbit in the same homotopy class with period bounded by its $\lambda'$ period which is itself bounded by $C \cdot k$.  Therefore we have an exponential lower bound on the number of geometrically distinct periodic orbits of period at least $N$ for $\mu$ as well.  Any $\mu$ can be rescaled so that $d \mu \prec \lambda'$, and one establishes lower bounds similarly (replacing $b$ with $d \cdot b$).
\end{proof}

\subsection{Fibered hyperbolic knots} \label{sec-fiberedpseudoAnosov}

The arguments in the above example can be replaced by any open book with one boundary component and pseudo-Anosov monodromy map which is Reeb-realizable.  In most examples one must smooth the prong singularities of the pseudo-Anosov map, a procedure we will not discuss (one way to smooth the singularity is discussed in \cite{Cotton-Clay}, section $3.2$).    Because $\phi^* - I$ is invertible (as we observed in the introduction), we can apply Lemma \ref{lem-ReebRealizability2} cited above to realize the (smoothed) pseudo-Anosov map by a Reeb vector field, continuing the contact form over the binding as in the last section.    The reader should note that the argument that the number of non-trivial Nielsen fixed point classes grows exponentially is generally more complicated.  Moreover, there may be several non-degenerate fixed points in certain Nielsen equivalence classes, but an appeal to Euler characteristics can be used if needed to show that the homology is non-trivial in non-trivial classes.  For details we refer to \cite{2008arXiv0809.5088C}, Section $11.1$.

Consider now specifically a tight fibered hyperbolic knot realizing the Thurston Bennequin bound in the tight $S^3$.  By \cite{Hedden1}, tight fibered knots realizing the Thurston-Bennequin bound are determined by the topological knot type up to transverse isotopy, so we may assume without loss of generality that such a knot is in fact the binding of an open book decomposition supporting the tight $S^3$ with (pseudo-Anosov) monodromy map $\phi$.  By definition, the open book supports the tight contact structure, so the kernel of the constructed contact form (which supports this open book) is the tight contact structure on $S^3$ (up to isotopy).  Thus Corollary \ref{cor-fiberedpseudoAnosov} is proved in a similar manner as Corollary \ref{cor-forcing3}/Proposition \ref{prop-figure8} by considering the pseudo-Anosov monodromy map $\phi$ in place of the matrix map $h$.

We include some general remarks about tight fibered hyperbolic knots.  First, there are approaches to decide whether or not a given knot is fibered (\cite{MR870705}, \cite{Ni}).  Second, a knot forms the binding of an open book decomposition for the tight $S^3$ if and only if it has a transverse representative which realizes the Thurston-Bennequin bound ($2g-1$ where $g$ is the three-genus) in the tight $S^3$ \cite{Hedden1}.  Third, a sufficient (and necessary) condition for the monodromy map to be pseudo-Anosov is that the knot is a hyperbolic knot \cite{Thurston_preprint}.  Finally, to see that the first property in the definition implies the fourth property as we claimed in the introduction, evaluating the Alexander polynomial $\Delta(t) = \det(t\cdot I - h^*)$ at $t=1$ shows that $h^* - I$ is invertible since $\Delta(1) = \pm 1$ for any knot.

Matt Hedden pointed out to the author that there is an abundance of knots satisfying all these properties including, for example, the Fintushel-Stern knot (see Figure \ref{fig-HFK}, $12n_{0242}$), also known as the Pretzel knot $(-2,3,7)$.  An easy way to see that there are infinitely many examples is to notice that one can positively stabilize many times to make the genus $g$ of the page arbitrarily large, thus increasing the self-linking number of the binding (it will be $2g-1$) which distinguishes the knots.  Positive stabilization does not change the manifold or contact structure, so $1,2$ continue to hold.  It is shown in e.g.~\cite{MR2396912} that if one positively stabilizes an open book appropriately enough times that the monodromy can be made pseudo-Anosov (thus will satisfy $3$), so this infinite family will possess infinitely many tight fibered hyperbolic representatives.  Using the search form in \cite{knotinfo} and the observations above, the $35$ tight fibered hyperbolic knots with crossing number at most $12$ were found and are listed in Figure \ref{fig-HFK} (by knot diagram and Alexander-Briggs notation)

\begin{figure}
\centering 
\includegraphics[width=115mm]{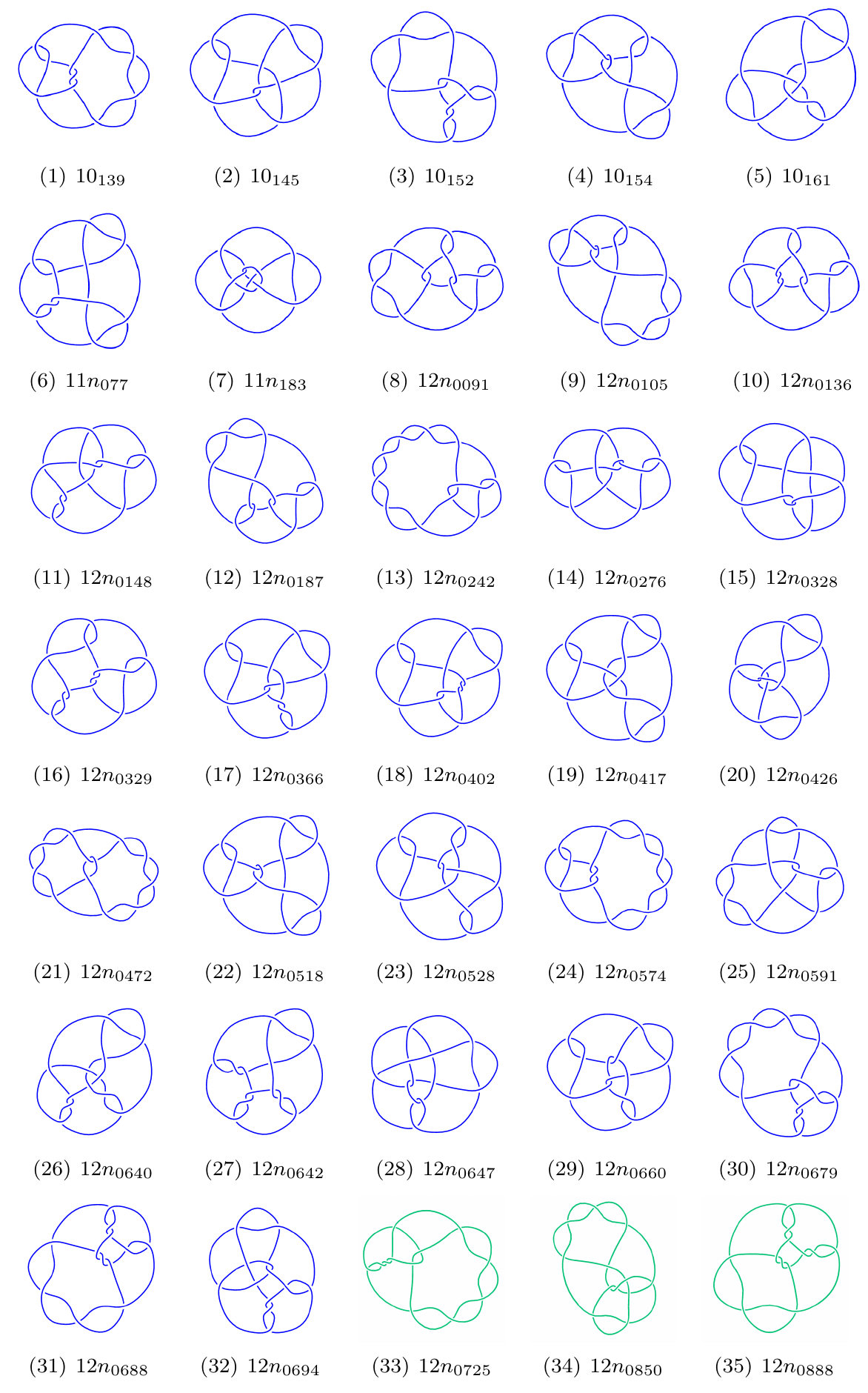}
\caption{The $35$ tight fibered hyperbolic knots with crossing number at most $12$ \cite{knotinfo}}
\label{fig-HFK}  
\end{figure}

\bigskip

\bibliography{/home/user/Dropbox/work/biblio}

\newcommand{\etalchar}[1]{$^{#1}$}
\providecommand{\bysame}{\leavevmode\hbox to3em{\hrulefill}\thinspace}
\providecommand{\MR}{\relax\ifhmode\unskip\space\fi MR }
\providecommand{\MRhref}[2]{%
  \href{http://www.ams.org/mathscinet-getitem?mr=#1}{#2}
}
\providecommand{\href}[2]{#2}
\begin{thebibliography}{CGHH10}

\bibitem[Ang05]{Angenent}
Sigurd~B. Angenent, \emph{Curve shortening and the topology of closed geodesics
  on surfaces}, Ann. of Math. (2) \textbf{162} (2005), no.~3, 1187--1241.
  \MR{2179729 (2006j:53094)}

\bibitem[BCE07]{MR2342700}
Fr{\'e}d{\'e}ric Bourgeois, Kai Cieliebak, and Tobias Ekholm, \emph{A note on
  {R}eeb dynamics on the tight 3-sphere}, J. Mod. Dyn. \textbf{1} (2007),
  no.~4, 597--613. \MR{MR2342700}

\bibitem[BEH{\etalchar{+}}03]{BEHWZ}
F.~Bourgeois, Y.~Eliashberg, H.~Hofer, K.~Wysocki, and E.~Zehnder,
  \emph{Compactness results in symplectic field theory}, Geom. Topol.
  \textbf{7} (2003), 799--888 (electronic). \MR{MR2026549 (2004m:53152)}

\bibitem[BM04]{MR2092725}
Fr{\'e}d{\'e}ric Bourgeois and Klaus Mohnke, \emph{Coherent orientations in
  symplectic field theory}, Math. Z. \textbf{248} (2004), no.~1, 123--146.
  \MR{MR2092725 (2005g:53173)}

\bibitem[Bou02]{Bourgeois_thesis}
Frederic Bourgeois, \emph{A morse-bott approach to contact homology}, Ph.D.
  thesis, Stanford, 2002.

\bibitem[Bou06]{MR2200047}
Fr{\'e}d{\'e}ric Bourgeois, \emph{Contact homology and homotopy groups of the
  space of contact structures}, Math. Res. Lett. \textbf{13} (2006), no.~1,
  71--85. \MR{MR2200047 (2007c:53122)}

\bibitem[CC09]{Cotton-Clay}
Andrew Cotton-Clay, \emph{Symplectic {F}loer homology of area-preserving
  surface diffeomorphisms}, Geom. Topol. \textbf{13} (2009), no.~5, 2619--2674.
  \MR{2529943}

\bibitem[CGHH10]{2010arXiv1004.2942C}
V.~{Colin}, P.~{Ghiggini}, K.~{Honda}, and M.~{Hutchings}, \emph{{Sutures and
  contact homology I}}, ArXiv e-prints (2010).

\bibitem[CH08a]{2008arXiv0809.5088C}
V.~{Colin} and K.~{Honda}, \emph{{Reeb vector fields and open book
  decompositions}}, ArXiv e-prints (2008).

\bibitem[CH08b]{MR2396912}
Vincent Colin and Ko~Honda, \emph{Stabilizing the monodromy of an open book
  decomposition}, Geom. Dedicata \textbf{132} (2008), 95--103. \MR{2396912
  (2008m:57041)}

\bibitem[CHL08]{MR2460883}
Vincent Colin, Ko~Honda, and Fran{\c{c}}ois Laudenbach, \emph{On the flux of
  pseudo-{A}nosov homeomorphisms}, Algebr. Geom. Topol. \textbf{8} (2008),
  no.~4, 2147--2160. \MR{2460883 (2009i:53089)}

\bibitem[CL]{knotinfo}
J.~C. Cha and C.~Livingston, \emph{Knotinfo: Table of knot invariants},
  http://www.indiana.edu/\~{}knotinfo.

\bibitem[Dra04]{Dragnev}
Dragomir~L. Dragnev, \emph{Fredholm theory and transversality for noncompact
  pseudoholomorphic maps in symplectizations}, Comm. Pure Appl. Math.
  \textbf{57} (2004), no.~6, 726--763. \MR{MR2038115 (2004m:53153)}

\bibitem[EGH00]{EGH}
Y.~Eliashberg, A.~Givental, and H.~Hofer, \emph{Introduction to symplectic
  field theory}, Geom. Funct. Anal. (2000), no.~Special Volume, Part II,
  560--673, GAFA 2000 (Tel Aviv, 1999). \MR{MR1826267 (2002e:53136)}

\bibitem[EHM]{Etnyre_Horn-Morris}
John~B. Etnyre and Jeremy~Van Horn-Morris, \emph{{Fibered Transverse Knots and
  the Bennequin Bound}}, International Mathematics Research Notices.

\bibitem[EKP06]{MR2284048}
Yakov Eliashberg, Sang~Seon Kim, and Leonid Polterovich, \emph{Geometry of
  contact transformations and domains: orderability versus squeezing}, Geom.
  Topol. \textbf{10} (2006), 1635--1747 (electronic). \MR{MR2284048
  (2007j:53111)}

\bibitem[FHS95]{MR1360618}
Andreas Floer, Helmut Hofer, and Dietmar Salamon, \emph{Transversality in
  elliptic {M}orse theory for the symplectic action}, Duke Math. J. \textbf{80}
  (1995), no.~1, 251--292. \MR{MR1360618 (96h:58024)}

\bibitem[Fra92]{MR1161099}
John Franks, \emph{Geodesics on {$S\sp 2$} and periodic points of annulus
  homeomorphisms}, Invent. Math. \textbf{108} (1992), no.~2, 403--418.
  \MR{MR1161099 (93f:58192)}

\bibitem[Gab86]{MR870705}
David Gabai, \emph{Detecting fibred links in {$S^3$}}, Comment. Math. Helv.
  \textbf{61} (1986), no.~4, 519--555. \MR{870705 (88c:57009)}

\bibitem[Gir02]{Giroux}
E.\ Giroux, \emph{G\'eom\'etrie de contact: de la dimension trois vers les
  dimensions sup\'erieures}, Proceedings of the International Congress of
  Mathematicians, Vol. II (Beijing, 2002), Higher Ed. Press, Beijing, 2002,
  pp.~405--414.

\bibitem[GVV03]{MR1974892}
R.~W. Ghrist, J.~B. {Van den Berg}, and R.~C. Vandervorst, \emph{Morse theory
  on spaces of braids and {L}agrangian dynamics}, Invent. Math. \textbf{152}
  (2003), no.~2, 369--432. \MR{MR1974892 (2004e:37025)}

\bibitem[GVVW]{GVBVVW}
R.~W. Ghrist, J.~B. {Van den Berg}, R.~C. Vandervorst, and W.~W{\'o}jcik,
  \emph{Braid floer homology}, arxiv:0910.0647v1.

\bibitem[Hed08]{Hedden1}
Matthew Hedden, \emph{An {O}zsv\'ath-{S}zab\'o {F}loer homology invariant of
  knots in a contact manifold}, Adv. Math. \textbf{219} (2008), no.~1, 89--117.
  \MR{2435421 (2009d:57023)}

\bibitem[HMSa]{HMS}
Umberto Hryniewicz, Al~Momin, and Pedro Salom\~ao, \emph{{A}n analog of the
  {P}oincar\'e-{B}irkhoff {T}heorem for {R}eeb flows on ${S}^3$}, In
  preparation.

\bibitem[HP08]{Harris-Paternain}
Adam Harris and Gabriel~P. Paternain, \emph{Dynamically convex {F}insler
  metrics and {$J$}-holomorphic embedding of asymptotic cylinders}, Ann. Global
  Anal. Geom. \textbf{34} (2008), no.~2, 115--134. \MR{2425525 (2009h:32035)}

\bibitem[HS10]{2010arXiv1006.0049H}
U.~{Hryniewicz} and P.~A.~S. {Salom{\~a}o}, \emph{{On the existence of
  disk-like global sections for Reeb flows on the tight 3-sphere}}, ArXiv
  e-prints (2010).

\bibitem[Hut02]{Hutchings-ECHindex}
Michael Hutchings, \emph{An index inequality for embedded pseudoholomorphic
  curves in symplectizations}, J. Eur. Math. Soc. (JEMS) \textbf{4} (2002),
  no.~4, 313--361. \MR{MR1941088 (2004b:53148)}

\bibitem[HWZ95]{HWZ_propsII}
H.~Hofer, K.~Wysocki, and E.~Zehnder, \emph{Properties of pseudo-holomorphic
  curves in symplectisations. {II}. {E}mbedding controls and algebraic
  invariants}, Geom. Funct. Anal. \textbf{5} (1995), no.~2, 270--328.
  \MR{MR1334869 (97e:58030)}

\bibitem[HWZ96]{HWZ_propsI}
\bysame, \emph{Properties of pseudoholomorphic curves in symplectisations. {I}.
  {A}symptotics}, Ann. Inst. H. Poincar{\'e} Anal. Non Lin{\'e}aire \textbf{13}
  (1996), no.~3, 337--379. \MR{MR1395676 (97e:58029)}

\bibitem[HWZ98]{HWZ_convex}
\bysame, \emph{The dynamics on three-dimensional strictly convex energy
  surfaces}, Ann. of Math. (2) \textbf{148} (1998), no.~1, 197--289.
  \MR{MR1652928 (99m:58089)}

\bibitem[HWZ03]{HWZ_FEF}
\bysame, \emph{Finite energy foliations of tight three-spheres and
  {H}amiltonian dynamics}, Ann. of Math. (2) \textbf{157} (2003), no.~1,
  125--255. \MR{MR1954266 (2004a:53108)}

\bibitem[HZ94]{Hofer-Zehnder}
Helmut Hofer and Eduard Zehnder, \emph{Symplectic invariants and {H}amiltonian
  dynamics}, Birkh{\"a}user Advanced Texts: Basler Lehrb{\"u}cher.
  [Birkh{\"a}user Advanced Texts: Basel Textbooks], Birkh{\"a}user Verlag,
  Basel, 1994. \MR{MR1306732 (96g:58001)}

\bibitem[KH95]{Hasselblatt-Katok}
Anatole Katok and Boris Hasselblatt, \emph{Introduction to the modern theory of
  dynamical systems}, Encyclopedia of Mathematics and its Applications,
  vol.~54, Cambridge University Press, Cambridge, 1995.

\bibitem[McD91]{MR1114456}
Dusa McDuff, \emph{The local behaviour of holomorphic curves in almost complex
  {$4$}-manifolds}, J. Differential Geom. \textbf{34} (1991), no.~1, 143--164.
  \MR{MR1114456 (93e:53050)}

\bibitem[MW95]{MR1314031}
Mario~J. Micallef and Brian White, \emph{The structure of branch points in
  minimal surfaces and in pseudoholomorphic curves}, Ann. of Math. (2)
  \textbf{141} (1995), no.~1, 35--85. \MR{MR1314031 (96a:58063)}

\bibitem[Ni07]{Ni}
Yi~Ni, \emph{Knot {F}loer homology detects fibred knots}, Invent. Math.
  \textbf{170} (2007), no.~3, 577--608. \MR{2357503 (2008j:57053)}

\bibitem[Sch95]{Schwarz}
Matthias Schwarz, \emph{{C}ohomology {O}perations from ${S}^1$-{C}obordisms in
  {F}loer {H}omology}, Ph.D. thesis, ETH Zurich, 1995.

\bibitem[Sie08]{Siefring1}
Richard Siefring, \emph{Relative asymptotic behavior of pseudoholomorphic
  half-cylinders}, Comm. Pure Appl. Math. \textbf{61} (2008), no.~12,
  1631--1684. \MR{MR2456182}

\bibitem[{Sie}09]{Siefring2}
R.~{Siefring}, \emph{{Intersection theory of punctured pseudoholomorphic
  curves}}, ArXiv e-prints (2009).

\bibitem[{Thu}98]{Thurston_preprint}
W.~P. {Thurston}, \emph{{Hyperbolic Structures on 3-manifolds, II: Surface
  groups and 3-manifolds which fiber over the circle}}, ArXiv Mathematics
  e-prints (1998).

\bibitem[TW75]{Thurston-Winkelnkemper}
W.~P. Thurston and H.~E. Winkelnkemper, \emph{On the existence of contact
  forms}, Proc. Amer. Math. Soc. \textbf{52} (1975), 345--347. \MR{MR0375366
  (51 \#11561)}

\bibitem[Wen10]{Wendl2008}
Chris Wendl, \emph{Automatic transversality and orbifolds of punctured
  holomorphic curves in dimension four}, Comment. Math. Helv. \textbf{85}
  (2010), 347--407.

\end{thebibliography}
\bibliographystyle{amsalpha}

\end{document}